\documentclass{amsart}
\usepackage{epsfig}
\usepackage{amsmath}
 \usepackage{hyperref}
\usepackage{amsfonts}
\usepackage{amsmath,amscd}
\usepackage{fullpage}
\usepackage{amssymb}
\usepackage{amsthm}
\usepackage{mathrsfs}
\usepackage{graphicx}
\usepackage{xypic}
\setlength{\topmargin}{0.in}
\setlength{\textwidth}{16.5cm}
\setlength{\textheight}{21.20cm}
\setlength{\oddsidemargin}{0.5cm}
\setlength{\evensidemargin}{0.5cm}
\newtheorem{definition}{Definition}[section]
\newtheorem{theorem}[definition]{Theorem}
\newtheorem{remark}[definition]{Remark}
\newtheorem{final Remarks}[definition]{Final Remarks}
\newtheorem{lemma}[definition]{Lemma}
\newtheorem{question}[definition]{Question}
\newtheorem{problem}[definition]{Problem}
\newtheorem{conjecture}[definition]{Conjecture}

\newtheorem{example}[definition]{Example}
\newtheorem{corollary}[definition]{Corollary}

\numberwithin{equation}{section}
\begin{document}
\title{Topological Rigidity Problems}
\vspace{2cm}
\author{Ramesh Kasilingam}

\email{rameshkasilingam.iitb@gmail.com  ; mathsramesh1984@gmail.com  }
\address{Statistics and Mathematics Unit,
Indian Statistical Institute,
Bangalore Centre, Bangalore - 560059, Karnataka, India.}

\date{}
\subjclass [2010]{Primary 53C24, 57R65, 57N70, 57R05, 57Q25, 57R55, 57R50; Secondary 58D27, 58D17.
57N99, 19A99, 19B99, 19D99, 18F25, 19A31, 19B28, 19G24, 19G24, 19K99, 46L80.}
\keywords{
Aspherical closed manifolds, topological rigidity, conjectures due to Borel, Novikov, non-positively curved manifolds, Farrell-Jones Conjecture.}
\maketitle
\begin{abstract}
We survey the recent results and current issues on the topological rigidity problem for closed aspherical manifolds, i.e., connected closed manifolds whose universal coverings are contractible. A number of open problems and conjectures are presented during the course of the discussion. We also review the status and applications of the Farrell-Jones Conjecture for algebraic $K$-and $L$-theory for a group ring $RG$ and coefficients in an additive category. These conjectures imply many other well-known and important conjectures. Examples are the Borel Conjecture about the topological rigidity of closed aspherical manifolds, the Novikov Conjecture about the homotopy invariance of higher signatures and the Conjecture for vanishing of the Whitehead group. We here present the status of the Borel, Novikov and vanishing of the Whitehead group Conjectures.
\end{abstract}
\section{\large Introduction}
A classification of manifolds up to $\rm{CAT}$ ($\rm{Diff}$, $\rm{PL}$ or $\rm{Top}$) isomorphism requires the construction of a complete set of invariants such that 
\begin{itemize}
\item[(i)] the invariants of a manifold are computable,
\item[(ii)] two manifolds are  $\rm{CAT}$-isomorphic if and only if they have the same invariants, and 
\item[(iii)] there is given a list of non-$\rm{CAT}$ isomorphisms manifolds realizing every possible set of invariants.
\end{itemize}
The most important invariant of a manifold $M^n$ is its dimension, the number $n\geq0$ such that $M$ is locally homeomorphic to $\mathbb{R}^n$, so that an $n$-dimensional manifold $M^n$ cannot be homeomorphic to an $m$-dimensional manifold $N^m$.  The homology and cohomology of an orientable $n$-dimensional closed manifold $M$ are related by the Poincar$\grave{\rm e}$ duality isomorphisms $H^*(M)\cong H_{n-*}(M)$. Any $n$-dimensional closed manifold $M$ has $\mathbb{Z}_2$-coefficient Poincar$\grave{\rm e}$ duality $H^*(M;\mathbb{Z}_2)\cong H_{n-*}(M;\mathbb{Z}_2)$ with $H_{n}(M;\mathbb{Z}_2)=\mathbb{Z}_2$, $H_{m}(M;\mathbb{Z}_2)=0$ for $m>n$. The dimension of a closed manifold $M$ is thus characterized homologically as the largest integer $n\geq0$ with $H_{n}(M;\mathbb{Z}_2)\neq0$. Homology is homotopy invariant, so that the dimension is also homotopy invariant : if $n\neq m$ an $n$-dimensional closed manifold $M^n$ cannot be homotopy equivalent to an $m$-dimensional closed manifold $N^m$.\\

The flavor of the classification of closed manifolds depends heavily on the dimension. In one dimension, the classification is trivial- all we have is the circle. In dimension two, it is easy. Manifolds are determined by their orientability and their Euler characteristic, and the smooth, piecewise linear, topological, and homotopy categories coincide. For $n\geq3$ there exist $n$-dimensional manifolds which are homotopy equivalent but not diffeomorphic, so that the diffeomorphism and homotopy classifications must necessarily differ.\\

In the 3-dimensional setting there is no distinction between smooth, PL and topological manifolds i.e., the categories of smooth, PL and topological manifolds are equivalent. A lot of techniques have been developed in the last century to study 3-manifolds but most of them are very special and don't generalise to higher dimensions. The first interesting family of 3-manifolds to be classified were the flat Riemannian
manifolds-those which are locally isometric to Euclidean space. David Hilbert, in the $18^{th}$ of his famous problems, asked whether there were only finitely many discrete groups of rigid motions of the Euclidean $n$-space with compact fundamental domain. Ludwig Bieberbach (1886-1982) proved this statement in 1910, and in fact gave a complete classification of such groups. Compact 3-manifolds of constant positive curvature were classified in 1925, by Heinz Hopf (1894-1971). Twenty-five years later, Georges de Rham (1903-1990) showed that Hopf's classification, up to isometry, actually coincides with the classification up to diffeomorphism. The lens spaces, with finite cyclic fundamental group, constitute a subfamily of particular interest. These were classified up to piecewise-linear homeomorphism in 1935 by Reidemeister, Franz, and de Rham, using an invariant which they called torsion. (See Milnor 1966 as well as Milnor and Burlet 1970 for expositions of these ideas.)\\

In the 1980’s Thurston developed another approach to 3-manifolds, see \cite{Thu97} and \cite{CHK00}. He considered 3-manifolds with Riemannian metrics of constant negative curvature $-1$. These manifolds, which are locally isometric to the hyperbolic 3-space, are called hyperbolic manifolds. There are fairly obvious obstructions showing that not every 3-manifold can admit such a metric. Thurston formulated a general conjecture that roughly says that the obvious obstructions are the only ones; should they vanish for a particular 3-manifold then that manifold admits such a
metric. His proof of various important special cases of this conjecture led him to formulate a more general conjecture about the existence of locally homogeneous metrics, hyperbolic or otherwise, for all manifolds; this is called Thurston’s Geometrization Conjecture for 3-manifolds. An important point is that Thurston’s Geometrization Conjecture includes the Poincar$\grave{\rm e}$ Conjecture as a very special case. A proof of Thurston’s Geometrization Conjecture is given in \cite{MT08} following ideas of Perelman. For more details on the history of the Poincar$\grave{\rm e}$ Conjecture, the development of 3-manifold topology, and Thurston’s Geometrization Conjecture, see Milnor's survey article \cite{Mil03}.\\

For $n\geq4$ group-theoretic decision problems prevent a complete classifications of smooth $n$-manifolds, by the following argument. Every smooth manifold $M$ can be triangulated by a finite simplicial complex, so that the fundamental group $\pi_1(M)$ is finitely presented. Homotopy equivalent manifolds have isomorphic fundamental groups. Every finitely presented group arises as the fundamental group $\pi_1(M)$ of an $n$-dimensional manifold $M$. It is not possible to have a complete set of invariants for distinguishing the isomorphisms class of a group from a finite presentation.  Group-theoretic considerations thus make the following questions unanswerable in general :
\begin{itemize}
 \item [(i)] Is $M$ homotopy equivalent to $N$ ?
 \item [(ii)] Is $M$ diffeomorphic to $N$ ?
 \item[(iii)] Is $\pi_1(M)$ isomorphic to $\pi_1(N)$ ?
 \end{itemize}
The surgery method of classifying manifolds seeks to answer a different problem :
\begin{problem}\label{genequestion}
Let $f:N\to M$ denote a homotopy equivalence between manifolds.\\
Is $f$ homotopic to a homeomorphism, a PL homeomorphism or a diffeomorphism ?
\end{problem}
In this paper, we review the status of the above problem \ref{genequestion} for topological rigidity, its recent developments and many interesting open question along this direction.\\
This paper is organized as follows. In section 2, we give the notation and state the basic definitions and results that will be used throughout the paper.\\
In section 3, we study about aspherical manifolds. Aspherical manifolds are manifolds with contractible universal cover. Many examples come from certain kinds of non-positive curvature conditions. Their homotopy types are determined by the fundamental groups. Important rigidity conjectures state more strongly that the geometry of such spaces is specified by their fundamental group. For example, the Borel conjecture states that the homeomorphism type of a closed aspherical manifold is determined by the fundamental group. The counterexamples to most of the old conjectures stem from essentially two different
constructions of aspherical manifolds. The first was the reflection group trick of Michael Davis \cite{Dav83} and the second construction was Gromov's idea of hyperbolization \cite{Gro87}. Here we mention certain results and examples of M. Davis and J.C. Hausmann \cite{DH89} and M. Davis and T. Januszkiewicz \cite{DJ91}, which were given by using these constructions.\\  
In section 4, we discuss Problem \ref{genequestion} for topological rigidity. In other words if $M$, $N$ are two manifolds with isomorphic fundamental groups, then are $M$, $N$ are homeomorphic to one another?. In particular, we study the above topological rigidity problem for aspherical manifolds and non-positively curved manifolds. We also present the status of the problem beginning with the rigidity results of Bieberbach and Mostow.\\
In section 5, we discuss the Farrell-Jones conjectures whose truth would certainly imply the main topological results of Section 3, and would also imply many other well known conjectures in algebraic $K$- and $L$-theory and algebraic topology (e.g. the Novikov Conjectures, the Borel Conjecture in dimensions $\geq 5$, and the Conjecture for vanishing of the Whitehead group). We also here present the status of the Farrell-Jones conjectures.
\section{\large Basic Definitions and Concepts}
In this section, we review some basic definitions, results and notation to be used throughout the article:\\
We write $\rm{Diff}$ for the category of smooth manifolds, $\rm{PL}$ for the category of piecewise-linear manifolds, and $\rm{Top}$ for the category of topological manifolds. We generically write $\rm{CAT}$ for any one of these geometric categories. Let $I=[0, 1]$ be a fixed closed interval in $\mathbb{R}$.
$\mathbb{R}^n$ is $n$-dimensional Euclidean space, $\mathbb{D}^n$ is the unit disk, $\mathbb{S}^n$ is the unit sphere, $\Sigma_g$ is the closed orientable surface of genus $g$ and $T^n=\mathbb{S}^1\times \mathbb{S}^1\times....\times \mathbb{S}^1$ ($n$-factors) is $n$-dimensional torus with their natural smooth structures and orientations. Define $\textbf{H}^n= \{(x_1,x_2,...x_n)\in \mathbb{R}^n : x_1\geq 0\}$. $\mathbb{C}^n$ is $n$-dimensional complex space, and $\mathbb{H}^n$ is $n$-dimensional quaternionic space. $\rm{Out}(G)$ is the group of outer automorphisms of the group $G$, $\rm{Top}(X)$ is the group of all self-homeomorphisms of a topological space $X$ and $\rm{Isom}(M)$ is the group of isometries of a Riemannian Manifold $M$. The general linear group $GL(n,\mathbb{K})$ is the group consisting of all invertible $n\times n$ matrices over the field $\mathbb{K}$, and the group of orthogonal $n\times n$ real matrices is denoted by $O(n)$.\\
Topological spaces are typically denoted by $X$, $Y$, $Z$. Manifolds tend to be denoted by $M^n$, $N^n$, where $n$ indicates the dimension. Homotopy spheres will be represented by $\Sigma^n$.
\begin{definition}\label{struct}(Structure Sets)\rm{
Let $M$ be a closed topological manifold. We define $\mathcal{S}(M)$ to be the set of equivalence classes of pairs $(N,f)$ where $N$ is a closed manifold and $f: N\to M$ is a homotopy equivalence.
And the equivalence relation is defined as follows :
$(N_1,f_1)\sim  (N_2,f_2)$ if there is a homeomorphism $h:N_1\to N_2$ such that $f_2\circ h$ is homotopic to $f_1$.}
\end{definition}
\begin{definition}\label{struct.boun}\rm{
Let $M$ be a compact manifold with boundary $\partial M$. We define $\mathcal{S}(M,\partial M)$ to be the set of equivalence classes of pairs $(N,f)$ where $N$ is a compact manifold with boundary $\partial N$ and $f:(N,\partial N)\to (M, \partial M)$ is a homotopy equivalence such that the restricted map
$f_{\partial N}: \partial N \to  \partial M$ is a homeomorphism. And the equivalence relation is defined as follows :
\begin{center}
$(N_1,f_1)\sim  (N_2,f_2)$ if there is a homeomorphism $h:(N_1,\partial N_1)\to (N_2, \partial N_2)$ such that $f_2\circ h$ is homotopic to $f_1~~\rm{rel}~~ \partial N_1$. 
\end{center}}
\end{definition}
The following definitions are taken from \cite{FJ96}:
\begin{definition}(Normal Cobordism)\label{nor.cob}\rm{
Let $M^n$ be a closed topological manifold.\\ 
$(N,f)\in \mathcal{S}(M)$ is normally cobordant to $(M,id)$ if there exists a compact cobordism $W^{n+1}$ and a map $F:(W,\partial W)\to (M\times [0,1]$, $\partial)$
with the following properties.
\begin{itemize}
\item[(i)] The boundary $\partial W=N\coprod M$. (Set $N= \partial^{+} W$, $M= \partial^{-} W $)
\item[(ii)] The restriction map $F_{|\partial^{+} W} :\partial^{+} W\to M\times 1$ is equal to $f$ and $F_{|\partial^{-} W} :\partial^{-} W\to M\times 0$
is equal to the identity map.
\item[(iii)] The map $F$ is covered by a (vector) bundle isomorphism $\bar{F} :N(W)\to \mathcal{E}$, where $N(W)$ is the stable normal bundle of $W$ and
$\mathcal{E}$ is a bundle over $M\times [0,1]$.
\end{itemize}
From now onward we will denote a normal cobordism by the triple $(W,F,\simeq)$ where $\simeq$ denotes the isomorphism covering F.
We now put an equivalence relation $\sim$ on the set of all normal cobordisms. We say $(W_1,F_1,\simeq_1) \sim (W_2, F_2, \simeq_2)$ if and only if there exists a triple $(\mathcal{W},\mathcal{F}$, $\equiv)$ 
where $\mathcal{W}$ is a cobordism between $W_1$ and $W_2$ and $\mathcal{F}:\mathcal{W}\to M\times I\times I$ satisfies the following properties where $I=[0,1]$. The restriction $\mathcal{F}_{|W_1}:W_1\to M\times 0\times I$ is $F_1$, and $\mathcal{F}_{|W_2}:W_2\to M\times 1\times I$
is $F_2$. Also $\mathcal{F}_{|\mathcal{W}^{-}}=id_{M\times 0\times I}$ and  $\mathcal{F}_{|\mathcal{W}^{+}}$ is a homotopy equivalence, where $\mathcal{W}^{-}$ and $\mathcal{W}^{+}$ are described. Also, $\equiv$ is an isomorphism of $\mathcal{N}(\mathcal{W})$ (the stable normal bundle) to some bundle $\mathcal{\epsilon}$ over
$M\times I\times I$ which covers $\mathcal{F}$ and which restricts to $\simeq_1$ and $\simeq_2$ over $W_1$ and $W_2$, respectively.
When $m=\dim M\geq 4$, the equivalence classes of normal cobordisms form a group which depends only on $\pi_1(M)$ and the first Stiefel-Whitney class $\omega_1(M)\in H^1(M, \mathbb{Z}_2)$ (\cite{Wal71, KS77}).
When $M$ is orientable, then $\omega_1(M)=0$. The group of normal cobordisms modulo equivalence is denoted $L_{m+1}(\pi_1(M))$ where $m=\dim M$. 
(See \cite{Wal71} for a purely algebraic definition of the groups $L_n(\pi)$.) }
\end{definition}
\begin{definition}(Special Normal Cobordism)\rm{
 A normal cobordism  $(W,F,\simeq)$ is called a special normal cobordism if $F_{|\partial ^{+}W} : \partial ^{+}W\to M\times 1$ is a homeomorphism. \\
 We can define a stronger equivalence relation $\sim_s$ on the set of special normal cobordisms by requiring $\mathcal{F}_{|\mathcal{W^{+}}}$ of the earlier equivalence relation $\sim$ in the definition \ref{nor.cob} to be a homeomorphism. This set of special normal cobordisms modulo the equivalence relation $\sim_s$ is also an abelian group and it
 is naturally identified with $[M\times [0,1],\partial ; G/Top]$. Here $G/Top$ is an $H$-space and $[X, A ; G/Top]$ denotes the set of homotopy classes of maps $f:X\to G/Top$ such that the restriction $f_{|A}=1$, where 1 is the homotopy identity element in $G/Top$.}
\end{definition}
\begin{definition}\rm{
 We next define a variant of $\mathcal{S}(M)$ denoted by $\bar{\mathcal{S}}(M)$. The underlying set of  $\bar{\mathcal{S}}(M)$ is the same as that of 
  $\mathcal{S}(M)$. But now $(N_1,f_1)$ is said to be equivalent to $(N_2,f_2)$ if there exist an $h$-cobordism $W$ between $N_1$ and $N_2$ and a map $F:W\to M\times  I$ such that $F_{|\partial^{-}W}=f_1$ and  $F_{|\partial^{+}W}=f_2,$
  where $\partial^{-}W=N_1$ and $\partial^{+}W=N_2$.}
\end{definition}
\begin{remark}\rm{
 Note that $\mathcal{S}(M)= \bar{\mathcal{S}}(M)$ when $Wh(\pi_1(M))=0$ and $\dim M\geq 5$. A set $\bar{\mathcal{S}}(M, \partial M)$ can be defined similarly when $M$ is a compact manifold with boundary. (The notation
 $\bar{\mathcal{S}}(M, \partial M)$ is sometimes abbreviated to $\bar{\mathcal{S}}(M, \partial)$ and likewise $[M,\partial M;G/Top]$ to $[M,\partial; G/Top]$.)}
 \end{remark}
 The formulation of the surgery exact sequence given below is also due to the work of Sullivan \cite{Sul71} and Wall \cite{Wal71} refining the earlier work of Browder and Novikov \cite{Nov64} :
\begin{definition}\label{surexa}(Surgery Exact Sequence)\rm{
Let $M^n$ be a compact connected manifold with non-empty boundary. For any non-negative integer $m$, there is long exact sequence of pointed sets :\\
$....\stackrel{\pi}{\longrightarrow}\bar{\mathcal{S}}(M\times \mathbb{D}^m, \partial)\stackrel{\omega}{\longrightarrow}[M\times \mathbb{D}^m,\partial; G/Top]\stackrel{\sigma}{\longrightarrow}  L_{n+m}(\pi_1M)
\longrightarrow....\\
\longrightarrow \bar{\mathcal{S}}(M\times \mathbb{D}^1, \partial)\stackrel{\omega}{\longrightarrow}[M\times \mathbb{D}^1,\partial; G/Top]\stackrel{\sigma}{\longrightarrow}
L_{n+1}(\pi_1M)\stackrel{\tau}{\longrightarrow}\bar{\mathcal{S}}(M,\partial)\stackrel{\omega}{\longrightarrow}[M ,\partial; G/Top]\\
\stackrel{\sigma}{\longrightarrow}  L_{n}(\pi_1M).$\\

Recall that $L_{n+m}(\pi_1M)= L_{n+m}(\pi_1(M\times \mathbb{D}^{m-1})$) is the set of equivalence classes of normal cobordisms on $M\times \mathbb{D}^{m-1}$ and that 
\begin{center}
 $[M\times \mathbb{D}^m,\partial; G/Top]=[M\times \mathbb{D}^{m-1}\times [0,1],\partial; G/Top]$
\end{center}
consists of the equivalence classes of special normal cobordisms on $M\times \mathbb{D}^{m-1}$. Then, $\sigma$ is the map which forgets the special structure; while, $\tau$ sends a normal cobordism $W$ to its top $\partial^{+}W$.
The maps $\omega$, when $m\geq 1$, similarly have a natural geometric description. We illustrate this when $m=1$. Let $(W,F)$ represent an element $x$ in $\bar{\mathcal{S}}(M\times [0,1],\partial)$. Then, $W$ is an $h$-cobordism between $\partial^{+}W$ and $\partial^{-}W$.
Furthermore, the restrictions $F_{|\partial^{-}W}: \partial^{-}W\to M\times 0$ and $F_{|\partial^{+}W}: \partial^{+}W\to M\times 1$ are both homeomorphisms. If  the first of these two homeomorphisms is $id_M$, then $(W,F,\simeq)$ is also a special normal cobordism and, considered as such, is $\omega(x)$. A bundle isomorphism $\simeq$ with domain $N(W)$ is determined since $F$ is a homotopy equivalence.
But it is easy to see that $(W,F)$ is equivalent in $\bar{\mathcal{S}}(M\times [0,1], \partial)$ to an object $(W',F')$ such that $ \partial^{-}W'=M$ and $F'_{|\partial^{-}W'}=id_M.$ }
\end{definition}
\begin{definition}\label{sur.map}\rm{
 The map $\sigma$ in the surgery sequence in Definition \ref{surexa} is called the surgery map or the assembly map.}
\end{definition}
\begin{definition}(Simple Normal Cobordism)\rm{
 A normal cobordism  $(W,F,\simeq)$ is called a simple normal cobordism if $F_{|\partial ^{+}W}: \partial ^{+}W\to M\times 1$ is a simple homotopy equivalence. \\
 There is an obvious equivalence relation on the set of simple normal cobordisms analogous to the equivalence relation on normal cobordisms and special normal cobordisms.
 Wall \cite{Wal71} showed that the equivalence classes of simple normal cobordisms form an abelian group, denoted by $L^{s}_{n+1}(\pi_1M)$, which depends only on $\pi_1M$ and on the first Stiefel-Whitney class $\omega_{1}(M)$. 
 The forget-structure maps define group homomorphisms
 \begin{center}
  $\bar{\sigma}: [M\times [0,1], \partial; G/Top]\to L^{s}_{n+1}(\pi_1M)$ and \\
 $ \eta :L^{s}_{n+1}(\pi_1M)\to L_{n+1}(\pi_1M)$.
 \end{center}
And these homomorphisms factor the surgery map 
\begin{center}
 $\sigma :  [M\times [0,1], \partial; G/Top]\to L_{n+1}(\pi_1M)$
\end{center}
as $\sigma=\eta\circ \bar{\sigma}$. It is known that $\eta$ is an isomorphism after tensoring with $\mathbb{Z}[\frac{1}{2}]$. This is a consequence of Rothenberg's exact sequence \cite[ p.248]{Wal71}. Of course $\eta$ is an isomorphism before tensoring with $\mathbb{Z}[\frac{1}{2}]$ if $Wh(\pi_1M)=0$ \cite{KL04}.}
\end{definition}
\begin{definition}(Spectrum)\rm{
A spectrum $${\bf{E}} = \{(E_n, \sigma_n) | n\in \mathbb{Z}\}$$ is a sequence of pointed spaces $\{E_n |n\in \mathbb{Z}\}$ together with pointed maps called structure maps $$\sigma_n : E_n\wedge \mathbb{S}^1\to E_{n+1}.$$ A map of spectra $f : {\bf{E}}\to {\bf{E}}^{'}$
is a sequence of maps $f_n : E_n \to E^{'}_n$ which are compatible with the structure maps $\sigma_n$, i.e., $$f_{n + 1}\circ \sigma_n = \sigma^{'}_n\circ (f_n\wedge id_{\mathbb{S}^1})$$ holds for all $n\in \mathbb{Z}$.}
\end{definition}
\begin{definition}($\Omega$-spectrum)\rm{
Given a spectrum ${\bf{E}}$, we can consider instead of the structure map $\sigma_n : E_n\wedge \mathbb{S}^1\to E_{n + 1}$ its adjoint
$\sigma^′_n : E_n\to \Omega E_{n + 1} = \rm{map}(\mathbb{S}^1, E_{n + 1})$. We call ${\bf{E}}$ an $\Omega$-spectrum if each map $\sigma^′_n$ is a weak homotopy equivalence.}
\end{definition}
\begin{definition}(Homotopy groups of a spectrum)\rm{
Given a spectrum ${\bf{E}}$, define for $n\in \mathbb{Z}$ its $n$-th homotopy group
$$\pi_n({\bf{E}}) := colim_{k\to \infty} \pi_{k+n}(E_k)$$ to be the abelian group which is given by the colimit over the directed system indexed by $\mathbb{Z}$ with $k$-th structure map $$\pi_{k+n}(E_k) \stackrel{\sigma^′_k}{\rightarrow}\pi_{k+n}(\Omega E_{k+1})=\pi_{k+n+1}(E_{k+1})$$
Notice that a spectrum can have, in contrast to a space, non-trivial negative homotopy groups. If ${\bf{E}}$ is an  $\Omega$-spectrum, then $\pi_n({\bf{E}})= \pi_n(E_0)$ for all $n\geq 0$.}
\end{definition}
\begin{definition}\rm{
Let $q\in \mathbb{Z}$. An $\Omega$-spectrum ${\bf{F}}$ is $q$-connective if $\pi_n({\bf{F}})= 0$ for $n< q$. A $q$-connective cover of an $\Omega$-spectrum ${\bf{F}}$ is a $q$-connective $\Omega$-spectrum ${\bf{F}}\langle q \rangle $ together with a map ${\bf{F}}\langle q \rangle\mapsto {\bf{F}}$ inducing isomorphisms $\pi_n({\bf{F}}\langle q \rangle)\cong \pi_n({\bf{F}})$ for $n\geq q$. In general, ${\bf{F}}\langle q \rangle$ is obtained from ${\bf{F}}$ by killing the homotopy groups $\pi_n({\bf{F}})$ for $n < q$, using Postnikov decompositions and Eilenberg-MacLane spectra.}
\end{definition}
If $f : (X, x)\to (Y, y)$ is any pointed map of spaces, we can naturally construct a fibration $\widetilde{f} : \widetilde{X}\to Y$ together with a homotopy equivalence $X\mapsto \widetilde{X}$ over $Y$. We denote by $htyfib(f)$, the fibre $\widetilde{f}^{-1}(y)$ of $\widetilde{f}$.
\begin{theorem}(Quillen's plus construction)
Let $G$ be a discrete group and $H\subset G$ be a perfect normal subgroup. Then there exists a CW-complex $BG^{+}$ and
a continuous map $\gamma: BG\to BG^+$ such that $\rm{ker}({\pi_1(BG)\to \pi_1(BG)^+)}= H$ and such that $\widetilde{H}^*(htyfib(\gamma), \mathbb{Z}) =0$. Moreover, $\gamma$ is unique up to homotopy.
\end{theorem}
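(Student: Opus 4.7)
The plan is to construct $BG^+$ by attaching $2$- and $3$-cells to $BG$, and to derive uniqueness from a universal-property argument.

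First I would choose a set $\{h_\alpha\}_{\alpha\in A}$ of generators for $H$ and, for each $\alpha$, attach a $2$-cell $e^2_\alpha$ to $BG$ along a loop representing $h_\alpha\in\pi_1(BG)=G$. Call the resulting CW-complex $W$. By van Kampen, $\pi_1(W)=G/H$ and the kernel of the induced map from $G$ is exactly $H$. The attached $2$-cells create unwanted classes in $H_2(\widetilde W)=\pi_2(W)$, which I eliminate using the perfectness of $H$. Writing each $h_\alpha$ as a product of commutators $\prod_{\beta=1}^{g_\alpha}[u_{\alpha\beta},v_{\alpha\beta}]$ with $u_{\alpha\beta},v_{\alpha\beta}\in H$, this commutator word is the attaching map of the unique $2$-cell of an oriented genus-$g_\alpha$ surface with one boundary component, giving a map $\Sigma_{g_\alpha,1}\to BG$ whose boundary loop represents $h_\alpha$. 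Gluing $e^2_\alpha$ to $\Sigma_{g_\alpha,1}$ along the common boundary produces a closed oriented surface $\overline\Sigma_\alpha\to W$; since each $u_{\alpha\beta},v_{\alpha\beta}$ projects to $1$ in $G/H$, this map lifts to the universal cover $\widetilde W$, and so determines a class in $\pi_2(W)=H_2(\widetilde W)$ (Hurewicz on the simply connected $\widetilde W$). Represent this class by $f_\alpha:S^2\to W$ and attach a $3$-cell along each $f_\alpha$. Declare the result to be $BG^+$, with $\gamma:BG\hookrightarrow BG^+$ the inclusion.

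Since attaching cells of dimension $\geq 3$ does not change $\pi_1$, we have $\pi_1(BG^+)=G/H$ with $\ker\gamma_*=H$. For acyclicity of the homotopy fiber $F$, I use the equivalent formulation that $\gamma$ is acyclic iff the induced cellular chain map $C_*(\widetilde{BG})\to C_*(\widetilde{BG^+})$ is a quasi-isomorphism of $\mathbb{Z}[G/H]$-modules, where $\widetilde{BG}$ is the cover of $BG$ corresponding to $H$ (a model for $BH$, with $H_1=H^{\mathrm{ab}}=0$ by perfectness) and $\widetilde{BG^+}$ is the universal cover. The quotient complex $C_*(\widetilde{BG^+})/C_*(\widetilde{BG})$ is a free $\mathbb{Z}[G/H]$-complex concentrated in degrees $2$ and $3$, with one generator per $\alpha$ in each degree. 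By construction the attaching sphere of each $3$-cell represents (modulo the $2$-chains of $\widetilde{BG}$ coming from $\Sigma_{g_\alpha,1}$) precisely the class of the corresponding $2$-cell, so the boundary $\partial:C_3\to C_2$ in the quotient sends $e^3_\alpha\mapsto e^2_\alpha$ and is therefore an isomorphism of free $\mathbb{Z}[G/H]$-modules; the quotient is acyclic, and hence so is $F$.

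For uniqueness, suppose $\gamma:BG\to X^+$ and $\gamma':BG\to Y$ both satisfy the theorem. Acyclicity of both fibers forces $\widetilde{X^+}$ and $\widetilde Y$ to be simply connected CW-complexes whose integral homology agrees with $H_*(BH)$, and more to the point forces the pairs $(\widetilde{X^+},\widetilde{BG})$ and $(\widetilde Y,\widetilde{BG})$ to be acyclic. The relative Whitehead theorem, combined with cell-by-cell extension of the identity $BG\to BG$ over the $2$- and $3$-cells of one construction using the fact that both $h_\alpha$ and the spheres $f_\alpha$ become inessential in the other construction, yields a homotopy equivalence $X^+\simeq Y$ under $BG$, hence uniqueness of $\gamma$ up to homotopy. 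The main obstacle in this outline is the construction of the attaching spheres $f_\alpha$ and the chain-level verification that the $3$-cells kill precisely the $H_2$-classes introduced by the $2$-cells; both steps use perfectness of $H$ in an essential way, while the rest is formal obstruction theory.
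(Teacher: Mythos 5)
The paper does not supply a proof of this theorem; it is recorded in Section~2 as standard background, so there is nothing to compare against directly, and your argument has to be judged on its own.

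Your construction is essentially the classical one and is correct in its main lines: attach $2$-cells along a generating set of $H$ to kill $H$ in $\pi_1$, use perfectness of $H$ to write each $h_\alpha$ as a product of commutators of elements of $H$, fill the resulting surface relation with a closed surface mapping into $W$, lift to the universal cover (this uses that $u_{\alpha\beta},v_{\alpha\beta}\in H$, hence trivial in $\pi_1(W)=G/H$), and attach $3$-cells along the Hurewicz representatives. The acyclicity verification via the relative cellular $\mathbb{Z}[G/H]$-chain complex $C_*(\widetilde{BG^+},\widetilde{BG})$, concentrated in degrees $2$ and $3$ with $\partial$ an isomorphism of free modules, is also the right argument; the identification of the homotopy fiber condition with this quasi-isomorphism condition (by pulling the map back along the universal cover of $BG^+$ and applying the Serre spectral sequence over a simply connected base) is correct. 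You slightly misplace the difficulty when you call the construction of the $f_\alpha$ the ``main obstacle''; as written, that step goes through.

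The genuine gap is in uniqueness, which you dismiss as ``formal obstruction theory.'' It is not. To build a map $X^+\to Y$ under $BG$, you extend $\gamma':BG\to Y$ over the attached cells. Extending over the $2$-cells is unobstructed since $\gamma'_*(h_\alpha)=1$, but the extension is \emph{not canonical}: it depends on a choice of nullhomotopy, i.e.\ it varies by $\pi_2(Y)$. The obstruction to continuing over a $3$-cell is the class $[\gamma'^{(2)}\circ f_\alpha]\in\pi_2(Y)$, and there is no reason this vanishes for an arbitrary choice of $\gamma'^{(2)}$; the surface $\overline\Sigma_\alpha$ contains the newly attached $2$-cell $e^2_\alpha$, whose image in $Y$ depends on that choice, so ``both $h_\alpha$ and the spheres $f_\alpha$ become inessential in the other construction'' is exactly what needs to be proved and may fail for a bad extension. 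Your appeal to a ``relative Whitehead theorem'' also does not apply: the relevant pairs have non-simply-connected subspace ($\pi_1(\widetilde{BG})=H$ when $\widetilde{BG}$ denotes the $H$-cover), and in any case the relative homotopy groups $\pi_2(X^+,BG)$ are certainly nonzero. The standard repair is to avoid obstruction theory on a single extension and instead use the cobase-change argument: form the homotopy pushout $P=X^+\cup_{BG}Y$; acyclic cofibrations are preserved by pushout, so $Y\to P$ and $X^+\to P$ are acyclic; van Kampen computes $\pi_1(P)\cong G/H$ with both legs inducing isomorphisms on $\pi_1$; an acyclic map inducing an isomorphism on $\pi_1$ is a weak homotopy equivalence (it is a homology isomorphism with all local coefficients pulled back from the target). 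Hence $X^+\simeq P\simeq Y$ under $BG$, giving uniqueness. Without some version of this argument, your uniqueness step does not go through.
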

\begin{definition}\rm{
For any ring $R$, let $\gamma : BGL(R)\to BGL(R)^+$ denote the Quillen's plus construction with respect to $[GL(R),GL(R)]\subset GL(R)$. We define $K_i(R)=\pi_i(BGL(R)^+)$, $i>0$.}
\end{definition}
\begin{definition}\rm{
 We define the $K$-theory space to be ${\bf{K}}(R) =K_0(R)\times BGL(R)^{+}$, and then for all $i\geq 0$ we can set
$$K_i(R)=\pi_i({\bf{K}}(R)).$$}
\end{definition}
\begin{definition}(Negative K-theory)\rm{
Define inductively for $n=-1$, $-2$,....
$$K_n(R):= \rm{coker}(K_{n+1}(R[t])\oplus K_{n+1}(R[t^{-1}])\to K_{n+1}(R[t, t^{-1}])).$$}
\end{definition}
\begin{definition}(K-theory of a unital $\mathbb{C}^{*}$-algebra)\rm{
Let $A$ be a $\mathbb{C}^{*}$-algebra with unit $1_A$. Define
$$K_i(A)=\pi_{i-1}(GL(A)),~i=1, 2, 3,..$$
and $K_0(A)$ as the algebraic $K$-theory group of the ring $A$.\\
Let $G$ be a discrete group and the Hilbert space $$l^2(G)=\{f : G\to \mathbb{C} : \sum_{\gamma \in G}|f(\gamma)|^{2} < \infty \},$$ and let $B(l^2(G))$ be the $\mathbb{C}^*$-algebra of all bounded linear operators $T : l^2(G)\to l^2(G)$ . The reduced $\mathbb{C}^*$-algebra of $G$, denoted by $\mathbb{C}_r^*(G)$ which  is the norm closure of the $^*$-algebra generated operators of the form $$L_{\gamma}(f)(\mu)=f(\gamma^{-1}\mu),$$ for $f\in l^2(G)$, $\gamma$, $\mu \in G$. This amounts to embedding the group ring $\mathbb{C}G$ in $B(l^2(G))$ by letting elements act by left convolution, and then close this embedding with respect to the operator norm on $B(l^2(G))$.}
\end{definition}
\begin{definition}($G$-Homology theory)\rm{
Let $\Lambda$ be a commutative ring. A $G$-homology theory $\mathcal{H}^{G}_∗$  is a covariant functor from the category of $G$-CW-pairs to the category of $\mathbb{Z}$-graded $\Lambda$-modules together with natural transformations $$\partial^{G}_n(X,A) : \mathcal{H}^{G}_n(X,A)\to \mathcal{H}^{G}_{n-1}(A)$$ for $n\in \mathbb{Z}$ satisfying the following axioms:
\begin{itemize}
 \item $G$-Homotopy invariance;
 \item Long exact sequence of pairs;
 \item Excision;
 \item Disjoint union axiom.
\end{itemize}}
\end{definition}
\begin{definition}(Equivariant homology theory)\rm{
An equivariant homology theory $\mathcal{H}^{?}_{∗}$ assigns to every group $G$, a $G$-homology theory $\mathcal{H}^{G}_{∗}$. These are linked together with the following so called induction structure: given a group homomorphism $\alpha: H\to G$ and a $H$-CW-pair $(X,A)$, there are for all $n\in \mathbb{Z}$ natural homomorphisms $$ \rm{ind}_{\alpha}:\mathcal{H}^{H}_n(X,A)\to \mathcal{H}^{G}_{n}(\rm{ind}_{\alpha}(X,A))$$ satisfying
\begin{itemize}
 \item Bijectivity: If $\rm{ker}(\alpha)$ acts freely on $X$, then $\rm{ind}_{\alpha}$ is a bijection;
 \item Compatibility with the boundary homomorphisms;
 \item Functoriality in $\alpha$;
 \item Compatibility with conjugation.
\end{itemize}}
\end{definition}
\begin{theorem}(L$\ddot{\rm u}$ck-Reich (2005))
Given a functor ${\bf{E}}: Groupoids\to Spectra$ sending equivalences to weak equivalences, there exists an equivariant homology
theory $\mathcal{H}{^?}_{∗}(-;{\bf{E}})$ satisfying $$\mathcal{H}^{H}_{n}(pt)\cong \mathcal{H}^{G}_{n}(G/H)\cong \pi_n({\bf{E}}(H))$$
\end{theorem}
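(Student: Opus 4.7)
The plan is to follow the orbit-category construction of Davis--L\"uck. First, for each discrete group $G$, I would build the transport groupoid functor $\bar{G}\colon Or(G) \to Groupoids$, where $Or(G)$ is the orbit category with objects the homogeneous $G$-spaces $G/H$ and morphisms the $G$-equivariant maps. The functor $\bar{G}$ sends $G/H$ to the action groupoid whose objects are the cosets $gH$ and whose morphisms $gH \to g'H$ are those $k\in G$ with $kgH = g'H$. A direct check shows that $\bar{G}(G/H)$ is equivalent, as a groupoid, to the one-object groupoid associated with $H$. Composing with $\mathbf{E}$ yields a covariant functor $\mathbf{E}^G := \mathbf{E}\circ \bar{G}\colon Or(G) \to Spectra$. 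For any $G$-CW-pair $(X,A)$ I would then define
$$
\mathcal{H}^G_n(X,A;\mathbf{E}) := \pi_n\bigl(\operatorname{map}_G(?,X/A) \wedge_{Or(G)} \mathbf{E}^G(?)\bigr),
$$
the homotopy groups of the balanced smash product (coend) of the contravariant $Or(G)$-space $G/H \mapsto (X/A)^H$ with the covariant $Or(G)$-spectrum $\mathbf{E}^G$; the boundary homomorphisms $\partial^G_n$ would be the connecting maps of the cofibration sequence associated to the pair.

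Next I would verify the axioms. $G$-homotopy invariance follows because the fixed-point functors $(-)^H$ preserve $G$-homotopies objectwise, and the coend descends these to weak equivalences of spectra; the long exact sequence of a pair and excision come from the standard cofibration sequences that the coend preserves; and the disjoint union axiom follows from compatibility of the coend with wedges. Evaluating on a single orbit $G/H$, representability of $\operatorname{map}_G(?,G/H)$ collapses the coend to $\mathbf{E}^G(G/H)=\mathbf{E}(\bar{G}(G/H))$, and the groupoid equivalence $\bar{G}(G/H)\simeq H$ combined with the hypothesis that $\mathbf{E}$ sends equivalences to weak equivalences yields $\mathcal{H}^G_n(G/H)\cong\pi_n(\mathbf{E}(H))$. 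Specializing to $G=H$ and $X=pt=G/G$ also gives $\mathcal{H}^H_n(pt)\cong\pi_n(\mathbf{E}(H))$, completing the identifications.

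Finally, for the induction structure, a homomorphism $\alpha\colon H\to G$ induces a natural transformation $\bar{H}\Rightarrow \bar{G}\circ\operatorname{ind}_\alpha$ of functors on $Or(H)$, which after coend produces the map $\operatorname{ind}_\alpha$ on homology. Functoriality in $\alpha$, compatibility with boundary maps, and compatibility with conjugation are then routine naturality statements. The main obstacle is the bijectivity clause: when $\ker(\alpha)$ acts freely on $X$, one must show that $\operatorname{ind}_\alpha$ is an isomorphism. The key reduction is cell-by-cell: for each isotropy group $L$ occurring in $X$, freeness of $\ker(\alpha)$ on $X$ forces $\alpha|_L$ to be injective, so the canonical functor of groupoids $\bar{H}(H/L)\to \bar{G}(G\times_\alpha H/L)\cong\bar{G}(G/\alpha(L))$ is an equivalence. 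Applying the hypothesis on $\mathbf{E}$ promotes this to a level weak equivalence of $Or$-spectra on the subdiagrams supporting the cells of $X$, and then an Atiyah--Hirzebruch-type spectral sequence comparison (or a direct induction on the equivariant cells of $X$) upgrades the cellwise weak equivalences to the desired isomorphism $\operatorname{ind}_\alpha\colon \mathcal{H}^H_n(X,A;\mathbf{E})\xrightarrow{\cong}\mathcal{H}^G_n(\operatorname{ind}_\alpha(X,A);\mathbf{E})$.
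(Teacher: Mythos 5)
The paper does not prove this theorem: it is stated as a citation to L\"uck--Reich (2005), and only later (Section 5, in the discussion of the Davis--L\"uck orbit-category approach) does the paper gesture at the underlying machinery via the formula $\widetilde{\mathbf{E}}(X)=\operatorname{map}_G(-,X)_+\otimes_{Or(G)}\mathbf{E}$. Your proposal correctly reconstructs exactly that construction, so there is no divergence of approach to report; what follows is an assessment of your argument on its own terms.

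Your outline is sound and is the standard route: transport groupoid $\bar G\colon Or(G)\to Groupoids$, composition with $\mathbf{E}$ to get an $Or(G)$-spectrum, balanced smash product with the contravariant fixed-point $Or(G)$-space, and verification of the Eilenberg--Steenrod-type axioms plus the induction structure. Two points deserve more care. First, the coend should be taken against the pointed contravariant $Or(G)$-space $G/K\mapsto \bigl(X\cup_A \operatorname{cone}(A)\bigr)^K_+$, not $\operatorname{map}_G(?,X/A)$ as written; without the mapping cone (or an equivalent pointed quotient with a disjoint basepoint added when $A=\emptyset$) the long exact sequence of a pair does not come out right and $\mathcal{H}^G_n(\emptyset)$ would not vanish. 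Second, in the bijectivity clause of the induction structure, after observing that $\alpha|_L$ is injective for each isotropy group $L$ of $X$ and hence that $\bar H(H/L)\to\bar G(G/\alpha(L))$ is an equivalence of groupoids, the passage from cellwise weak equivalences of $Or$-spectra to an isomorphism on the homology of $(X,A)$ needs an invocation of the fact that the balanced smash product along a cofibrant (free) $Or(G)$-CW source converts objectwise weak equivalences to weak equivalences; merely citing an Atiyah--Hirzebruch comparison presupposes convergence, which for an infinite-dimensional $G$-CW complex requires the disjoint union axiom to pass to the colimit over skeleta. Both gaps are fillable by standard arguments from Davis--L\"uck, so the proposal is essentially correct.
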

\begin{theorem}(Equivariant homology theories associated to $K$ and $L$-theory, Davis-L$\ddot{\rm u}$ck (1998))
Let $R$ be a ring (with involution). There exist covariant functors
\begin{align*}
{\bf{K}}_{R}:& ~Groupoids \to Spectra;\\
{\bf{L}}^{\langle \infty \rangle}_{R}:&~ Groupoids \to Spectra;\\
{\bf{K}}^{top}:&~ inj-Groupoids \to Spectra;
\end{align*}
with the following properties:
\begin{itemize}
 \item They send equivalences of groupoids to weak equivalences of spectra;
 \item For every group $G$ and all $n\in \mathbb{Z}$ we have
 \begin{align*}
  \pi_n({\bf{K}}_{R}(G))&\cong K_n(RG);\\
  \pi_n({\bf{L}}^{\langle \infty \rangle}_{R}(G))&\cong {\bf{L}}^{\langle \infty \rangle}_{n}(RG);\\
  \pi_n({\bf{K}}^{top}(G))&\cong K_n(C^{*}_r(G));
\end{align*}
\end{itemize}
\end{theorem}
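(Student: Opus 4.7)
The plan is to factor each of the three desired functors through an intermediate construction that sends a groupoid to a ring-like object of the appropriate flavour (an additive $R$-linear category for ${\bf K}_R$ and ${\bf L}^{\langle\infty\rangle}_R$, a $C^*$-algebra for ${\bf K}^{top}$), and then to compose with an existing functorial spectrum-valued $K$- or $L$-theory. The essential construction is the \emph{groupoid ring} $R\mathcal{G}$: for a small groupoid $\mathcal{G}$ I would define an additive $R$-linear category whose objects are formal finite direct sums of objects of $\mathcal{G}$ and whose morphism groups are free $R$-modules on the morphism sets of $\mathcal{G}$, with composition extending that of $\mathcal{G}$ by $R$-bilinearity. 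This construction is evidently covariantly functorial in $\mathcal{G}$. When $\mathcal{G}=G$ is a group regarded as a one-object groupoid, the resulting additive category is (Morita equivalent to) the usual group ring $RG$, so both its $K$-theory and $L$-theory spectrum agree with those of $RG$.

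For ${\bf K}_R$ I would then compose $\mathcal{G}\mapsto R\mathcal{G}$ with the non-connective $K$-theory spectrum functor on additive categories in the sense of Pedersen–Weibel (or, for the connective part, the Quillen plus construction applied to the full subcategory of finitely generated free modules, recovering the space ${\bf K}(R)=K_0(R)\times BGL(R)^+$ of the earlier definition). For ${\bf L}^{\langle\infty\rangle}_R$, I would use the involution on $R$ to equip $R\mathcal{G}$ with a duality and then apply Ranicki's ultimate lower $L$-theory spectrum of an additive category with involution. Both constructions are known to send equivalences of additive categories with structure to weak equivalences of spectra, so it remains to verify that an equivalence of groupoids $F:\mathcal{G}\to \mathcal{H}$ induces an equivalence of the associated additive categories $R\mathcal{G}\to R\mathcal{H}$; this is a formal check essentially dual to the Yoneda argument, exploiting that $F$ is essentially surjective and fully faithful.

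For ${\bf K}^{top}$ the analogous recipe is to attach to a groupoid $\mathcal{G}$ its reduced groupoid $C^*$-algebra $C_r^*(\mathcal{G})$, built from the regular representation on $\ell^2$ of the morphism spaces of $\mathcal{G}$, and then to compose with the topological $K$-theory of $C^*$-algebras as a spectrum-valued functor. The reason for restricting to the subcategory $inj$-$Groupoids$ of faithful (injective-on-morphism-sets) functors is precisely that the reduced $C^*$-completion is not functorial for arbitrary $*$-homomorphisms: an inclusion of groupoids extends continuously to the reduced completion, but a non-injective map need not, because faithfulness of the regular representation can fail. On $inj$-$Groupoids$ this issue disappears and $\mathcal{G}\mapsto C_r^*(\mathcal{G})$ becomes a well-defined covariant functor, which again specialises to $C_r^*(G)$ on one-object groupoids; equivalences in $inj$-$Groupoids$ induce Morita equivalences of the resulting $C^*$-algebras, hence weak equivalences after applying ${\bf K}^{top}$.

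The two main technical hurdles are, first, producing a \emph{genuinely functorial} (not just pseudofunctorial) spectrum-valued $K$- and $L$-theory on additive categories so that the composite with $\mathcal{G}\mapsto R\mathcal{G}$ is a strict functor into spectra --- this is what makes the argument go through and is handled by Pedersen–Weibel for $K$-theory and by Ranicki's construction for $L$-theory --- and second, in the $C^*$-case, the non-functoriality of the reduced completion, which is what forces the restriction to injective functors and is the genuine subtlety in the third bullet. Once these two points are dealt with, the identifications of the homotopy groups on one-object groupoids $G$ are immediate from the very definitions of $K_n(RG)$, $L_n^{\langle\infty\rangle}(RG)$ and $K_n(C_r^*(G))$ given earlier in the excerpt.
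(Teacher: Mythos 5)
The paper states this result as a citation to Davis--L\"uck \cite{DL98} without reproducing a proof, so there is no in-paper argument to compare against; the relevant question is whether your reconstruction matches the original. It does, in all essential respects: Davis--L\"uck really do pass through the $R$-linear groupoid category $R\mathcal{G}$ (with $\mathrm{Hom}_{R\mathcal{G}}(x,y)=R[\mathrm{Hom}_{\mathcal{G}}(x,y)]$), then feed its additive/idempotent completion into the Pedersen--Weibel non-connective $K$-theory spectrum and into Ranicki's ultimate lower $L$-theory spectrum with the induced involution; and for ${\bf K}^{top}$ they form a reduced groupoid $C^*$-algebra and apply the $C^*$-$K$-theory spectrum. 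You have also correctly pinpointed the two genuine subtleties: the need for a \emph{strictly} functorial spectrum-valued $K$/$L$-theory on additive categories (this is exactly why Pedersen--Weibel and Ranicki are invoked rather than, say, Quillen's $Q$-construction as a pseudofunctor), and the non-functoriality of the reduced completion under non-injective maps (a quotient $G\twoheadrightarrow G/N$ need not induce a $*$-homomorphism $C^*_r(G)\to C^*_r(G/N)$), which is what forces the restriction to $inj$-$Groupoids$. One minor imprecision: Davis--L\"uck do not literally take formal direct sums of objects of $\mathcal{G}$ but rather the category of finitely generated free (or projective) $R\mathcal{G}$-modules; this is an equivalent additive category, so nothing in your argument breaks, but it is worth knowing that the literal construction is module-theoretic. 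The verification that an equivalence of groupoids gives an equivalence of the resulting additive categories, and that on a one-object groupoid one recovers $K_n(RG)$, $L_n^{\langle-\infty\rangle}(RG)$ and $K_n(C^*_r(G))$, is exactly the routine check you describe. Your proposal is a faithful sketch of the cited construction.
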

\begin{definition}(Smash product)\rm{
Let ${\bf{E}}$ be a spectrum and $X$ be a pointed space. Define the smash product $X\wedge {\bf{E}}$ to be the spectrum whose $n$-th space is $X\wedge E_n$ and whose $n$-th structure map is $$X\wedge E_n\wedge \mathbb{S}^1  \stackrel{id_X\wedge \sigma_n}{\rightarrow}X\wedge E_{n+1}.$$}
\end{definition}
\begin{theorem}(Homology theories and spectra)
Let $E$ be a spectrum. Then we obtain a homology theory $H_∗(−;{\bf{E}})$ by
$$H_n(X,A;{\bf{E}}):= \pi_n((X\cup_A cone(A))\wedge {\bf{E}}).$$
It satisfies $H_n(pt;{\bf{E}})= \pi_n({\bf{E}})$.
\end{theorem}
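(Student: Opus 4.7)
The plan is to reduce the verification to the standard fact that the smash product with a spectrum turns a cofiber sequence of pointed spaces into a cofiber sequence of spectra, and that a cofiber sequence of spectra yields a long exact sequence on $\pi_*$. First I would introduce the reduced theory $\widetilde{H}_n(Y;{\bf E}) := \pi_n(Y\wedge {\bf E})$ for a pointed space $Y$ and observe that the given formula is $H_n(X,A;{\bf E}) = \widetilde{H}_n(X\cup_A cone(A);{\bf E})$, with $X\cup_A cone(A)$ canonically pointed at the cone point. Taking $A=\emptyset$ and $cone(\emptyset)=pt$ recovers the absolute groups as $H_n(X;{\bf E}) = \widetilde{H}_n(X_+;{\bf E})$.

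Next I would verify the Eilenberg--Steenrod axioms one at a time. Homotopy invariance of pairs passes through the mapping cone construction, then through $-\wedge{\bf E}$, and then through $\pi_n$, each of which respects homotopies. For the long exact sequence of a pair, the cofiber sequence
$$A\hookrightarrow X \to X\cup_A cone(A)$$
extends to the Puppe sequence
$$A\to X\to X\cup_A cone(A)\to \Sigma A\to \Sigma X\to \Sigma(X\cup_A cone(A))\to \cdots.$$
Since $-\wedge{\bf E}$ takes this to a cofiber sequence of spectra, and each cofiber sequence of spectra induces a long exact sequence on $\pi_*$, the pieces glue to the long exact sequence of the pair, with boundary map induced by the collapse $X\cup_A cone(A)\to \Sigma A$. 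For excision on a CW triad $(X;A,B)$ with $X = A\cup B$, the inclusion $(B, A\cap B)\hookrightarrow (X,A)$ induces a homotopy equivalence of mapping cones $B\cup_{A\cap B} cone(A\cap B)\xrightarrow{\simeq} X\cup_A cone(A)$, hence an isomorphism on $\widetilde{H}_*(-;{\bf E})$.

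The coefficient computation is immediate: $H_n(pt;{\bf E}) = \widetilde{H}_n(pt_+;{\bf E}) = \widetilde{H}_n(\mathbb{S}^0;{\bf E}) = \pi_n(\mathbb{S}^0\wedge {\bf E}) = \pi_n({\bf E})$, using that $\mathbb{S}^0\wedge E_k$ is canonically identified with $E_k$ in a way compatible with the structure maps, so that $\mathbb{S}^0\wedge {\bf E}$ is naturally equivalent to ${\bf E}$.

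The main technical obstacle is the disjoint union axiom. This requires $\pi_*$ to commute with arbitrary wedges of spectra, which hinges on working in a setup (CW spectra, or an appropriate symmetric monoidal model category of spectra) where $(\bigvee_i Y_i)\wedge {\bf E} \simeq \bigvee_i (Y_i\wedge {\bf E})$ and $\pi_n$ commutes with the filtered colimits over compact subspectra. Granted such a framework, the splitting $\widetilde{H}_n(\bigvee_i (X_i)_+;{\bf E}) \cong \bigoplus_i \widetilde{H}_n((X_i)_+;{\bf E})$ follows from the decomposition $\pi_n(\bigvee_i {\bf F}_i) \cong \bigoplus_i \pi_n({\bf F}_i)$. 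A secondary point is that excision yields an honest homotopy equivalence of mapping cones only after restricting to CW pairs (or, more generally, cofibration pairs), which is precisely the setting used in the sequel for $G$-CW pairs and equivariant versions of the construction.
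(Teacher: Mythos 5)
The paper simply records this as a standard background fact without giving any proof, so there is no internal argument to compare against. Your sketch is the correct standard argument: reduce to the reduced theory $\widetilde{H}_n(Y;{\bf E})=\pi_n(Y\wedge{\bf E})$, get exactness of the pair sequence from the Puppe cofiber sequence (using that $-\wedge{\bf E}$ preserves cofiber sequences and that cofiber sequences of spectra give long exact sequences on $\pi_*$), get excision from the homotopy equivalence of mapping cones $B\cup_{A\cap B}cone(A\cap B)\simeq X\cup_A cone(A)$ for CW (or cofibration) triads, compute coefficients via $pt\cup_\emptyset cone(\emptyset)=S^0$ and $S^0\wedge{\bf E}\simeq{\bf E}$, and obtain the disjoint union axiom from the levelwise identity $(\bigvee_i Y_i)\wedge E_k=\bigvee_i(Y_i\wedge E_k)$ together with the fact that $\pi_n$ of a spectrum is a filtered colimit over compact pieces, so $\pi_n$ commutes with arbitrary wedges. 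Your caveat that excision and the wedge axiom are cleanest in the CW setting is appropriate and matches the convention used elsewhere in the paper, where everything is carried out for ($G$-)CW pairs.
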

\section{\large Aspherical Manifolds}
Given a pair of topological spaces $X$ and $Y$ with isomorphic homology groups or homotopy groups, are they homotopy equivalent? An important theorem of Whitehead answers this question for CW-complexes:
\begin{theorem}\label{whitehead}(J.H.C. Whitehead)
A continuous map $f: M\to N$ between CW-complexes is a homotopy equivalence iff it induces an isomorphism on $\pi_{m}$ for all $m$.\\
Moreover, if $\pi_{1}(X)=0=\pi_{1}(Y)$ then $f$ is a homotopy equivalence if and only if $f_*$ is an isomorphism on homology
groups.
\end{theorem}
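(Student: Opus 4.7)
The plan is to establish both directions of the first equivalence and then bootstrap to the simply-connected statement using the relative Hurewicz theorem.

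The forward direction is formal: since $\pi_m$ is a homotopy functor, a homotopy inverse $g$ to $f$ induces a two-sided inverse $g_*$ to $f_*$ on every $\pi_m$. The essential content lies in the converse, so that is where I would concentrate effort.

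For the converse, I would first replace $f$ by an inclusion via the mapping cylinder $M_f = (M \times I) \cup_f N$, giving $M_f$ a CW structure in which $M = M \times \{0\}$ sits as a subcomplex (invoking cellular approximation of $f$). Since $M_f$ deformation retracts onto $N$, producing a homotopy inverse for $f$ reduces to producing a strong deformation retraction of $M_f$ onto $M$. The hypothesis, transported through the long exact sequence of the pair, says $\pi_m(M_f, M) = 0$ for every $m \geq 1$. I would then build the retraction skeleton by skeleton: on each cell $e^{n+1}$ of $M_f \setminus M$, relative vanishing produces a nullhomotopy of the characteristic map into $M$ rel boundary, and the homotopy extension property of the CW pair $(M_f, M)$ glues these cellwise compressions together into a global homotopy. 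The main obstacle here is the inductive bookkeeping --- ensuring that the compression built on the $n$-skeleton extends consistently to the next stage --- which is precisely what HEP affords but which one must verify deliberately.

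For the simply-connected addendum, I would again pass to the inclusion $M \hookrightarrow M_f$ and read off $H_m(M_f, M) = 0$ for all $m$ from the long exact sequence in homology. Because $\pi_1(M) = \pi_1(M_f) = 0$, the pair $(M_f, M)$ is $1$-connected, so the base case of relative Hurewicz is in place. An induction on $n \geq 1$ then yields $\pi_{n+1}(M_f, M) \cong H_{n+1}(M_f, M) = 0$, upgrading $(M_f, M)$ from $n$-connected to $(n+1)$-connected at each stage. Hence $f_*$ is an isomorphism on every $\pi_m$, and the first half of the theorem closes the argument. The only delicate point in this half is verifying the simple-connectedness hypothesis of relative Hurewicz, which is immediate from van Kampen under the assumption $\pi_1(M) = \pi_1(N) = 0$.
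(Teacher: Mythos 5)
The paper states this as a classical result of J.H.C.\ Whitehead and provides no proof, so there is no in-paper argument to compare against. Your proposal is the standard mapping-cylinder proof and it is correct: the forward direction is functoriality of $\pi_m$; for the converse you replace $f$ by the inclusion $M\hookrightarrow M_f$, convert the $\pi_m$-isomorphism hypothesis into $\pi_m(M_f,M)=0$ for all $m\geq 1$ via the long exact sequence, and then deformation retract $M_f$ onto $M$ by the compression lemma for CW pairs --- the inductive, HEP-driven cell-by-cell compression you flag as the delicate step is exactly the content of that lemma, so your caution is well placed. The simply connected addendum is handled correctly as well: $(M_f,M)$ is $1$-connected, the homology long exact sequence gives $H_m(M_f,M)=0$, and the relative Hurewicz theorem then bootstraps connectivity one degree at a time, reducing the statement to the first half. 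One incidental remark about the statement as the paper prints it, not about your proof: the ``moreover'' clause refers to $\pi_1(X)=0=\pi_1(Y)$ when the map is $f\colon M\to N$; you correctly read this as $\pi_1(M)=0=\pi_1(N)$, and one should also take the first equivalence to include a bijection on $\pi_0$ (or restrict to connected complexes), which your mapping-cylinder reduction implicitly does.
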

\begin{remark}\rm{
Whitehead requires that the isomorphism is induced by a continuous map. There are examples where $\pi_{m}(M)\cong \pi_{m}(N)$ for all $m$; but $M$ is not homotopy equivalent to $N$. Here are explicit examples:
\begin{itemize}
\item[\bf{1.}] Consider $M=\mathbb{S}^3\times \mathbb{R}\textbf{P}^2$ and $N=\mathbb{S}^2\times \mathbb{R}\textbf{P}^3$. Both of them have fundamental group $\mathbb{Z}_2$ and universal cover $\mathbb{S}^3\times \mathbb{S}^2$. So their homotopy groups are all the same. On the other hand, only the latter is orientable since $\mathbb{R}\textbf{P}^3$ is orientable but $\mathbb{R}\textbf{P}^2$ isn't, so they have different values on $H_5$ and therefore can't be homotopy equivalent. 
\item[\bf{2.}] Let $M=\mathbb{S}^2\times \mathbb{S}^2$ and $N=S(\eta^2\oplus \epsilon^1)$ where $\eta^2$ is the canonical $\mathbb{C}$-line bundle over $\mathbb{C}\textbf{P}^1=\mathbb{S}^2$, $\epsilon^1$ is the trivial $\mathbb{R}$-line bundle and $S(\eta^2\oplus \epsilon^1)$ denotes the sphere bundle associated to the Whitney sum $\eta^2\oplus \epsilon^1$. Since the fibration $$\mathbb{S}^2\mapsto N\mapsto \mathbb{S}^2$$ has a cross section, it follows
that the long exact homotopy sequence for this fibration splits and that there are isomorphisms $$\pi_i(N)\cong \pi_i(\mathbb{S}^2)\oplus  \pi_{i}(\mathbb{S}^2)\cong \pi_i(\mathbb{S}^2\times \mathbb{S}^2).$$ Finally, to see $M$ and $N$ are not homotopy equivalent, one computes their Stiefel-Whitney classes. It turns out $\omega_2(\mathbb{S}^2\times \mathbb{S}^2)=0$ while $\omega_2(N)\not=0$. Since the Stiefel-Whitney classes can be defined in terms of Steenrod powers, they are homotopy invariants, so $M$ and $N$ are not homotopy equivalent.\\ However there is an important special case where this worry is unnecessary:
\end{itemize}}
\end{remark}
\begin{definition}\rm{
A closed connected manifold $M$ is called an aspherical manifold if $\pi_{m}(M)=0$ for $m\neq 1$. (This is equivalent to requiring that the universal cover $\widetilde{M}$ of $M$ is contractible.}
\end{definition}
From the homotopy theory point of view an aspherical manifold is completely determined by its fundamental group due to Hurewicz :
\begin{theorem}\label{hurewicz}
If $\pi_{1}(M)\cong \pi_{1}(N)$ and both $M$ and $N$ are aspherical, then $M$ and $N$ are homotopy equivalent.
\end{theorem}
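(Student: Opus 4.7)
\bigskip

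\noindent\textbf{Proof plan.} The plan is to realize the given abstract isomorphism $\varphi:\pi_{1}(M)\to \pi_{1}(N)$ by an actual continuous map $f:M\to N$, and then to apply Whitehead's theorem (Theorem~\ref{whitehead}) to conclude that $f$ is a homotopy equivalence. The whole argument is just the standard fact that a $K(\pi,1)$ represents the functor sending a CW-complex $X$ to $\mathrm{Hom}(\pi_{1}(X),\pi)/\text{conjugation}$, specialized to the case when both source and target are aspherical manifolds; the topology of $M$ and $N$ enters only through the CW-structure that every closed manifold admits.

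\medskip

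\noindent\textbf{Step 1: constructing the map.} I fix CW-structures on $M$ and $N$ and, after choosing basepoints $m_{0}\in M$ and $n_{0}\in N$, I build $f:M\to N$ skeleton by skeleton. On the $1$-skeleton $M^{(1)}$ I choose loops representing a set of generators for $\pi_{1}(M,m_{0})$ and send them to loops in $N$ representing their images under $\varphi$. The obstruction to extending this map across a $2$-cell attached along a word $w$ in $\pi_{1}(M^{(1)})$ is precisely the class of the image of $w$ in $\pi_{1}(N^{(1)})$, and this class dies in $\pi_{1}(N)$ by the choice of $\varphi$, so I can extend over $M^{(2)}$. For $k\geq 3$, the obstruction to extending an already defined map on $M^{(k-1)}$ over a $k$-cell lies in $\pi_{k-1}(N)$, which vanishes because $N$ is aspherical. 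Hence, proceeding inductively and using $\pi_{k-1}(N)=0$ for every $k\geq 3$, I obtain a continuous map $f:M\to N$ that, by construction, induces $\varphi$ on $\pi_{1}$.

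\medskip

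\noindent\textbf{Step 2: verifying $f$ is a homotopy equivalence.} By construction, $f_{*}:\pi_{1}(M)\to \pi_{1}(N)$ equals the given isomorphism $\varphi$. For $m\geq 2$ both $\pi_{m}(M)$ and $\pi_{m}(N)$ are trivial, so $f_{*}:\pi_{m}(M)\to \pi_{m}(N)$ is tautologically an isomorphism. Every closed manifold has the homotopy type of a CW-complex, so Theorem~\ref{whitehead} applies and yields that $f$ is a homotopy equivalence. One can alternatively construct $g:N\to M$ realizing $\varphi^{-1}$ by the same obstruction argument, and observe that $g\circ f$ and $f\circ g$ induce the identity on $\pi_{1}$ modulo inner automorphisms and hence are homotopic to the respective identities, again by lifting uniqueness on a $K(\pi,1)$.

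\medskip

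\noindent\textbf{Main obstacle.} The only real content is the obstruction-theoretic extension in Step~1, and specifically the extension across $2$-cells: here I have to use the hypothesis on $\varphi$, namely that it is a well-defined group homomorphism on $\pi_{1}(M)$ (not merely a function on generators), to guarantee that the relations of $\pi_{1}(M)$ map to nullhomotopic loops in $N$. Once past the $2$-skeleton the argument is formal, since asphericity of $N$ makes every subsequent obstruction group zero; the same observation also guarantees that the map $f$ is unique up to homotopy given its effect on $\pi_{1}$, which is what makes an inverse available and the whole homotopy-type of $M$ truly depend only on $\pi_{1}(M)$.
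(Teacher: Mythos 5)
Your proof is correct and is the standard argument; the paper itself offers no proof of this theorem, merely invoking the preceding Whitehead theorem and the $K(\pi,1)$ uniqueness theory, which is exactly the obstruction-theoretic/Whitehead route you carry out.
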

\begin{example}(Aspherical Manifolds)\rm{
\begin{itemize}
\item[\bf{1}.] A connected closed $1$-dimensional manifold is homeomorphic to $\mathbb{S}^1$ and hence aspherical.
\item[\bf{2}.] Let $M$ be a connected closed 2-dimensional manifold. Then $M$ is either aspherical or homeomorphic to $\mathbb{S}^2$ or $\mathbb{R}\mathbb{P}^2$. This can be easily seen from the following facts :
Let $M\neq  \mathbb{S}^2$, $\mathbb{R}\mathbb{P}^2$. We may assume that $M$ is a CW-complex, hence its universal cover $\widetilde{M}$ is a 2-manifold, a CW-complex. $\widetilde{M}$ must be non-compact as $\pi_1(M)$ is infinite. Thus $H_2(\widetilde{M})= 0$. Moreover $H_1(\widetilde{M}) = 0$ and $H_i(\widetilde{M}) = 0$, $i > 1$. Hence by Hurewicz theorem $\pi_i(\widetilde{M})=0$, $i\geq 1$. Whitehead theorem now says that $\widetilde{M}$ is contractible.
\item[\bf{3.}] In dimension 3; Note that $\mathbb{S}^2\times \mathbb{S}^1$ has fundamental group $\mathbb{Z}$, $\mathbb{R}\mathbb{P}^3$ has fundamental group $\mathbb{Z}_2$ and clearly they are not aspherical. Consider the 3-manifold $M= \mathbb{S}^2\times \mathbb{S}^1\#\mathbb{S}^2\times \mathbb{S}^1$; it has fundamental group $\mathbb{Z}*\mathbb{Z}$. Thus $\pi_1(M)$ is infinite and $H_1(M)=\mathbb{Z}\oplus \mathbb{Z}$, $H^1(M)=\mathbb{Z}\oplus \mathbb{Z}$ (by Universal coefficient Theorem). Hence by Poincar$\grave{\rm e}$ Duality, $H_2(M)= \mathbb{Z}\oplus \mathbb{Z}$. Now if we suppose $M$ to be aspherical, then $M=K(\mathbb{Z}*\mathbb{Z}, 1)$ is $\mathbb{S}^1\vee \mathbb{S}^1$ up to homotopy. This contradicts the fact that $H_2(M)= \mathbb{Z}\oplus \mathbb{Z}$. Thus all the above cases are non-examples.\\
In fact, all closed oriented 3-manifolds $M$ such that $\pi=\pi_1(M)$ is not isomorphic to $\mathbb{Z}$, a finite group or a non-trivial free product are aspherical. This can be easily seen from the following facts: If $M$ is a closed oriented 3-manifolds then there exists a unique (up to homeomorphism) collection of oriented prime manifolds $P_1$...., $P_k$ such that $M= P_1\#,...\#P_k$, the connected sum.  This is known as prime decomposition theorem. Now if $P$ is an oriented prime 3-manifold then either it is $\mathbb{S}^2\times \mathbb{S}^1$ or any embedded 2-sphere in $P$ bounds a ball (also called irreducible). In other words, $M$ is a connected sum of irreducible 3-manifolds and copies of $\mathbb{S}^2\times \mathbb{S}^1$. Moreover it is known that any orientable prime 3-manifold $P$ with $\pi_2(P)\neq 0$ is $\mathbb{S}^2\times \mathbb{S}^1$.
Thus essentially in dimension 3, orientable prime 3-manifolds for which $\pi_2(P)=0$ and $\pi_1(P)$ is infinite are aspherical.
To see this, let $\widetilde{P}$ be the universal cover of $P$. Since $\pi_1(P)$ is infinite, $\widetilde{P}$ is non-compact and $\pi_2(\widetilde{P})=\pi_2(P)=0$. Moreover $\widetilde{P}$ being a non-compact 3-manifold, $H_3(\widetilde{P})= 0$ and $H_i(\widetilde{P}) = 0$, $i > 1$. Hence by Hurewicz Theorem $\pi_i(\widetilde{P})=0$, $i\geq 1$. It follows that $\widetilde{P}$ is contractible by Whitehead Theorem.\\ Thurston's Geometrization Conjecture implies that a closed 3-manifold is aspherical if and only if its universal covering is homeomorphic to $\mathbb{R}^3$. This follows from \cite[Theorem 13.4 on page 142]{Hem76} and the fact that the 3-dimensional geometries which have compact quotients and whose underlying topological spaces are contractible have as underlying smooth manifold $\mathbb{R}^3$ (see  \cite{Sco83}). A proof of Thurston's Geometrization Conjecture is given in  \cite{MT08} following ideas of Perelman. There are examples of closed orientable 3-manifolds that are aspherical but do not support a 
Riemannian 
metric with nonpositive sectional curvature (see  \cite{Lee95}). For more information about 3-manifolds we refer to  \cite{Hem76, Sco83}.
\item[\bf{4}.] Any complete non-positively curved Riemannian manifold is aspherical. This follows from the Cartan-Hadamard Theorem. Special cases are flat Riemannian manifolds and locally symmetric spaces of non-compact type.
\item[\bf{5}.] Let $G$ be a non-compact Lie group and $K$ a maximal compact subgroup. Then $G/K$ is diffeomorphic to $\mathbb{R}^n$ for some $n$. Let $\Gamma$ be a discrete torsion free subgroup of $G$. The natural $\Gamma$-action on $G/K$ is free and proper. Hence, the double coset space $\Gamma\setminus G/K$ is aspherical. In the special case where G is virtually nilpotent and $\pi_{1}(G)=1$, the double coset space $\Gamma\setminus G/K$ is called an infranilmanifold.
\item[\bf{6}.] On the basis of such examples the following conjecture was made in  \cite{Joh71} :
\end{itemize}}
\end{example}
\begin{conjecture}\label{johnson A}(F. E. A. Johnson)
Let $M$ be a closed manifold of type $K(\pi, 1)$. Then universal covering space $\widetilde{M}$ is homeomorphic to $\mathbb{R}^n$.
\end{conjecture}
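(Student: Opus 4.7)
The plan is to reduce the statement to a question about the topology at infinity of the universal cover, and then attempt to handle it dimension by dimension. Recall the standard recognition principle for Euclidean space: a contractible topological $n$-manifold $\widetilde{M}$ (without boundary) is homeomorphic to $\mathbb{R}^n$ provided it is simply connected at infinity, using Freudenthal's theorem in low dimensions, the resolution of the Poincar\'e conjecture, and the work of Stallings and Siebenmann in dimensions $\geq 5$. Since the universal cover of a closed aspherical manifold is, by definition, a contractible $n$-manifold, my goal reduces to the assertion that $\widetilde{M}$ is simply connected at infinity.

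First I would dispose of the cases where the conjecture is already accessible. For $n=1,2$ the classification of closed aspherical surfaces gives $\widetilde{M}\cong\mathbb{R}^n$ directly. For $n=3$ the conjecture follows from Thurston's Geometrization Conjecture (proved by Perelman, as cited in the excerpt): every closed aspherical $3$-manifold has universal cover homeomorphic to $\mathbb{R}^3$ because the relevant contractible geometric models all have $\mathbb{R}^3$ as underlying smooth manifold. For $n\geq 4$, in the presence of a complete Riemannian metric of non-positive sectional curvature the conjecture is immediate from the Cartan--Hadamard theorem, since the exponential map is a diffeomorphism $T_pM\to \widetilde{M}$; in particular the conjecture is settled for flat manifolds, locally symmetric spaces of non-compact type, and the double coset spaces $\Gamma\backslash G/K$ in Example 3.3(5).

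In the remaining generic case $n\geq 5$, the plan is to prove simple connectedness at infinity using the cocompact free action of $\pi=\pi_1(M)$ on $\widetilde{M}$. Suppose a loop $\gamma$ lies in the complement of a large compact set $K\subset\widetilde{M}$; one would like to use the contractibility of $\widetilde{M}$ to null-homotope $\gamma$, and then push the null-homotopy outside $K$ using deck translates that retract the homotopy away from $K$. A clean way to try to organize this is via the coarse topology of $\pi$: if the group $\pi$ is one-ended and has a suitably tame combinatorial model for its classifying space (for instance a $CAT(0)$ cube complex, a systolic complex, or a non-positively curved polyhedron), then one can attempt to build arbitrarily large finite subcomplexes of $\widetilde{M}$ whose complements are simply connected.

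The hard part, and the step where I expect the proposal to collapse, is precisely this last one. There is no known group-theoretic hypothesis on a finitely presented Poincar\'e duality group $\pi$ that forces the universal cover of an associated $K(\pi,1)$ manifold to be simply connected at infinity. In fact, the Davis reflection-group construction alluded to in the introduction produces closed aspherical $n$-manifolds (for every $n\geq 4$) whose universal covers fail to be simply connected at infinity, and hence fail to be homeomorphic to $\mathbb{R}^n$. So any proof along the lines above would have to obstruct the Davis construction, and I do not see a route to this. My honest expectation, therefore, is that the reduction to simple connectedness at infinity is the correct formulation of what is needed, that the conjecture holds for all the structured classes listed above, but that Johnson's conjecture in full generality is false and must be replaced by a restricted version (for example, imposing non-positive curvature, word-hyperbolicity of $\pi$, or a suitable tameness hypothesis on a combinatorial $K(\pi,1)$).
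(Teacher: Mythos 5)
Your analysis is correct and matches the paper's treatment exactly: the paper does not prove this conjecture but instead records that it is \emph{false} in every dimension $\geq 4$, via the Davis reflection-group construction combined with the criterion that a contractible $n$-manifold ($n\geq 4$) is homeomorphic to $\mathbb{R}^n$ if and only if it is simply connected at infinity. Your reduction to simple connectedness at infinity, your positive cases (dimensions $\leq 3$, non-positive curvature via Cartan--Hadamard, and the special case $n\geq 5$ with $\pi$ a non-trivial direct product due to Johnson), and your identification of the Davis construction as the essential obstruction are precisely the points the paper makes, so you are right to conclude that no proof of the general statement exists.
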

More general than Conjecture \ref{johnson A} would be  \cite{Joh74}:
\begin{conjecture}\label{johnson B}(F. E. A. Johnson)
Let $M$ be a manifold of type $K(\pi, 1)$. Then universal covering space $\widetilde{M}$ is homeomorphic to $\mathbb{R}^n$.
\end{conjecture}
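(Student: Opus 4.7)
The plan is to reduce the conjecture to a recognition theorem for Euclidean space. A classical theorem of Stallings in dimension $\geq 5$, Freedman in dimension $4$, and (via the resolution of the Poincar\'e conjecture, together with work of Husch--Price and Luft) in dimension $3$, asserts that a contractible topological $n$-manifold is homeomorphic to $\mathbb{R}^n$ if and only if it is simply connected at infinity. Since $M$ is a $K(\pi,1)$, the universal cover $\widetilde{M}$ is contractible, so the entire problem reduces to verifying that $\widetilde{M}$ is simply connected at infinity.

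Next I would translate this end-theoretic condition into one about the group $\pi$ together with the proper homotopy type of $\widetilde{M}$. Working first in the compact case (Conjecture \ref{johnson A}), the group $\pi$ acts freely, properly, and cocompactly on $\widetilde{M}$, so simple connectivity of $\widetilde{M}$ at infinity becomes a property of $\pi$ alone, accessible via the vanishing of $H^2_c(\widetilde{M};\mathbb{Z})\cong H^2(\pi;\mathbb{Z}\pi)$. I would try to verify this vanishing either from a geometric hypothesis on $\pi$ (for instance $\pi$ word-hyperbolic or CAT(0), where a Cartan--Hadamard-style compactification of $\widetilde{M}$ gives the result directly), or, more ambitiously, from the Farrell--Jones package: topological rigidity of $M$ supplied by Borel, combined with an end-controlled refinement, should force the end structure of $\widetilde{M}$ to match that of $\mathbb{R}^n$.

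For Conjecture \ref{johnson B}, where $M$ need not be compact, cocompactness of the $\pi$-action is lost, and one is forced to work in the proper homotopy category on $M$ itself before lifting to $\widetilde{M}$. The natural tool is controlled topology: show that the ends of $M$, and hence of $\widetilde{M}$, are tame, have trivial $\pi_1$ at infinity, and satisfy the hypotheses of the Stallings--Freedman recognition principle. One would then invoke the recognition theorem separately in each dimension.

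The main obstacle, and the reason I expect this to be the hard part, is that the conjecture as stated is perilously close to false in the non-compact regime: the Whitehead manifold is a contractible open $3$-manifold that is trivially a $K(\{1\},1)$ but is not homeomorphic to $\mathbb{R}^3$, precisely because it fails to be simply connected at infinity. Any successful proof attempt must therefore either impose a tameness hypothesis on $M$ that excludes such Whitehead-type phenomena, or deduce such tameness from the condition that $M$ be a \emph{manifold} of type $K(\pi,1)$ for an appropriate $\pi$. A secondary difficulty is that, even in the compact case, the instances $n=3,4$ require the full strength of geometrization and Freedman's topological surgery, so the argument is unavoidably dimension-sensitive and cannot proceed uniformly.
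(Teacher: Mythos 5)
This statement is a conjecture which the paper itself records as \emph{false}, so there is no ``paper's own proof'' to compare against. Immediately after the statement, the paper notes that F.~E.~A.~Johnson himself gave counterexamples to Conjecture~\ref{johnson B} in each dimension $n\geq 4$; moreover, Davis's reflection group trick (Corollary~\ref{davis-cocompact}) furnishes closed aspherical manifolds in every dimension $\geq 4$ whose universal covers are not homeomorphic to Euclidean space, disproving even the compact version, Conjecture~\ref{johnson A}.

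Your proposal has exactly the right instinct but stops one sentence short of the correct conclusion. You correctly reduce the problem to the Stallings--Freedman recognition principle, and you correctly identify the Whitehead manifold as a contractible open $3$-manifold that is a $K(\{1\},1)$ yet is not simply connected at infinity, hence not homeomorphic to $\mathbb{R}^3$. That is not merely an ``obstacle'': since the Whitehead manifold is its own universal cover, it is already a counterexample to Conjecture~\ref{johnson B} exactly as stated. The correct response is to declare the conjecture false rather than to search for additional tameness hypotheses to salvage a proof. The step that would fail in any attempted proof is precisely the one you flagged: simple connectivity at infinity of $\widetilde{M}$ is not a consequence of asphericity, either in the compact case (Davis) or the non-compact case (Whitehead, Johnson).
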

\begin{remark}\rm{
\indent
\begin{itemize}
\item [\bf{1.}] F. E. A. Johnson  \cite{Joh74} proved the Conjecture \ref{johnson A} when $n\geq 5$ and $\pi$ is a non-trivial direct product. Finally, F. E. A. Johnson  \cite{Joh74} gave an example to show that the generalized Conjecture \ref{johnson B} is false in each dimension $n\geq 4$.
\item [\bf{2.}] The counterexamples to most of the old conjectures stem from essentially two different constructions of aspherical manifolds. The first was the “reflection group trick” of Michael Davis  \cite{Dav83} which yielded the first aspherical manifolds whose universal covers are not Euclidean spaces. The second construction of aspherical closed manifolds was Gromov's idea of hyperbolization  \cite{Gro87}. It implies that aspherical manifolds exist in abundance. For instance, any closed PL manifold is the image of an aspherical manifold by a degree one tangential map, and any cobordism class can be represented by an aspherical manifold. In both constructions (Gromov's and Michael Davis), the fundamental groups of the aspherical manifolds are centerless. Interestingly, Lee and Raymond  \cite{LR75} showed that if the fundamental group of an aspherical manifold has nontrivial center, or more generally contains a nontrivial abelian normal subgroup, then the universal cover is homeomorphic to an Euclidean space. This is 
rather uncommon in the setting of Davis constructions.
\item[\bf{4}.] Michael W. Davis discussed the reflection group trick using the theory of Coxeter groups to construct a large number of new examples of closed aspherical manifolds in  \cite{Dav84}. The most striking consequence of this construction is the existence of counter examples to the Conjecture \ref{johnson A} in each dimension $\geq 4$.
\end{itemize}}
\end{remark}
First we will discuss Davis construction of aspherical manifolds by using the reflection group trick  \cite{Dav83}:
\begin{definition}\label{coxeter}\rm{
Suppose that $\Gamma$ is a group and $V$ is a set of generators, each element of which has order two. For any pair of elements $(v,w)$ of $V$, let $m(v, w)$ denote the order of $vw$ in $\Gamma$. Since $vw = (wv)^{-1}$, we have $m(v, w) = m(w, v)$. Let $E$ be the set of unordered pairs $\{v, w \}$ of distinct elements in $V$ such that $m(v, w)\neq \infty$. The pair $(\Gamma, V)$ is a Coxeter system and $\Gamma$ is a Coxeter group if the set of generators $V$ together with the relations $v^2= 1$ for $v\in V$, $(vw)^{m(v,w)}=1$ for ${v,w}\in E$ form a presentation for $\Gamma $.}
\end{definition}
\begin{definition}\rm{
Suppose that $(\Gamma, V)$ is a Coxeter system, that $X$ is a Hausdorff space and that $(X_v)_{v\in V}$ is a locally finite family of closed subspaces indexed by $V$ (The $X_v$ are called the panels of $X$). Let $\Gamma_{S}$ be the subgroup generated by $S \subset V$ and let $X_S$ be the face of $X$ defined by $X_S=\cap_{v\in S}X_v$. Denote by $K_{0}(\Gamma,V)$ or $K_0$(resp.$D_0(X)$ or $D_0$), the abstract simplicial complex with vertex set $V$ and with simplices, those non empty subset $S$ of $V$ such that $\Gamma_{s}$ is finite (resp.such that $X_S$ is nonempty). Thus, $D_0$ is the nerve of the covering of $\partial X (= \cup_{v\in V}X_v)$ by its panels.}
\end{definition}
\begin{definition}\rm{
Let $K$ be an abstract simplicial complex and let $S\in K$. The link of $S$ in $K$, denoted by $Link(S;K)$, is the abstract simplicial complex consisting of all simplices $T\in K$ such that $S\cap T = \emptyset $ and $S\cup T\in K$. An $n$-dimensional abstract simplicial complex $K$ is a generalized $n$-manifold (or a Cohen-Macaulay complex) if $H_{*}(Link(S; K))=H_{*}(\mathbb{S}^{n-\dim(S)-l})$ for all $S\in K$. If, in addition, $|K|$ has the homology of $S^{n}$, then it is a generalized homology $n$-sphere.}
\end{definition}
\begin{definition}\rm{
Let $G$ be a discrete group acting on a Hausdorff space $X$.\\ The action is proper if the following three conditions hold:
\begin{itemize}
\item[\bf{(a)}] the orbit space $X/G$ is Hausdorff,
\item[\bf{(b)}] for each $x\in X$ the isotropy subgroup $G_{x}$ is finite,
\item[\bf{(c)}] each $x\in X$ has a $G_{x}$-invariant open neighborhood $U_{x}$ such that $gU_{x}\cap U_{x}=\emptyset$ whenever $g\notin G_{x}$. Next suppose that $X$ is an $n$-dimensional manifold that $G$ acts properly. The action is  locally smooth if 
\item[\bf{(d)}] For each $x\in X$ there is an open neighborhood $U_{x}$ as in (c) and a faithful representation $G_{x}\to O(n)$ so that $U_{x}$ is $G_{x}$-homeomorphic to $\mathbb{R}^n$ with the linear $G_{x}$-action given by the representation. Such a neighborhood $U_{x}$ is called a linear neighborhood $x$.
\end{itemize}}
\end{definition}
\begin{definition}\rm{
A reflection on a connected manifold $M$ is a locally smooth involution  $r: M\to M$ such that the fixed point set $M_{r}$ separates $M$. Suppose that $\Gamma$ is a discrete group acting properly, locally smoothly and effectively on a connected manifold M and that $\Gamma$ is generated by reflections. Then $\Gamma$ is a reflection group on $M$.}
\end{definition}
\begin{definition}\rm{
Let $\Gamma$ be  a reflection group on $M$. Let $R$ denote the set of all reflections in $\Gamma$. For each $x\in M$, let $R(x)$ be the
set of all $r$ in $R$ such that $x$ belongs to $M_{r}$. A point $x$ is nonsingular if $R(x) = \emptyset$; otherwise it is singular. A chamber of $\Gamma$ on $M$ is the closure of a connected component of the set of nonsingular points. Let $Q$ be a chamber. Denote by $V_{Q}$(or simply $V$) the set of reflections $v$ such that $R(x)= \{v \}$ for some $x\in Q$. If $v\in V$, then $Q_{v}=M_{v}\cap Q$ is a panel of $Q$. $V$ is the set of reflections through panels of $Q$. As a convenient shorthand, we shall say that $(\Gamma, V)$ is a reflection system on $M$ with fundamental chamber $Q$. A reflection system is cocompact if its fundamental chamber is compact.\\
For any $x\in Q$ denote by $V(x)$ the intersection of $R(x)$ with $V$. In other words, $V(x)$ is the set of reflections through the panels of $Q$ which contain $x$. For any subset $T$ of $R$ let $\Gamma_T$ denote the subgroup of $\Gamma$ generated by $T$.}
\end{definition}
\begin{definition}\rm{
Let $(\Gamma, V)$ be a Coxeter system and $X$ a space with faces indexed by $V$. Give $\Gamma$ the discrete topology. Define an equivalence relation $\sim$ on $\Gamma\times X$ by $(g, x)\sim (h, y) \Leftrightarrow x = y$ and $g^{-1}h\in \Gamma_{V(x)}$. The natural $\Gamma$-action on $\Gamma\times X$ is compatible with the equivalence relation; hence, it passes to an action on the quotient space $\Gamma\times X/\sim $. Denote this quotient space by $\mathcal{U}(\Gamma, X)$ (or simply by $\mathcal{U}$) and call it the $\Gamma$-space associated to $(\Gamma, X)$.}
\end{definition}
\begin{definition}\rm{
Let $C^n$ be the standard simplicial cone in $\mathbb{R}^n$ defined by the linear inequalities $x_i\geq 0$, $1\leq i \leq n$. For any $x = (x_1,... ,x_n)\in C^n$, its codimension $c(x)$ is the number of $x_i$ which are equal to 0.\\
An $n$-manifold with corners $Q$ is a Hausdorff space together with a maximal atlas of local charts onto open subsets of $C^n$ so that the overlap maps are homeomorphisms which preserve codimension. For any $x\in Q$, its codimension $c(x)$ is then well defined. An open pre-face of $Q$ of codimension $m$ is a connected component of $c^{-1}(m)$. A closed pre-face is the closure of an open pre-face. For any $x\in Q$, let $\Sigma(x)$ be the set of closed pre-faces of codimension one which contain $x$. The manifold with corners $Q$ is nice if $Card(\Sigma(x))= 2$ for any $x$ with $c(x) = 2$.\\
 A manifold with faces is a nice manifold with corners $Q$ together with a panel structure on $Q$ such that: 
 \begin{itemize}
 \item[(a)]Each panel is a pairwise disjoint union of closed pre-faces of codimension one,
 \item[(b)] Each closed pre-face of codimension one is contained in exactly one panel.
 \end{itemize}}
\end{definition}
\begin{definition}\rm{
Suppose $(\Gamma, V)$ is a Coxeter system and that a space $X$ has a panel structure indexed by $V$. The panel structure is $\Gamma$-finite if the subgroup $\Gamma_{V(x)}$ is finite for all $x\in X$.}
\end{definition}
\begin{remark}\rm{
In dimension $\geq 4$ a necessary and sufficient condition for contractible manifold to be homeomorphic to a Euclidean space is that it be simply connected at infinity (A non compact space $Y$ is simply connected at infinity if every neighborhood of infinity  contains a simply connected neighborhood of infinity)  \cite{Dav83}.}
\end{remark}
We will need the following theorems to construct Davis examples of closed aspherical manifolds  \cite{Dav83}:
\begin{theorem}\label{davis1}
Let $L$ be a generalized homology sphere. Then there is a subdivision $L^{*}$ of $L$ and a cocompact reflection system $(\Gamma,V)$ on a contractible manifold with $K_{0}(\Gamma,V)=L^{*}$.
\end{theorem}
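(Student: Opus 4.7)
\bigskip
\noindent\textbf{Proof proposal.} The plan is to carry out the right-angled version of the Davis reflection construction, producing $\Gamma$ from a flag refinement of $L$ and assembling the manifold from a carefully panelled fundamental chamber.

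First I would replace $L$ by its barycentric subdivision and take $L^{*} := L'$. The point of this subdivision is that $L^{*}$ is automatically a flag complex: any finite set of pairwise-adjacent vertices spans a simplex. Next, associate to $L^{*}$ the right-angled Coxeter system $(\Gamma,V)$ whose generator set $V$ is the vertex set of $L^{*}$ and whose Coxeter matrix is
\[
m(v,w) \;=\;
\begin{cases}
1 & \text{if } v=w,\\
2 & \text{if } \{v,w\} \text{ spans a $1$-simplex of } L^{*},\\
\infty & \text{otherwise.}
\end{cases}
\]
For a right-angled Coxeter group, a subset $S\subset V$ generates a finite subgroup $\Gamma_S\cong(\mathbb Z/2)^{|S|}$ precisely when the elements of $S$ commute pairwise, i.e.\ when every pair of them spans an edge of $L^{*}$. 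By the flag property of the barycentric subdivision, this happens exactly when $S$ is a simplex of $L^{*}$. So, by the very definition of $K_0(\Gamma,V)$, we obtain $K_0(\Gamma,V)=L^{*}$ on the nose.

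Second, I would construct the fundamental chamber $Q$. Take $Q$ to be a regular neighbourhood of the barycentre of the cone $|\mathrm{cone}(L^{*})|$, arranged so that $\partial Q$ is identified with $|L^{*}|$ and the panel $Q_v$ is the closed star of the vertex $v$ in the dual cell structure on $|L^{*}|$ (equivalently, $Q_v$ is the union of the closed dual cells to simplices containing $v$). This gives $Q$ the structure of a nice manifold with faces, with panel structure indexed by $V$ and with $V(x)=\{v:x\in Q_v\}$ equal to the simplex $S$ of $L^{*}$ whose dual cell contains $x$ in its interior. In particular $\Gamma_{V(x)}$ is finite for all $x\in Q$, so the panel structure is $\Gamma$-finite. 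Now form the associated $\Gamma$-space $\mathcal U=\mathcal U(\Gamma,Q)=(\Gamma\times Q)/\sim$. By construction $\Gamma$ acts properly and cocompactly on $\mathcal U$ with fundamental chamber $Q$, each generator $v\in V$ acts as a reflection with fixed set the image of $Q_v$, and the $\Gamma_{V(x)}$-action near any $x\in Q$ is locally smooth.

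Third, and this is where the hypothesis on $L$ is consumed, I would show that $\mathcal U$ is a contractible topological manifold of the correct dimension. Locally at a point $x$ of $Q$ lying in the interior of a dual cell corresponding to a simplex $S$ of $L^{*}$, a neighbourhood of the orbit of $x$ in $\mathcal U$ is modelled on $\mathbb R^{n-|S|-1}\times c(\Gamma_S*_V\mathrm{Link}(S;L^{*}))$, i.e.\ the open cone on the space obtained by gluing $|\Gamma_S|$ copies of the cone on $\mathrm{Link}(S;L^{*})$ along their boundaries via the Coxeter reflections. Because $L$, and hence $L^{*}$, is a generalised homology sphere, every such link is a generalised homology sphere of the right dimension, and a direct Mayer--Vietoris / double-suspension argument identifies the glued-up link with a genuine sphere, so $\mathcal U$ is indeed an $n$-manifold. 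Contractibility of $\mathcal U$ is then established by Davis's inductive exhaustion: enumerate the cosets of $\Gamma$ in an order compatible with word length and build $\mathcal U$ by successively attaching a new copy of $Q$ along a contractible union of panels, verifying at each stage that the attaching subspace is the nerve of a finite-type subset and is therefore contractible.

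The main obstacle I expect is the manifold check at singular points: proving that the link of a fixed point of a nontrivial $\Gamma_S$ is a topological sphere rather than merely a homology sphere. This is precisely where the generalised-homology-sphere hypothesis on $L$ (the Cohen--Macaulay condition on all links) is indispensable, together with a sphere-recognition input such as the double-suspension theorem; once that local structure is in hand, the global contractibility assembly is then a relatively formal inductive argument.
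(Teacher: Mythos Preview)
Your construction of the Coxeter system is exactly right: passing to the barycentric subdivision $L^{*}=L'$ gives a flag complex, and the associated right-angled Coxeter system $(\Gamma,V)$ has $K_0(\Gamma,V)=L^{*}$ for precisely the reason you state. This part matches Davis's argument.

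The gap is in your choice of fundamental chamber. You take $Q$ to be (a regular neighbourhood of the apex in) the cone on $|L^{*}|$, so that $\partial Q\cong |L^{*}|$. But a generalized homology sphere in the sense used here is only a Cohen--Macaulay complex with the homology of a sphere; $|L^{*}|$ need not be a topological manifold, let alone a topological sphere (think of the suspension of the Poincar\'e homology sphere). At the cone point the isotropy in $\Gamma$ is trivial, so a neighbourhood of that point in $\mathcal{U}(\Gamma,Q)$ is just the open cone on $|L^{*}|$; this is homeomorphic to $\mathbb{R}^n$ only when $|L^{*}|$ is an actual sphere. Hence with your chamber $\mathcal{U}$ is in general only a contractible \emph{homology} manifold, not a topological manifold, and your local link analysis (which only treats points lying over genuine simplices of $L^{*}$) misses exactly this interior point. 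The double-suspension theorem does not rescue you here: it applies to points where the reflection group contributes extra suspension coordinates, not to points with trivial isotropy.

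The step you are missing is Theorem~\ref{davisdual}: one must first construct a \emph{homotopy-cell} $Q$, i.e.\ a compact contractible manifold with faces whose panel nerve is $L^{*}$, with every face contractible. This is a nontrivial inductive construction (built face-by-face over the poset of simplices of $L^{*}$, using that homology spheres bound contractible manifolds in the relevant dimensions) and is where the generalized-homology-sphere hypothesis is actually consumed. Once $Q$ is a genuine manifold with faces, Theorem~\ref{davisman} gives that $\mathcal{U}(\Gamma,Q)$ is a manifold, and the contractibility argument you sketch (Davis's exhaustion by unions of chambers along contractible intersections) then goes through. The paper's outline in the Remark following Corollary~\ref{davis-cocompact} follows exactly this route.
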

\begin{theorem}\label{davis2}
Let $(\Gamma,V)$ be a cocompact reflection system on a contractible manifold $M$. Then $M$ is simply connected at infinity iff $|K_{0}(\Gamma,V)|$ is simply connected.
\end{theorem}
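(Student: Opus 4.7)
The plan is to identify a cofinal family of neighborhoods of infinity in $M$ whose homotopy type is controlled by $|K_{0}(\Gamma,V)|$, and then read off the equivalence of simple-connectivity statements.

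Since $(\Gamma,V)$ is cocompact, the translates $\{gQ\}_{g\in\Gamma}$ of the compact fundamental chamber $Q$ form a locally finite closed cover of $M$ with $M=\bigcup_{g\in\Gamma}gQ$. For each finite subset $F\subseteq\Gamma$, set $M_{F}=\bigcup_{g\in F}gQ$ and $N_{F}=M\setminus M_{F}^{\circ}$. As $F$ exhausts $\Gamma$, the $N_{F}$ form a cofinal system of neighborhoods of infinity, so $M$ is simply connected at infinity iff for every finite $F$ there is a larger finite $F'\supseteq F$ such that every loop in $N_{F'}$ is null-homotopic in $N_{F}$.

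The key step is to compute $\pi_{1}(N_{F})$ using the nerve of the chamber cover $\{gQ:g\notin F\}$ together with the traces of these chambers on $\partial M_{F}$. Each multi-intersection of chambers is a face of the form $(gQ)_{S}=g\cdot\bigcap_{v\in S}Q_{v}$ for some $g\in\Gamma$ and $S\subseteq V$ with $\Gamma_{S}$ finite, i.e., precisely for a simplex $S\in K_{0}$. Each such face is contractible (being a face of a manifold with faces), so by the nerve lemma $N_{F}$ is homotopy equivalent to this nerve $\mathcal{N}_{F}$. Now the local structure of $\mathcal{N}_{F}$ at each panel $gQ_{v}\subseteq\partial M_{F}$ is precisely determined by $K_{0}$: the higher simplices of $\mathcal{N}_{F}$ through the vertex corresponding to any chamber $gQ$ abutting $\partial M_{F}$ are indexed by subsets $S\ni v$ with $\Gamma_{S}$ finite, i.e., by the simplices of $K_{0}$ containing $v$. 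Thus $\mathcal{N}_{F}$ is naturally a union of copies of $|K_{0}|$ (one per chamber reflected across a panel of $\partial M_{F}$) glued along their common faces, with the interior structure of $M_{F}$ collapsed away in $N_{F}$. Passing to the direct limit as $F$ exhausts $\Gamma$, one checks that a meridian loop linking a panel at infinity maps to a generator of $\pi_{1}(|K_{0}|)$, and conversely every loop at infinity is homotopic to a concatenation of such meridian loops. Consequently $\pi_{1}^{\infty}(M)$ vanishes if and only if $\pi_{1}(|K_{0}|)$ does.

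The main obstacle is the nerve identification in the central step: one must rigorously verify that every intersection of translates of $Q$ (not merely of panels within a single chamber) is contractible, so the nerve lemma applies, and then match the combinatorics of these intersections with the simplicial structure of $K_{0}$. This rests on Davis's careful choice of fundamental chamber so that $D_{0}(Q)=K_{0}$ and on the fact that the gluing defining $\mathcal{U}(\Gamma,Q)$ identifies faces exactly along finite special subgroups $\Gamma_{S}$. A secondary technicality is controlling the passage from finite $F$ to the inverse limit at infinity; here one uses that the complement $N_{F}\setminus N_{F'}$ is built from finitely many chambers so introduces no new $\pi_{1}$-generators beyond those already coming from copies of $|K_{0}|$, making the stabilization of $\pi_{1}^{\infty}(M)$ straightforward once the local model is understood.
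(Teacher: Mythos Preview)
The paper is a survey and does not prove this theorem; it merely quotes it from Davis \cite{Dav83}. So there is no proof in the paper to compare against, and your proposal must stand on its own.

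There is a genuine gap in the central step. The nerve $\mathcal{N}_{F}$ of the cover $\{gQ : g\notin F\}$ has vertex set $\Gamma\setminus F$, and a simplex $\{g_{0},\dots,g_{k}\}$ records that the chambers $g_{0}Q,\dots,g_{k}Q$ have a common face. This nerve is (a piece of) the Coxeter complex dual to the chamber decomposition --- not ``a union of copies of $|K_{0}|$ glued along faces.'' What is true is that the \emph{link of each vertex} in this nerve is a copy of $|K_{0}|$; but that local statement does not by itself identify $\pi_{1}(\mathcal{N}_{F})$ with $\pi_{1}(|K_{0}|)$, and your description of $\mathcal{N}_{F}$ as built from copies of $|K_{0}|$ indexed by ``chambers reflected across a panel of $\partial M_{F}$'' conflates the two complexes. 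The closing ``meridian loop'' paragraph is an intuition, not an argument: you have not exhibited a map from loops at infinity to $\pi_{1}(|K_{0}|)$, nor shown it is well defined or an isomorphism.

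There is also a secondary issue with invoking the nerve lemma. In the generality of the theorem the faces $Q_{S}$ (and hence the multi-intersections of chambers) are only known to be acyclic, not contractible --- compare condition $(\star)$ in the paper's Remark following Corollary~\ref{davis-cocompact}. So the nerve lemma gives at best a homology equivalence, which is not enough for a $\pi_{1}$ computation. Davis's actual argument in \cite{Dav83} proceeds differently: he filters $M$ by ``starlike'' subcomplexes $P_{n}$ defined via word length in $\Gamma$, shows each complement $M\setminus P_{n}$ is connected with $\pi_{1}$ computable by van Kampen in terms of $\pi_{1}(|K_{0}|)$, and tracks how these groups behave as $n\to\infty$. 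If you want to salvage your approach, you would need to replace the nerve of the chamber cover by the correct model (the Davis complex $\Sigma$, whose vertex links are $|K_{0}|$) and then carry out a genuine van Kampen or Mayer--Vietoris analysis of the complements, rather than asserting the answer.
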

\begin{corollary}\label{refdavis-cocompact}
In every dimension $\geq 4$ there exist cocompact reflection systems on contractible manifolds not homeomorphic to a Euclidean space.
\end{corollary}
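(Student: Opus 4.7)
The plan is to combine Theorems \ref{davis1} and \ref{davis2} with the criterion recalled in the Remark: in dimension $\geq 4$ a contractible manifold is homeomorphic to $\mathbb{R}^n$ if and only if it is simply connected at infinity. Thus it suffices to produce, in each dimension $n\geq 4$, a cocompact reflection system on a contractible $n$-manifold whose ``boundary complex'' $K_0(\Gamma,V)$ has non-trivial fundamental group.

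My first step is to exhibit, for each $n\geq 4$, a non-simply-connected generalized homology $(n-1)$-sphere $L$. For $n=4$ I would take $L$ to be the Poincar\'e homology $3$-sphere, whose fundamental group is the binary icosahedral group; this is even a genuine manifold, hence certainly a generalized homology sphere in the sense of the definition above. For $n\geq 5$ I would appeal to Kervaire's construction of smooth homology spheres of dimension $\geq 3$ with prescribed finitely presented perfect fundamental group; applying this with any non-trivial perfect group gives non-simply-connected smooth homology $(n-1)$-spheres for all $n\geq 5$. (Alternatively, one can iterate the Davis double suspension style constructions on the Poincar\'e sphere to get generalized homology spheres of arbitrary dimension with prescribed $\pi_1$, but invoking Kervaire is cleaner.)

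With such an $L$ in hand I would invoke Theorem \ref{davis1} to obtain a subdivision $L^{*}$ of $L$ and a cocompact reflection system $(\Gamma,V)$ on a contractible $n$-manifold $M$ with $K_{0}(\Gamma,V)=L^{*}$. Since subdivision does not change the underlying space, $|K_{0}(\Gamma,V)|=|L^{*}|=|L|$, and in particular
\[
\pi_{1}\bigl(|K_{0}(\Gamma,V)|\bigr)\;=\;\pi_{1}(L)\;\neq\;1.
\]
Theorem \ref{davis2} then tells me that $M$ fails to be simply connected at infinity. Invoking the criterion from the Remark, $M$ cannot be homeomorphic to $\mathbb{R}^n$, and $(\Gamma,V)$ is the desired cocompact reflection system on a contractible manifold distinct from Euclidean space.

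The only real obstacle is arranging a supply of non-simply-connected generalized homology spheres in every dimension $\geq 3$; everything else is a direct citation of Theorems \ref{davis1}, \ref{davis2} and the characterization of $\mathbb{R}^n$ via simple connectivity at infinity. Since the Poincar\'e sphere handles $n=4$ and Kervaire's theorem handles all higher dimensions, this obstacle is in fact already resolved in the literature, and the corollary follows.
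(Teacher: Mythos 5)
Your argument is exactly the paper's intended proof: exhibit a non-simply-connected generalized homology $(n-1)$-sphere in each dimension $n-1\geq 3$, feed it into Theorem \ref{davis1}, use Theorem \ref{davis2} together with the criterion that in dimension $\geq 4$ a contractible manifold is $\mathbb{R}^n$ if and only if it is simply connected at infinity, and conclude. The paper leaves the supply of non-simply-connected homology spheres as a bare remark; you fill this in sensibly with the Poincar\'e sphere and Kervaire's realization theorem (noting only that for the $n=5$ case one needs the refinement of Kervaire's theorem handling homology $4$-spheres via balanced presentations, which the binary icosahedral group admits).
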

\begin{definition}\rm{
A compact manifold with faces is a homology-cell (resp. a homotopy-cell) if each face is acyclic (resp. contractible).}
\end{definition}
\begin{theorem}
If $Q$ is a homology-cell of dimension $n+1$, then $D_o(Q)$ is a generalized homology $n$-sphere.
\end{theorem}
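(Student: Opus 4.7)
The plan is to prove the statement by induction on the dimension $n+1$ of $Q$, establishing simultaneously both parts of the definition of a generalized homology $n$-sphere: that $|D_0(Q)|$ has the homology of $\mathbb{S}^n$, and that each link $\mathrm{Link}(S; D_0(Q))$ has the homology of the appropriate smaller sphere. The two main ingredients will be Lefschetz duality (to control the homology of $\partial Q$) and the nerve lemma (to pass from $\partial Q$ to the nerve $D_0(Q)$), together with a naturality argument for the inductive step.

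For the global homology computation, I first study $\partial Q$. Regarding $Q$ itself as a face (the intersection over the empty set of panels), the homology-cell hypothesis makes $Q$ acyclic. Since $Q$ is a compact $(n+1)$-manifold with boundary, Lefschetz duality gives $H_k(Q, \partial Q) \cong H^{n+1-k}(Q)$, which vanishes except in top degree, and then the long exact sequence of $(Q, \partial Q)$ forces $H_*(\partial Q) \cong H_*(\mathbb{S}^n)$. Next, the panels $\{Q_v\}_{v \in V}$ form a locally finite closed cover of $\partial Q$ whose nerve is precisely $D_0(Q)$, and every nonempty intersection $Q_S = \bigcap_{v \in S} Q_v$ is a face of $Q$ and so acyclic. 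Applying the homological nerve lemma (collapsing the Mayer--Vietoris spectral sequence at $E_2$ because all cover intersections are acyclic) then yields $H_*(|D_0(Q)|) \cong H_*(\partial Q) \cong H_*(\mathbb{S}^n)$.

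For the link condition, fix a simplex $S \in D_0(Q)$. The manifold-with-faces structure on $Q$ restricts to a manifold-with-faces structure on the face $Q_S$, of dimension $(n+1) - |S|$, whose faces are exactly the $Q_{S \cup T}$ with $T \subset V \setminus S$ and $Q_{S \cup T} \neq \emptyset$. These are faces of $Q$, so they are acyclic, and hence $Q_S$ is a homology-cell of dimension $n - \dim(S)$. Unwinding the definitions, the nerve $D_0(Q_S)$ has as vertices the $v \in V \setminus S$ with $Q_{S \cup \{v\}} \neq \emptyset$ and as simplices the subsets $T$ with $Q_{S \cup T} \neq \emptyset$; these are exactly the vertices and simplices of $\mathrm{Link}(S; D_0(Q))$. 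Thus $D_0(Q_S) = \mathrm{Link}(S; D_0(Q))$, and the inductive hypothesis applied to $Q_S$ gives that this link has the homology of $\mathbb{S}^{n - \dim(S) - 1}$, as required.

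The main obstacle is the clean application of the nerve lemma in this setting: the panels are \emph{closed} subsets of $\partial Q$, so one must either invoke a version of the nerve lemma valid for good closed covers of sufficiently nice (CW or polyhedral) spaces, or thicken each panel to an equivalent open cover using the manifold-with-corners collar structure. A secondary technical point is that Lefschetz duality as stated requires orientability; if $Q$ is not assumed orientable one works with $\mathbb{Z}_2$-coefficients or with the twisted orientation system, the argument being otherwise unchanged. Once these points are addressed, the double induction on dimension (outer: global homology; inner: links via $Q_S$) closes up and yields the theorem.
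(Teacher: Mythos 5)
Your argument is correct and follows the same route as the proof in Davis's original paper \cite{Dav83}: Lefschetz duality to show $\partial Q$ is a homology $n$-sphere, the acyclic nerve lemma applied to the panel cover (made rigorous by triangulating $Q$ so that faces are subcomplexes), and the identification $\mathrm{Link}(S; D_0(Q)) = D_0(Q_S)$ to close the induction on dimension. One simplification is available for the orientability point you flag at the end: it never arises, because $Q$ acyclic over $\mathbb{Z}$ forces $H^1(Q;\mathbb{Z}_2)=0$ by universal coefficients, hence $w_1(Q)=0$ and $Q$ is automatically orientable, so integral Lefschetz duality applies directly (the $\mathbb{Z}_2$ fallback you mention would in any case only yield a $\mathbb{Z}_2$-homology sphere, which is weaker than the stated conclusion).
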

Conversely, we have the following result:
\begin{theorem}\label{davisdual}
Let $K_o$ be a generalized homology $n$-sphere. Then there is a homotopy $(n + 1)$-cell $Q$ with $D_o(Q)= K_0$.
\end{theorem}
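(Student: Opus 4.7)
The strategy is to build $Q$ by induction on $n$, first assembling its boundary as a closed $n$-manifold from homotopy $n$-cells associated to the vertices of $K_0$ via their links, and then capping off to obtain $Q$.

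For the base case $n=0$, $K_0$ consists of two vertices and $Q=[0,1]$ with its two endpoints as panels suffices. For the inductive step, assume the theorem for generalized homology spheres of all dimensions less than $n$, and let $K_0$ be a generalized homology $n$-sphere. For each vertex $v$ of $K_0$, the link $L_v=Link(v;K_0)$ is a generalized homology $(n-1)$-sphere, so the inductive hypothesis supplies a homotopy $n$-cell $P_v$ with $D_0(P_v)=L_v$. The panel $P_v^w$ of $P_v$ corresponding to a vertex $w$ of $L_v$ (equivalently to an edge $\{v,w\}$ of $K_0$) is itself a homotopy $(n-1)$-cell whose nerve is $Link(\{v,w\};K_0)$, and similarly iterated intersections of panels of $P_v$ realise the links of higher simplices of $K_0$ containing $v$.

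I would next glue the family $\{P_v\}$ along matching panels. For each edge $\{v,w\}$ of $K_0$ the two homotopy cells $P_v^w$ and $P_w^v$ share the nerve $Link(\{v,w\};K_0)$, so with a coherent inductive choice of homotopy cell attached to each generalized homology sphere that appears as an iterated link, they can be identified via a chosen homeomorphism; higher-simplex compatibilities are forced by the nested structure of these links. Denote the resulting quotient space by $M=\bigsqcup_v P_v/\!\sim$. The next step is to verify that $M$ is a closed topological $n$-manifold and a generalized homology $n$-sphere: local Euclidean structure at the stratum indexed by $\sigma$ follows from a neighbourhood model governed by the inductive homotopy cell for $Link(\sigma;K_0)$, while a Mayer--Vietoris or Leray spectral-sequence computation with the contractible cover $\{P_v\}$ yields $H_*(M)\cong H_*(|K_0|)\cong H_*(\mathbb{S}^n)$.

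Finally, because $M$ is a closed topological $n$-manifold that is a homology $n$-sphere, it bounds a compact contractible topological $(n+1)$-manifold $Q$, by classical arguments in low dimensions together with Freedman in dimension $4$ and topological surgery in higher dimensions. The panel structure inherited from $\partial Q=M$ equips $Q$ with a manifold-with-faces structure in which $\bigcap_{v\in\sigma}Q_v$ agrees with a face of some $P_v$ and is therefore contractible, so $Q$ is a homotopy $(n+1)$-cell with $D_0(Q)=K_0$. The main technical difficulty is the gluing step together with the verification that $M$ is a genuine topological manifold: it requires a coherent inductive assignment of homotopy cells to every generalized homology sphere arising as a link, so that all panel identifications are mutually compatible at every depth, and the manifold property (not merely a homology manifold) must propagate through the links of arbitrary simplices of $K_0$; the concluding cap-off step rests on the non-trivial topological fact that homology spheres bound contractible manifolds.
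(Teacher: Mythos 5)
This theorem is cited from Davis's 1983 paper without proof in the text, so I assess the argument on its own terms. Your overall strategy --- glue inductively produced homotopy cells over the vertex links to form $\partial Q$, identify its homotopy type with $|K_0|$ via the nerve lemma, and cap off the resulting homology $n$-sphere with a contractible $(n+1)$-manifold --- is a reasonable route, and the two external inputs you invoke (the nerve lemma, and that homology spheres bound contractible manifolds, using Freedman when $\dim Q=4$) are the right ones.

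The genuine gap is the coherence of the inductive choices, which you flag but then dismiss too quickly. The inductive hypothesis as you state it supplies, for each vertex $v$, only \emph{some} homotopy $n$-cell $P_v$ with the right nerve. The faces $P_v^w$ and $P_w^v$ both have nerve $Link(\{v,w\};K_0)$, but two homotopy cells with the same nerve need not be isomorphic as manifolds with faces (the cap-off at each inductive stage is highly non-unique), and even where an isomorphism exists the chosen identifications must satisfy compatibility conditions over every simplex of $K_0$ simultaneously. These compatibilities are not ``forced by the nested structure''; with only an existence statement in hand they simply need not hold. The fix is to strengthen the induction: construct, by induction on dimension, a single assignment $L\mapsto P(L)$ from generalized homology spheres to homotopy cells with the extra property that $P(L)_{\sigma}$ \emph{equals} $P(Link(\sigma;L))$ for every simplex $\sigma$ of $L$. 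Then each gluing $P_v^w\to P_w^v$ is the identity of $P(Link(\{v,w\};K_0))$ and all higher compatibilities are automatic; your glue-and-cap step does furnish such an assignment by setting $P(K_0):=Q$, but the coherence must be carried through the induction as part of its statement, not extracted afterwards. Two minor points: local Euclideanness of $M$ near a codimension-$(c-1)$ stratum $P_\sigma$ is the purely combinatorial fact that $c$ copies of the simplicial cone $C^{c-1}$, glued along codimension-one faces in the pattern of the boundary of a $(c-1)$-simplex, assemble to $\mathbb{R}^{c-1}$ --- this uses only that $K_0$ is a simplicial complex and is simpler than your appeal to a link-governed neighbourhood model; and after capping off one must still straighten corners so that $Q$ actually carries the prescribed manifold-with-faces structure rather than just a manifold-with-boundary structure.
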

\begin{theorem}\label{davisman}
Let $(\Gamma,V)$ be a Coxeter system and let $Q$ be a connected manifold with faces with $\Gamma$-finite panel structure indexed by $V$. Put $M=\mathcal{U}(\Gamma, Q)$. Then $M$ is a manifold and $(\Gamma,V)$ is a reflection system on $M$.
\end{theorem}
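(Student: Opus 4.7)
The plan is to exhibit a Euclidean chart around every point of $M=\mathcal{U}(\Gamma,Q)$, using the $\Gamma$-action to reduce to points of the form $[1,x]$ with $x\in Q$, and then using the local manifold-with-faces structure together with the classical linear representation theory of finite Coxeter groups to model a neighborhood of $[1,x]$. The reflection system structure then falls out of the local picture.

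First I would set up the basic bookkeeping. The group $\Gamma$ acts on $M$ by $g\cdot[h,x]=[gh,x]$, and the map $Q\to M$, $x\mapsto[1,x]$, is a closed embedding whose image is a strict fundamental domain. The equivalence relation is engineered so that the stabilizer of $[1,x]$ is exactly $\Gamma_{V(x)}$, which is finite by the $\Gamma$-finiteness hypothesis. Since every point of $M$ is $\Gamma$-equivalent to some $[1,x]$, and $\Gamma$ acts by homeomorphisms, it suffices to produce a Euclidean neighborhood of each $[1,x]$. Moreover, because only elements of $\Gamma_{V(x)}$ fix $[1,x]$, a sufficiently small $\Gamma_{V(x)}$-invariant neighborhood $U$ of $x$ in $Q$ satisfies $g\cdot\overline{U}\cap\overline{U}=\emptyset$ for $g\notin\Gamma_{V(x)}$ (where $\overline{U}$ denotes the image in $M$). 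Hence a neighborhood of $[1,x]$ in $M$ is canonically identified with $\mathcal{U}(\Gamma_{V(x)},U)$, reducing everything to the finite subsystem.

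The local analysis is the heart of the argument. Since $Q$ is a nice $n$-manifold with faces, the point $x$ of codimension $k=c(x)$ has a chart $\phi\colon U\to C^k\times\mathbb{R}^{n-k}$ sending $x$ to the origin, with the $k$ closed pre-faces through $x$ corresponding to the $k$ coordinate walls of $C^k$. By the definition of a manifold with faces, each pre-face lies in exactly one panel, so the $k$ panels through $x$ are distinct and $V(x)=\{v_1,\ldots,v_k\}$. Now $W:=\Gamma_{V(x)}$ is a finite Coxeter group on $k$ generators; by the classical theorem on finite reflection groups, $W$ admits a faithful linear representation on $\mathbb{R}^k$ in which the $v_i$ are reflections through $k$ hyperplanes bounding a simplicial cone $C_W$, and the union of $W$-translates of $C_W$ tiles $\mathbb{R}^k$. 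Choosing $\phi$ so that the labels match up identifies $C^k$ with $C_W$ (as cones with labeled facets), and hence yields a $W$-equivariant homeomorphism
\[
\mathcal{U}(W,\,C^k\times\mathbb{R}^{n-k})\;\cong\;\mathbb{R}^k\times\mathbb{R}^{n-k}\;=\;\mathbb{R}^n.
\]
This supplies the desired Euclidean chart around $[1,x]$, so $M$ is an $n$-manifold.

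Finally, for the reflection system structure: each $v\in V$ acts on $M$ as a homeomorphism of order two, and in the local chart above every $v_i\in V(x)$ acts on $\mathbb{R}^n$ as an honest linear reflection, so the action is locally smooth and the fixed set $M_v$ is a bicollared codimension-one submanifold that separates $M$ locally, hence globally (one can check that removing $M_v$ from $M$ disconnects it into the components swapped by $v$). The subgroup generated by $V$ is all of $\Gamma$ by the Coxeter presentation, the action is proper because the isotropy at $[g,x]$ is finite and $Q$ supplies local slices, and the chamber containing $[1,\mathrm{int}(Q)]$ is precisely the image of $Q$. Thus $(\Gamma,V)$ is a reflection system on $M$ with fundamental chamber $Q$.

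The main obstacle is the identification $\mathcal{U}(W,C^k)\cong\mathbb{R}^k$ for the finite Coxeter group $W=\Gamma_{V(x)}$: it requires invoking the Tits/Bourbaki theory of finite reflection groups to produce a linear model in which the abstract simplicial cone $C^k$ of the manifold-with-faces chart matches the geometric fundamental chamber. Everything else — the reduction to the local subgroup, the verification that each $v\in V$ is a reflection, and the global description of the chamber — is formal once this local model is in place.
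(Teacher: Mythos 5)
Your sketch is essentially the Davis argument from \cite{Dav83}, which this survey states without reproducing a proof. The reduction to the local model $\mathcal{U}(\Gamma_{V(x)}, U)$ around $[1,x]$, and the identification of $\mathcal{U}(W, C^k)$ with $\mathbb{R}^k$ via the canonical geometric representation of the finite Coxeter group $W=\Gamma_{V(x)}$ (the Tits cone of a finite Coxeter group is all of $\mathbb{R}^k$, tiled by the $W$-translates of a simplicial fundamental cone), are exactly the right ingredients, and you correctly single out the Tits/Bourbaki input as the crux. Two places want more care than the sketch gives them. First, the phrase \emph{separates $M$ locally, hence globally} is not a valid inference: a codimension-one submanifold can separate every small ball it meets without separating the ambient manifold (a nonseparating curve on a torus does). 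The actual argument is combinatorial, not local-to-global: the sign $\epsilon_v(g)=\mathrm{sgn}\bigl(\ell(vg)-\ell(g)\bigr)$ on $\Gamma$ descends to a locally constant function on $M\setminus M_v$ taking both values, which forces disconnectedness. Second, you produce Euclidean charts but never check that $M=\mathcal{U}(\Gamma,Q)$ is Hausdorff; this is not automatic for a quotient and uses that $Q$ is Hausdorff with closed panels and that the $\Gamma_{V(x)}$ are finite. Neither point is fatal, but both need to be supplied to turn the sketch into a proof.
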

\begin{corollary}\label{davis-cocompact}
In every dimension $\geq4$ there exist closed aspherical manifolds whose universal cover is not homeomorphic to Euclidean space.
\end{corollary}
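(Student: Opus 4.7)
The plan is to upgrade the contractible manifolds produced by Corollary \ref{refdavis-cocompact} to closed aspherical manifolds by passing to the quotient of a torsion-free finite-index subgroup of the ambient Coxeter group.

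First I would invoke Corollary \ref{refdavis-cocompact}: for each $n\geq 4$ there exists a cocompact reflection system $(\Gamma,V)$ on a contractible $n$-manifold $M$ such that $M$ is not homeomorphic to $\mathbb{R}^n$. By Theorem \ref{davisman} one may realize $M$ as $\mathcal{U}(\Gamma,Q)$ for a compact fundamental chamber $Q$, and the action of $\Gamma$ on $M$ is proper, locally smooth, and cocompact. (Unwinding the hypothesis via Theorems \ref{davis1}, \ref{davis2}, and \ref{davisdual}, one starts with a non-simply-connected generalized homology $(n-1)$-sphere $L$, such as the Poincar\'e homology $3$-sphere for $n=4$ and suitable suspensions or constructions in higher dimensions.)

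Next I would invoke the classical fact that every finitely generated Coxeter group contains a torsion-free subgroup $\Gamma_0$ of finite index. This follows from Tits's faithful linear representation of $\Gamma$ into $GL_N(\mathbb{R})$ combined with Selberg's lemma. Because $\Gamma_0$ is torsion-free and the $\Gamma$-isotropy of every point of $M$ is finite, every isotropy subgroup of the restricted $\Gamma_0$-action on $M$ is trivial, so $\Gamma_0$ acts freely and properly discontinuously on $M$. The action remains cocompact since $[\Gamma:\Gamma_0]<\infty$.

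Set $N:=\Gamma_0\backslash M$. Then $N$ is a closed topological $n$-manifold, and its universal cover is $M$. Contractibility of $M$ implies $\pi_k(N)=0$ for all $k\geq 2$, so $N$ is a closed aspherical $n$-manifold. Its universal cover is $M$, which by Corollary \ref{refdavis-cocompact} is not homeomorphic to $\mathbb{R}^n$, completing the argument.

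The only non-routine step is the existence of the torsion-free finite-index subgroup $\Gamma_0$, which is where the heavy machinery (Tits representation plus Selberg's lemma) enters; everything else is an essentially formal consequence of the results already established in this section.
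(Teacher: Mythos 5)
Your proof is correct and follows the same approach the paper takes in the remark following the corollary: invoke Corollary \ref{refdavis-cocompact} for the contractible manifold $M$ not homeomorphic to $\mathbb{R}^n$, use faithful linearity of Coxeter groups plus Selberg's Lemma to obtain a torsion-free finite-index subgroup, observe it acts freely (since all isotropy is finite) and cocompactly, and pass to the closed aspherical quotient with universal cover $M$. The only cosmetic difference is that the paper unrolls the construction from a non-simply-connected generalized homology sphere, while you quote Corollary \ref{refdavis-cocompact} as a black box; the substance is identical.
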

\begin{remark}\rm{
The idea in the construction of examples given by Theorem \ref{davis-cocompact} as follows:\\
Start with a simplicial complex $L$ which is a generalized homology sphere. Choose a Coxeter system $(\Gamma,V)$ with $K_0(\Gamma,V)=L$ given by Theorem \ref{davis1}. Finally, by Theorem \ref{davisdual}, there exists  a compact manifold with faces $X$ with $\Gamma$-finite panel structure satisfying the following condition:
\begin{itemize}
\item[($\star$)] $X$ is contractible and for each subset $S$ of $V$ such that $\Gamma_S$ is finite, the face $X_S$ is acyclic.
\end{itemize}
Now consider the $\Gamma$-space $M=\mathcal{U}(\Gamma, X)$. By Theorem \ref{davisman}, $M$ is a manifold and $(\Gamma, V)$ is a cocompact reflection system on $M$. In  \cite[Corollary 10.3]{Dav83}, Davis showed that Condition $(\star)$ is equivalent to the statement that $M$ is contractible. Since finitely generated Coxetergroups have faithful linear representations(\cite{Bou68}), they are virtually torsion-free (by Selberg's Lemma). Hence there is a torsion-free subgroup $\Gamma'$ of finite index in $\Gamma$. Since each $\Gamma$-isotropy group is finite, each $\Gamma'$-isotropy group is trivial. Hence, $\Gamma'$ acts freely on $M$ and consequently, $\Gamma'\setminus M$ is aspherical. It is closed since the index of $\Gamma'$ in $\Gamma$ is finite. The universal cover of $\Gamma'\setminus M$ is $M$. Since in dimensions $\geq 4$ we can choose $M$ to be non simply connected at infinity by Theorem \ref{refdavis-cocompact}, it follows that there exist closed aspherical manifolds which are not covered by a 
Euclidean space. Thus, the above Conjecture \ref{johnson A} is false in every dimension $\geq 4$.}
\end{remark}
\begin{remark}\rm{
\indent
\begin{itemize}
\item[\bf{1}.] Corollary \ref{davis-cocompact}  follows from Theorem \ref{davis1} and Theorem \ref{davis2} and  the fact that there exist non simply connected homology spheres in dimensions $\geq 3$.
\item[\bf{2}.] Using the reflection group trick of \cite{Dav83}, Michael W. Davis and Jean-Claude Hausmann \cite{DH89} constructed an example of a closed aspherical manifold which does not support any differentiable structure. Here are the results:
\end{itemize}}
\end{remark}
\begin{theorem}\label{davis-haus1}
For each $n\geq 13$, there exists an aspherical closed PL-manifold $M$ of dimension $n$ which does not have the homotopy type of a closed smooth manifold.  
\end{theorem}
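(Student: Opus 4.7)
My plan is to combine two ingredients: a closed simply connected PL manifold $N^d$ whose homotopy type is not realized by any closed smooth manifold, and Davis's reflection group trick (Theorem~\ref{davisman}) applied to a thickening of $N$. The existence of such a seed $N$ follows from surgery theory together with Novikov's topological invariance of rational Pontryagin classes: one constructs $N$ so that some integrality constraint satisfied by every closed smooth manifold (for instance the integrality of the $\hat{A}$-genus in the spin setting, or a constraint coming from the structure of $\pi_{*}(G/O)$ and the Hirzebruch $L$-polynomial) fails on $N$. Such $N$ can be exhibited once $d$ is large enough; call the smallest usable dimension $d_{0}$.

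Next I would thicken $N$ to a compact PL manifold with faces $K = N \times D^{\ell}$, with $\ell = n - d$ chosen so that $\dim K = n \geq 13$. Give $\partial K$ a flag triangulation $L$ and associate to $L$ the right-angled Coxeter system $(\Gamma, V)$ in the sense of Definition~\ref{coxeter}. The panel structure on $K$ induced by $L$ is $\Gamma$-finite because every spherical subsystem of a right-angled Coxeter group is itself finite, and one checks that each face $K_{S}$ is contractible so that Davis's condition $(\star)$ holds. By Theorem~\ref{davisman}, $\mathcal{U}(\Gamma, K)$ is a PL manifold on which $(\Gamma, V)$ acts as a cocompact reflection system; condition $(\star)$ implies it is contractible, exactly as in the proof of Corollary~\ref{davis-cocompact}. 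Selberg's lemma supplies a torsion-free subgroup $\Gamma' \subset \Gamma$ of finite index, and $M := \Gamma' \backslash \mathcal{U}(\Gamma, K)$ is then a closed aspherical PL manifold of dimension $n$.

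The decisive feature is that the natural ``fold'' map $\mathcal{U}(\Gamma, K) \to K$ collapsing each translate $w \cdot K$ onto the fundamental chamber descends to a retraction $r \colon M \to K \to N$ (composing with the projection $N \times D^{\ell} \to N$), so that $r \circ i \simeq \mathrm{id}_{N}$ for the inclusion $i \colon N \hookrightarrow M$. Suppose now for contradiction that $M$ were homotopy equivalent to a closed smooth manifold $M'$. The rational Pontryagin classes of $M'$ pull back along the homotopy equivalence to classes on $M$, and by Novikov's theorem they agree with the intrinsic rational Pontryagin classes of the PL manifold $M$. Pulling back further via $r$ and using $i^{*} r^{*} = \mathrm{id}$ on $H^{*}(N;\mathbb{Q})$, the integrality constraint that $M'$ automatically satisfies would propagate to $N$, contradicting our choice of $N$.

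The main obstacle is making this transfer rigorous: homotopy retraction is strictly weaker than retraction in any manifold category, so the obstruction to the smooth-homotopy-type must be phrased as a genuine homotopy invariant. The cleanest route is to identify a cohomological obstruction $\omega(N) \in H^{*}(N;\mathbb{Q})$ (for example a rational Pontryagin number, or a class detected by $[N, G/O] \otimes \mathbb{Q}$) which necessarily vanishes for the homotopy type of any closed smooth manifold and which is nonzero on $N$ by construction. Then $r^{*}\omega(N) \in H^{*}(M;\mathbb{Q})$ would have to vanish on any smooth representative $M'$ of the homotopy type of $M$, yet $i^{*} r^{*} \omega(N) = \omega(N) \neq 0$, a contradiction. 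The bound $n \geq 13$ then reflects the combination $d \geq d_{0}$ together with the extra dimensions $\ell$ needed for $K$ to be a manifold with faces of the type required by Theorem~\ref{davisman}.
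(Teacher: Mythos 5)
Your overall strategy — choose a simply connected PL seed manifold $N$ that is not of the homotopy type of a closed smooth manifold, thicken to $Q = N \times D^{\ell}$, apply Davis's reflection group trick, and use the retraction $M \to N$ to pull the obstruction back — is exactly the Davis--Hausmann strategy. But the step where you transfer the obstruction has a genuine gap, and the fix you sketch does not work.

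First, you invoke Novikov's theorem to say that the rational Pontryagin classes of the hypothetical smooth manifold $M'$ pull back to those of the PL manifold $M$. Novikov's theorem gives \emph{topological} invariance, i.e.\ invariance under homeomorphism; $M'$ is only assumed homotopy equivalent to $M$. Rational Pontryagin classes are not homotopy invariants (only the higher signatures are, and that is the Novikov Conjecture, not a theorem here), so this identification is unjustified. Second, the proposed repair — find a class $\omega(N) \in H^{*}(N;\mathbb{Q})$ that vanishes on the homotopy type of every closed smooth manifold but is nonzero on $N$ — cannot exist: rationally there is no difference between $BO$, $BPL$ and $BTop$, since $PL/O$ and $Top/PL$ have finite homotopy groups. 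Any obstruction separating ``PL manifold'' from ``homotopy type of a smooth manifold'' is necessarily torsion, not a rational cohomology class.

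The correct homotopy invariant is the Spivak normal fibration $\nu_M \colon M \to BG$. If $M$ is homotopy equivalent to a closed smooth manifold, then $\nu_M$ lifts to $BO$. One chooses the seed $N$ to be a closed simply connected PL manifold whose Spivak fibration $\nu_N$ admits no lift to $BO$ (such manifolds exist once the dimension is large enough, which is where the bound $n \geq 13$ ultimately comes from). The essential point your argument omits is that the inclusion $N \hookrightarrow Q = N \times D^{\ell} \hookrightarrow M$ is the zero section of a trivial disk bundle followed by a codimension-zero embedding of a chamber, so it has stably trivial normal bundle in $M$; consequently $\nu_M|_N \simeq \nu_N$ as stable spherical fibrations. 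Only because of this tangentiality does a $BO$-lift of $\nu_M$ restrict to a $BO$-lift of $\nu_N$ and produce the contradiction. Without establishing this normal/tangential property of the Davis inclusion, the retraction $r \colon M \to N$ gives you no control over how the relevant obstruction behaves, and the argument does not close.
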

\begin{theorem}\label{davis-haus2}
For each $n\geq 8$, there exists an aspherical closed topological manifold $M$ of dimension $n$ such that $M$ is not homeomorphic to a closed PL-manifold.
\end{theorem}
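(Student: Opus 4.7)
The goal is to construct, for each $n\geq 8$, a closed aspherical topological $n$-manifold $M$ whose Kirby--Siebenmann class $\kappa(M)\in H^4(M;\mathbb{Z}/2)$ is non-zero. Since, for $n\geq 5$, the Kirby--Siebenmann class is the sole obstruction to the existence of a PL-structure on a topological manifold, and any homeomorphism from $M$ to a closed PL-manifold would pull back a PL-structure to $M$, this will prove that $M$ is not homeomorphic to any closed PL-manifold.

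The construction proceeds in two stages. First, I would apply Corollary \ref{davis-cocompact} to produce a closed aspherical PL-manifold $P^n$, choosing the input generalized homology $(n-1)$-sphere $L$ of Theorem \ref{davis1} and the Coxeter system $(\Gamma,V)$ so that $H^4(P;\mathbb{Z}/2)\neq 0$; the bound $n\geq 8$ enters here to guarantee that such a class can be engineered inside the Davis complex and detected via Poincar\'e duality. Second, I would exploit the topological surgery exact sequence of Definition \ref{surexa}:
\begin{equation*}
\cdots\to L_{n+1}(\pi_1 P)\to \bar{\mathcal{S}}(P)\xrightarrow{\omega}[P,G/\mathrm{Top}]\xrightarrow{\sigma}L_n(\pi_1 P).
\end{equation*}
The space $G/\mathrm{Top}$ carries a natural ``Kirby--Siebenmann splitting'' $\delta\colon G/\mathrm{Top}\to K(\mathbb{Z}/2,4)$, coming from the fibration $\mathrm{Top}/\mathrm{PL}\simeq K(\mathbb{Z}/2,3)\to G/\mathrm{PL}\to G/\mathrm{Top}$, with the property that for any $(N,f)\in\bar{\mathcal{S}}(P)$ one has $\delta_*(\omega(N,f))=(f^{-1})^*\kappa(N)\in H^4(P;\mathbb{Z}/2)$. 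It therefore suffices to find an element in $[P,G/\mathrm{Top}]$ whose $\delta$-image is non-zero and whose $\sigma$-image vanishes; such an element lifts via $\omega$ to a structure $(N,f)\in\bar{\mathcal{S}}(P)$ with $\kappa(N)\neq 0$. Since $N$ is homotopy equivalent to the aspherical $P$, it is itself aspherical (by Theorem \ref{hurewicz}), and so $N$ is the desired $M$.

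The main obstacle is verifying that the Kirby--Siebenmann ``direction'' $H^4(P;\mathbb{Z}/2)$ in the normal invariants is not completely absorbed by the surgery obstruction map $\sigma$. This can be handled by choosing $P$ so that $\pi_1(P)$ satisfies the $L$-theoretic Farrell--Jones Conjecture (discussed in Section 5): for such groups the assembly map controls $\sigma$ and one can verify by direct computation that the $H^4$-summand of $[P,G/\mathrm{Top}]$ descends non-trivially into $\bar{\mathcal{S}}(P)$. Davis's construction offers enough freedom in the choice of $(\Gamma,V)$ to ensure $\pi_1(P)$ is a virtually torsion-free Coxeter group, a class for which the Farrell--Jones Conjecture is known; the remaining delicate point is the bookkeeping needed to track the $H^4$-class through the reflection-group construction. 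An alternative, more hands-on route is to apply Davis's reflection group trick (Theorem \ref{davisman}) directly to a compact $n$-manifold with boundary $W=C\times A$, where $C$ is a compact Freedman $4$-manifold carrying a non-vanishing relative KS class and $A$ is a closed aspherical PL-manifold of dimension $n-4\geq 4$, and to check that $\kappa(\mathcal{U}(\Gamma,W))$ inherits the non-vanishing KS class of $C$ through the chamber embedding; this also accounts for the dimension bound $n=4+(n-4)\geq 8$.
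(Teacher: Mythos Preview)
The paper does not prove this theorem; it is quoted from Davis--Hausmann \cite{DH89} without argument, so your proposal must be measured against that source.

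Your primary (surgery-theoretic) approach contains a fatal internal contradiction. You seek $x\in[P,G/\mathrm{Top}]$ with $\sigma(x)=0$ and $\delta_*(x)\neq0$, and you propose to use the $L$-theoretic Farrell--Jones Conjecture for $\pi_1(P)$ to arrange this. But the Farrell--Jones Conjecture for a torsion-free group (known for the Coxeter-type groups you invoke, in both $K$- and $L$-theory) is precisely what makes the assembly map $A_n$ injective in Ranicki's sequence of Theorem~\ref{ranicki}; this is the content of the proof of Theorem~\ref{fjimpbor}. Since $\sigma$ identifies with $A_n$, one gets $\ker\sigma=0$, so the only class that lifts along $\omega$ to $\bar{\mathcal{S}}(P)$ is zero, forcing $\kappa(N)=0$. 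In other words, the hypothesis you invoke to ``control $\sigma$'' forces $\bar{\mathcal{S}}(P)=\{*\}$ --- the Borel Conjecture for $P$. Producing a closed aspherical $P$ with nontrivial topological structure set would be a counterexample to Borel; none is known. Your explanation of the bound $n\geq 8$ in this approach is also unconvincing: closed aspherical PL manifolds with $H^4(\,\cdot\,;\mathbb{Z}/2)\neq0$ exist already from $n=4$ onward (e.g.\ $T^n$).

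Your alternative route is closer in spirit to \cite{DH89}, but the step you flag as ``delicate bookkeeping'' is the entire content and fails as written. With $C$ the contractible Freedman $4$-manifold bounding the Poincar\'e sphere $\Sigma$ and $A$ a closed aspherical PL manifold, the \emph{absolute} class $\kappa(C\times A)\in H^4(C\times A;\mathbb{Z}/2)\cong H^4(A;\mathbb{Z}/2)$ vanishes by the product formula (only the \emph{relative} class $\kappa(C,\Sigma)\times 1$ is nonzero), so the chamber retraction $M\to C\times A$ gives no information about $\kappa(M)$. Worse, with the natural panel structure pulled back from a flag triangulation of $\partial C=\Sigma$ one has $\mathcal{U}(\Gamma,C\times A)\cong\mathcal{U}(\Gamma,C)\times A$, and additivity of the relative Kirby--Siebenmann class over the chamber decomposition of the closed $4$-manifold $M_0=\mathcal{U}(\Gamma,C)/\Gamma'$ gives $\kappa(M_0)\equiv[\Gamma:\Gamma']\cdot\mu(\Sigma)\equiv0\pmod 2$, since every torsion-free finite-index subgroup of a right-angled Coxeter group has even index. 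Thus $M=M_0\times A$ has $\kappa(M)=0$. The genuine Davis--Hausmann construction avoids this parity trap by a different choice of chamber; you have not supplied one.
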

Now we will discuss the second construction of aspherical closed manifolds using Gromov's idea of hyperbolization  \cite{Gro87}:
\begin{remark}\rm{
A very important construction of aspherical manifolds comes from the hyperbolization technique due to Gromov  \cite{Gro87}. A hyperbolization technique of Gromov  \cite{Gro87} is explained in  \cite{DJ91, DFL13}: given a simplicial complex $K$, one can construct a new space $h(K)$ and a map $f: h(K)\to K$ with the following properties.
\begin{itemize}
\item[(a)] $h(K)$ is a locally CAT(0) cubical complex; in particular, it is aspherical.
\item[(b)] The inverse image in $h(K)$ of any simplex of $K$ is a hyperbolized simplex. So, the inverse image of each vertex in $K$ is a point in $h(K)$.
\item[(c)] $f : h(K)\to K$ induces a split injection on cohomology ( \cite[p. 355]{DJ91}).
\item[(d)] Hyperbolization preserves local structure: for any simplex $\sigma$ in $K$ the link of $f^{-1}(\sigma)$ is isomorphic to a subdivision of the link of $\sigma$ in $K$ (\cite[p. 356]{DJ91}). So, if $K$ is a polyhedral homology manifold, then so is $h(K)$.
\item[(e)] If $K$ is a polyhedral homology manifold, then $f : h(K)\to K$ pulls back the Stiefel-Whitney classes of $K$ to those of $h(K)$.
\end{itemize}
In  \cite{DJW01} the above version of hyperbolization is used to define a relative hyperbolization procedure (an idea also due to Gromov  \cite{Gro87}). Given $(K, \partial K)$, a triangulated manifold with boundary, form $K\cup c(\partial K)$ and then define $H(K, \partial K)$ to be the complement of an open neighborhood of the cone point in $h(K\cup c(\partial K))$. Then $H(K, \partial K)$ is a manifold with boundary; its boundary is homeomorphic to $\partial K$. The main results of  \cite{DJW01} are as follows:}
\end{remark}
\begin{theorem}
$H(K,\partial K)$ is aspherical if and only if each component of $\partial K$ is aspherical.
\end{theorem}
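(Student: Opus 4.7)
The plan is to use that $X := h(K \cup c(\partial K))$ is aspherical by property (a), so its universal cover $\widetilde{X}$ is contractible. I would first observe that the cone point $v \in X$ has link a subdivision of $\partial K$ (by property (d)), and since $c(\partial K)$ is contractible, the composition $\pi_1(\partial K) \to \pi_1(X)$ is trivial. Consequently each lift $\tilde{v}_\alpha$ of $v$ in $\widetilde{X}$ admits a closed cone neighborhood $\widetilde{N}_\alpha$ whose boundary is an unbranched copy of $\partial K$. I would then set $\widetilde{X}_0 := \widetilde{X} \setminus \bigsqcup_\alpha \mathrm{int}(\widetilde{N}_\alpha)$, which is a regular cover of $H := H(K, \partial K)$ with deck group $\pi_1(X)$, and apply Mayer--Vietoris to $\widetilde{X} = \widetilde{X}_0 \cup \bigsqcup_\alpha \widetilde{N}_\alpha$ (using the contractibility of $\widetilde{X}$ and of each $\widetilde{N}_\alpha$) to obtain $H_n(\widetilde{X}_0) \cong \bigoplus_\alpha H_n(\partial K)$ for every $n \geq 1$.

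For the ``only if'' direction, assume $H$ is aspherical, so $\widetilde{H}$ is contractible. I would identify $H_*(\widetilde{X}_0) \cong H_*(K; \mathbb{Z})$ via a local-coefficient computation, where $K := \ker(\pi_1(H) \to \pi_1(X))$ is the normal closure of $\pi_1(\partial K)$ in $\pi_1(H)$. Next, I would apply Bass--Serre theory to the pushout $X = H \cup_{\partial K} c(\partial K)$, together with the $\pi_1$-injectivity of each boundary component of $H$ in $\pi_1(H)$, to realize $K$ as a free product of conjugates of $\pi_1(\partial K)$; hence $H_n(K) \cong \bigoplus_\alpha H_n(B\pi_1(\partial K))$ for $n \geq 1$. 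Combining with the Mayer--Vietoris identity would give $H_n(\partial K) \cong H_n(B\pi_1(\partial K))$ for all $n \geq 1$, and then the Serre spectral sequence of the classifying fibration $\widetilde{\partial K} \to \partial K \to B\pi_1(\partial K)$ would force $\widetilde{\partial K}$ to be acyclic. Since $\widetilde{\partial K}$ is simply connected, Whitehead's theorem would make it contractible, so each component of $\partial K$ is aspherical.

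For the ``if'' direction, assume each component of $\partial K$ is aspherical, so each $\widetilde{\partial_i K}$ is contractible. I would pass to the universal cover $\widetilde{H}$ of $H$ (equivalently, of $\widetilde{X}_0$); each boundary component of $\widetilde{H}$ is a universal cover $\widetilde{\partial_i K}$, hence contractible. To show $\widetilde{H}$ is contractible, I would realize it as an iterated pushout obtained from $\widetilde{X}_0$ by unwrapping each boundary copy of $\partial K$ into its universal cover $\widetilde{\partial_i K}$. Each unwrapping step preserves homotopy type (the attaching piece is contractible, so the inclusion is a homotopy equivalence), and a careful comparison with the contractible $\widetilde{X}$ would yield that $\widetilde{H}$ is acyclic; combined with simple connectivity, $\widetilde{H}$ is contractible and $H$ is aspherical.

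The main obstacle in both directions is the identification of $K$ as a free product via Bass--Serre theory and the verification of $\pi_1$-injectivity of the boundary components. Neither follows from the axiomatic properties (a)--(e) of hyperbolization; they require the specific CAT(0) cubical structure of Gromov's construction, under which each lift of $\partial K$ in $\widetilde{X}$ is a convex totally-geodesic wall. The geometric heart of the ``only if'' direction --- showing that a non-aspherical boundary forces non-trivial higher homotopy in $H$ --- will in particular lean on this convexity to ensure that null-homotopies of spheres on the boundary can be pushed into the boundary.
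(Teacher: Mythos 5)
The paper cites this theorem from Davis--Januszkiewicz--Weinberger [DJW01] without giving a proof, so there is no in-paper argument to compare against; I assess your proposal on its own. Your Mayer--Vietoris computation of $H_*(\widetilde{X}_0)$ is correct, and you are right to flag $\pi_1$-injectivity of $\partial K \hookrightarrow H$ and the structure of $\Gamma = \ker(\pi_1 H \to \pi_1 X)$ as the crux of the ``only if'' direction. But the mechanism you propose for the free-product decomposition of $\Gamma$ does not work. Bass--Serre theory applies to graph-of-groups decompositions, in which each edge group injects into both adjacent vertex groups; in the pushout $X = H \cup_{\partial K} c(\partial K)$ the edge group $\pi_1(\partial K)$ maps to the trivial group $\pi_1(c(\partial K))$, so van Kampen only gives $\pi_1(X) = \pi_1(H)/\langle\langle \pi_1(\partial K) \rangle\rangle$ and there is no Bass--Serre tree on which to run a Kurosh-type argument for the normal closure $\Gamma$. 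The claim that $\Gamma$ is a free product of conjugates of $\pi_1(\partial K)$ is a strong structural statement that does not follow from $\pi_1$-injectivity together with the tools you invoke, and your own Mayer--Vietoris identity $H_*(\Gamma) \cong \bigoplus_\alpha H_*(\partial K)$ is a constraint on, not a proof of, that structure.

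The geometric input you say is missing is also misidentified. In the CAT(0) cube complex $\widetilde X$, the convex subcomplexes supplied by Gromov's construction are the lifted cone neighborhoods $\widetilde N_\alpha$, not their boundary spheres $\partial K_\alpha$: a convex subset of a CAT(0) space is itself CAT(0) and hence contractible, whereas each $\partial K_\alpha$ is a copy of the closed manifold $\partial K$, which has nontrivial top homology. So ``each lift of $\partial K$ in $\widetilde X$ is a convex totally-geodesic wall'' is false, and the envisaged mechanism of pushing null-homotopies of spheres into the boundary does not have the foundation you assign to it. Finally, in the ``if'' direction the ``iterated pushout'' is not a coherent construction: coning off the boundary components $\widetilde{\partial_i K}$ of $\widetilde H$ produces a simply connected space $\widetilde H^+$ with $H_*(\widetilde H^+) \cong H_*(\widetilde H)$, but the induced map $\widetilde H^+ \to \widetilde X$ is a covering only over $\widetilde X_0$ and a cone-on-covering over the pieces $\widetilde N_\alpha$; it is not a covering map, and you give no argument that it is a homology isomorphism, so the acyclicity of $\widetilde H^+$ (hence of $\widetilde H$) remains unjustified. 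Both directions, as written, have genuine gaps, and the caveat you offer at the end does not correctly locate them.
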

\begin{theorem}
The inclusion $\pi_1(\partial K)\to \pi_1(H(K,\partial K))$ is injective.
\end{theorem}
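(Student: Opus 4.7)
The plan is to lift the problem to the universal cover of $X := h(K\cup c(\partial K))$ and apply a CAT(0) radial projection argument. By property (a), $X$ is a locally CAT(0) cube complex, hence aspherical, and the Cartan–Hadamard theorem equips its universal cover $p:\widetilde{X}\to X$ with a complete CAT(0) metric admitting unique geodesics between any two points. By property (b), the cone vertex of $K\cup c(\partial K)$ has a single preimage $v_0\in X$, and by property (d) the link $\mathrm{Lk}(v_0)$ is a subdivision of $\partial K$. For $\epsilon>0$ small, the closed metric ball $N:=\overline{B}(v_0,\epsilon)\subset X$ is isometric to the Euclidean cone on $\mathrm{Lk}(v_0)$; in particular $N$ is contractible and $\partial N\cong \partial K$. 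We may then arrange so that $Y := H(K,\partial K) = X\setminus \mathrm{int}(N)$ with $\partial Y$ identified with $\partial K$.

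Next I pass to covers. Let $\{\widetilde{v}_i\}_{i\in I} = p^{-1}(v_0)$. Since $N$ is simply connected, each component $\widetilde{N}_i$ of $p^{-1}(N)$ is mapped homeomorphically onto $N$ and coincides with $\overline{B}(\widetilde{v}_i,\epsilon)\subset \widetilde{X}$. Then $\widetilde{Y}:=p^{-1}(Y) = \widetilde{X}\setminus \bigsqcup_i \mathrm{int}(\widetilde{N}_i)$ is the cover of $Y$ corresponding to $\ker(\pi_1(Y)\to \pi_1(X))$. Because $\partial K = \partial N$ bounds the contractible cone $N$ inside $X$, the image of $\pi_1(\partial K)$ in $\pi_1(X)$ is trivial, so any loop $\gamma$ in $\partial K$ lifts to a loop $\widetilde{\gamma} \subset \partial \widetilde{N}_{i_0}$ in $\widetilde{Y}$ for some $i_0 \in I$.

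Now suppose $\gamma$ is null-homotopic in $Y$. The null-homotopy lifts to a map $\widetilde{D}\colon D^2 \to \widetilde{Y}$ with $\widetilde{D}|_{\partial D^2} = \widetilde{\gamma}$. Define the CAT(0) radial projection $\rho \colon \widetilde{X}\setminus\{\widetilde{v}_{i_0}\} \to \partial\widetilde{N}_{i_0}$ by sending $x$ to the unique point on the geodesic $[\widetilde{v}_{i_0},x]$ at distance $\epsilon$ from $\widetilde{v}_{i_0}$. By uniqueness and continuous dependence of CAT(0) geodesics on their endpoints, $\rho$ is continuous; it restricts to the identity on $\partial\widetilde{N}_{i_0}$, and, since $d(\widetilde{v}_{i_0},x)\geq \epsilon$ for every $x\in \widetilde{Y}$, it sends $\widetilde{Y}$ into $\partial\widetilde{N}_{i_0}$. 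Composing, $\rho\circ\widetilde{D}$ is a null-homotopy of $\widetilde{\gamma}$ taking values in $\partial\widetilde{N}_{i_0}\cong \partial K$, so $\gamma$ is null-homotopic in $\partial K$, proving the asserted injectivity.

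The main technical obstacle I anticipate is justifying the metric-cone description of $N$ near $v_0$ and the continuity of $\rho$. This depends on standard facts about CAT(0) cube complexes: the hyperbolization of $K\cup c(\partial K)$ is a cube complex whose vertex links are flag (so Gromov's link condition is satisfied, which underlies property (a)); small balls at vertices of such complexes are genuine metric cones over their links; and nearest-point and radial projections in CAT(0) spaces are continuous. Once these ingredients are in place, $\rho$ is a continuous retraction of $\widetilde{Y}$ onto $\partial\widetilde{N}_{i_0}$, and the lifting argument above closes the proof.
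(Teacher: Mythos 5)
The paper itself contains no proof of this statement; it is quoted (together with the other main results of relative hyperbolization) from Davis--Januszkiewicz--Weinberger \cite{DJW01} without argument, so there is no ``paper's proof'' to compare against.

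Your argument is correct and is, as far as I can tell, the natural CAT(0) argument for this kind of boundary injectivity. Writing $X=h(K\cup c(\partial K))$, you correctly use property (a) to get a locally CAT(0) cube complex, property (b) to locate the single vertex $v_0$ over the cone point, and property (d) to identify $\mathrm{Lk}(v_0)$ with a subdivision of $\partial K$; taking $N=\overline{B}(v_0,\epsilon)$ to be the deleted neighborhood is consistent with the definition of $H(K,\partial K)$, since any two such neighborhoods give the same complement up to homeomorphism. The covering-space step is sound: since $\partial K$ bounds the contractible cone $N$ in $X$, its fundamental group dies in $\pi_1(X)$, so a loop $\gamma\subset\partial K$ lifts to a genuine loop $\widetilde\gamma$ on some sphere $\partial\widetilde N_{i_0}\subset\widetilde Y$; a null-homotopy of $\gamma$ in $Y$ likewise lifts (the disk is simply connected and, by uniqueness of lifts, agrees with $\widetilde\gamma$ on the boundary). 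The radial projection $\rho$ along CAT(0) geodesics from $\widetilde v_{i_0}$ is the right retraction: it is continuous because CAT(0) geodesics depend continuously on their endpoints, it fixes $\partial\widetilde N_{i_0}$ pointwise, and it carries $\widetilde Y$ into $\partial\widetilde N_{i_0}$ since every point of $\widetilde Y$ is at distance $\geq\epsilon$ from every $\widetilde v_i$ (here one should take $\epsilon$ small enough that the balls $\overline{B}(\widetilde v_i,\epsilon)$ are pairwise disjoint and each maps isometrically onto $N$; this is possible because $X$ is compact). Composing gives a null-homotopy of $\widetilde\gamma$ in $\partial\widetilde N_{i_0}\cong\partial K$, which pushes down to a null-homotopy of $\gamma$ in $\partial K$. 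The technicalities you flag -- the metric-cone description of a small ball at a vertex of a piecewise-Euclidean cube complex, and continuity of the radial projection -- are both standard, so nothing essential is missing. One small remark: if $\partial K$ is disconnected, $\widetilde Y$ need not be connected, but your argument is unaffected since $\widetilde\gamma$ and the lifted disk lie in a single component.
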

\begin{theorem}
Suppose that each component of a closed manifold $M$ is aspherical and that $M$ is the boundary of a (triangulable) manifold. Then $M$ bounds an aspherical manifold.
\end{theorem}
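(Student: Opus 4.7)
The plan is to construct the desired aspherical bounding manifold directly using the relative hyperbolization functor $H(-,-)$ recalled just before the statement. Let $W$ be a triangulable manifold with $\partial W = M$, fix a triangulation of the pair $(W,M)$, and set $N := H(W,M)$. By the construction (form the closed polyhedron $W \cup c(M)$, apply Gromov's hyperbolization $h$ to obtain $h(W \cup c(M))$, and delete an open regular neighborhood of the preimage of the cone point), $N$ is a manifold with boundary and that boundary is homeomorphic to $M$.

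The next and central step is to verify that $N$ is aspherical, and this is precisely the content of the first of the three stated theorems applied to $K = W$: the relative hyperbolization $H(W,M)$ is aspherical if and only if each component of $\partial W = M$ is aspherical. The asphericity hypothesis on $M$ is exactly what is needed, and feeding it in gives that $N$ is aspherical. Combining this with $\partial N \cong M$ from the previous paragraph yields the conclusion: $M$ bounds the aspherical manifold $N$.

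The part requiring care, rather than a deep obstacle, is confirming that the output of the relative hyperbolization really is a manifold whose boundary is $M$. This is built into the construction via property (d) of the hyperbolization functor (preservation of links): $W \cup c(M)$ is a manifold away from the cone vertex, whose link there is $M$ itself; since $h$ preserves local structure, $h(W \cup c(M))$ is a manifold away from the preimage of the cone vertex, and the boundary of a small regular neighborhood of that preimage is identified with (a subdivision of) $M$. Once this manifold-with-boundary structure is in place, invoking the asphericity theorem finishes the argument with essentially no further work.
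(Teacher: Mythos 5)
Your argument is correct and is exactly the derivation the paper has in mind: one applies the relative hyperbolization $H(W,M)$ to a triangulated bounding manifold $W$, uses the stated fact that $H(W,M)$ is a manifold with boundary homeomorphic to $M$, and invokes the preceding theorem (asphericity of $H(K,\partial K)$ iff each component of $\partial K$ is aspherical) to conclude. The paper simply lists this as one of the main results of \cite{DJW01} without writing out the deduction, but your reconstruction matches the intended route.
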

\begin{remark}\rm{
In  \cite{KS77} Kirby and Siebenmann showed that there are manifolds which do not admit PL structures, the possibility remained that all
manifolds could be triangulated. In  \cite{GS77} Galewski and Stern constructed a closed 5-manifold $M^5$ so that every $n$-manifold,
with $n\geq 5$, can be triangulated if and only if $M^5$ can be triangulated. Moreover, $M^5$ admits a triangulation if and only if the Rokhlin $\mu$-
invariant homomorphism, $\mu: \theta^{H}_3\to \mathbb{Z}_2$, is split. In  \cite{Man13} Manolescu showed that the $\mu$-homomorphism does not split. Consequently, there exist Galewski-Stern manifolds, $M^n$, that are not triangulable for each $n\geq 5$. In  \cite{Fre82} Freedman proved that there exists a topological 4-
manifold with even intersection form of signature 8. It followed from later work of Casson that such 4-manifolds cannot be triangulated  \cite{AM90}.
In  \cite{DJ91} Davis and Januszkiewicz applied Gromov's hyperbolization procedure to Freedman's $E_8$-manifold to show that there exist closed
aspherical 4-manifolds that cannot be triangulated. In  \cite{DFL13} Michael W. Davis, Jim Fowler and Jean-François Lafont applied hyperbolization techniques to the Galewski-Stern manifolds to show that there exist closed aspherical $n$-manifolds that cannot be triangulated for each $n\geq 6$. The question remains open in dimension 5. Here is the result:}
\end{remark}
\begin{theorem}\label{davisnottri}
For each $n\geq 6$ there is a closed aspherical manifold $M^n$ that cannot be triangulated.
\end{theorem}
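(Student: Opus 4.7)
The plan is to apply Gromov's hyperbolization procedure to a Galewski-Stern non-triangulable manifold, arranging matters so that the hyperbolized object is a closed aspherical manifold on which the obstruction to triangulability persists.

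First, I would set up the relevant obstruction theory. By Galewski-Stern, for a closed topological $n$-manifold $N$ with $n\geq 5$, triangulability is equivalent to the vanishing of a class $c(N)\in H^5(N;\ker\mu)$, where $\mu\colon\theta^H_3\to\mathbb{Z}_2$ is the Rokhlin homomorphism. Explicitly, $c(N)=\beta(\Delta(N))$, where $\Delta(N)\in H^4(N;\mathbb{Z}_2)$ is the Kirby-Siebenmann class and $\beta$ is the Bockstein coming from the short exact sequence $0\to\ker\mu\to\theta^H_3\to\mathbb{Z}_2\to 0$. Manolescu's theorem that $\mu$ does not split means $\beta$ is nontrivial, and for each $n\geq 5$ there exist closed topological $n$-manifolds $N^n$ (the Galewski-Stern manifolds) with $c(N^n)\neq 0$; these are precisely the non-triangulable manifolds.

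Second, for $n\geq 6$, I would construct the desired aspherical $M^n$ by applying the hyperbolization procedure $h$ to (an appropriate triangulated replacement of) $N^n$, and then analyze what survives. By property (a) of hyperbolization, $M^n:=h(N^n)$ is locally CAT(0) cubical, hence aspherical. By property (e), the natural map $f\colon M^n\to N^n$ pulls back Stiefel-Whitney classes correctly, and the analogous naturality for the stable topological tangent bundle gives $\Delta(M^n)=f^*\Delta(N^n)$. Naturality of the Bockstein then yields $c(M^n)=f^*c(N^n)$. Finally, property (c) states that $f^*$ is a split injection on cohomology, so $c(M^n)\neq 0$, and $M^n$ is not triangulable.

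The main technical obstacle is clear: classical hyperbolization as in \cite{Gro87,DJ91} takes a simplicial complex as input, whereas the Galewski-Stern manifold is by construction not triangulable. Circumventing this requires either extending hyperbolization to topological manifolds or working instead with a relative hyperbolization of a triangulated cobordism realizing the Galewski-Stern obstruction, while carefully verifying that the Kirby-Siebenmann class is transported correctly under the procedure. This extension and the associated bookkeeping are the central contributions of \cite{DFL13}, and the dimension hypothesis $n\geq 6$ (rather than the expected $n\geq 5$) reflects the technical constraints of this extension; the $n=5$ case remains open precisely because the replacement/cobordism construction does not readily apply there.
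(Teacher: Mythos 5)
The overall plan you describe — detect non-triangulability via the Galewski--Stern class $\beta_\mu(\Delta(M))\in H^5(M;\ker\mu)$, invoke Manolescu to make this obstruction realizable, and use hyperbolization to make the manifold aspherical — is indeed the correct framework, and it is the route taken by Davis--Fowler--Lafont. (The paper under review gives no proof of this theorem beyond citing \cite{DFL13}, so your sketch is already more detailed than the text.) You also correctly locate the crux: $N^n$ is not a simplicial complex, so the hyperbolization functor has nothing to act on.

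However, your proposed resolution does not close the gap and in fact contains an internal contradiction. You set $M^n := h(K)$ for ``an appropriate triangulated replacement'' $K$, and then argue $c(M^n)=f^*c(N^n)\neq 0$ so that $M^n$ cannot be triangulated. But by property (a), $h(K)$ is a cubical complex, hence a polyhedron, hence \emph{always} triangulable no matter what $K$ is; so $c(h(K))=0$ automatically, and the split injectivity of $f^*$ on cohomology can never be used to propagate a nonzero Galewski--Stern class into $h(K)$. (There is also no map $f:h(K)\to N^n$ in your setup — the natural hyperbolization map targets $K$, not the topological manifold $N^n$ you are trying to ``replace''.) In other words, hyperbolization can never be the \emph{source} of non-triangulability; it can only supply asphericity. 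The actual \cite{DFL13} construction therefore has to be of a different shape: one applies \emph{relative} hyperbolization to a triangulated manifold-with-boundary carrying a suitable class, obtaining an aspherical polyhedral piece $W$ with prescribed boundary, and then glues to $W$ a separate, intrinsically non-triangulable topological piece (built from Freedman-type topology — roughly, a product of a contractible Freedman 4-manifold bounding the Poincar\'e sphere with a torus) along a $\pi_1$-injective, aspherical boundary. Asphericity is preserved by the DJW-type gluing theorem, while the non-triangulability is created by the capped-off piece and detected by the Galewski--Stern/Manolescu obstruction, using that the Poincar\'e sphere class has nontrivial $\mu$-invariant and that $\mu$ does not split. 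It is this cap-and-glue step — not an ``extension of hyperbolization to topological manifolds'' — that is the essential content missing from your write-up, and it is also where the restriction to $n\geq 6$ (so that the boundary $\Sigma^3\times T^{n-4}$ is at least $5$-dimensional and the gluing/asphericity arguments go through) genuinely enters.
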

\section{\large Topological Rigidity and Borel Conjecture }
Recall that if $M$ is an aspherical manifold, then $M$ is a $K(\pi,1)$-space where $\pi =\pi_1(M)$. Now among spaces having the homotopy type of a CW-complex, the $K(\pi,1)$'s are the spaces whose homotopy type is completely determined by the fundamental group alone. Thus one would suspect that if the $K(\pi,1)$-space is also a manifold, then $\pi$ might come close to determining the topology of the manifold. This leads one to perhaps the most difficult and important problem concerning aspherical manifolds:
\begin{problem}
Let $M$ and $N$ be closed aspherical manifolds with $\pi_1(M)$ isomorphic to $\pi_1(N)$. Are $M$ and $N$ homeomorphic?
\end{problem}
Since any isomorphism of the fundamental groups $\alpha: \pi_1(M)\to \pi_1(N)$ may be geometrically realized as the induced isomorphism on the fundamental
group of a homotopy equivalence $f: M\to N$, the problem may be stated perhaps more interestingly as follows:
\begin{problem}(Borel Conjecture)\label{borel}
Let $f:N\to M$ denote a homotopy equivalence between closed aspherical manifolds. Is $f$ homotopic to a homeomorphism?
\end{problem}
\begin{definition}(Topologically rigid)\rm{
We call a closed manifold $M$ topologically rigid if any homotopy equivalence $N\to M$ with a closed manifold $N$ as source is homotopic to a homeomorphism.}
\end{definition}
The Borel Conjecture is equivalent to the statement that every closed aspherical manifold is topologically rigid. 
\begin{remark}\rm{
\indent 
\begin{itemize}
\item[\bf{1.}]When $M$ is aspherical, $Out(\pi_1M)$ acts on $\mathcal{S}(M)$. The statement that $| \mathcal{S}(M)|=0$ is equivalent to Borel's conjecture \ref{borel}. While the statement that  $Out(\pi_1M)$ acts transitively on $\mathcal{S}(M)$ is equivalent to the weaker statement that any closed aspherical manifold $N$ with  $\pi_1M \simeq \pi_1N$ is homeomorphic to $M$.
\item[\bf{2.}]In particular the Borel Conjecture \ref{borel} implies because of Theorem \ref{hurewicz} that two aspherical closed manifolds are homeomorphic if and only if their fundamental groups are isomorphic.
\item[\bf{3.}] The Borel Conjecture \ref{borel} is true in dimension $\leq 2$ by the classification of closed manifolds of dimension 2. It is true in dimension 3 if Thurston’s Geometrization Conjecture is true. This follows from results of Waldhausen (see Hempel  \cite[Lemma 10.1 and Corollary 13.7]{Hem76}) and Turaev (see \cite{Tur88}) as explained for instance in \cite[Section 5]{KL09}.
\item[\bf{4.}] Borel's Conjecture \ref{borel} implies Poincar$\grave{\rm e}$'s Conjecture which says that any simply connected closed $n$-manifold $(n\geq 3)$ is homeomorphic to the unit sphere $\mathbb{S}^n$ in $\mathbb{R}^{n+1}$. This is seen as follows: Let $\Sigma^n$ be a counterexample to Poincar$\grave{\rm e}$'s Conjecture, and consider the connected sum $M=T^n\#\Sigma^n$. Van Kampen's theorem shows that $T^n$ and $M^n$ have isomorphic fundamental groups. And $M$ is seen to be aspherical by applying the Hurewicz isomorphism theorem to the universal cover of $T^n\#\Sigma^n$. Borel's Conjecture is contradicted by showing that $T^n\#\Sigma^n$ is not homeomorphic to $T^n$. For this we use the following two results:
\end{itemize}}
\end{remark}
\begin{theorem}(M. Brown, Schoenflies Theorem \cite{Bro60})\label{schoen}
 Let $f:\mathbb{S}^{n-1}\to \mathbb{S}^n$ be a bicollared embedding, then $f(\mathbb{S}^{n-1})$ bounds closed (topological) balls on both sides.
\end{theorem}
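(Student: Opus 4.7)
The plan is to follow Morton Brown's 1960 argument, which uses the bicollar to implement a shrinking / near-homeomorphism argument and to recognize each complementary domain as a closed $n$-cell.

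First I fix notation: the bicollar is an embedding $F:\mathbb{S}^{n-1}\times[-1,1]\to\mathbb{S}^n$ with $F(x,0)=f(x)$. By the Jordan-Brouwer separation theorem (a consequence of Alexander duality), $\mathbb{S}^n\setminus f(\mathbb{S}^{n-1})$ has exactly two connected components; call their closures $A$ and $B$, so $A\cup B=\mathbb{S}^n$ and $A\cap B=f(\mathbb{S}^{n-1})$. The restrictions $F|_{\mathbb{S}^{n-1}\times[-1,0]}$ and $F|_{\mathbb{S}^{n-1}\times[0,1]}$ supply collar neighborhoods of the common boundary inside $A$ and inside $B$ respectively. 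The goal is to show $A\cong\mathbb{D}^n$ and $B\cong\mathbb{D}^n$; by symmetry, treating $B$ suffices.

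Next I consider the upper-semicontinuous decomposition of $\mathbb{S}^n$ whose only non-degenerate element is $A$, and let $\pi:\mathbb{S}^n\to\mathbb{S}^n/A$ denote the associated quotient map. The technical core of the proof is to verify Bing's shrinking criterion: for every open neighborhood $U\supset A$ and every $\epsilon>0$ there is a self-homeomorphism $h$ of $\mathbb{S}^n$ which is the identity outside $U$ and satisfies $\operatorname{diam}(h(A))<\epsilon$. The construction of $h$ exploits the bicollar: using the product coordinates along $F(\mathbb{S}^{n-1}\times[-1,1])$ together with a companion push inside $A$ (defined via the $A$-side collar), one can ``pinch'' $A$ together with a small $B$-side shell into an arbitrarily small region near $f(\mathbb{S}^{n-1})$. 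Once the criterion is verified, $\pi$ is a near-homeomorphism, hence $\mathbb{S}^n/A\cong\mathbb{S}^n$.

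With that identification in hand, the restriction $\pi|_B:B\to\mathbb{S}^n/A$ sends $\partial B=f(\mathbb{S}^{n-1})\subset A$ to the single collapsed point $*$ and is injective elsewhere; a compactness argument upgrades this to a homeomorphism $B/\partial B\cong\mathbb{S}^n$. Deleting $*$ then yields $\operatorname{int}(B)\cong\mathbb{R}^n$, and the $B$-side collar of $\partial B$ allows one to reglue the boundary sphere in the standard way to conclude $B\cong\mathbb{D}^n$. The symmetric decomposition argument, collapsing $B$ instead of $A$, then gives $A\cong\mathbb{D}^n$.

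The main obstacle is the shrinking step. Constructing $h$ so that $A$, which may extend far outside the bicollar, gets compressed to small diameter by a homeomorphism supported in a prescribed neighborhood $U$ requires a delicate combination of bicollar coordinates with an ambient push that collapses $A$ onto its collar. The bicollar hypothesis is indispensable: wild embeddings such as Alexander's horned sphere show that without a two-sided collar one complementary domain can fail even to be simply connected at infinity, hence cannot be an $n$-cell. A secondary but related subtlety is the passage $B/\partial B\cong\mathbb{S}^n\Rightarrow B\cong\mathbb{D}^n$, which again invokes the collar on $\partial B$ inside $B$ to match the boundary with the standard collar on $\partial\mathbb{D}^n\subset\mathbb{D}^n$.
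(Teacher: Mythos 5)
The paper does not contain a proof of this theorem; it cites Brown~\cite{Bro60}. So the question is whether your blind argument is correct, and I believe it contains a genuine gap at its central step.

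You claim that the technical core, namely verifying Bing's shrinking criterion for the decomposition of $\mathbb{S}^n$ whose only nondegenerate element is $A$, can be carried out by \emph{``using the product coordinates along $F(\mathbb{S}^{n-1}\times[-1,1])$ together with a companion push inside $A$ \ldots\ to pinch $A$ \ldots\ into an arbitrarily small region near $f(\mathbb{S}^{n-1})$.''} This is where the argument breaks down. The bicollar only gives product coordinates in a neighborhood of $\partial A = f(\mathbb{S}^{n-1})$; it gives no control whatsoever over the part of $A$ away from the collar. Constructing an ambient homeomorphism $h$, supported in an arbitrary neighborhood $U$ of $A$, with $\operatorname{diam}(h(A))<\epsilon$ is precisely the statement that $A$ is \emph{cellular}, i.e.\ that $A$ is an intersection of nested $n$-cells $Q_1\supset Q_2\supset\cdots$ with $Q_{i+1}\subset\operatorname{int}(Q_i)$. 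But the only neighborhoods of $A$ that the bicollar supplies are the sets $A\cup F(\mathbb{S}^{n-1}\times[-1,t])$, and showing that \emph{these} are $n$-cells is, up to a collar push, exactly the Schoenflies conclusion you are trying to prove. Thus the shrinking step, as you have sketched it, is circular: shrinkability of $A$ is essentially equivalent to $A$ already being a ball, and the collar alone does not provide it.

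Brown's actual 1960 argument avoids this by working with the decomposition that collapses \emph{both} complementary domains simultaneously. One defines $\phi:\mathbb{S}^n\to\Sigma\mathbb{S}^{n-1}\cong\mathbb{S}^n$ by sending $A$ to the south pole, $B$ to the north pole, and $F(x,t)\mapsto(x,t)$ in suspension coordinates. This map is continuous and onto, from a compact space to a Hausdorff space, so $\mathbb{S}^n/\{A,B\}\cong\mathbb{S}^n$ is \emph{immediate}, with no shrinking needed at all. The genuine content of Brown's proof lies in a different lemma that upgrades this information about the double quotient into the statement that one closed side of $f(\mathbb{S}^{n-1})$ is an $n$-cell; that lemma uses the collar structure in a global, approximation-theoretic way rather than by directly shrinking one complementary domain. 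A secondary issue: your final passage from $B/\partial B\cong\mathbb{S}^n$ together with the collar on $\partial B$ to the conclusion $B\cong\mathbb{D}^n$ is also not innocent --- as written it asks you to recognize the complement of a standardly embedded open cell in $\mathbb{S}^n$ as a ball, which is itself a Schoenflies-type assertion and needs an explicit justification independent of what you are proving. So while the overall ``collapse one side, identify the quotient, undo on the other side'' outline resembles Brown's, the two load-bearing steps as you have written them are either circular or not yet proved.
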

\begin{theorem} (Alexander's Trick) \label{alex}
Let $h:\mathbb{S}^n\to \mathbb{S}^n$ be any homeomorphism. Then $h$ extends to a homeomorphism $\bar{h} :\mathbb{D}^{n+1}\to \mathbb{D}^{n+1}$.
\end{theorem}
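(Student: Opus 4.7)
The plan is to produce $\bar{h}$ by a radial coning construction. Every nonzero point $y\in\mathbb{D}^{n+1}$ can be written uniquely as $y=r\cdot x$ with $r=|y|\in(0,1]$ and $x=y/|y|\in\mathbb{S}^n$. Using this decomposition, I define
\[
\bar{h}(y):=\begin{cases} |y|\,\cdot\, h\!\left(y/|y|\right) & \text{if } y\neq 0,\\ 0 & \text{if } y=0.\end{cases}
\]
On the boundary $|y|=1$ this manifestly restricts to $h$, and on the interior it extends $h$ radially along each ray from the origin. Since the decomposition $y=rx$ is unique for $y\neq 0$, the formula is well defined.

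Next I would verify that $\bar h$ is continuous. Away from the origin the map $y\mapsto (|y|,y/|y|)$ is continuous into $(0,1]\times \mathbb{S}^n$, so $\bar h$ is a composition of continuous maps there. The only potential problem is continuity at $0$, and this is handled by the observation that $|h(x)|=1$ for all $x\in\mathbb{S}^n$, hence $|\bar h(y)|=|y|$ for all $y\in \mathbb{D}^{n+1}$; therefore $\bar h(y)\to 0$ as $y\to 0$. This norm-preserving property is really the whole content of the argument, and it is what makes the coning trick work.

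To finish, I would exhibit the inverse explicitly. Apply the same coning construction to the homeomorphism $h^{-1}:\mathbb{S}^n\to \mathbb{S}^n$ to obtain $\overline{h^{-1}}:\mathbb{D}^{n+1}\to \mathbb{D}^{n+1}$. A direct computation using $|\bar h(y)|=|y|$ and $|\overline{h^{-1}}(y)|=|y|$ gives, for $y\neq 0$,
\[
\bar h\bigl(\overline{h^{-1}}(y)\bigr)=|y|\cdot h\!\left(\frac{\overline{h^{-1}}(y)}{|\overline{h^{-1}}(y)|}\right)=|y|\cdot h\bigl(h^{-1}(y/|y|)\bigr)=y,
\]
and symmetrically for the other composition, while both send $0$ to $0$. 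Hence $\bar h$ is a homeomorphism of $\mathbb{D}^{n+1}$ extending $h$.

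I do not expect any serious obstacle: the entire argument hinges on the single observation that radial coning preserves norms, which immediately yields continuity at the origin and a two-sided inverse by the same construction. The only small subtlety worth stating carefully is the uniqueness of the polar decomposition $y=rx$ for $y\neq 0$, which guarantees the formula defines a function in the first place.
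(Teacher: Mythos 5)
Your proof is correct and is exactly the classical coning argument that goes by the name ``Alexander's trick.'' The paper states this result without proof, citing it as a standard fact, so there is no alternative argument to compare against; your radial-extension formula, the norm-preservation observation $|\bar h(y)|=|y|$ to get continuity at the origin, and the construction of the inverse by applying the same coning to $h^{-1}$ are precisely the standard content, and all the steps check out.
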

Now if $T^n\#\Sigma^n$ were homeomorphic to $T^n$, then the universal cover of $T^n\#\Sigma^n$ is homeomorphic to $\mathbb{R}^n$. Consequently, the Schoenflies theorem \ref{schoen} shows that $\Sigma^n\setminus Int(\mathbb{D}^n)$ is homeomorphic to $\mathbb{D}^n$. (This $Int(\mathbb{D}^n)$ is the interior of the 3-dimensional ball removed from $\Sigma^n$ in forming the connected sum with $T^n$.) Now applying Alexander's trick, we get $\Sigma^n$ is homeomorphic to $\mathbb{S}^n$. It follows that $T^n\#\Sigma^n$ is not homeomorphic to $T^n$.
\begin{remark}\rm{
The following smooth analogue of Borel's Conjecture \ref{borel} are both false due to Michael M. Davis and Tadeusz Januszkiewicz \cite{DJ91}  and Browder \cite{Bro65} :}
\end{remark}
\begin{question}\label{borel.smooth}
\indent
\begin{itemize}
\item[(i)] All closed aspherical manifolds support a smooth structure.
\item[(ii)] Any two closed smooth aspherical manifolds with isomorphic fundamental groups are diffeomorphic.
\end{itemize}
\end{question}
\begin{remark}\label{rem.borel}\rm{
The smooth analogue of Borel's Conjecture \ref{borel.smooth} is false as the following examples show.\\
Let $T^n$ denote the $n$-dimensional torus; i.e., $T^n=\mathbb{S}^{1}\times\mathbb{S}^{1}\times\mathbb{S}^{1}\times....\times\mathbb{S}^{1}$ ($n$-factors). Browder \cite{Bro65} constructed a smooth manifold which is homeomorphic but not diffeomorphic to $T^7$. This shows that (ii) is false. On the other hand, Michael M. Davis and J.C. Hausmann \cite{DH89} constructed in Theorem \ref{davis-haus1} an example of a closed aspherical manifold which does not support any differentiable structure proving (i) to be false as well. Moreover, Michael  M. Davis and Tadeusz Januszkiewicz \cite{DJ91} gave an example of a closed aspherical manifold which can not be triangulated (see Theorem \ref{davisnottri} and \cite{DJ91}).}
\end{remark}
One may view the Borel Conjecture as the topological version of Mostow rigidity for hyperbolic closed manifolds. The conclusion in the Borel Conjecture is weaker, one gets only homeomorphisms and not isometric diffeomorphisms, but the assumption is also weaker, since there are
many more aspherical closed topological manifolds than hyperbolic closed manifolds. The general rigidity results of Mostow \cite{Mos67, Mos73} and of Bieberbach \cite{Bie12} are as follows :
\begin{theorem}\label{bieberbachrig}[Bieberbach's Rigidity Theorem, 1912]
Let $f:N\to M$ be a homotopy equivalence between closed flat Riemannian manifolds. Then $f$ is homotopic to an affine diffeomorphism.
\end{theorem}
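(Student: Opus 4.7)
The plan is to reduce the statement to Bieberbach's Second Theorem on crystallographic groups and then use the fact that flat closed manifolds are aspherical, so maps between them are classified up to homotopy by their effect on $\pi_1$. Write $M = \mathbb{R}^n/\Gamma_M$ and $N = \mathbb{R}^n/\Gamma_N$, where $\Gamma_M, \Gamma_N$ are discrete, cocompact, torsion-free subgroups of the Euclidean isometry group $\mathrm{Isom}(\mathbb{R}^n) = O(n) \ltimes \mathbb{R}^n$ acting freely on $\mathbb{R}^n$. (This is possible because a complete flat Riemannian manifold has $\mathbb{R}^n$ as its universal cover by the Cartan--Hadamard Theorem applied to the flat metric, and the deck transformations act by isometries.) Thus $\Gamma_M$ and $\Gamma_N$ are crystallographic groups.

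Next, I would extract from $f$ an isomorphism on fundamental groups. Choosing basepoints $\widetilde{x}_0 \in \mathbb{R}^n$ covering chosen basepoints $x_0 \in N$ and $f(x_0) \in M$, the homotopy equivalence $f$ induces an isomorphism $\phi = f_\ast \colon \Gamma_N \xrightarrow{\cong} \Gamma_M$. Now I invoke Bieberbach's Second Theorem: every abstract isomorphism between two crystallographic subgroups of $\mathrm{Isom}(\mathbb{R}^n)$ is the restriction of conjugation by some affine automorphism $A \in \mathrm{Aff}(\mathbb{R}^n) = GL(n,\mathbb{R}) \ltimes \mathbb{R}^n$. Thus there exists $A$ with $A \Gamma_N A^{-1} = \Gamma_M$ and $\phi(\gamma) = A \gamma A^{-1}$ for all $\gamma \in \Gamma_N$. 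Because $A$ is $\Gamma_N$-equivariant with respect to $\phi$, it descends to a well-defined affine diffeomorphism $\bar{A} \colon N \to M$, and by construction $\bar{A}_\ast = \phi = f_\ast$ on fundamental groups.

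Finally, I would promote the equality $\bar{A}_\ast = f_\ast$ to a homotopy $f \simeq \bar{A}$. Since $\mathbb{R}^n$ is contractible, $M$ is a $K(\Gamma_M, 1)$, and so the standard obstruction-theoretic fact for aspherical targets applies: for any CW-complex $Y$ and any $K(\pi,1)$ target $X$, the map $[Y, X] \to \mathrm{Hom}(\pi_1(Y), \pi_1(X))/\mathrm{inn}(\pi_1(X))$ sending $[g]$ to the conjugacy class of $g_\ast$ is a bijection. Since $f_\ast$ and $\bar{A}_\ast$ literally agree (not merely up to conjugation) once basepoints are suitably lifted, we conclude $f \simeq \bar{A}$, completing the proof.

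The main obstacle is of course the invocation of Bieberbach's Second Theorem; all of the topological content above is routine (the Cartan--Hadamard identification of the universal cover, and the $K(\pi,1)$ classification of maps), and the substantive rigidity is entirely packaged in the algebraic statement that isomorphic crystallographic groups are affinely conjugate inside $\mathrm{Aff}(\mathbb{R}^n)$. A secondary technical point to be careful about is basepoint bookkeeping: the isomorphism $f_\ast$ is only defined up to inner automorphism, but any ambiguity can be absorbed by composing $A$ with a translation in $\Gamma_M$, which does not change the induced affine diffeomorphism $\bar{A}$ on $N$.
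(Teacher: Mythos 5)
The paper states this theorem without proof (it is a classical result of Bieberbach), so there is no internal argument to compare against; however, the paper does separately record Bieberbach's Second Theorem as Theorem~\ref{bieberbach}, and your proposal is precisely the standard derivation of the rigidity statement from that algebraic input together with the $K(\pi,1)$ classification of maps. The structure is sound: identify $M=\mathbb{R}^n/\Gamma_M$, $N=\mathbb{R}^n/\Gamma_N$ with $\Gamma_M,\Gamma_N$ Bieberbach groups, realize $f_*$ as conjugation by some $A\in\mathrm{Aff}(\mathbb{R}^n)$ via Bieberbach's Second Theorem, descend $A$ to an affine diffeomorphism $\bar A\colon N\to M$ using the $\phi$-equivariance, and conclude $f\simeq\bar A$ because both induce $\phi$ (up to conjugacy) and the target is aspherical. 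This is correct, and the observation that free (non-based) homotopy classes of maps into a $K(\pi,1)$ are classified by conjugacy classes of homomorphisms already absorbs the basepoint ambiguity, so the final remark is slightly redundant rather than necessary.

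One small terminological slip in the last paragraph: you write that the inner-automorphism ambiguity can be absorbed ``by composing $A$ with a translation in $\Gamma_M$.'' Elements of a Bieberbach group $\Gamma_M\subset E(n)$ are in general rigid motions with nontrivial rotational part, not translations; what you mean is composing $A$ with an arbitrary element of $\Gamma_M$ (equivalently, with a deck transformation of $\mathbb{R}^n\to M$), which indeed changes $\phi$ by an inner automorphism and leaves $\bar A$ unchanged. With that correction the argument is complete.
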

\begin{theorem}\label{mostow}(Mostow Rigidity Theorem)
Let $M$ and $N$ be compact, locally symmetric Riemannian manifolds with everywhere nonpositive curvature having no closed one or two dimensional geodesic subspaces which are locally direct factors. If  $f:M\to N$ is a homotopy equivalence, then $f$ is homotopic to an isometry. 
\end{theorem}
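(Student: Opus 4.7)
The plan is to lift the homotopy equivalence $f$ to a $\pi_1$-equivariant map $\tilde{f}\colon \tilde{M}\to \tilde{N}$ between universal covers, and then promote $\tilde{f}$ to an isometry by studying its behaviour at infinity. Since $M$ and $N$ are compact locally symmetric of nonpositive curvature, their universal covers $X=\tilde{M}$ and $Y=\tilde{N}$ are simply connected symmetric spaces, hence $\mathrm{CAT}(0)$ spaces with well-defined visual boundaries $\partial_\infty X$ and $\partial_\infty Y$. The exclusion of closed one- and two-dimensional geodesic local factors rules out $\mathbb{R}$- and $\mathbb{R}^2$-de Rham factors (and the corresponding low-dimensional hyperbolic factors), so we are squarely in the setting where the structure at infinity is rich enough to rigidify maps.

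First I would upgrade $\tilde{f}$ to a quasi-isometry. Compactness of $M$ together with $\pi_1$-equivariance of $\tilde{f}$ forces the existence of constants $k\geq 1$, $b\geq 0$ with
\[
k^{-1}d_X(x,x')-b\;\leq\; d_Y(\tilde{f}(x),\tilde{f}(x'))\;\leq\; k\, d_X(x,x')+b;
\]
in Mostow's language $\tilde{f}$ is a pseudo-isometry. A $\pi_1$-equivariant lift of a homotopy inverse of $f$ supplies a coarse inverse, so $\tilde{f}$ is a quasi-isometry. Using the convexity of the distance function on $X$ (nonpositive curvature) together with the Morse lemma for symmetric spaces, $\tilde{f}$ extends to a homeomorphism $\partial\tilde{f}\colon \partial_\infty X\to \partial_\infty Y$ which is equivariant under the induced isomorphism $f_*\colon \pi_1(M)\xrightarrow{\cong} \pi_1(N)$.

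The heart of the proof, and the step I expect to be the main obstacle, is to show that $\partial\tilde{f}$ is not merely a topological conjugacy but preserves the natural geometric structure on $\partial_\infty X$. In the rank-one case one argues that $\partial\tilde{f}$ is quasiconformal with respect to the visual (Carnot--Carath\'eodory) metric on $\partial_\infty X$, and then uses ergodicity of the geodesic flow on the unit tangent bundle of $M$ together with Lebesgue density arguments \`a la Mostow-Gromov-Thurston to upgrade quasiconformality to conformality. In the higher-rank case, Mostow's Tits-geometric argument shows that $\partial\tilde{f}$ must preserve the simplicial/building structure of the Furstenberg boundary, carrying maximal flats of $X$ to maximal flats of $Y$. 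In either regime one then invokes the classification of boundary homeomorphisms of symmetric spaces preserving this structure to conclude that $\partial\tilde{f}$ is induced by an isometry $\tilde{g}\colon X\to Y$.

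Finally, since $\partial\tilde{f}$ is $\pi_1$-equivariant and $\tilde{g}$ is determined by its boundary values, $\tilde{g}$ is itself $\pi_1$-equivariant and descends to an isometry $g\colon M\to N$. To see $g\simeq f$, I would use nonpositive curvature of $Y$: for each $x\in M$ let $\tilde{x}\in X$ be a chosen lift and define $H(x,t)$ to be the image in $N$ of the unique geodesic segment in $Y$ from $\tilde{f}(\tilde{x})$ to $\tilde{g}(\tilde{x})$ at time $t$. Equivariance of both $\tilde{f}$ and $\tilde{g}$ makes the construction independent of the chosen lift, yielding a well-defined homotopy $H\colon M\times[0,1]\to N$ from $f$ to $g$ and completing the argument.
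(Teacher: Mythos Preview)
The paper does not supply a proof of this theorem; it is quoted as Mostow's result with references \cite{Mos67, Mos73} and used as input for the topological rigidity discussion. So there is no in-paper argument to compare against.

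Your outline is a faithful high-level summary of Mostow's original strategy: lift $f$ to a $\pi_1$-equivariant pseudo-isometry of the symmetric universal covers, pass to an equivariant boundary map, show that this boundary map preserves the relevant conformal/Tits structure, and conclude it is induced by an isometry which then descends. The final homotopy via geodesic segments in the nonpositively curved target is also the standard device. As a sketch this is correct.

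Two places where the sketch hides real work deserve a flag. First, invoking ``the Morse lemma for symmetric spaces'' to get a boundary extension is fine in rank one but misleading in higher rank: individual geodesics in a flat are \emph{not} Morse, and the boundary extension is obtained instead by showing that $\tilde f$ coarsely carries flats to flats and Weyl chambers to Weyl chambers, which is already part of the Tits-building argument rather than a preliminary to it. Second, the step ``quasiconformal $\Rightarrow$ conformal'' in rank one is exactly where the hypothesis excluding $2$-dimensional factors bites (Teichm\"uller theory for $\mathbb{H}^2$), and the ergodicity/density argument you allude to is the genuine analytic core; calling it a ``Lebesgue density argument'' undersells the absolute continuity and differentiability a.e.\ issues Mostow has to establish. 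None of this is wrong in your write-up, but if this were to stand as a proof rather than a roadmap, those are the two gaps a referee would ask you to fill.
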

\begin{remark}\rm{
\indent
 \begin{itemize}
\item[\bf{1}.] Of particular importance to topologists is the case where $M$ and $N$ are $n$-manifolds $(n\geq 3)$ of constant negative curvature(which can be normalized to be $-1$) with isomorphic fundamental groups. Since $M$ and $N$ are both covered by the hyperbolic $n$-plane, all of their higher homotopy groups vanish. Then by a well known consequence of the classical Eilenberg obstruction theory, $M$ and $N$ are actually homotopy equivalent. So Mostow's Theorem applies, and they are isometric (by an isometry inducing the given isomorphism of fundamental groups).
\item[\bf{2}.] Mostow's Rigidity Theorem implies that atmost one differentiable manifold in a homeomorphism class can support a hyperbolic structure. 
\item[\bf{3}.] Prasad extended Mostow’s results further by replacing the assumption that the manifolds be compact, with the assumption that they have finite volume \cite{Pra73}. A result of Gromov \cite{Thu79} tells us that two homotopy equivalent hyperbolic manifolds actually have the same volume. This again has an implication for the action of $\pi_{1}$ on the sphere at infinity of hyperbolic plane, which can be used to give a proof of Mostow's Theorem.  So we have:
\end{itemize}}
\end{remark}
\begin{theorem}\label{prasad}(Mostow and Prasad Rigidity Theorem)
If $M$ and $N$ are complete hyperbolic $n$-manifolds, $n\geq 3$, with finite volume, and $f:M\to N$ is a homotopy equivalence, then $f$ is homotopic to an isometry. 
\end{theorem}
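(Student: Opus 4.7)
The plan is to follow the classical Mostow--Margulis--Prasad strategy: promote the homotopy equivalence $f$ to an equivariant quasi-isometry of the universal covers, show it extends to a boundary map on the sphere at infinity, prove this boundary map is conformal (hence Möbius), and then recover an isometry. First I would lift $f$ to a continuous map $\tilde f : \mathbb{H}^n \to \mathbb{H}^n$ between universal covers, equivariant with respect to the isomorphism $\alpha : \Gamma_M \to \Gamma_N$ of deck transformation groups induced by $f$ on $\pi_1$. Using that $f$ is a homotopy equivalence between finite-volume (hence uniformly discretely cocompact on the thick part) hyperbolic manifolds, a standard smoothing/averaging argument shows $\tilde f$ can be chosen to be a pseudo-isometry: there exist constants $K \geq 1$, $C \geq 0$ with
\[
K^{-1} d(x,y) - C \leq d(\tilde f(x), \tilde f(y)) \leq K\, d(x,y) + C.
\]

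Next I would extend $\tilde f$ to a homeomorphism $\partial \tilde f : \mathbb{S}^{n-1}_\infty \to \mathbb{S}^{n-1}_\infty$ of the sphere at infinity by sending the endpoint of a geodesic ray to the endpoint of the coarsely-image ray; hyperbolic geometry (the Morse lemma for quasi-geodesics) guarantees this is well-defined and continuous, and $\Gamma_M$-equivariance is inherited. The key analytic step is to show that $\partial \tilde f$ is quasi-conformal on $\mathbb{S}^{n-1}_\infty$. For this one compares the shadows of metric balls in $\mathbb{H}^n$ under $\tilde f$ and uses the pseudo-isometry inequality to bound the eccentricity of image configurations, a computation carried out originally in Mostow. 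Quasi-conformal maps in dimension $n-1 \geq 2$ are differentiable almost everywhere with respect to Lebesgue measure on $\mathbb{S}^{n-1}_\infty$.

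The heart of the argument, and what I expect to be the main obstacle, is upgrading quasi-conformal to conformal. Here I would invoke ergodicity of the $\Gamma_M$-action on $\mathbb{S}^{n-1}_\infty \times \mathbb{S}^{n-1}_\infty$ (equivalently, ergodicity of the geodesic flow on the unit tangent bundle), which holds in finite volume by the Howe--Moore theorem or direct mixing arguments. The $\alpha$-equivariance of $\partial \tilde f$ together with ergodicity forces the measurable conformal distortion function to be almost everywhere constant; combined with the fact that an a.e.\ linear quasi-conformal map with constant eccentricity is conformal, one concludes $\partial \tilde f$ is conformal. By Liouville's theorem in dimension $n-1 \geq 2$, a conformal self-homeomorphism of $\mathbb{S}^{n-1}$ is the restriction of a Möbius transformation, which is precisely the boundary extension of an isometry $\Phi : \mathbb{H}^n \to \mathbb{H}^n$.

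Finally I would descend: because $\Phi$ is $\alpha$-equivariant (same boundary values as $\tilde f$), it induces an isometry $\bar\Phi : M \to N$. A straight-line (geodesic) homotopy in $\mathbb{H}^n$ between $\tilde f$ and $\Phi$, equivariant by construction, projects to a homotopy from $f$ to $\bar\Phi$ on $M$. The extra care needed in the Prasad (finite-volume, non-compact) case is to verify that the pseudo-isometry estimates persist into the cusps; this is handled by noting that a homotopy equivalence of finite-volume hyperbolic manifolds induces a bijection of cusps and restricts to a proper map of bounded distortion on horoball neighborhoods, using Margulis' lemma to identify the cusps and Gromov's invariance of volume under homotopy equivalence to ensure no cusp is lost.
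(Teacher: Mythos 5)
The paper does not prove this theorem; it states it as a survey result, citing Mostow \cite{Mos67, Mos73} and Prasad \cite{Pra73}, and alludes to a proof route via Gromov's simplicial-volume theorem (``two homotopy equivalent hyperbolic manifolds actually have the same volume\ldots which can be used to give a proof of Mostow's Theorem''). That allusion is to the Gromov--Thurston argument: straighten the map, show that because the volumes agree the straightened map must send regular ideal simplices to regular ideal simplices almost everywhere, and conclude that the boundary map is Möbius. Your sketch instead reconstructs Mostow's original quasi-conformal proof extended to finite volume as in Prasad: pseudo-isometric lift, boundary extension via the Morse lemma, quasi-conformality of the boundary map, ergodicity to promote quasi-conformal to conformal, and Liouville's theorem. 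Both strategies are standard and correct; the Gromov--Thurston route avoids the quasi-conformal and ergodic-theoretic machinery at the cost of developing simplicial volume and the straightening technique, whereas your route is closer to the proofs in the references the paper actually cites.

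Two small points worth correcting in your account. First, the step ``an a.e.\ linear quasi-conformal map with constant eccentricity is conformal'' compresses the hardest part of Mostow's argument; constancy of the distortion is not by itself enough, and one must rule out an invariant measurable field of non-round ellipsoids (equivalently a measurable line or plane field at infinity), which in Mostow's proof uses ergodicity of the action on pairs (or frames) at infinity, not merely of the geodesic flow. Second, the claim that Gromov's volume invariance ``ensures no cusp is lost'' is misattributed: the bijection between cusps comes from the purely algebraic fact that the induced isomorphism $\alpha : \Gamma_M \to \Gamma_N$ carries the maximal parabolic (virtually nilpotent, infinite) subgroups to maximal parabolic subgroups, which by the thick-thin decomposition and the Margulis lemma identifies cusps with cusps. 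Gromov's volume theorem is needed in the Thurston proof but is not the mechanism by which peripheral structure is preserved in the Mostow--Prasad quasi-conformal argument you are following.
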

\begin{remark}\rm{
\indent
\begin{itemize}
\item[\bf{1}.] Mostow's theorem says nothing about what happens for closed orientable hyperbolic manifolds of dimension $2$. These manifolds are exactly the closed orientable surfaces of genus $g\geq 2$, which we denote $\Sigma_{g}$. Are these Mostow rigid? Or do there exist many non-isometric hyperbolic structures on $\Sigma_{g}$ ? Teichmuller theory tells us that the space of all marked hyperbolic structures on $\Sigma_{g}$ is homeomorphic to $\mathbb{R}^{6g-6}$ (\cite[Chapter 9]{FM12}). Therefore, such manifolds can be deformed and are not rigid. The whole point of Mostow rigidity is that this kind of deformations cannot happen in higher dimensions. It was stressed that Mostow's Rigidity Theorem does not hold in dimension $2$. However, for surfaces of genus $g$, the Dehn-Nielsen-Baer theorem \cite{FS08} is an analog of corollary of Mostow's Rigidity Theorem, which states that for a manifold $M$ satisfying the hypotheses of Mostow rigidity, we have $\rm{Out}(\pi_{1}(M ))= \rm{Isom}(M)$. In the current case, outer 
automorphisms do not necessarily arise from isometries, but they do arise from homeomorphisms. Here is the statement of Dehn-Nielsen-Baer theorem: For $g\geq 1$, $\rm{Top}(\Sigma_{g})/ \rm{Top}_{0}(\Sigma_{g})=\rm{Out}(\pi_{1}(\Sigma_{g}))$, where $\rm{Top}_{0}(\Sigma_{g})$ is the group of homeomorphisms isotopic to the identity map. This is a remarkable result of algebraic topology, since it relates a purely topological object $(\rm{Top}(\Sigma_{g})/ \rm{Top}_{0}(\Sigma_{g}))$ to a purely algebraic object $\rm{Out}(\pi_{1}(\Sigma_{g}))$. 
\item[\bf{2}.] If $M$ and $N$ are $2$-manifolds of finite volume, then they are homeomorphic exactly when their fundamental groups are the same. Combining this fact with Prasad's version of Mostow's theorem, we get:
\end{itemize}}
\end{remark}
\begin{theorem}\label{prasad-volum}
If $M$ and $N$ are complete hyperbolic manifolds with finite volume and isomorphic fundamental groups, then they are homeomorphic.
\end{theorem}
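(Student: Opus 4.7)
The plan is to split by dimension and combine Theorem \ref{prasad} (Mostow--Prasad) with the classification of finite-volume hyperbolic surfaces, as signalled by the remark preceding the theorem. First observe that the common dimension $n=\dim M=\dim N$ is determined by the isomorphism type of the fundamental group (e.g.\ via its (virtual) cohomological dimension together with Poincar\'e/Lefschetz duality), so the cases $n\leq 2$ and $n\geq 3$ are well defined on the group-theoretic side. In dimension $n=1$ a complete finite-volume hyperbolic $1$-manifold must be $\mathbb{S}^{1}$, so the statement is trivial. In dimension $n=2$ I would invoke the remark above the theorem: a complete finite-volume hyperbolic surface is homeomorphic to a closed surface minus finitely many points, and both the genus of the compactification and the number of cusps can be extracted from $\pi_{1}$ (from its abelianization together with the conjugacy classes of its maximal parabolic subgroups), so $\pi_{1}$ determines the homeomorphism type.

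For the main case $n\geq 3$, the strategy is two-step. First, upgrade the given isomorphism $\varphi:\pi_{1}(M)\xrightarrow{\cong}\pi_{1}(N)$ to a homotopy equivalence $f:M\to N$. Since $M$ and $N$ are quotients of the contractible space $\mathbb{H}^{n}$ by free, proper, isometric actions of their respective fundamental groups, each is a $K(\pi,1)$-space. Standard Eilenberg obstruction theory therefore produces a continuous $f:M\to N$ realizing $\varphi$ on $\pi_{1}$, and because all higher homotopy groups of both $M$ and $N$ vanish, Whitehead's theorem (Theorem \ref{whitehead}) shows that $f$ is a homotopy equivalence. Second, apply Theorem \ref{prasad} to conclude that $f$ is homotopic to an isometry $\bar{f}:M\to N$, which is in particular a homeomorphism.

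The main obstacle is really the $n=2$ case, since Mostow--Prasad rigidity actually fails in this dimension (Teichm\"uller space is nontrivial, as emphasized in the remark following Theorem \ref{prasad}), and so one cannot proceed by a uniform ``produce a homotopy equivalence, then rigidify'' argument. Instead one must replace rigidity with the topological classification of finite-volume hyperbolic surfaces, which is an independent input. Once this case is isolated and handled via the classification of surfaces, the remaining dimensions reduce cleanly to Theorem \ref{prasad} preceded by the standard $K(\pi,1)$ realization step.
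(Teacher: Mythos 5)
Your proposal is correct and follows essentially the same route as the paper: for dimension $\geq 3$ you realize the given isomorphism of fundamental groups by a homotopy equivalence (using that both manifolds are $K(\pi,1)$-spaces, via obstruction theory and Theorem \ref{whitehead}) and then apply Mostow--Prasad rigidity (Theorem \ref{prasad}), while for dimension $2$ you fall back on the topological classification of finite-volume hyperbolic surfaces, which is exactly the dichotomy signalled in the remark preceding the theorem. (A minor aside: the $n=1$ case you list is vacuous, since there is no $1$-dimensional hyperbolic geometry, so no separate treatment is actually needed there.)
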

\begin{remark}\label{lens}\rm{
Thus, manifolds of constant negative curvature are topologically rigid. There are simple examples that show that analogous results do not hold for manifolds of constant positive curvature. For example, the lens space $L(5,1)$ and $L(5,2)$ have the same homotopy groups, but are not homotopy equivalent, while the lens space $L(7,1)$ and $L(7,2)$ are homotopy equivalent \cite{DK01} but not homeomorphic \cite[Cha74]{DK01}. Now suppose that $M$ is simply connected 4-manifold and admits no Spin structure. Then there exists another simply connected 4-manifold $N$ with the same intersection form but different Kirby Siebenmann invariant (\cite[10.1]{FQ90}). In particular $M$ and $N$ are not homeomorphic but they are oriented homotopy equivalent by \cite{Mil58}. This also shows that the answer to the following Problem \ref{fund.que} is yes for 2 dimensional manifolds and No for 3 and 4-dimensional manifolds:}
\end{remark}
\begin{problem}\label{fund.que}
Let $f:N\to M$ denote a homotopy equivalence between closed manifolds. Is $f$ homotopic to a homeomorphism?
\end{problem}
There are other large classes of 3-manifolds, however, for which topological rigidity Problem \ref{fund.que} does hold. The following result in low 
dimensional topology  is due to Waldhausen \cite{Hem76}:
\begin{theorem}(Waldhausen's Theorem) \label{waldhausen}
If $M$ and $N$ are homotopy equivalent closed prime Haken 3-manifolds, then the homotopy equivalence is homotopic to a homemorphism.
\end{theorem}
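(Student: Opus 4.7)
The plan is to proceed by induction on the length of a Haken hierarchy for $M$. Every Haken 3-manifold admits a hierarchy $M = M_0 \supset M_1 \supset \cdots \supset M_k$, where each $M_{i+1}$ is obtained from $M_i$ by cutting along a two-sided incompressible surface and $M_k$ is a disjoint union of 3-balls. To make the induction run, I first strengthen the statement to Haken pairs with boundary: if $f\colon (N, \partial N) \to (M, \partial M)$ is a homotopy equivalence that restricts to a homeomorphism $\partial N \to \partial M$, then $f$ is homotopic rel $\partial N$ to a homeomorphism. The base case, in which $M_k$ is a 3-ball, is then immediate from Alexander's trick (Theorem \ref{alex}).

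For the inductive step, pick a two-sided incompressible surface $S$ appearing in a hierarchy for $M$. The key technical ingredient is Waldhausen's surface theorem: any $\pi_1$-injective map from a closed surface into a Haken 3-manifold is homotopic to a covering of an embedded incompressible surface (or into the boundary). I apply this together with transversality to homotope $f$ so that $T := f^{-1}(S)$ is an embedded incompressible surface in $N$ and the restriction $f|_T\colon T \to S$ is a homotopy equivalence. Since $T$ and $S$ are aspherical surfaces, $f|_T$ can be further homotoped to a homeomorphism by the Dehn-Nielsen-Baer theorem. Cutting $M$ along $S$ and $N$ along $T$ produces Haken pairs with boundary of strictly shorter hierarchy length, and the cut-open map is a homotopy equivalence that is a homeomorphism on the new boundary. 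By induction it is homotopic rel boundary to a homeomorphism, which then reglues to a homeomorphism $h\colon N \to M$ homotopic to $f$.

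The hardest step will be the construction of the incompressible preimage $T$ with $f|_T$ a homotopy equivalence. This is not a routine transversality argument: one must invoke the loop theorem and the tower construction of Papakyriakopoulos to compress an arbitrary transverse preimage surface to an incompressible one without altering the homotopy class of $f$. A secondary subtlety concerns the passage to manifolds with boundary in the strengthened inductive statement: when the cut-open manifolds have several boundary components, the hypothesis must be applied simultaneously to each piece, and the primeness assumption on $M$ and $N$ is used to rule out the $\mathbb{S}^2$ summands that would otherwise be produced by the sphere theorem during these compressions. Once these compressions are controlled, the remainder of the argument is a careful bookkeeping of homotopies rel boundary at each stage of the hierarchy.
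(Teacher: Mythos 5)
The paper does not give a proof of this theorem; it is quoted from Hempel \cite{Hem76}, and the induction on a Haken hierarchy that you sketch is precisely Waldhausen's original argument as presented there (Chapter 13), so you are following the same route as the reference rather than an alternative.

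A few details deserve tightening, though none invalidates the outline. You must cut along the \emph{first} surface of a chosen hierarchy (so that the hierarchy length genuinely drops), not an arbitrary incompressible surface appearing somewhere in it. The tool that produces an incompressible preimage $T$ with $f|_T$ a degree-one homotopy equivalence is not really the surface theorem you name (that one classifies $\pi_1$-injective maps of surfaces into a Haken manifold); it is Waldhausen's homotopy lemma for maps of 3-manifold pairs, which you essentially describe in your last paragraph. You are right that the loop theorem and irreducibility are the engine there, but the delicate point is not only capping off spheres: one must also discard excess preimage components, and this is where the degree-one hypothesis on $f$ enters and where most of Waldhausen's work actually sits. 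The Dehn-Nielsen-Baer theorem as the paper states it is for closed surfaces; deeper in the hierarchy the cut surfaces have boundary, so you need the relative version (or just the classification of compact surfaces together with the fact that homotopy classes of maps of aspherical pairs are controlled by $\pi_1$). Finally, note that "prime" adds nothing beyond "Haken," which already entails irreducibility; what you are actually using when you fill in spheres during compressions is that irreducibility, not the sphere theorem.
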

\begin{remark}\rm{
\indent
 \begin{itemize}
 \item[\bf{1.}]Turaev \cite{Tur88} has extended this result to showing that a simple homotopy equivalence between 3-manifolds with torsionfree fundamental group is homotopic to a homeomorphism provided that Thurston’s Geometrization Conjecture for irreducible 3-manifolds with infinite fundamental group and the 3-dimensional Poincar$\grave{\rm e}$ Conjecture are true. This statement remains true if one replaces simple homotopy equivalence by homotopy equivalence. This follows from the fact that the Whitehead group of the fundamental group of a 3-manifold vanishes provided that Thurston’s Geometrization Conjecture for irreducible
3-manifolds with infinite fundamental group is true \cite{KL09}. The vanishing of the Whitehead group is proved for Haken manifolds in Waldhausen \cite[Section 19]{Wal76}. In order to prove it for prime 3-manifolds it remains to treat closed hyperbolic manifolds and closed Seifert manifolds. These
cases are consequences of \cite[Theorem 2.1, pp.263 and Proposition 2.3]{FJ93a}.
\item[\bf{2.}] Using Waldhausen's Theorem \ref{waldhausen}, David Gabai has found conditions when certain homotopy equivalences could be replaced by homeomorphisms and also has shown the following result \cite{Gab94} :
\end{itemize}}
\end{remark}
\begin{theorem}\label{gabai}
Let $N$ be a closed hyperbolic 3-manifold containing an embedded hyperbolic tube of radius $\frac{\log 3}{2}$ about a closed geodesic $\frac{\log 3}{2}$. 
\begin{itemize}
\item[\rm{(i)}] If $f:M\to N$ is a homotopy equivalence where $M$ is an irreducible 3-manifold, then $f$ is  homotopic to a homemorphism.
\item[\rm{(ii)}] If $f, g :M\to N$ are homotopic homemorphism, then $f$ is isotopic to $g$.
\item[\rm{(iii)}] The space of hyperbolic metrics on $N$ is path connected.
\end{itemize}
\end{theorem}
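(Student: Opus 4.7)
The plan is to deduce all three statements from a common geometric analysis near the distinguished closed geodesic $\gamma\subset N$ with embedded tube $T_r$ of radius $r=\tfrac{\log 3}{2}$. For part (i), I first reduce to the aspherical case: since $N$ is hyperbolic, $\pi=\pi_1(N)$ is torsion-free and infinite, and since $M$ is irreducible with $\pi_1(M)\cong \pi$ infinite, the sphere theorem forces $\pi_2(M)=0$, and then a Hurewicz argument on the universal cover $\widetilde{M}$ shows $\widetilde{M}$ is contractible. Thus both $M$ and $N$ are closed aspherical $3$-manifolds with the same fundamental group. Once this reduction is made, I would lift $f$ to a proper $\pi$-equivariant map $\widetilde{f}:\widetilde{M}\to \widetilde{N}=\mathbb{H}^3$ and try to homotope $\widetilde{f}$ to an equivariant homeomorphism.

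The geometric heart of the argument is the exploitation of the tube $T_r$. Let $g\in \pi$ be the element represented by $\gamma$; its axis in $\mathbb{H}^3$ is stabilized by $\langle g\rangle$, and the tube $T_r$ lifts to an $\langle g\rangle$-invariant metric neighborhood of that axis. The choice $r=\tfrac{\log 3}{2}$ is precisely the threshold at which an \emph{insulator family} for $g$ can be constructed: a $\pi$-equivariant collection of open sets, one around each translate of the axis, which are pairwise disjoint, $\langle g\rangle$-invariant on each component, and fill out the thick part near the axes. Using the insulator family together with the tube geometry, I would show that $\widetilde{f}^{-1}$ of the axis of $g$ is a single properly embedded line in $\widetilde{M}$, and then perform a controlled equivariant homotopy making $\widetilde{f}$ carry this line homeomorphically to the axis. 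Piecing together such controlled homotopies across the $\pi$-orbit of $\gamma$ and extending to the complementary region (which is simply connected at infinity after the insulators are fixed) gives a homotopy of $\widetilde{f}$, equivariantly, to a homeomorphism, proving (i).

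For (ii), I would run the same insulator analysis in a $1$-parameter family: given a homotopy $H:M\times[0,1]\to N$ between homeomorphisms $f$ and $g$, lift to $\widetilde{H}$ and use the insulator around $\gamma$ to straighten $\widetilde{H}$ into an isotopy through the thick region, then extend through the thin complement using the $3$-dimensional irreducibility already established. Part (iii) is then a direct consequence of (ii) combined with Mostow–Prasad rigidity (Theorem \ref{prasad}): any two hyperbolic metrics on $N$ are related by an isometry homotopic to the identity, and (ii) upgrades this homotopy to an isotopy, so the space of hyperbolic metrics deformation retracts onto a single path component corresponding to the given structure.

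The main obstacle, and the place where the hypothesis $r=\tfrac{\log 3}{2}$ is essential, is the construction and verification of the insulator family. One must show that at this specific radius the tubes around all translates of $\gamma$ admit disjoint $\langle g\rangle$-equivariant neighborhoods whose closures give enough ``geometric separation'' to force $\widetilde{f}$ to be injective on a neighborhood of the preimage of $\gamma$. This is a delicate quantitative estimate in hyperbolic geometry comparing tube radii, translation lengths, and the Margulis constant; everything else in the proof — the reductions using the sphere theorem, the lift-and-deform strategy, and the passage from (ii) to (iii) via Mostow–Prasad — is essentially formal once that geometric input is in hand.
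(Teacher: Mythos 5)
The survey does not prove Theorem \ref{gabai}: it is stated as an external result cited from Gabai [Gab94], and the remark that follows it merely records that the conclusions specialize to Mostow (resp.\ Waldhausen) rigidity when $M$ is hyperbolic (resp.\ when $N$ is Haken). So there is no proof in the paper against which to compare step-by-step; what can be said is whether your sketch would fill that gap. It does not, and you say as much yourself. You correctly identify the preliminary reduction (irreducible plus infinite $\pi_1$ gives asphericity via the sphere theorem and Hurewicz), the lift-and-straighten-in-$\mathbb{H}^3$ strategy, the centrality of Gabai's insulator families attached to the $\tfrac{\log 3}{2}$ tube, and the derivation of (iii) from (ii) together with Mostow--Prasad. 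These are all genuinely part of Gabai's argument. But the entire mathematical content of the theorem lives in the steps you defer: constructing the insulator family from the tube hypothesis, proving the combinatorial rigidity lemma that makes it useful, and showing that it forces the controlled homotopy of $\widetilde{f}$. Labeling everything else ``essentially formal'' understates what remains; as written, this is an annotated statement of the theorem, not a proof of it.

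Two details in the sketch are also off in ways that would derail a genuine write-up. First, Gabai's insulators are $\pi_1$-equivariant families of subsets of the sphere at infinity $S^2_\infty$, not metric neighborhoods of the geodesic axes in $\mathbb{H}^3$; the insulator condition is a separation condition on $S^2_\infty$, and the tube radius $\tfrac{\log 3}{2}$ enters in verifying it, not in defining the insulators as tubes. Second, the endgame after straightening near $\gamma$ is not an argument about simple connectivity at infinity of a ``complementary region.'' The point of fixing the preimage of the tube is to exhibit an incompressible torus, which makes the complement Haken, after which Waldhausen's rigidity theorem (Theorem \ref{waldhausen}) supplies the homeomorphism and, in part (ii), the isotopy. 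If you want to turn this into an actual argument, those are the two places to start.
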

\begin{remark}\rm{
\indent
 \begin{itemize}
\item[\bf{1.}] If $M$ is hyperbolic, then conclusion (i) follows from Mostow's rigidity Theorem \ref{prasad}. Actually Mostow's Theorem  implies that $f$ is homotopic to an isometry. If $N$ is instead Haken, then conclusions (i)-(ii) follow from Waldhausen's Theorem \ref{waldhausen}. If $N$ is Haken and hyperbolic, then conclusion (iii) follows by combining Mostow's rigidity Theorem \ref{prasad} and Waldhausen's Theorem \ref{waldhausen}. Conclusions (i), (ii)-(iii) can be viewed as a 2-fold generalization of Mostow's rigidity Theorem \ref{prasad}.
\item[\bf{2.}] Nathaniel Thurston has shown that technical conditions necessary in the proof of the above Theorem \ref{gabai} are satisfied even when the geodesic with the given tube radius does not exist \cite{GRT03}. Thus we have the following result :
\end{itemize}}
\end{remark}
\begin{theorem}\label{thurston1}
Let $N$ be a closed hyperbolic 3-manifold and $M$ irreducible. Then any homotopy equivalence $f:M\to N$ is isotopic to an isometry.
\end{theorem}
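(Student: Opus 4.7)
The plan is to deduce Theorem \ref{thurston1} by combining Gabai's Theorem \ref{gabai} with Nathaniel Thurston's technical refinement, and then packaging the result with Mostow's Rigidity Theorem \ref{prasad}. The strategic point is that Theorem \ref{gabai} already delivers both the existence of a homeomorphism and its uniqueness up to isotopy, contingent on $N$ containing an embedded hyperbolic tube of radius $\tfrac{\log 3}{2}$ about a closed geodesic; the new input is precisely that this tube hypothesis can be bypassed.

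First I would isolate the role of the tube hypothesis in Gabai's argument. The radius $\tfrac{\log 3}{2}$ is used to guarantee that a certain combinatorial/geometric model near the short geodesic controls homotopies rel a specified model neighborhood. Following Nathaniel Thurston (cf.\ \cite{GRT03}), I would replace the appeal to an actual embedded tube of that radius by a finer analysis of the Margulis tubes and Mostow-type hyperbolic geometry: when no geodesic of $N$ admits a tube of the requested radius, one instead works with the thin part of $N$ and uses quantitative estimates (volume/length comparisons, drilling/filling arguments à la Thurston) to produce the same homotopy-to-homeomorphism conclusion. Concretely, the aim of this step is to upgrade Theorem \ref{gabai}(i)--(ii) to the statement that for every closed hyperbolic $3$-manifold $N$ and every irreducible $3$-manifold $M$, any homotopy equivalence $f\colon M\to N$ is homotopic to a homeomorphism $h\colon M\to N$, and any two such homeomorphisms are isotopic.

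With that in hand, the rest of the proof is essentially formal. Given $f\colon M\to N$, Step 1 applies the strengthened Gabai--Thurston result to produce a homeomorphism $h\colon M\to N$ with $h\simeq f$. Step 2 transports the hyperbolic structure of $N$ to $M$ via $h$; this endows $M$ with a hyperbolic metric for which $h$ is tautologically an isometry $\phi$. Step 3 invokes Mostow--Prasad rigidity (Theorem \ref{prasad}) to observe that, for this hyperbolic structure on $M$, any homotopy equivalence $M\to N$ is homotopic to an isometry, so we may in fact take $\phi$ to be the canonical Mostow isometry representing the conjugacy class of $\pi_1(f)$. Step 4 uses the isotopy-uniqueness part of the strengthened Gabai theorem (the extension of Theorem \ref{gabai}(ii)): since $\phi$ and $h$ are homotopic homeomorphisms, they are isotopic, whence $f\simeq h\simeq_{\mathrm{isotopy}}\phi$, proving the theorem.

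The main obstacle is unquestionably the removal of the tube-radius assumption in Step 1. Gabai's original proof is delicate, relying on shrinking homotopies that proceed through a tubular neighborhood of prescribed geometric size; when such a neighborhood is absent, one must supply substitute geometric control. I would expect the hardest technical work to be the quantitative hyperbolic geometry near short geodesics, where Margulis-constant estimates, thick-thin decomposition, and possibly hyperbolic Dehn-surgery approximations must be combined to replicate the effect of the missing tube. Once this geometric input is secured, the passage from homeomorphism to isometry, and from homotopy to isotopy, is a clean application of Theorems \ref{prasad} and \ref{gabai}(ii).
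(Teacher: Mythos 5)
Your proposal follows essentially the same route as the paper, which deduces the theorem directly from the Gabai--Meyerhoff--Thurston result \cite{GRT03} removing the tube-radius hypothesis in Theorem \ref{gabai}, and then packages the conclusion with Mostow--Prasad rigidity (Theorem \ref{prasad}). The paper does not spell out the final assembly; you fill in exactly that step (transport the hyperbolic metric to $M$ via the homeomorphism from strengthened part (i), apply Mostow to land on an isometry, and use strengthened part (ii) to upgrade homotopy of homeomorphisms to isotopy), and you correctly defer the heavy quantitative work to \cite{GRT03} rather than attempting it yourself.
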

\begin{remark}\rm{
\indent
\begin{itemize}
\item[\bf{1.}] By Theorem \ref{thurston1}, we can recognize whether an irreducible 3-manifold is hyperbolic just by looking at its fundamental group.
\item[\bf{2.}] Topological rigidity problem \ref{fund.que} do hold also for some non-aspherical closed manifolds. For instance the sphere $\mathbb{S}^n$ is topologically rigid by the Poincar$\grave{\rm e}$ Conjecture. The Poincar$\grave{\rm e}$ Conjecture is known to be true in all dimensions. This follows in high dimensions from the h-cobordism theorem, in dimension four from the work of Freedman \cite{Fre82}, in dimension three from the work of Perelman as explained in \cite{KL08} and \cite{MT07} and in dimension two from the classification of surfaces. Many more examples of classes of manifolds which are topologically rigid are given and analyzed in Kreck-L$\ddot{\rm u}$ck \cite{KL09}:
\end{itemize}}
\end{remark}
\begin{definition}\rm{
 A manifold $M$ is called a strong Borel manifold if every orientation preserving homotopy equivalence $f : N\to M$ of manifolds is homotopic to a homeomorphism $h: N\to M$.}
\end{definition}
Since the connected sum of two aspherical closed manifolds is, in general, not aspherical, we have the following examples for non-aspherical closed manifolds due to M. Kreck and W. L$\ddot{\rm u}$ck \cite{KL09}:
\begin{theorem}
Let $M$ and $N$ be connected oriented closed topological manifolds of the same dimension $n\geq 5$ such that neither $\pi_1(M)$ nor $\pi_1(N)$ contains elements of order 2 or that $n\equiv 0, 3~\rm{mod}~4$. If both $M$ and $N$ are strong Borel manifolds, then the same is true for their connected sum $M\#N$.
\end{theorem}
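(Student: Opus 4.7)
The plan is to reduce the strong Borel property for $M\#N$ to the same property for $M$ and $N$ by splitting any homotopy equivalence along the connected-sum sphere. Given an orientation preserving homotopy equivalence $f:X\to M\#N$ from a closed oriented $n$-manifold $X$, van Kampen's theorem gives $\pi_{1}(M\#N)\cong \pi_{1}(M)*\pi_{1}(N)$, and the separating $(n-1)$-sphere $\Sigma\subset M\#N$ realizes this free product decomposition geometrically. The goal is to produce an embedded sphere $\Sigma'\subset X$ along which $f$ splits into homotopy equivalences of the two halves.

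First, I would invoke Cappell's codimension one splitting theorem for homotopy equivalences over a free product, applied to $\Sigma\subset M\#N$. The obstruction to making $f$ splittable along $\Sigma$ lives in Cappell's unitary nilpotent group $\mathrm{UNil}_{n+1}(\mathbb{Z};\mathbb{Z}\pi_{1}(M),\mathbb{Z}\pi_{1}(N))$. The two hypotheses listed in the theorem correspond exactly to the two standard vanishing results for these groups: Cappell's theorem giving vanishing when neither factor contains an element of order $2$ (the trivial-amalgamating, square-root closed case), and the purely algebraic observation that the relevant $\mathrm{UNil}$ contribution dies in degrees $n+1\equiv 1,0 \pmod 4$, i.e.\ $n\equiv 0,3 \pmod 4$. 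Under either hypothesis, $f$ is thus homotopic to a split homotopy equivalence: there is a two-sided sphere $\Sigma'\subset X$ with $f^{-1}(\Sigma)=\Sigma'$, a decomposition $X=X_{1}\cup_{\Sigma'} X_{2}$, and homotopy equivalences $f_{i}:(X_{i},\Sigma')\to (M_{i},\Sigma)$, where $M_{1}=M\setminus\mathrm{Int}(\mathbb{D}^{n})$ and $M_{2}=N\setminus\mathrm{Int}(\mathbb{D}^{n})$, with $f|_{\Sigma'}$ a homeomorphism onto $\Sigma$.

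Next, I would close up the pieces. Capping $X_{i}$ and $M_{i}$ with disks via the boundary homeomorphism $f|_{\Sigma'}$ (extended across the disk by Alexander's trick, Theorem~\ref{alex}) produces closed oriented manifolds $\widehat{X}_{1},\widehat{X}_{2}$ and orientation preserving homotopy equivalences $\widehat{f}_{1}:\widehat{X}_{1}\to M$ and $\widehat{f}_{2}:\widehat{X}_{2}\to N$. The strong Borel hypotheses on $M$ and $N$ give homeomorphisms $\widehat{\varphi}_{i}$ homotopic to $\widehat{f}_{i}$. By an ambient isotopy one may arrange each $\widehat{\varphi}_{i}$ to carry the capping disk in $\widehat{X}_{i}$ onto the capping disk in $M$ (respectively $N$); removing the interiors of these balls and regluing across their boundary spheres via $f|_{\Sigma'}$ yields a homeomorphism $\varphi:X\to M\#N$. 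Verifying $\varphi\simeq f$ is then immediate: the homotopies $\widehat{\varphi}_{i}\simeq \widehat{f}_{i}$ can be truncated away from the capping disks and reassembled into a homotopy between $\varphi$ and the split representative of $f$.

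The principal difficulty is the first step. Cappell's splitting theorem is a delicate surgery-theoretic tool, and matching the two abstract hypotheses of the theorem to the known vanishing statements for his $\mathrm{UNil}$ groups (one through the square-root closed argument, one through the degreewise algebraic computation) is where the real content lies. The capping and regluing steps are essentially formal consequences of the existence of a split form together with the strong Borel property applied separately to $M$ and $N$; orientation compatibility is ensured throughout because all the given homotopy equivalences are orientation preserving and the boundary sphere inherits a consistent orientation from either side.
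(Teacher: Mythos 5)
Your outline matches the argument in Kreck--L\"uck \cite{KL09}, which is where this theorem originates (the survey itself states it without proof): split via Cappell's codimension-one splitting theorem along the connect-sum sphere, kill the obstruction in $\mathrm{UNil}_{n+1}$ using either the square-root-closed hypothesis (no $2$-torsion in $\pi_{1}(M)$ or $\pi_{1}(N)$) or the degree-wise vanishing $\mathrm{UNil}_{n+1}=0$ for $n+1\equiv 0,1\pmod 4$, then cap off the two halves, invoke the strong Borel property of $M$ and $N$, and reassemble. When reassembling you should add a word on why the two boundary restrictions $\widehat{\varphi}_{1}|_{\Sigma'}$ and $\widehat{\varphi}_{2}|_{\Sigma'}$, which a priori are two different orientation-preserving self-homeomorphisms of $S^{n-1}$, can be made to agree after a further isotopy, and on keeping the homotopies relative to the separating sphere so that the assembled homotopy $\varphi\simeq f$ is well defined; these are standard but worth recording explicitly, and with them your proof is complete and takes essentially the same route as the cited source.
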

\begin{theorem}{\rm \cite{KL09}}
Consider $k, d \in \mathbb{Z}$ with $(k, d \geq 1)$ . 
\begin{itemize}
\item[\rm {(a)}]Suppose that  $k+d\neq 3$. Then $\mathbb{S}^k\times \mathbb{S}^d$ is a strong Borel manifold if and only if both $k$ and $d$ are odd.
\item[\rm {(b)}] For $d\not=2$ the manifolds $\mathbb{S}^1\times \mathbb{S}^d$ is strongly Borel; and $\mathbb{S}^1\times \mathbb{S}^2$ is strongly Borel if and only if the 3-dimensional Poincar$\grave{\rm e}$ Conjecture is true.
\end{itemize}
\end{theorem}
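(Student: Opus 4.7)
The overall strategy is to compute the topological structure set $\mathcal{S}(\mathbb{S}^{k}\times\mathbb{S}^{d})$ via the surgery exact sequence of Definition \ref{surexa} and read off precisely when it consists of the class of the identity alone. The fundamental group is trivial when $k,d\geq 2$ and isomorphic to $\mathbb{Z}$ when exactly one of $k,d$ equals $1$; in both cases the Whitehead group vanishes and so $\bar{\mathcal{S}}=\mathcal{S}$ in the high-dimensional range. The low-dimensional values $n=k+d\in\{2,3,4\}$ will be handled separately using surface classification, the three-dimensional Poincar\'e conjecture, and Freedman's topological surgery for simply connected four-manifolds.

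\medskip
\noindent\textbf{Part (a).} Assume first $k,d\geq 2$. Using Sullivan's identification $\pi_{i}(G/\mathrm{Top})\cong L_{i}(\mathbb{Z})$ for $i\geq 1$ together with the cofibration $\mathbb{S}^{k}\vee\mathbb{S}^{d}\hookrightarrow\mathbb{S}^{k}\times\mathbb{S}^{d}\twoheadrightarrow\mathbb{S}^{k+d}$, I obtain
\[
[\mathbb{S}^{k}\times\mathbb{S}^{d};G/\mathrm{Top}]\cong L_{k}(\mathbb{Z})\oplus L_{d}(\mathbb{Z})\oplus L_{k+d}(\mathbb{Z}).
\]
Recalling that $L_{i}(\mathbb{Z})$ is $\mathbb{Z},0,\mathbb{Z}/2,0$ for $i\equiv 0,1,2,3\pmod{4}$, if both $k$ and $d$ are odd the two lower summands vanish, and the top-cell component of the assembly map $\sigma$ agrees with the (isomorphism) surgery obstruction of $\mathbb{S}^{k+d}$. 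Combined with $L_{n+1}(\mathbb{Z})=0$ for odd $n+1$, exactness of the surgery sequence then forces $\mathcal{S}(\mathbb{S}^{k}\times\mathbb{S}^{d})=\{\ast\}$. For the converse, when (say) $k$ is even I would produce a generator of $L_{k}(\mathbb{Z})\subset[\mathbb{S}^{k}\times\mathbb{S}^{d};G/\mathrm{Top}]$ by pulling back a class along the projection $\mathbb{S}^{k}\times\mathbb{S}^{d}\to\mathbb{S}^{k}$; the L-theory product formula (in which $\mathbb{S}^{d}$ contributes a factor controlled by $\chi(\mathbb{S}^{d})\in\{0,2\}$) shows that its surgery obstruction in $L_{n}(\mathbb{Z})$ vanishes, so the class lifts to a homotopy equivalence $N\to\mathbb{S}^{k}\times\mathbb{S}^{d}$ with non-trivial normal invariant, hence not homotopic to a homeomorphism.

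\medskip
\noindent\textbf{Part (b).} For $\mathbb{S}^{1}\times\mathbb{S}^{d}$ with $\pi_{1}=\mathbb{Z}$ the Shaneson-Wall splitting $L_{n}(\mathbb{Z}[\mathbb{Z}])\cong L_{n}(\mathbb{Z})\oplus L_{n-1}(\mathbb{Z})$ together with its geometric counterpart, the Farrell-Hsiang splitting of the structure set, yields
\[
\mathcal{S}(\mathbb{S}^{1}\times\mathbb{S}^{d})\cong\mathcal{S}(\mathbb{S}^{d+1})\oplus\mathcal{S}(\mathbb{S}^{d}\times I,\partial).
\]
For $d\geq 4$ both summands are trivial: the first by the topological Poincar\'e conjecture (rigidity of $\mathbb{S}^{d+1}$) and the second by an analogous surgery-sequence argument for the disk bundle after noting that the relevant assembly is an isomorphism in the split-off range. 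The cases $d=1$ (just $T^{2}$, aspherical and Borel by surface classification) and $d=3$ (handled in dimension four by Freedman's machinery together with the part (a) analysis) complete the $d\neq 2$ claim. For $d=2$ the manifold $\mathbb{S}^{1}\times\mathbb{S}^{2}$ is three-dimensional; by Kneser-Milnor prime decomposition and the constraint $\pi_{1}=\mathbb{Z}$, any $N^{3}\simeq\mathbb{S}^{1}\times\mathbb{S}^{2}$ is of the form $(\mathbb{S}^{1}\times\mathbb{S}^{2})\#\Sigma^{3}$ for some homotopy three-sphere $\Sigma$, and hence homeomorphic to $\mathbb{S}^{1}\times\mathbb{S}^{2}$ exactly when $\Sigma\cong\mathbb{S}^{3}$, which is the three-dimensional Poincar\'e conjecture.

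\medskip
\noindent\textbf{Main obstacle.} The hardest step is the construction of the explicit exotic structures witnessing failure of strong Borelness in part (a): one must realize a chosen normal invariant on a lower sphere factor as a genuine homotopy equivalence whose surgery obstruction vanishes, which requires careful use of the Sullivan orientation and the L-theory multiplicative structure, and then rule out that the resulting structure is already equivalent to the identity in $\mathcal{S}$. A secondary subtlety is the dimension-four range of part (a) (notably $\mathbb{S}^{2}\times\mathbb{S}^{2}$), where Freedman's classification of simply connected topological four-manifolds is needed to render the surgery exact sequence operative.
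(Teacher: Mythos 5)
The paper itself does not prove this theorem; it simply cites Kreck--L\"uck \cite{KL09}, so your attempt can only be measured against what is known from that source and against internal consistency.

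Before anything else, you should have flagged a problem with the statement as it appears here: if part (a) is read literally for all $k,d\geq 1$ with $k+d\neq 3$, then for $k=1,\, d=4$ it asserts that $\mathbb{S}^1\times\mathbb{S}^4$ is \emph{not} strongly Borel (since $1$ and $4$ are not both odd), while part (b) asserts that $\mathbb{S}^1\times\mathbb{S}^4$ \emph{is} strongly Borel. One can verify directly from the surgery exact sequence for $\mathbb{S}^1\times\mathbb{S}^d$, $d\geq 4$, using $[\mathbb{S}^1\times\mathbb{S}^d;G/\mathrm{Top}]\cong L_d(\mathbb{Z})\oplus L_{d+1}(\mathbb{Z})$, the Shaneson splitting $L_{d+1}(\mathbb{Z}[\mathbb{Z}])\cong L_{d+1}(\mathbb{Z})\oplus L_d(\mathbb{Z})$, and the ``$\times\,\mathbb{S}^1$'' product formula, that $\mathcal{S}(\mathbb{S}^1\times\mathbb{S}^d)$ is a single point \emph{regardless of the parity of $d$}. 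So part (a) must be restricted to $k,d\geq 2$ to be correct; the statement as printed is a mis-transcription of \cite{KL09}. You silently impose this restriction (``Assume first $k,d\geq 2$'') but never return to the $k=1$ case of (a) nor point out the conflict, which leaves the proof incomplete against the literal statement.

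Within the simply connected range $k,d\geq 2$, your computation of normal invariants via the stable splitting $\Sigma^\infty(\mathbb{S}^k\times\mathbb{S}^d)_+\simeq\mathbb{S}^0\vee\mathbb{S}^k\vee\mathbb{S}^d\vee\mathbb{S}^{k+d}$ and $\pi_*(G/\mathrm{Top})\cong L_*(\mathbb{Z})$ is right, and the ``both odd'' direction is sound since then $L_k=L_d=0$ and $L_{k+d+1}(\mathbb{Z})=0$. For the converse your invocation of an ``$\chi(\mathbb{S}^d)\in\{0,2\}$'' factor is not the correct formulation: the Morgan--Sullivan--Ranicki product formula gives $\sigma(f\times\mathrm{id}_{\mathbb{S}^d})=\sigma(f)\cdot\sigma^*(\mathbb{S}^d)$, where $\sigma^*(\mathbb{S}^d)\in L^d(\mathbb{Z})$ is the symmetric signature, which vanishes for all $d\geq 2$ (zero signature when $d\equiv 0\pmod 4$, zero de~Rham invariant when $d\equiv 1\pmod 4$, and $L^d(\mathbb{Z})=0$ otherwise). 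The Euler characteristic enters transfer formulas for sphere \emph{bundles}, not the product surgery obstruction, so while your conclusion is right, the stated mechanism is wrong. Note also that when $k+d$ is odd no product formula is needed at all: $L_{k+d}(\mathbb{Z})=0$ forces $\sigma$ to vanish identically, and any nonzero normal invariant already lifts.

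In part (b) the claimed isomorphism
$\mathcal{S}(\mathbb{S}^1\times\mathbb{S}^d)\cong\mathcal{S}(\mathbb{S}^{d+1})\oplus\mathcal{S}(\mathbb{S}^d\times I,\partial)$
is not a theorem I can recognize; the Shaneson--Wall splitting is an $L$-group statement, and its geometric counterpart for structure sets (via the Farrell fibering theorem when $\mathrm{Wh}(\mathbb{Z})=0$) involves $\mathcal{S}(\mathbb{S}^d)$ as the fibre, not $\mathcal{S}(\mathbb{S}^{d+1})$, and in general gives an exact sequence rather than a direct sum. The cleaner and correct argument is the direct surgery exact sequence computation sketched above: $\sigma_{d+1}:[\mathbb{S}^1\times\mathbb{S}^d;G/\mathrm{Top}]\to L_{d+1}(\mathbb{Z}[\mathbb{Z}])$ is injective (in fact an isomorphism onto its image, matching the Shaneson decomposition cell by cell) and $\sigma_{d+2}$ on $[\mathbb{S}^1\times\mathbb{S}^d\times I,\partial;G/\mathrm{Top}]$ surjects onto $L_{d+2}(\mathbb{Z}[\mathbb{Z}])$, so $\mathcal{S}=\{\ast\}$ for $d\geq 4$. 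The treatment of $d=3$ (dimension $4$, Freedman) and $d=1,2$ is only gestured at and would need to be spelled out, especially the prime-decomposition argument for $\mathbb{S}^1\times\mathbb{S}^2$ which does not immediately deliver \emph{homotopic to a homeomorphism} from mere homeomorphism.
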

\begin{remark}\rm{
The Borel Conjecture \ref{borel} has the following (slightly weaker when $n\neq 3$) group theoretic interpretation in which $Top(\mathbb{R}^n)$ denotes the group of all self-homeomorphisms of $\mathbb{R}^n$ equipped with the compact open topology. Here are the results :}
\end{remark}
Let $E(n)$ be the group of rigid motions of the $n$-dimensional Euclidean space and $A(n)$ be the group of affine motions of Euclidean $n$-space. Bieberbach proved the following result in 1912. 
\begin{theorem}\label{bieberbach}(Bieberbach)
Let $\Gamma$ and $\Gamma^{'}$ be two torsion-free uniform discrete subgroups of $E(n)$. If $\Gamma$ and $\Gamma^{'}$ are isomorphic, then they are conjugate inside of $A(n)$.
\end{theorem}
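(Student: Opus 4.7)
My plan is to build an affine transformation $\alpha(x)=Lx+c\in A(n)$ whose conjugation on $E(n)$ carries $\Gamma$ to $\Gamma'$ and realizes the given isomorphism $\phi\colon\Gamma\to\Gamma'$. The strategy is to read off $L$ from the induced map on translation lattices, and then to solve for the translation vector $c$ as the coboundary of a $1$-cocycle over a finite group.

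The first step is to identify the translation lattices. Writing $E(n)=O(n)\ltimes\mathbb{R}^n$, set $\Lambda=\Gamma\cap\mathbb{R}^n$ and $\Lambda'=\Gamma'\cap\mathbb{R}^n$. Bieberbach's first (structural) theorem asserts that, for a uniform torsion-free discrete subgroup of $E(n)$, this $\Lambda$ is free abelian of rank $n$, spans $\mathbb{R}^n$, and is normal of finite index in $\Gamma$. Crucially, $\Lambda$ is characterized algebraically inside $\Gamma$---for instance as the unique maximal normal abelian subgroup of finite index, equivalently the Fitting subgroup---so the abstract isomorphism $\phi$ must carry $\Lambda$ onto $\Lambda'$. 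Since $\Lambda\otimes_{\mathbb{Z}}\mathbb{R}=\mathbb{R}^n$, the restriction $\phi|_\Lambda$ extends uniquely to an $\mathbb{R}$-linear automorphism $L\colon\mathbb{R}^n\to\mathbb{R}^n$, which I take as the candidate linear part of $\alpha$.

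The second step is to force $L$ to intertwine the linear parts. Writing $\gamma=(A_\gamma,b_\gamma)$ and $\phi(\gamma)=(A'_\gamma,b'_\gamma)$, the conjugation $\gamma(I,v)\gamma^{-1}=(I,A_\gamma v)$ for $(I,v)\in\Lambda$, together with $\phi|_\Lambda=L|_\Lambda$, yields $LA_\gamma v=A'_\gamma Lv$ for all $v\in\Lambda$, and hence $A'_\gamma=LA_\gamma L^{-1}$ on all of $\mathbb{R}^n$. A direct computation of $\alpha\gamma\alpha^{-1}$ with $\alpha(x)=Lx+c$ then reduces the desired identity $\alpha\gamma\alpha^{-1}=\phi(\gamma)$ to the single translational equation
\[
(I-A'_\gamma)\,c \;=\; b'_\gamma - L\,b_\gamma, \qquad \gamma\in\Gamma.
\]

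The final and hardest step is to find one $c$ solving this simultaneously over all of $\Gamma$, and this is where I expect the main obstacle to lie. Define $\psi\colon\Gamma\to\mathbb{R}^n$ by $\psi(\gamma)=b'_\gamma-Lb_\gamma$; using $LA_\gamma=A'_\gamma L$ and the composition law $b_{\gamma_1\gamma_2}=A_{\gamma_1}b_{\gamma_2}+b_{\gamma_1}$ in $E(n)$, one checks that $\psi$ is a $1$-cocycle for $\Gamma$ acting on $\mathbb{R}^n$ through $\gamma\mapsto A'_\gamma$. Because $\phi|_\Lambda=L|_\Lambda$, $\psi$ vanishes on $\Lambda$ and therefore descends to a $1$-cocycle $\bar\psi$ of the finite holonomy group $F=\Gamma/\Lambda$ with values in $\mathbb{R}^n$. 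Since $F$ is finite and $\mathbb{R}^n$ is uniquely divisible, $H^1(F;\mathbb{R}^n)=0$; concretely I would produce $c$ by the standard averaging formula $c=-|F|^{-1}\sum_{\bar\gamma\in F}\bar\psi(\bar\gamma)$, which turns $\bar\psi$ into a coboundary and solves the equation for every $\gamma$. Setting $\alpha(x)=Lx+c\in A(n)$ then gives $\alpha\Gamma\alpha^{-1}=\phi(\Gamma)=\Gamma'$, as required. The real leverage of the whole argument rests on Bieberbach's first theorem at the very start: it is this that makes $\Lambda$ visible from the abstract group $\Gamma$ and lets $\phi$ manufacture the linear part $L$; without that input there is no candidate for $L$, and the cocycle machinery has nothing to act upon.
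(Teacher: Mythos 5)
The paper states this theorem as a classical result, citing Bieberbach's 1912 paper, and offers no proof of its own, so there is no in-paper argument to compare against.

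Read on its own terms, your sketch is the standard modern proof of Bieberbach's rigidity theorem and the structure is sound: Bieberbach's first theorem supplies the lattice $\Lambda=\Gamma\cap\mathbb{R}^n$; its algebraic characterization inside $\Gamma$ is valid (the holonomy representation $\Gamma/\Lambda\to O(n)$ is faithful, and a unipotent orthogonal matrix is the identity, so any normal nilpotent subgroup lies in $\Lambda$); tensoring $\phi|_\Lambda$ with $\mathbb{R}$ produces the invertible $L$; conjugation on $\Lambda$ forces $A'_\gamma=LA_\gamma L^{-1}$; and the remaining translational equation is precisely the assertion that a $1$-cocycle of the finite holonomy group $F$ with values in $\mathbb{R}^n$ is a coboundary, which holds since $H^1(F;\mathbb{R}^n)=0$. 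Two small remarks. First, the averaging formula should be $c=|F|^{-1}\sum_{\bar\gamma\in F}\bar\psi(\bar\gamma)$, not its negative: with this $c$, the cocycle identity gives $A'_\gamma c = c - \psi(\gamma)$, i.e.\ $(I-A'_\gamma)c=\psi(\gamma)$ as required. Second, your argument nowhere uses the torsion-freeness hypothesis, and rightly so; Bieberbach's rigidity theorem holds for all crystallographic groups, and torsion-freeness in the paper's statement only ensures that $\mathbb{R}^n/\Gamma$ is a manifold rather than an orbifold.
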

Borel posed in 1966 the following question whether one can allow $\Gamma^{'}$ to sit inside the larger group $\rm{Top}(\mathbb{R}^n)$, but require the induced action of $\Gamma^{'}$ on $\mathbb{R}^n$ to be free and properly discontinuous.
\begin{problem}\label{borelgroup}(Topological Strong Rigidity Conjecture)
If $\Gamma$ and $\Gamma^{'}$  are isomorphic, is $\Gamma$  conjugate to $\Gamma^{'}$ inside of $\rm{Top}(\mathbb{R}^n)$?
\end{problem}
\begin{remark}\rm{
F.T. Farrell and W.C. Hsiang \cite{FH78} gave an affirmative answer to Problem \ref{borelgroup} when $n>4$ and $\Gamma$  has odd order holonomy group.(The holonomy group of $\Gamma$ is its image in $O(n)$ the orthogonal group under the canonical projection $E(n)\to O(n)$; Bieberbach (1910) showed this group is always finite.) Here is the result:  }
\end{remark}
\begin{theorem}\label{farall-hsiang}
 Let $M$ denote  a closed flat Riemannian $n$-manifold with fundamental group $\Gamma$ with holonomy group $G$.
If $m + n > 4$ and $|G|$ is odd, then $\mathcal{S}(M\times \mathbb{D}^{m})=0$.
\end{theorem}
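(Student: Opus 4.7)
The plan is to combine the surgery exact sequence of Definition \ref{surexa} with a transfer argument along the torus covering that exists by Bieberbach's structure theorem. Specifically, Theorem \ref{bieberbach} and the Bieberbach exact sequence produce a short exact sequence $1 \to A \to \Gamma \to G \to 1$ with $A \cong \mathbb{Z}^n$ the translation lattice, so $M$ is finitely covered by $T^n$ via a regular covering $p : T^n \to M$ of odd degree $|G|$.

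First I would reduce to working with $\bar{\mathcal{S}}$ by establishing that $Wh(\Gamma)=0$; this was proved by Farrell-Hsiang for Bieberbach groups of odd-order holonomy via a transfer argument analogous to the one below, and then the remark after Definition 2.6 identifies $\mathcal{S}(M\times \mathbb{D}^m,\partial)$ with $\bar{\mathcal{S}}(M\times \mathbb{D}^m,\partial)$. Given this, the surgery exact sequence reduces the desired vanishing to two statements: the assembly map $\sigma:[M\times \mathbb{D}^m,\partial;G/Top]\to L_{n+m}(\Gamma)$ is injective, and the corresponding map in degree $n+m+1$ is surjective.

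The core of the argument is to transport these statements from $\Gamma$ to the torus subgroup $\mathbb{Z}^n$, where they are known. Indeed, iterated Shaneson splitting shows that the assembly map for $\mathbb{Z}^n$ is split injective in all relevant degrees, so $\bar{\mathcal{S}}(T^n\times \mathbb{D}^k,\partial)=0$ whenever $n+k\geq 5$. The covering $p$ induces a commutative diagram linking the surgery sequences for $M\times \mathbb{D}^m$ and $T^n\times \mathbb{D}^m$, equipped with an induction $p_*$ and a transfer $p^!$. On both the normal invariants and on the $L$-groups the composition $p^!\circ p_*$ is multiplication by $|G|$, so any class in the kernel of $\sigma$ at the level of $\Gamma$ that survives on $T^n$ must in fact vanish.

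The main obstacle is the interaction with the prime $2$. Surgery $L$-theory is largely $2$-local, and a naive application of $p^!\circ p_* = |G|$ only controls things after inverting the odd integer $|G|$. The essential point is that because $|G|$ is odd, multiplication by $|G|$ is already an isomorphism on the $2$-primary parts of $L_*(\Gamma)$ and of $[M\times \mathbb{D}^m,\partial; G/Top]$, so the transfer argument does dispose of the $2$-primary kernel. The odd-primary part of the kernel then has to be eliminated by a secondary induction over the cyclic (or $p$-elementary) subgroups of $G$, using the computation of $L$-theory for virtually abelian groups and Wall's periodicity to keep track of the assembled classes. I expect the hardest step to be verifying that the geometric transfer in the surgery sequence genuinely realizes the algebraic transfer in $L$-theory used in this induction, so that the arithmetic identity $p^!\circ p_* = |G|$ translates into statements about geometric structure sets and normal invariants.
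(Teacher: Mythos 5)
The paper itself gives no proof of this theorem; it is quoted from Farrell--Hsiang \cite{FH78}, so the comparison must be made against their original argument.

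Your proposal correctly assembles the standard preliminaries: reduce to $\bar{\mathcal{S}}$ via $Wh(\Gamma)=0$, use the surgery exact sequence to translate the vanishing of the structure set into injectivity/surjectivity of the assembly map $\sigma$, invoke the Bieberbach extension $1\to\mathbb{Z}^n\to\Gamma\to G\to 1$ to get the odd-degree regular cover $p:T^n\to M$, and use the fact that Shaneson/Hsiang--Shaneson--Wall splitting makes $\sigma$ for $T^n\times\mathbb{D}^k$ an isomorphism. The transfer identity $p_!\circ p^* = |G|$ then shows that any $x\in\ker\sigma_M$ satisfies $|G|\,x=0$, and since $|G|$ is odd, $\ker\sigma_M$ indeed has trivial $2$-primary part. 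All of that is correct, and these ingredients do appear in \cite{FH78}.

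The genuine gap is your final step. You acknowledge that after the transfer the kernel is odd $|G|$-torsion and propose to dispose of it by a ``secondary induction over the cyclic (or $p$-elementary) subgroups of $G$.'' This does not work as stated, and it is not what Farrell--Hsiang do. If $H<G$ is a proper subgroup, its preimage $\Gamma_H<\Gamma$ is again a Bieberbach group with holonomy $H$, so transferring from $\Gamma$ to $\Gamma_H$ and inducting on $|G|$ only produces the identity ``any kernel class is $[G:H]$-torsion for all $H$'' --- which, since the gcd of the indices of the proper subgroups of $G$ need not be $1$, does not force vanishing; and in the base case $H$ cyclic one has not previously established the theorem, so the induction is circular. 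Farrell--Hsiang's actual mechanism is quite different: they apply Dress's hyperelementary induction not to subgroups of $G$ but to hyperelementary subgroups $H$ of the finite quotients $\Gamma/sA$ for a carefully chosen integer $s$ (a prime $\equiv 1\bmod|G|$), take preimages $\Gamma_H\subset\Gamma$, and combine the resulting algebraic Frobenius induction with geometric control coming from the expanding self-covering $M\to M$ induced by multiplication by $s$ on $A\cong\mathbb{Z}^n$. The oddness of $|G|$ enters in analysing which hyperelementary subgroups of $\Gamma/sA$ can occur and in guaranteeing that the corresponding $\Gamma_H$ fall into cases that can be handled directly (essentially, cases where the holonomy collapses). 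Without this expanding-endomorphism/controlled-topology argument, the transfer alone gives only the $2$-primary vanishing, and the theorem does not follow. You correctly flag that the realization of the algebraic transfer by the geometric one is a delicate point; in fact that delicacy, together with the choice of $s$ and the analysis of the hyperelementary subgroups of $\Gamma/sA$, is essentially the entire content of the Farrell--Hsiang proof, and your sketch leaves it unaddressed.
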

\begin{theorem}\label{farall-hsiang A}
Let $M$ be  a closed flat Riemannian $n$-manifold, $N$ be a topological manifold and $f : N \to M$ be a homotopy equivalence.
If $n > 4$ and the holonomy group of $M$ has odd order, then $f$ is homotopic to a homeomorphism.
\end{theorem}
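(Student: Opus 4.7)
The plan is to deduce Theorem \ref{farall-hsiang A} directly from the preceding Theorem \ref{farall-hsiang} by specializing the stabilization parameter $m$ to zero. The work is essentially a matter of unpacking the definition of the structure set $\mathcal{S}(M)$ from Definition \ref{struct}; no new surgery input is required.

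Concretely, I would proceed as follows. First, note that since $M$ is a closed $n$-manifold and $f\colon N\to M$ is a homotopy equivalence from a (necessarily closed) topological manifold $N$, the pair $(N,f)$ represents an element of the structure set $\mathcal{S}(M)$. Second, apply Theorem \ref{farall-hsiang} with $m=0$: the hypothesis $m+n>4$ becomes $n>4$, which is given, and the odd-order holonomy hypothesis is also given. The conclusion is that $\mathcal{S}(M\times \mathbb{D}^{0})=\mathcal{S}(M)=0$, i.e., the pointed set $\mathcal{S}(M)$ consists of the single equivalence class of $(M,\mathrm{id}_{M})$. Third, since $[(N,f)]$ and $[(M,\mathrm{id}_{M})]$ must therefore coincide, the equivalence relation of Definition \ref{struct} furnishes a homeomorphism $h\colon N\to M$ such that $\mathrm{id}_{M}\circ h=h$ is homotopic to $f$. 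This is exactly the assertion that $f$ is homotopic to a homeomorphism.

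There is no genuine obstacle in this deduction; the entire content lies in Theorem \ref{farall-hsiang}, whose proof uses controlled topology and the odd-order holonomy hypothesis to trivialize the surgery obstructions. The one item that should be flagged for the reader is the implicit requirement that $N$ be closed, which is forced by the fact that $f$ is a homotopy equivalence to the closed manifold $M$ (so that $(N,f)$ legitimately represents an element of $\mathcal{S}(M)$), together with the standard convention in Definition \ref{struct} that structure set elements are closed manifolds. With that point noted, the proof is a one-line specialization of Theorem \ref{farall-hsiang}.
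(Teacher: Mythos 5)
Your proposal is correct and is exactly the paper's own justification: the paper simply remarks that Theorem \ref{farall-hsiang A} is the special case $m=0$ of Theorem \ref{farall-hsiang}, and your unpacking of $\mathcal{S}(M)=0$ via Definition \ref{struct} to extract the homeomorphism homotopic to $f$ is the intended (and correct) reading of that remark.
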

\begin{remark}\rm{
Theorem \ref{farall-hsiang A} is a special case of Theorem \ref{farall-hsiang} when $m= 0$. It has another equivalent formulation as follows:}
\end{remark}
\begin{theorem}\label{farall-hsiang A'} 
Let $M$ be a closed connected $n$-manifold $(n > 4)$. It has flat Riemannian structure with odd order holonomy group if and only if $\pi_{i}(M^n)= 0$ for $i> 1$ and $\pi_{1}(M^n)$ contains an abelian subgroup with odd (finite) index.
\end{theorem}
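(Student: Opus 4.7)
The plan is to prove the two directions separately, with the converse being the substantive one.

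For the forward direction, I would argue as follows: if $M$ is a closed flat Riemannian $n$-manifold, then its universal cover is the contractible space $\mathbb{R}^n$, forcing $\pi_i(M) = 0$ for $i > 1$, and $\pi_1(M)$ embeds as a torsion-free uniform discrete subgroup of $E(n) = O(n) \ltimes \mathbb{R}^n$, in which the translation lattice $T = \pi_1(M) \cap \mathbb{R}^n \cong \mathbb{Z}^n$ is an abelian subgroup whose index equals the order of the holonomy, and so is odd by hypothesis.

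For the converse, let $\Gamma = \pi_1(M)$ and let $A \leq \Gamma$ be abelian of odd finite index. Since $M$ is a closed aspherical $n$-manifold, $\Gamma$ is torsion-free of cohomological dimension $n$; thus $A$ is finitely generated torsion-free abelian, and the equality $\mathrm{cd}(A) = \mathrm{cd}(\Gamma)$ for finite-index subgroups of torsion-free groups forces $A \cong \mathbb{Z}^n$. Therefore $\Gamma$ is torsion-free virtually $\mathbb{Z}^n$, and Bieberbach's algebraic characterization of Euclidean crystallographic groups will produce a closed flat Riemannian $n$-manifold $M_0$ with $\pi_1(M_0) \cong \Gamma$. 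Once I have verified that the holonomy $G$ of $M_0$ has odd order, I will finish as follows: $M$ and $M_0$ are aspherical with isomorphic fundamental groups, hence homotopy equivalent by Theorem \ref{hurewicz}; since $n > 4$ and $|G|$ is odd, Theorem \ref{farall-hsiang A} will promote this homotopy equivalence to a homeomorphism $M \cong M_0$, equipping $M$ with a flat structure of odd holonomy.

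I expect the parity of $|G|$ to be the main obstacle. To establish it, I will denote by $T \triangleleft \Gamma$ the translation subgroup of $\Gamma$ (the unique maximal normal abelian subgroup, isomorphic to $\mathbb{Z}^n$) and by $G = \Gamma/T$ the holonomy; a key input from Bieberbach's theory is that the conjugation action $G \hookrightarrow \mathrm{Aut}(T) = GL_n(\mathbb{Z})$ is faithful. I will then consider $H = AT/T \leq G$, which is abelian and has $[G : H] = [\Gamma : AT]$ dividing the odd integer $[\Gamma : A]$; in particular $[G : H]$ is odd and $|G|_2 = |H|_2$. Assuming for contradiction that $|G|$ is even, $H$ would contain an element $\sigma$ of order two, lifting to some $a \in A$ with $a^2 \in T$ and $a \notin T$. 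Because $A$ is abelian, $a$ centralizes the finite-index sublattice $A \cap T \subseteq T$, so $\sigma$ fixes $A \cap T$ pointwise, hence fixes $T \otimes \mathbb{Q}$, and therefore acts trivially on $T$; faithfulness of the holonomy representation then forces $\sigma = 1$ in $G$, contradicting $\mathrm{ord}(\sigma) = 2$. The hard part is precisely this conversion of ``odd-index abelian subgroup of $\Gamma$'' into ``odd-order holonomy $G$'', and it relies crucially on the faithfulness of the Bieberbach holonomy representation, which makes the entire $2$-part of $G$ visible from $A$.
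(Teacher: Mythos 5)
Your proof is correct, and it is precisely the derivation the paper leaves implicit: the paper only asserts that Theorem \ref{farall-hsiang A'} is ``another equivalent formulation'' of Theorem \ref{farall-hsiang A}, citing \cite{FH78}, without supplying the argument. Your route — using cohomological dimension to pin down $A\cong\mathbb{Z}^n$, invoking the Bieberbach/Zassenhaus realization of torsion-free virtually-$\mathbb{Z}^n$ groups as fundamental groups of closed flat manifolds, reducing odd index to odd holonomy via faithfulness of $G\hookrightarrow\mathrm{Aut}(T)$, and then applying Theorem \ref{farall-hsiang A} to transfer the flat structure across the homotopy equivalence — is exactly the expected reduction. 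One small simplification: the parity argument is more elaborate than necessary, since your centralizer computation shows that \emph{any} $\sigma\in H=AT/T$ acts trivially on the full-rank sublattice $A\cap T$, hence on $T\otimes\mathbb{Q}$ and thus on $T$, forcing $\sigma=1$ by faithfulness; so $H$ is trivial, $A\subseteq T$, and $|G|=[\Gamma:T]$ divides the odd number $[\Gamma:A]$ directly, with no need to single out an involution or to argue by contradiction.
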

\begin{remark}\label{fhrig}\rm{
F.T. Farrell and W.C. Hsiang \cite{FH83} verified Borel Conjecture \ref{borel} for aspherical manifolds (of dimensions greater than $4$) whose fundamental groups contain nilpotent subgroups of finite index. In particular, Borel Conjecture \ref{borel} is true for (high dimensional) closed flat Riemannian manifolds. Since fundamental groups of flat Riemannian manifolds are virtually abelian; i.e., contain an abelian subgroup with finite index.  Here are the results:}
\end{remark}
\begin{theorem}\label{farall-hsiang 80}{\rm \cite{FH83}}
Let $M$ be a closed aspherical manifold whose fundamental group is virtually nilpotent and let $E^{m+n}$ be the total space of a
$\mathbb{D}^{m}$-bundle whose base space is $M^n$, $(m + n > 4)$, then $\mathcal{S}(E^{m+n})=0$; in particular, $\mathcal{S}(M^{n})=0$ when $n > 4$.
\end{theorem}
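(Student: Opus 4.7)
The plan is to apply the surgery exact sequence of Definition \ref{surexa} to the total space $E^{m+n}$. Since $E$ is a $\mathbb{D}^m$-bundle over $M$, the projection induces an isomorphism $\pi_1(E) \cong \pi_1(M)$, and the structure set $\mathcal{S}(E^{m+n})$ can be identified with $\bar{\mathcal{S}}(M \times \mathbb{D}^m, \partial)$ once $Wh(\pi_1 M)$ is shown to vanish (so that $\mathcal{S} = \bar{\mathcal{S}}$). The relevant segment of the sequence reads
\begin{align*}
L_{n+m+1}(\pi_1 M) \xrightarrow{\tau} \bar{\mathcal{S}}(M \times \mathbb{D}^m, \partial) \xrightarrow{\omega} [M \times \mathbb{D}^m, \partial; G/Top] \xrightarrow{\sigma} L_{n+m}(\pi_1 M),
\end{align*}
so vanishing of the structure set reduces to showing that $\sigma$ is an isomorphism in degrees $n+m$ and $n+m+1$.

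I would then dispose of the required $K$-theoretic input: $Wh(\pi_1 M) = 0$ for virtually nilpotent $\pi_1(M)$. Passing to a torsion-free nilpotent normal subgroup $N$ of finite index, one inducts on the Hirsch rank of $N$ using the central extensions $1 \to \mathbb{Z} \to N_k \to N_{k-1} \to 1$ together with the Bass--Heller--Swan splitting
\begin{align*}
Wh(H \times \mathbb{Z}) \cong Wh(H) \oplus \widetilde{K}_0(\mathbb{Z}H) \oplus \widetilde{\mathrm{Nil}}_0(\mathbb{Z}H)^{\oplus 2}.
\end{align*}
The base case is finitely generated free abelian, where all summands vanish, and a transfer / Frobenius-induction argument passes the conclusion from $N$ up to the virtually nilpotent group $\pi_1(M)$.

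The main step is showing that the surgery assembly map $\sigma : [M \times \mathbb{D}^k, \partial ; G/Top] \to L_{n+k}(\pi_1 M)$ is an isomorphism for $k \in \{m, m+1\}$. The strategy is again induction on the Hirsch rank: realize the chosen central extension geometrically as the fundamental group of a bundle $F \to M' \to \mathbb{S}^1$ on an infranilmanifold model $M'$ of $M$; then an $L$-theoretic splitting theorem in the spirit of Farrell's fibering theorem replaces any surgery problem on $M \times \mathbb{D}^k$ by one on $F \times \mathbb{D}^{k+1}$, for which the assembly map is an isomorphism by induction.

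The main obstacle will be carrying out this splitting with geometric control: one must show that the Nil-summands in the twisted Bass--Heller--Swan decomposition of $L$-theory vanish (so that $L_*(\pi_1 M)$ actually splits as $L_*(\pi_1 F) \oplus L_{*-1}(\pi_1 F)$), track the first Stiefel--Whitney class $\omega_1(M)$ throughout in order to handle the non-orientable case, and absorb the indeterminacy of the transfer from torsion-free nilpotent to virtually nilpotent by stabilizing with the disk factor. This last point is precisely why the theorem is phrased with the auxiliary $\mathbb{D}^m$-bundle $E^{m+n}$ rather than $M$ alone: the disk stabilization supplies the extra dimensions needed to realize the controlled surgery manoeuvres (expanding self-maps on infranilmanifolds, thin $h$-cobordism theorems) that detect L-theory from a finite cover.
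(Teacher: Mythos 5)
Your high-level skeleton is right: use the surgery exact sequence, identify $\mathcal{S}$ with $\bar{\mathcal{S}}$ after proving $Wh(\pi_1 M)=0$, and reduce vanishing of the structure set to the assembly map $\sigma$ being an isomorphism. The Hirsch-rank induction you outline, via the twisted Bass--Heller--Swan decomposition and the Shaneson/Ranicki splitting, does work for a torsion-free finitely generated \emph{nilpotent} group $N$, since such $N$ is poly-$\mathbb{Z}$ --- though note you want extensions $1\to N_{k-1}\to N_k\to\mathbb{Z}\to 1$, so that $N_k\cong N_{k-1}\rtimes_\alpha\mathbb{Z}$ and the twisted BHS/Shaneson formulas apply; the central extensions $1\to\mathbb{Z}\to N_k\to N_{k-1}\to 1$ you wrote point the wrong way. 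The genuine gap is the passage from $N$ to $\Gamma=\pi_1(M)$ with $[\Gamma:N]<\infty$. ``A transfer / Frobenius-induction argument'' is not an argument here: Dress/Frobenius induction reduces a statement over $\Gamma$ to subgroups $H\leq\Gamma$ with $N\leq H$ and $H/N$ hyperelementary in $\Gamma/N$, and every such $H$ is again virtually nilpotent of the same Hirsch rank, so the induction you set up never bottoms out on the nilpotent case you have solved. The same circularity blocks the $L$-theory step: fibering and splitting over $\mathbb{S}^1$ is available only on a nilmanifold cover of $M$, and you still need to climb back up the finite extension.

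What Farrell and Hsiang actually do in \cite{FH83} (building on \cite{FH78} and \cite{FH81a}) is break this circle by \emph{interlocking} the hyperelementary induction with the geometry of expanding endomorphisms on nilmanifolds and with controlled topology. One models $M$ by an infranilmanifold, so a finite nilmanifold cover carries expanding self-coverings; one chooses a sequence of finite-index subgroups $\Gamma_s\leq\Gamma$ coming from these; Dress induction over $\Gamma/\Gamma_s$ reduces the vanishing to transfers along the corresponding covers; and --- this is the decisive point --- because the covers expand the metric, the transferred obstruction is an $\epsilon$-controlled surgery (or $\epsilon$-controlled $h$-cobordism) problem over a fixed base with $\epsilon\to 0$ as $s\to\infty$, which a controlled vanishing theorem of Chapman--Ferry/Quinn type then kills. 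The nontrivial arithmetic is to arrange that the hyperelementary subgroups of $\Gamma/\Gamma_s$ stay small relative to the expansion, so that control genuinely improves. You gesture at expanding self-maps and thin $h$-cobordism theorems in your last sentences, but only as a side comment about the role of the disk factor, not as the engine of the proof; in \cite{FH83} these ingredients, woven together with the Frobenius induction, \emph{are} the proof, and without spelling out how they mesh your proposal stalls precisely at the step that distinguishes the virtually nilpotent theorem from the poly-$\mathbb{Z}$ one.
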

Theorem \ref{farall-hsiang 80} also has the following immediate consequence.
\begin{theorem}\label{farall-hsiang 80ii}
Let $N^n$ $(n \neq 3, 4)$ be a closed connected infranilmanifold and $M^n$ be an aspherical manifold with $\pi_{1}(M^n)$ isomorphic to $\pi_{1}(N^n)$, then $N^n$ and $M^n$ are homeomorphic.
\end{theorem}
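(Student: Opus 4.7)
The plan is to reduce the statement to Theorem \ref{farall-hsiang 80} by first producing a homotopy equivalence via the Hurewicz theorem, and then handling the low-dimensional cases $n \leq 2$ separately since Theorem \ref{farall-hsiang 80} requires $n > 4$.

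First I would verify that the hypotheses of Theorem \ref{farall-hsiang 80} apply to $M$. An infranilmanifold $N^n$ is, by construction (Example, item 5 of the previous section), the double coset space $\Gamma \backslash G$ where $G$ is a simply connected nilpotent Lie group, diffeomorphic to $\mathbb{R}^n$, and $\Gamma$ is a torsion-free discrete subgroup of $G \rtimes C$ for some compact group $C$ acting by automorphisms. Thus $N$ is aspherical and $\pi_1(N) = \Gamma$ contains the nilpotent normal subgroup $\Gamma \cap G$ of finite index, so $\pi_1(N)$ is virtually nilpotent. Since $\pi_1(M) \cong \pi_1(N)$, the group $\pi_1(M)$ is virtually nilpotent as well, and by hypothesis $M$ is aspherical. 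Hence $M$ satisfies the hypotheses of Theorem \ref{farall-hsiang 80}.

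Next, for the case $n \geq 5$, I would apply Theorem \ref{hurewicz} to produce a homotopy equivalence $f : N \to M$ realizing the given isomorphism on fundamental groups; this is possible because both $N$ and $M$ are aspherical. The pair $(N, f)$ then defines an element of the structure set $\mathcal{S}(M)$. By Theorem \ref{farall-hsiang 80} (taking $m = 0$), we have $\mathcal{S}(M) = 0$, so $(N, f) \sim (M, \mathrm{id}_M)$ in the sense of Definition \ref{struct}. Unwinding the equivalence relation, this yields a homeomorphism $h : N \to M$ with $\mathrm{id}_M \circ h \simeq f$, so in particular $N$ and $M$ are homeomorphic.

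The remaining low-dimensional cases $n \in \{0, 1, 2\}$ must be handled directly, since Theorem \ref{farall-hsiang 80} is unavailable there. For $n = 0$ the statement is trivial; for $n = 1$, both $N$ and $M$ are closed aspherical $1$-manifolds and therefore homeomorphic to $\mathbb{S}^1$; for $n = 2$, any closed aspherical surface is determined up to homeomorphism by its fundamental group by the classification of surfaces, so $N \cong M$ follows immediately. The main obstacle is really absorbed into the preceding Theorem \ref{farall-hsiang 80}, whose proof (via controlled surgery theory for virtually nilpotent fundamental groups) provides the heavy lifting; once that result is taken as given, the present theorem is a direct corollary, with the only subtlety being to correctly identify the virtually nilpotent structure on $\pi_1(M)$ and to invoke Theorem \ref{hurewicz} to convert the algebraic isomorphism into a geometric homotopy equivalence.
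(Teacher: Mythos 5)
Your proof is correct and follows precisely the route the paper intends when it calls this result an ``immediate consequence'' of Theorem~\ref{farall-hsiang 80}: identify $\pi_1(M)$ as virtually nilpotent via the isomorphism with $\pi_1(N)$, realize the isomorphism geometrically through Theorem~\ref{hurewicz}, read off a homeomorphism from $\mathcal{S}(M)=0$ with $m=0$, and dispose of dimensions $\leq 2$ by classical classification. One small point worth being explicit about: the statement must (and clearly does, given the surrounding text and the source \cite{FH83}) take $M$ to be a \emph{closed} aspherical manifold, since Theorem~\ref{farall-hsiang 80} requires it and the conclusion would otherwise fail; your invocation of that theorem tacitly uses this, and you should say so, just as you should note that your description of an infranilmanifold differs cosmetically from the paper's Example~5 but agrees on the only fact used, namely that $\pi_1(N)$ is virtually nilpotent.
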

\begin{remark}\rm{
\indent
If $N^n$ is a nilmanifold, this result was proven by Wall \cite{Wal71}; and if $N^n$ is the $n$-torus, the result was proven earlier yet by Wall \cite{Wal69}, and Hsiang and Shaneson \cite{HS70}.}
\end{remark}
F. T. Farrell and L. E. Jones \cite{FJ88} proved Borel's Conjecture \ref{borel} for every closed aspherical manifold of $\dim \neq 3$ whose fundamental group is virtually poly-$\mathbb{Z}$. Here are the results:
\begin{theorem}\label{virtualpoly}\cite{FJ88}
Let $M$ be a closed aspherical manifold whose fundamental group is virtually poly-$\mathbb{Z}$ and let $E^{m+n}$ be the total space of an
$\mathbb{D}^{m}$-bundle whose base space is $M^n$, $(m + n > 4)$, then $\mathcal{S}(E^{m+n})=0$; in particular, $\mathcal{S}(M^{n})=0$ when $n > 4$.
\end{theorem}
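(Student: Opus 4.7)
The plan is to combine the surgery exact sequence of Definition \ref{surexa} with an induction on the Hirsch length of the poly-$\mathbb{Z}$ subgroup of $\pi_1(M)$, reducing at each inductive step to a fibering problem over $\mathbb{S}^1$ that is handled by Farrell-Jones fibered-rigidity methods. Since $E^{m+n}$ is itself a compact aspherical manifold with boundary and $\pi_1(E)\cong \pi_1(M)$ is again virtually poly-$\mathbb{Z}$, it suffices to prove $\mathcal{S}(E,\partial)=0$ under the standing assumption $\dim E\geq 5$. The Whitehead group of any virtually poly-$\mathbb{Z}$ group vanishes (established as part of the same program), so $\bar{\mathcal{S}}=\mathcal{S}$, and the surgery exact sequence reduces the problem to showing that the assembly map $\sigma:[E,\partial;G/\rm{Top}]\to L_d(\pi_1E)$ is injective, with the corresponding surjectivity one degree higher to kill the image of $\tau$.

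I would next induct on the Hirsch length $h$ of $\pi_1(M)$. The base of the induction (and much more) is provided by Theorem \ref{farall-hsiang 80}, which settles the case of virtually nilpotent fundamental group. For the inductive step with $h\geq 2$, after passing to a finite-index subgroup and transferring back via a standard covering-space argument, one may assume $\pi_1(M)$ is genuinely poly-$\mathbb{Z}$ with a normal subgroup $\Pi$ of Hirsch length $h-1$ and $\pi_1(M)/\Pi\cong \mathbb{Z}$. This projection is geometrically realized by a smooth fibration $p:M\to \mathbb{S}^1$ whose fiber $F$ is an aspherical manifold with $\pi_1(F)=\Pi$, and the inductive hypothesis applies to $F$ and to all $\mathbb{D}^j$-bundles over $F$.

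The geometric heart of the argument is then to pass from the triviality of $\mathcal{S}$ for the fiber $F$ to the triviality of $\mathcal{S}$ for the total space $M$. Given $(N,f)\in \mathcal{S}(M,\partial)$, one considers the sequence of covers $M_k\to M$ obtained by pulling back the degree-$k$ self-cover $\mathbb{S}^1\to \mathbb{S}^1$; by analyzing the dynamics of the solvable monodromy of $p$ on the infinite cyclic cover $\widetilde{M}$, one exhibits the pulled-back homotopy equivalence $(N_k,f_k)$ as a controlled surgery problem whose fiberwise control tends to zero as $k\to \infty$. A controlled splitting and $h$-cobordism theorem in the Chapman-Ferry-Quinn tradition then cuts $(N_k,f_k)$ along a fiber, leaving a surgery problem on $F\times \mathbb{D}^1$ that is killed by the inductive hypothesis. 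A transfer/descent argument recovers triviality of the original $(N,f)$, and the same reasoning applies verbatim with $E^{m+n}$ in place of $M^n$ to yield $\mathcal{S}(E^{m+n})=0$.

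The principal obstacle, and the true technical content of \cite{FJ88}, lies in establishing these foliated control estimates in the solvable (rather than nilpotent) setting. In the virtually nilpotent case handled by Farrell-Hsiang, an infranilmanifold carries a genuine expanding self-map and the required control estimates are essentially algebraic; for general poly-$\mathbb{Z}$ fundamental group, the monodromy of $p$ may be only partially hyperbolic and non-semisimple, so one must quantify the mixed expansion/contraction behaviour on the universal cover finely enough to drive a controlled surgery theorem. Packaging those estimates into a fibered rigidity statement strong enough to close the induction is the hard step; once it is in place, the induction on Hirsch length runs as sketched and delivers the conclusion in all dimensions $\geq 5$.
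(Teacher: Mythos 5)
The survey does not prove Theorem~\ref{virtualpoly} --- it is stated as a citation to \cite{FJ88} with no argument --- so there is no in-paper proof to compare against, and your sketch has to be judged against the actual Farrell--Jones argument.

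Your outline has the right overall shape: surgery exact sequence with $\bar{\mathcal{S}}=\mathcal{S}$ via vanishing of the Whitehead group, Hirsch-length induction anchored on the Farrell--Hsiang virtually nilpotent theorem, fibering over $\mathbb{S}^1$, and controlled/foliated estimates to cut along a fiber. But there is a genuine gap in the sentence ``after passing to a finite-index subgroup and transferring back via a standard covering-space argument, one may assume $\pi_1(M)$ is genuinely poly-$\mathbb{Z}$.'' No covering-space argument does this. Triviality of $\mathcal{S}(M')$ for a finite cover $M'\to M$ does not descend to triviality of $\mathcal{S}(M)$ by naturality alone; what makes the reduction go is the Frobenius/hyperelementary induction machinery of Farrell--Hsiang, in which one checks, prime by prime, that the assembly map becomes an isomorphism after transferring to a carefully constructed family of finite covers with hyperelementary deck groups and then invokes Dress's induction theorem. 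This is a co-equal pillar of \cite{FJ88} alongside the foliated control estimates you rightly single out as hard, and because the relevant finite quotients change at every level of the Hirsch filtration it is not a one-time preliminary reduction but recurs throughout the induction. A second gap is the cut-along-a-fiber step: splitting a homotopy equivalence along a two-sided codimension-one $\pi_1$-injective aspherical submanifold carries algebraic $K$- and $L$-theoretic obstructions (Whitehead torsion and the Nil-type terms attached to the HNN extension $\Pi\rtimes\mathbb{Z}$), and their vanishing is an input --- established within the same Farrell--Hsiang/Farrell--Jones program --- rather than an automatic output of making the control small; your sketch treats the split as if it came for free once the fiberwise control tends to zero.
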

A more geometric consequence of Theorem \ref{virtualpoly} is the following result:
\begin{theorem}\label{rig.virtu}{\rm \cite{FJ88}}
Let $N^n$ and $M^n$ be two closed connected aspherical manifolds with isomorphic fundamental groups. Suppose $\pi_1(N)$ is virtually poly-$\mathbb{Z}$, then $N$ and $M$ are homeomorphic provided $n\neq 3, 4$.
\end{theorem}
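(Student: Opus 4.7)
The plan is to reduce Theorem \ref{rig.virtu} to the vanishing of the structure set $\mathcal{S}(M)$ given by Theorem \ref{virtualpoly}, feeding in a homotopy equivalence produced by Theorem \ref{hurewicz}. I will split the argument according to dimension: the substantive case is $n\geq 5$, while $n=1,2$ are handled by classical surface-classification-style arguments.

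For $n\geq 5$: First I would use the hypothesis that $M$ and $N$ are closed aspherical with $\pi_1(M)\cong \pi_1(N)$. By Theorem \ref{hurewicz} there is a homotopy equivalence $f\colon N\to M$ realizing the given isomorphism on fundamental groups. Thus the pair $(N,f)$ determines a class in the structure set $\mathcal{S}(M)$ of Definition \ref{struct}. Next I would invoke Theorem \ref{virtualpoly} with $m=0$: since $\pi_1(M)$ is virtually poly-$\mathbb{Z}$ and $\dim M=n>4$, we have $\mathcal{S}(M)=0$. Therefore $(N,f)$ is equivalent to the base point $(M,\mathrm{id})$, which by Definition \ref{struct} means precisely that there exists a homeomorphism $h\colon N\to M$ such that $\mathrm{id}\circ h=h$ is homotopic to $f$. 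In particular $N$ is homeomorphic to $M$, as desired.

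For $n=1$: any closed connected $1$-manifold is $\mathbb{S}^1$, so both $M$ and $N$ are $\mathbb{S}^1$ and are trivially homeomorphic. For $n=2$: a closed connected aspherical $2$-manifold is a closed surface other than $\mathbb{S}^2$ or $\mathbb{RP}^2$, and the classification of surfaces shows that such surfaces are determined up to homeomorphism by their fundamental groups. So $\pi_1(M)\cong \pi_1(N)$ already forces $M$ and $N$ to be homeomorphic. (Note that the virtually poly-$\mathbb{Z}$ hypothesis is not even needed in these low dimensions.)

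The dimensions $n=3$ and $n=4$ are excluded from the statement because the surgery machinery behind Theorem \ref{virtualpoly} requires $\dim\geq 5$; the $4$-dimensional case is out of reach of the usual TOP surgery exact sequence for such fundamental groups, and the $3$-dimensional case would require Waldhausen–Turaev type arguments together with geometrization. So the only real obstacle in the plan above is that it silently relies on the deep content of Theorem \ref{virtualpoly}; once that input is taken as given, the passage to Theorem \ref{rig.virtu} is essentially a formal unpacking of the definition of the structure set together with Hurewicz's theorem for aspherical spaces.
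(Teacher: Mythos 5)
Your proposal follows essentially the same route the paper intends: Theorem \ref{rig.virtu} is stated explicitly as ``a more geometric consequence'' of Theorem \ref{virtualpoly}, and the analogous Theorem \ref{farall top-rig} is proved in exactly this way (set $m=0$ in the structure-set vanishing theorem, feed in the homotopy equivalence from Theorem \ref{hurewicz}). The one step you gloss over, which the paper's model argument for Theorem \ref{farall top-rig} makes explicit, is the passage from the vanishing of the structure set to a homeomorphism. The paper's proof says that the vanishing shows $M$ and $N$ are h-cobordant, and then invokes the s-cobordism theorem together with $Wh(\pi_1(M))=0$ to upgrade the h-cobordism to a homeomorphism. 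The reason this matters is that the surgery exact sequence (Definition \ref{surexa}) is formulated in terms of $\bar{\mathcal{S}}$, the h-cobordism structure set, and the surgery machinery behind Theorem \ref{virtualpoly} a priori gives vanishing of $\bar{\mathcal{S}}$; one has $\mathcal{S}=\bar{\mathcal{S}}$ only when $Wh(\pi_1(M))=0$ and $\dim M\geq 5$. Your reading of Definition \ref{struct} at face value is not wrong, but it hides this input. For a closed aspherical manifold whose fundamental group is virtually poly-$\mathbb{Z}$, the Whitehead group does vanish by Farrell--Hsiang \cite{FH81a} (the paper cites this explicitly), so your conclusion holds; you should simply flag that you are implicitly using $Wh(\pi_1(M))=0$, just as the paper does in the proof of Theorem \ref{farall top-rig}. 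The low-dimensional cases $n=1,2$ are handled correctly by the classification of surfaces.
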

\begin{remark}\rm{
\indent
\begin{itemize}
\item[\bf{1.}] The work of Freedman and Quinn \cite{FQ90} together with Theorem \ref{virtualpoly} should imply that $N^n$ and $M^n$ are homeomorphic even when $n = 4$.
\item[\bf{2.}] We also recall a conjecture of Milnor \cite{Mil77}, viz., that the class of fundamental groups of compact complete affine flat manifolds coincides with the class of torsion-free virtually poly-$\mathbb{Z}$ groups. (The original conjecture was without the compactness assumption, but Margulis \cite{Mar83} has given a counterexample to this more general conjecture.) Some positive evidence for this conjecture is in \cite{Boy89} and \cite{Mil77}. We can relate the conjecture to Theorem \ref{rig.virtu}. Namely, if the conjecture were true, then complete compact flat affine manifolds would be topologically characterized (in dimensions $\neq 3, 4$) as those closed manifolds $M$ such that $\pi_1(M)$ is virtually poly-$\mathbb{Z}$ and $\pi_{i}(M)=0$ for $i\neq 1$. 
\end{itemize}}
\end{remark}
\begin{theorem}\label{hype.regid}{\rm \cite{FJ89}}
Let $M^n$ $(n \neq 3, 4)$ be a complete (connected) real hyperbolic manifold.
If $m$ is any nonnegative integer larger than $4\text{-}n$, then the structure set $\mathcal{S}(M\times \mathbb{R}^m)$ contains only one element, and this element is represented by the identity map of $M^n\times \mathbb{R}^m$.
\end{theorem}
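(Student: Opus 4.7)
The plan is to attack this via the surgery exact sequence of Definition \ref{surexa}, applied to $M^n \times \mathbb{D}^m$ relative to its boundary (noting that $\mathcal{S}(M \times \mathbb{R}^m)$ in the proper category may be identified with $\mathcal{S}(M\times \mathbb{D}^m,\partial)$ when one pushes all control to $\partial \mathbb{D}^m$). The numerical hypothesis $m > 4-n$ is equivalent to $n+m \geq 5$, which places us in the regime where surgery machinery is available. Since $\pi_1(M)$ is a torsion-free discrete subgroup of $\mathrm{Isom}(\mathbb{H}^n)$, one should first observe (or cite as an input) that $\mathrm{Wh}(\pi_1 M)=0$, so that $\bar{\mathcal{S}}(M\times \mathbb{D}^m,\partial)$ coincides with $\mathcal{S}(M\times \mathbb{D}^m,\partial)$ by the remark following Definition \ref{struct.boun}.

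Given this identification, the relevant portion of the surgery sequence is
$$[M\times \mathbb{D}^{m+1},\partial;G/Top] \xrightarrow{\sigma} L_{n+m+1}(\pi_1 M) \xrightarrow{\tau} \mathcal{S}(M\times \mathbb{D}^m,\partial) \xrightarrow{\omega} [M\times \mathbb{D}^m,\partial;G/Top] \xrightarrow{\sigma} L_{n+m}(\pi_1 M),$$
so by exactness it suffices to prove that the left-hand $\sigma$ is surjective and the right-hand $\sigma$ is injective. Both statements are instances of an assembly isomorphism for $\pi_1(M)$. The overarching goal is therefore to show that the $L$-theoretic assembly map is an isomorphism for fundamental groups of complete hyperbolic manifolds, in the range relevant to $M\times \mathbb{D}^m$.

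The geometric engine for proving this assembly isomorphism is the geodesic flow $\phi_t$ on the unit tangent bundle $SM$. Because $M$ is real hyperbolic, $\phi_t$ is Anosov, with strong stable and unstable foliations that contract exponentially under the flow. The plan is to use $\phi_t$ to build an asymptotic transfer from surgery problems on $M\times \mathbb{R}^m$ to problems on a bundle over $M$ (or over $SM$) that are \emph{foliated-controlled} along the stable foliation. Once a surgery problem has been made small in the flow direction, one invokes a controlled surgery/$\alpha$-approximation style theorem to conclude that its surgery obstruction vanishes forces the problem itself to be trivial, thereby splitting the assembly map.

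The hard part, and the heart of the Farrell-Jones argument, will be the \emph{foliated control theorem}: one must prove that homotopy equivalences which become arbitrarily small when measured along the stable leaves of $\phi_t$ can be deformed to homeomorphisms. This requires delicate interplay between the exponential contraction of the geodesic flow and the controlled surgery theory of Quinn. A secondary technical obstacle, should $M$ fail to be compact, is that the flow dynamics degenerate in the thin part of $M$; one handles this by invoking Margulis' thick--thin decomposition and treating cuspidal or infinite-volume ends separately via a horospherical cross-section. Granting these two ingredients, the assembly maps in the range above are isomorphisms, the surgery sequence collapses, and $\mathcal{S}(M\times \mathbb{R}^m)$ reduces to the single element represented by $\mathrm{id}$.
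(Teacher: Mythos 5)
The paper itself does not prove this statement; it is a survey item cited directly from \cite{FJ89}. Your sketch is a reasonable high-level reconstruction of the route Farrell and Jones actually take in that paper: collapse the question to an assembly isomorphism for $L$-theory, use the geodesic flow on the unit tangent bundle as an asymptotic transfer to produce foliated control along the (exponentially contracting) stable leaves, invoke a foliated control theorem to kill controlled surgery obstructions, and patch together the thick and thin parts of $M$ when it is non-compact. Those are indeed the load-bearing ingredients.

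Two places where your account is oversimplified relative to what actually has to be done. First, the passage from $\mathcal{S}(M\times\mathbb{R}^m)$ to $\mathcal{S}(M\times\mathbb{D}^m,\partial)$ by ``pushing control to $\partial\mathbb{D}^m$'' is only a one-line remark when $M$ is compact; when $M$ is a complete but non-compact hyperbolic manifold (which is the case Theorem \ref{hype.regid} is actually after, and which Corollary \ref{prop-regid} depends on), the relevant object is a \emph{proper} structure set, and $M\times\mathbb{D}^m$ is itself non-compact, so neither Definition \ref{struct} nor \ref{struct.boun} applies as written. The correct tool is the proper surgery exact sequence, and identifying the proper structure set with a controlled/bounded structure set is itself part of the work in \cite{FJ89}, not a preliminary normalization. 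Second, citing $\mathrm{Wh}(\pi_1M)=0$ as a prior input to pass from $\bar{\mathcal{S}}$ to $\mathcal{S}$ understates matters: for closed hyperbolic $M$ this is \cite{FJ86}, and for complete non-compact $M$ the Whitehead vanishing is established by the \emph{same} geodesic-flow and foliated-control machinery you are proposing to use for the $L$-theory assembly map, so in the original argument it is a co-product of the dynamical method rather than something fed in from outside. Finally, the phrase ``splitting the assembly map'' is slightly off the mark: split injectivity of the surgery map in the closed nonpositively curved case was already obtained by Farrell--Hsiang (Theorem \ref{sur.map}); the new content of \cite{FJ89} needed to trivialize the structure set is \emph{surjectivity} of the assembly map in degree $n+m+1$, and that is precisely what the transfer and foliated control deliver. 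With those corrections your outline matches the cited proof.
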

Specializing Theorem \ref{hype.regid} by assuming $M$ is compact and $m = 0$ yields the following result:
\begin{theorem}\label{hyptop-rig}{\rm \cite{FJ89}}
Let $M^n$ be a closed (connected) real hyperbolic manifold and $N$ be a  closed topological aspherical manifold such that $N$ and $M$ have isomorphic fundamental groups. Then $N$ and $M$ are homeomorphic provided the dimension of $M$ differs from $3$ and $4$.
\end{theorem}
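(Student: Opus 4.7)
The plan is to derive Theorem \ref{hyptop-rig} as the special case $m=0$ of Theorem \ref{hype.regid}, after first producing a homotopy equivalence between $N$ and $M$ from the isomorphism of fundamental groups. Both $N$ (by hypothesis) and $M$ (since $M$ is covered by hyperbolic $n$-space $\textbf{H}^n$, which is contractible by Cartan--Hadamard, making $M$ aspherical as in Example 4 of Section 3) are closed aspherical manifolds with isomorphic fundamental groups. By Theorem \ref{hurewicz}, any chosen isomorphism $\pi_1(N)\cong\pi_1(M)$ is induced by some homotopy equivalence $f:N\to M$.

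Next, I would feed $f$ into the structure set $\mathcal{S}(M)$. For $n\geq 5$, Theorem \ref{hype.regid} applied with $m=0$ (which is allowed once $m>4-n$, i.e.\ always when $n\geq 5$) asserts that $\mathcal{S}(M)$ has exactly one element, represented by $\mathrm{id}_M$. The class $[(N,f)]\in\mathcal{S}(M)$ therefore coincides with $[(M,\mathrm{id}_M)]$. Unwinding Definition \ref{struct}, this means there exists a homeomorphism $h:N\to M$ such that $\mathrm{id}_M\circ h=h$ is homotopic to $f$. In particular $N$ is homeomorphic to $M$, which is the conclusion sought.

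For the low-dimensional cases $n=1,2$ that Theorem \ref{hype.regid} does not directly cover, I would appeal to the classification of low-dimensional manifolds: a closed real hyperbolic $1$-manifold is forced to be $\mathbb{S}^1$, so $\pi_1(N)\cong\mathbb{Z}$ and $N\cong\mathbb{S}^1$ by the classification of $1$-manifolds; a closed real hyperbolic $2$-manifold is a closed surface of genus $\geq 2$ (or a non-orientable analogue), and the classification of closed surfaces says that any closed aspherical surface is determined up to homeomorphism by its fundamental group. Together with the hyperbolic dimension constraint ($n\geq 2$), this handles every dimension outside the excluded range $n=3,4$.

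The substantive content of the argument sits entirely in Theorem \ref{hype.regid}; the present theorem is essentially a specialization. Thus the main obstacle is not in this reduction but in the proof of Theorem \ref{hype.regid} itself, which is what forces the dimension restrictions: the machinery used to kill $\mathcal{S}(M)$ for hyperbolic targets requires $n\geq 5$, and the cases $n=3,4$ involve fundamentally different tools (Mostow--Prasad rigidity and $3$-manifold topology for $n=3$, and topological surgery in dimension $4$ via Freedman--Quinn for $n=4$), which is precisely why those dimensions are excluded from the statement.
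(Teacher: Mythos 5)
Your argument is correct and follows essentially the same route the paper indicates: specialize Theorem \ref{hype.regid} to $m=0$ (which indeed requires $n\geq 5$), produce the homotopy equivalence $f:N\to M$ from the isomorphism of fundamental groups via Theorem \ref{hurewicz} and asphericity of $M$, and read off the homeomorphism from triviality of the structure set $\mathcal{S}(M)$. You also supply the low-dimensional cases $n=1,2$, which the $m=0$ specialization of Theorem \ref{hype.regid} does not cover directly, by appeal to the classification of closed $1$- and $2$-manifolds; this is a sensible and correct completion of the bookkeeping left implicit in the paper.
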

\begin{remark}\rm{
Theorem \ref{hyptop-rig} together with Mostow's rigidity Theorem \ref{mostow} yields a characterization of hyperbolic structures on compact manifolds whose dimension is greater than 4; i.e., Theorem \ref{hyptop-rig} is an existence theorem for hyperbolic structures while Mostow's work is the uniqueness theorem.}
\end{remark}
\begin{corollary}\label{hyp-stru}{\rm \cite{FJ89}}
A closed (connected) topological manifold $M$ of dimension $n\neq 3$ and $4$ has a (real) hyperbolic structure  if and only if
\begin{itemize}
\item[(i)] $M$ is aspherical and
\item[(ii)] the fundamental group of $M$ is isomorphic to a discrete cocompact subgroup of the Lie group $O(n, 1)$.
\end{itemize}
Furthermore, by Mostow's rigidity Theorem \ref{mostow}, the structure is unique (up to isometry) provided $n > 2$.
\end{corollary}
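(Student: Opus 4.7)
The plan is to prove the two implications separately and then invoke Mostow rigidity for the uniqueness statement. For the forward direction, suppose $M$ carries a hyperbolic structure. Then $M$ is a complete Riemannian manifold of constant sectional curvature $-1$; by the Cartan-Hadamard theorem its universal cover $\widetilde{M}$ is isometric to hyperbolic $n$-space $\mathbb{H}^n$, which is contractible, so $M$ is aspherical, proving (i). The fundamental group $\Gamma = \pi_1(M)$ acts freely on $\mathbb{H}^n$ by deck transformations, and these are isometries, so $\Gamma$ embeds as a discrete subgroup of $\rm{Isom}(\mathbb{H}^n) \cong O(n,1)$; cocompactness follows from compactness of $M$, giving (ii).

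Conversely, assume (i) and (ii), and fix a discrete cocompact subgroup $\Gamma \leq O(n,1)$ isomorphic to $\pi_1(M)$. Since a closed aspherical manifold has torsion-free fundamental group (any nontrivial finite subgroup of $\pi_1(M)$ would act freely on the contractible universal cover $\widetilde{M}$, forcing that finite group to have cohomological dimension at most $\dim M < \infty$, which is impossible), $\Gamma$ is torsion-free. Hence $\Gamma$ acts freely and properly discontinuously on $\mathbb{H}^n$, and the quotient $N := \Gamma \backslash \mathbb{H}^n$ is a closed hyperbolic $n$-manifold. Then $N$ is aspherical with $\pi_1(N) \cong \Gamma \cong \pi_1(M)$, and since $n \neq 3, 4$, Theorem \ref{hyptop-rig} produces a homeomorphism $h : M \to N$; transporting the hyperbolic structure of $N$ along $h$ endows $M$ with a hyperbolic structure.

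For the uniqueness claim, suppose $M$ admits two hyperbolic structures, giving hyperbolic manifolds $M_1$ and $M_2$ both homeomorphic to $M$. The induced homeomorphism $M_1 \to M_2$ is in particular a homotopy equivalence between compact locally symmetric manifolds of nonpositive curvature; when $n > 2$ there are no one- or two-dimensional local geodesic direct factors, so Mostow's Rigidity Theorem \ref{mostow} applies and $h$ is homotopic to an isometry, proving uniqueness up to isometry.

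The main substantive input is Theorem \ref{hyptop-rig} (the Farrell-Jones topological rigidity result for real hyperbolic manifolds): it is what converts the purely algebraic datum of (ii) together with the homotopy-theoretic hypothesis (i) into an actual homeomorphism with a hyperbolic manifold. Everything else is bookkeeping with deck transformations, torsion-freeness of $\Gamma$, and classical Mostow rigidity. The dimension restriction $n \neq 3, 4$ enters exactly at the invocation of Theorem \ref{hyptop-rig}; in dimension $3$ the analogous statement requires Thurston's Geometrization, while in dimension $4$ the available surgery techniques are insufficient in full generality.
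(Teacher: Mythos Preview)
Your proof is correct and follows precisely the approach the paper indicates: the paper does not spell out a proof of this corollary but states in the preceding remark that Theorem~\ref{hyptop-rig} supplies the existence direction and Mostow's rigidity the uniqueness, which is exactly what you do. Your additional observation that $\pi_1(M)$ must be torsion-free (so that $\Gamma$ acts freely on $\mathbb{H}^n$) is the one step the paper leaves implicit, and you handle it correctly.
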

More generally, if $M$ has finite volume but perhaps is not compact, we have the following generalization of Theorem \ref{hyptop-rig}.
\begin{corollary}\label{prop-regid}{\rm \cite{FJ89}}
Let $M^n$ be a complete (connected) real hyperbolic manifold. Suppose that  $M^n$  has finite volume and $n\neq 3, 4$, and $5$.
Let $N$ be any topological manifold that is properly homotopically equivalent to $M$, then $N$ and $M$ are homeomorphic. In fact, any proper homotopy equivalence is properly homotopic to a homeomorphism.
\end{corollary}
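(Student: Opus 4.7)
The plan is to reduce the finite-volume case to the already-treated compact case, by truncating the cusps and applying the rigidity results for closed flat manifolds on the cusp cross-sections and the compact hyperbolic rigidity on the core.

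First, I would invoke the thick-thin decomposition (Margulis' Lemma) for finite-volume complete hyperbolic manifolds: there is a compact submanifold with boundary $M_0\subset M$ and finitely many cusps $C_1,\dots,C_k$, each homeomorphic to $F_i\times [0,\infty)$, where $F_i$ is a closed flat $(n-1)$-manifold (the cusp cross-section), so that $M=M_0\cup\bigsqcup_i C_i$ and $\partial M_0=\bigsqcup_i F_i$. Since $f\colon N\to M$ is a proper homotopy equivalence and $M$ is aspherical, after a proper homotopy I may assume that $f$ is transverse to each $F_i$, that $f^{-1}(F_i)$ is a single closed $(n-1)$-manifold $F'_i$ mapping by homotopy equivalence to $F_i$, and that the ends of $N$ decompose accordingly as $C'_i\simeq F'_i\times[0,\infty)$ with $f|_{C'_i}$ proper.

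Next I would apply the Farrell--Hsiang rigidity theorem for closed flat manifolds (Theorem \ref{farall-hsiang 80ii}): since $n\neq 4,5$, we have $\dim F_i=n-1\neq 3,4$, so each homotopy equivalence $F'_i\to F_i$ is homotopic to a homeomorphism. Using the product structure on the cusps together with Theorem \ref{farall-hsiang 80} applied to $F_i\times\mathbb{D}^1$ (whose fundamental group is still virtually nilpotent), the proper homotopy equivalence $C'_i\to C_i$ can be deformed, after a proper homotopy, to a homeomorphism. This leaves a homotopy equivalence $f_0\colon (N_0,\partial N_0)\to (M_0,\partial M_0)$ which is a homeomorphism on the boundary.

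Finally, I would show the relative structure set $\bar{\mathcal{S}}(M_0,\partial M_0)$ is trivial, so that $f_0$ is homotopic rel boundary to a homeomorphism; gluing with the cusp homeomorphisms then yields a homeomorphism $N\to M$ properly homotopic to $f$. The vanishing of $\bar{\mathcal{S}}(M_0,\partial M_0)$ follows from Theorem \ref{hype.regid}: since $n\geq 6$, taking $m=0$ gives $\mathcal{S}(M)=\{\ast\}$ for the proper structure set, which via the decomposition above is exactly the assertion that $\bar{\mathcal{S}}(M_0,\partial M_0)$ vanishes once the cusps are rigidified. The surgery exact sequence of Definition \ref{surexa} applies in the core dimension $n\geq 5$, matching the hypothesis.

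The principal obstacle is the matching of the two rigidity mechanisms at the boundary: one must ensure that the homotopies adjusting $f$ on the cusps and on the core can be chosen compatibly, so that the resulting homeomorphism on $M_0$ extends across $\partial M_0$ to agree with the cusp homeomorphisms. This coordination is what forces the intersection of the two dimension hypotheses $n\neq 3,4$ (for hyperbolic core rigidity and the surgery exact sequence) and $n\neq 4,5$ (for flat rigidity of the $(n-1)$-dimensional cross-sections), yielding the restriction $n\neq 3,4,5$ in the statement. The remaining verification — that the gluing can be performed and that the resulting homeomorphism is properly homotopic to $f$ — is a standard, though careful, application of the relative surgery machinery that has already been set up.
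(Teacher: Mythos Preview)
The survey does not prove this corollary itself (it simply cites \cite{FJ89}), but your outline is precisely the argument Farrell and Jones use there: truncate the cusps via the thick--thin decomposition, apply Farrell--Hsiang flat rigidity (Theorems \ref{farall-hsiang 80} and \ref{farall-hsiang 80ii}) to the $(n-1)$-dimensional cross-sections --- which is what forces $n-1\neq 3,4$, hence the exclusion of $n=5$ --- and then apply hyperbolic rigidity to the compact core rel boundary. Your diagnosis of the dimension restriction as the intersection of the two rigidity hypotheses is exactly right.

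One point deserves tightening. When you invoke Theorem \ref{hype.regid} with $m=0$ for the core, note that this theorem already applies for all $n\neq 3,4$, so the core step alone does not exclude $n=5$; the restriction $n\neq 5$ comes \emph{entirely} from the cusp step. More substantively, the structure set $\mathcal{S}(M)$ in Theorem \ref{hype.regid} for noncompact $M$ must be read as the set of proper structures which are already standard (a homeomorphism) near infinity --- otherwise that theorem would directly give the corollary for $n=5$ as well, and the cusp analysis would be redundant. Phrased correctly there is no circularity: the cusp rigidification (needing $n\neq 4,5$) is what places an arbitrary proper homotopy equivalence into the structure set to which Theorem \ref{hype.regid} (needing $n\neq 3,4$) applies, and then the identification $\mathcal{S}(M)\cong \bar{\mathcal{S}}(M_0,\partial M_0)$ is immediate. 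With this clarification your argument goes through.
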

\begin{definition}\rm{
A smooth map $k : N\to M$ between Riemannian manifolds is harmonic if it is a critical point of the energy functional $\mathcal{E}(k) = \int_{N}\frac{1}{2}|dk|^2$ . An equivalent definition is that the tension field $\tau_k$ of $k$ vanishes everywhere. (The tension
field $\tau_k$ is a section of the bundle $k^{*}TM$ and can be defined in the following way: for $x\in N$ choose an orthonormal basis $\{v_i \}$ of $T_{x}(N)$ and define $\tau_k(x)=\sum w_i$, where $w_i$ is the acceleration vector, at $t = 0$, of $k(\gamma_i)$, and $\gamma_i$ is the geodesic with $\gamma_i(0)=x$ and $\frac{d}{dt}\gamma_i(0)= v_i.)$ Given a map $f: N\to M$ between Riemannian manifolds, we can try to associate to it a harmonic map that is the limit $k=\lim_{t\to \infty} k_t$, where $k_t$ is the unique solution of the heat flow equation, that is, the PDE initial value problem $\frac{\partial k_t}{\partial t}=\tau(k_t)$, $k_0 = f$. If this limit $k$ exists then it is homotopic to $f$ (the homotopy is $t\to k_t$).}
\end{definition}
\begin{remark}\label{harm.Eells}\rm{
Let $N$ and $M$ denote two closed connected Riemannian manifolds which have nonpositive sectional curvature values and whose fundamental groups are isomorphic. Since both $N$ and $M$ are $K(\pi,1)$-spaces, it follows that they must be homotopy equivalent to one another. Now, it follows from the classical result of Eells and Sampson \cite{ES64} that if $f : N\to M$ is a smooth homotopy equivalence between closed negatively curved manifolds the heat flow equation beginning at $f$ converges to a well defined harmonic map $k=\lim_{t\to \infty} k_t$. Moreover, from the results of Hartman \cite{Har67} and Al’ber \cite{Alb68} it follows that $f$ is homotopic to a unique harmonic map. Therefore the homotopy equivalence $f$ in Problem \ref{fund.que} homotopic to unique harmonic maps. A problem with some history behind it is to determine whether or not $N$ and $M$ must be homeomorphic to one another. Cheeger showed in the mid-1970s that the bundles of orthonormal two-frames $V_2(N)$, $V_2(M)$ are homeomorphic provided $M$ and $N$ 
are negatively curved manifolds; and then, under the same hypothesis, Gromov showed that the unit sphere bundles $S(N)$, $S(M)$ are homeomorphic, via a homeomorphism which preserves the orbits of the geodesic flows. Mishchenko \cite{Mis74} showed that the homotopy equivalence $f:N\to M$ pulls the rational Pointrjagin classes of $M$ back to those of $N$; and Farrell and Hsiang \cite{FH81} showed in 1979 that $N\times \mathbb{R}^3$ and $M\times \mathbb{R}^3$ are homeomorphic. Here is the result :}
\end{remark}
\begin{theorem}\label{FH-PROD}
Let $M^n$ be a closed non-positively curved manifold and let $g: N^n\to M^n$ be a homotopy equivalence where $N^n$ is a manifold. Then $g\times id: N^n\times \mathbb{R}^3\to M\times \mathbb{R}^3$ is homotopic to a homeomorphism.
\end{theorem}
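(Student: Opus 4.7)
The approach is via the surgery exact sequence. The homotopy equivalence $g:N\to M$ yields a proper homotopy equivalence $g\times \mathrm{id}:N\times \mathbb{R}^3\to M\times \mathbb{R}^3$; equivalently, after a standard compactification argument it produces an element of the structure set of $M\times \mathbb{D}^3$ which we must show is trivial. By the surgery exact sequence
$$L_{n+4}(\pi_1 M)\xrightarrow{\tau} \bar{\mathcal{S}}(M\times \mathbb{D}^3,\partial)\xrightarrow{\omega}[M\times \mathbb{D}^3,\partial;G/Top]\xrightarrow{\sigma}L_{n+3}(\pi_1 M),$$
it suffices to show that the assembly map $\sigma$ is injective and the boundary map $\tau$ is surjective; in other words, the surgery assembly map should be an isomorphism in the relevant range.

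The key geometric input is the Cartan--Hadamard theorem: since $M$ is non-positively curved, the exponential map $\exp:T_x\widetilde{M}\to \widetilde{M}$ is a diffeomorphism, so $\widetilde{M}\cong \mathbb{R}^n$ and the universal cover carries a canonical radial structure together with a geodesic contraction onto any chosen point. The plan, following the general philosophy of Farrell and Hsiang's earlier expanding-map method for virtually nilpotent fundamental groups (Theorem \ref{farall-hsiang 80}), is to exploit this contraction to build a controlled or asymptotic transfer on surgery obstructions. Such a transfer is modeled on the geodesic flow on the unit tangent bundle and its extension to the sphere at infinity of $\widetilde{M}$; when composed with assembly, it provides enough control on $L_{n+3}(\pi_1 M)$ and on $[M\times \mathbb{D}^3,\partial;G/Top]$ to force the required injectivity of $\sigma$ and surjectivity of $\tau$.

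The main obstacle, and the technical heart of the proof, is the rigorous construction of this focal transfer and the verification of its functorial compatibility with $\sigma$ and $\omega$ in the surgery sequence. The three-dimensional Euclidean stabilization is essential for two independent reasons: it provides the codimensional room needed for the transfer construction, which is naturally built on the geometry of codimension-three normal data; and it pushes the ambient dimension above $4$, so that the high-dimensional surgery machinery of Wall is applicable. Once the transfer is in place and shown to kill the relevant obstructions, exactness in the surgery sequence yields $\bar{\mathcal{S}}(M\times \mathbb{D}^3,\partial)=0$, and in particular $g\times \mathrm{id}$ is homotopic to a homeomorphism, as claimed.
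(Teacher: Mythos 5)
The paper does not prove Theorem~\ref{FH-PROD} directly; it is cited from \cite{FH81}, but the surrounding discussion (Conjecture~\ref{Novikov2}, Theorem~\ref{sur.map} and the remarks following them) makes the intended route explicit, and it differs from yours in two essential ways.

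First, the reduction you propose is both imprecise and stronger than what is available. The element $(N\times\mathbb{D}^3,\,g\times\mathrm{id})$ obtained by compactifying is \emph{not} an element of $\bar{\mathcal{S}}(M\times\mathbb{D}^3,\partial)$, since $g\times\mathrm{id}$ is not a homeomorphism on the boundary sphere $N\times\mathbb{S}^2$. More importantly, FH81 do not (and could not, at that time) show that $\bar{\mathcal{S}}(M\times\mathbb{D}^3,\partial)$ is a point; that is Farrell--Jones's later Topological Rigidity Theorem~\ref{gentoprigd}, whose proof also requires the Vanishing Theorem~\ref{whigro} for the Whitehead group. What FH81 establish (Theorem~\ref{sur.map}) is only that $\bar{\sigma}:[M\times\mathbb{D}^m,\partial;G/Top]\to L^s_{n+m}(\pi_1M)$ is a \emph{split monomorphism}. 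This suffices to show that the normal invariant $\nu(g)\in[M;G/Top]$ vanishes, since the surgery obstruction of a homotopy equivalence is zero; but it does not kill the $L_{n+1}$-action on the structure set, so it does not by itself prove $N\cong M$. The passage from $\nu(g)=0$ to $N\times\mathbb{R}^3\cong M\times\mathbb{R}^3$ is where the Euclidean factor is actually used: one crosses the normal cobordism $W$ (from $N$ to $M$) with $\mathbb{D}^3$ and applies Wall's $\pi$-$\pi$ theorem to the pair $(W\times\mathbb{D}^3,W\times\mathbb{S}^2)$, which is legitimate because the two sides have the same fundamental group. This is a different, and more delicate, use of the $\mathbb{R}^3$ factor than ``codimensional room for a transfer.''

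Second, the method you describe --- a ``focal transfer modeled on the geodesic flow,'' following the ``expanding-map method'' --- is not the method of \cite{FH81}, and the attribution is historically off. The expanding-endomorphism transfer is the FH78 argument for flat manifolds; the virtually nilpotent case (Theorem~\ref{farall-hsiang 80}) uses Frobenius/hyperelementary induction; and geodesic-flow/control-theoretic transfers appear only later, starting with \cite{FJ86}. For Theorem~\ref{FH-PROD} the correct geometric mechanism is the one the paper records as conditions (*) and (**): by Cartan--Hadamard, $\widetilde{M}\cong\mathbb{R}^n$ admits a $\pi_1(M)$-equivariant compactification $\overline{M}\cong\mathbb{D}^n$ (adding the sphere at infinity), together with a canonical lifting-of-homotopies property. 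Farrell and Hsiang then split the surgery map using this compactification and the $\pi$-$\pi$ theorem; no dynamical transfer is constructed. Your instinct that Cartan--Hadamard and the surgery exact sequence are the key inputs is right, but you would need to replace the proposed geodesic-flow transfer with the compactification/(*),(**)/$\pi$-$\pi$ argument, and to be careful that what you prove is injectivity of $\bar{\sigma}$ (hence vanishing of the normal invariant) rather than vanishing of the whole structure set.
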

\begin{remark}\rm{
It turns out that this theorem is very closely related to the so-called "Novikov's Conjecture". So, let us begin with this conjecture:\\
Let $M$ be a closed oriented manifold. Given a homomorphism $\pi_{1}M\to \pi$, we have a natural map $f:M\to B\pi=K(\pi,1)$. Let 
$$\mathcal{L}(M)\in \bigoplus_{k\in \mathbb{Z},k\geq0}H^{4k}(M,\mathbb{Q})$$ be the total $\mathcal{L}$-genus of $M$.  Its $k$-th entry
$\mathcal{L}_k(M)\in H^{4k}(M;\mathbb{Q})$ is a certain homogeneous polynomial of degree $k$ in the rational Pontrjagin classes $p_i(M;\mathbb{Q})\in H^{4i}(M;\mathbb{Q})$ for $i = 1$, $2$, . . . , $k$ such that the coefficient $s_k$ of the monomial $p_k(M;\mathbb{Q})$ is different from zero. The $\mathcal{L}$-genus $\mathcal{L}(M)$ is determined by all the rational Pontrjagin classes and vice
versa. The $\mathcal{L}$-genus depends on the tangent bundle and thus on the differentiable structure of $M$. For $x\in \prod_{k\geq0}H^{k}(B\pi;\mathbb{Q})$ define the higher signature of $M$ associated to $x$ and $f$ to be the number
\begin{center}
$sign_x(M,f)=\left\langle \mathcal{L}(M)\cup f^{*}(x),[M]\right\rangle \in \mathbb{Q}$.
\end{center} 
We say that $sign_x$ for $x\in H^*(B\pi;\mathbb{Q})$ is homotopy invariant if for two closed oriented smooth manifolds $M$ and $N$ with corresponding maps $f: M\to B\pi$ and $g: N\to B\pi$ we have
$$sign_x(M, f) = sign_x(N, g),$$ whenever there is an orientation preserving homotopy equivalence $h : M\to N$ such
that $g\circ h$ and $f$ are homotopic. If $x=1\in H^0(B\pi)$, then the higher signature $sign_x(M,f)$ is by the Hirzebruch signature formula (see \cite{Hir58, Hir71}) the signature of $M$ itself and hence an invariant of the oriented homotopy type.  Several years ago, Novikov made the following conjecture :}
\end{remark}
\begin{conjecture}(Novikov Conjecture)\label{Novikov}
For every group $\pi$ and each element $$x\in \prod_{k\in \mathbb{Z},k\geq0}H^{k}(B\pi;\mathbb{Q}),$$ the number $sign_x()$ is a homotopy invariant. 
\end{conjecture}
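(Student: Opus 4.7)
The plan is to translate the Novikov Conjecture into a statement about the $L$-theory assembly map and then reduce it to the Farrell-Jones Conjecture, which is the subject of later sections. First, I would attach to every closed oriented manifold $M^n$ equipped with a reference map $f: M \to B\pi$ its symmetric signature class
$$f_*[M]_{\mathbf{L}} \in H_n(B\pi; \mathbf{L}\langle 0 \rangle) \otimes \mathbb{Q},$$
where $\mathbf{L}\langle 0 \rangle$ is the $0$-connective cover of the symmetric $L$-theory spectrum of $\mathbb{Z}$. A theorem of Ranicki identifies the pairing of $f_*[M]_{\mathbf{L}}$ with $x \in H^*(B\pi;\mathbb{Q})$ (under Poincar\'e duality and the fact that the nontrivial homotopy groups of $\mathbf{L}\langle 0 \rangle \otimes \mathbb{Q}$ are $\mathbb{Q}$ in degrees $4k$) with the higher signature $sign_x(M,f)$. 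Thus Novikov's Conjecture for $\pi$ would follow if I could show that $f_*[M]_{\mathbf{L}}$ depends only on the oriented homotopy type of the pair $(M,f)$.

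Next I would introduce the $L$-theory assembly map
$$A_\pi : H_*(B\pi; \mathbf{L}\langle 0 \rangle) \to L_*(\mathbb{Z}\pi)$$
and observe that, for an orientation preserving homotopy equivalence $h: M \to N$ with $g\circ h \simeq f$, the difference $f_*[M]_{\mathbf{L}} - g_*[N]_{\mathbf{L}}$ lies in the kernel of $A_\pi$ after tensoring with $\mathbb{Q}$. The point is that the $L$-theoretic surgery obstruction of a homotopy equivalence vanishes in $L_*(\mathbb{Z}\pi)$; this can be read off from the surgery exact sequence of Definition \ref{surexa} together with the description of $A_\pi$ as the composition of the Poincar\'e duality isomorphism with the map induced by $f$. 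Therefore, if $A_\pi$ is rationally injective then the two symmetric signature classes agree and all higher signatures coincide.

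To produce rational injectivity of $A_\pi$ I would invoke the Farrell-Jones Conjecture for $L$-theory: its equivariant assembly map from $H_*^\pi(E_{\mathcal{VC}}\pi; \mathbf{L}_{\mathbb{Z}}^{\langle -\infty \rangle})$ to $L_*^{\langle -\infty \rangle}(\mathbb{Z}\pi)$ is conjecturally an isomorphism, and it factors $A_\pi$ through the forgetful map $H_*(B\pi; \mathbf{L}) \to H_*^\pi(E_{\mathcal{VC}}\pi; \mathbf{L})$, which is rationally split injective. Consequently the Farrell-Jones Conjecture for $\pi$ implies the Novikov Conjecture for $\pi$, and in every class of groups where the Farrell-Jones machinery applies—hyperbolic groups, $\mathrm{CAT}(0)$ groups, virtually poly-$\mathbb{Z}$ groups, and so on (as will be recalled in Section 5)—the conjecture holds.

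The principal obstacle is that the Novikov Conjecture is itself one of the central open problems of topology, with no known uniform proof for arbitrary discrete $\pi$. Any concrete proposal must therefore be read as a program rather than a single argument: the geometric question is reduced to the algebraic statement that $A_\pi$ is rationally injective, which is then attacked by whichever analytic, geometric, or group-theoretic input is available—Farrell-Jones style controlled topology, Yu's coarse embeddings into Hilbert space, Kasparov's descent in bivariant $K$-theory, or Lafforgue's Banach $KK$-theory—depending on the class of groups under consideration.
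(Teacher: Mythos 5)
The statement labeled Conjecture~\ref{Novikov} is the Novikov Conjecture itself, which is an open conjecture; the paper does not prove it, and no proof is known for arbitrary discrete~$\pi$. You correctly recognize this in your final paragraph, so what you have written is not a proof but a summary of the standard reduction program, and that summary is accurate and consistent with what the paper records elsewhere. In particular, your reformulation in terms of the symmetric signature class and the rational injectivity of the $L$-theory assembly map matches Theorem~\ref{Nov.for} (Wall's equivalence of the Novikov Conjecture for $\pi$ with rational injectivity of the surgery maps $\bar\sigma_m$), and your reduction to the Farrell-Jones Conjecture is exactly the content of Theorem~\ref{novifj}, which records that rational injectivity of either the $L$-theoretic Farrell-Jones assembly map or the Baum-Connes assembly map implies the Novikov Conjecture for~$\pi$. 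Since the paper merely states Conjecture~\ref{Novikov} as context with no proof attached, there is no paper proof to compare against; the correct answer to the prompt is that the conjecture is open, and your sketch of how one reduces it to assembly-map injectivity and attacks it class by class (hyperbolic, CAT(0), poly-$\mathbb{Z}$, etc.) is the faithful state of the art.
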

\begin{remark}\rm{
Let the map $f : M\to N$ be a homotopy equivalence of aspherical closed oriented manifolds. Then the Novikov Conjecture \ref{Novikov} implies that $f^*(p_i(N;\mathbb{Q}))=p_i(M;\mathbb{Q})$. This is certainly true if $f$ is a diffeomorphism. On the other hand, in general the rational Pontrjagin classes are not homotopy invariants and the integral Pontrjagin classes $p_k(M;\mathbb{Q})$ are not homeomorphism invariants (see for instance \cite[Example 1.6 and Theorem 4.8]{KL05}). This seems to shed doubts about the Novikov Conjecture. However, if the Borel Conjecture \ref{borel} is true, the map $f : M\to N$ is homotopic to a homeomorphism and the conclusion $f^*(p_i(N;\mathbb{Q}))=p_i(M;\mathbb{Q})$ does follow from the following deep result due to Novikov \cite{Nov65, Nov65a, Nov66}.}
\end{remark}
\begin{theorem}(Novikov 1966)
 If $f:M\to N$ is a homeomorphism between smooth manifolds, then $f^*(p_i(N;\mathbb{Q}))=p_i(M;\mathbb{Q})$.
\end{theorem}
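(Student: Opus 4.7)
The plan is to reduce topological invariance of the rational Pontrjagin classes to topological invariance of signatures of suitable submanifolds, via Hirzebruch's signature theorem. Since the $\mathcal{L}$-classes and the Pontrjagin classes determine one another by invertible polynomial transformations over $\mathbb{Q}$, it suffices to prove $f^{*}\mathcal{L}(N)=\mathcal{L}(M)$. By Poincar\'e duality this is equivalent to showing that for every $k$ and every $\alpha\in H^{n-4k}(M;\mathbb{Q})$,
$$\langle \mathcal{L}_k(M)\cup\alpha,[M]\rangle \;=\; \langle \mathcal{L}_k(N)\cup(f^{-1})^{*}\alpha,[N]\rangle,$$
so the problem becomes the homeomorphism invariance of all such characteristic numbers.

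I would then realize the classes $\alpha$ geometrically. After multiplying by a nonzero integer and, if needed, stabilizing by replacing $M$ and $N$ with $M\times T^{N}$ and $N\times T^{N}$ (so that in the relevant degrees rational cohomology is spanned by pullbacks from tori under smooth maps), every such class can be written as $g^{*}(\omega)$ for a smooth map $g:M\to T^{n-4k}$ with $\omega$ the product of the canonical one-dimensional generators of $H^{*}(T^{n-4k};\mathbb{Q})$. For a regular value $t\in T^{n-4k}$ the preimage $V=g^{-1}(t)$ is a closed oriented $4k$-dimensional submanifold with trivial normal bundle, so $\mathcal{L}(M)|_{V}=\mathcal{L}(V)$, and Hirzebruch's signature theorem yields
$$\langle \mathcal{L}_k(M)\cup g^{*}\omega,[M]\rangle=\langle\mathcal{L}_k(V),[V]\rangle=\sigma(V).$$
Thus every characteristic number of this form is the signature of an explicit submanifold built from a map of $M$ to a torus.

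Given the homeomorphism $f:M\to N$, I would transport $g$ across as $g\circ f^{-1}:N\to T^{n-4k}$, approximate this continuous map by a smooth map $g'$, and take $V'=(g')^{-1}(t')$ for a nearby regular value $t'$. The heart of the argument, and its main obstacle, is to prove $\sigma(V)=\sigma(V')$, even though $V$ and $V'$ need not be homeomorphic or even homotopy equivalent. Here one invokes either topological transversality together with the fact that the signature is an oriented normal-cobordism invariant of closed $4k$-manifolds, or Novikov's original torus trick: open subsets of a topological manifold homeomorphic to $T^{m}\times\mathbb{R}^{\ell}$ carry a canonical concordance class of smooth structure which lets one compute signatures of preimages of regular values consistently on the topological side. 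Either way one produces a cobordism between $V$ and $V'$ lying over $f$ whose signature defect vanishes, forcing $\sigma(V)=\sigma(V')$.

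Putting the three steps together, for every $\alpha$ of the form $g^{*}\omega$ (hence, after stabilization, for a spanning set of $H^{n-4k}(M;\mathbb{Q})$) one obtains $\langle \mathcal{L}_k(M)\cup\alpha,[M]\rangle=\sigma(V)=\sigma(V')=\langle \mathcal{L}_k(N)\cup(f^{-1})^{*}\alpha,[N]\rangle$, and so $f^{*}\mathcal{L}(N)=\mathcal{L}(M)$; the analogous equality for rational Pontrjagin classes follows immediately. I expect the signature-comparison step $\sigma(V)=\sigma(V')$ to be by far the most delicate, since that is precisely where the purely topological nature of $f$ must be exploited; in Novikov's original work this preceded the Kirby--Siebenmann topological transversality machinery and was carried out through a subtle analysis of signatures of certain open topological manifolds with free abelian fundamental group, which is where essentially all the technical depth of the theorem resides.
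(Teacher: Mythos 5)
The paper does not prove this theorem; it simply states it and cites Novikov's original papers \cite{Nov65, Nov65a, Nov66}, so there is no in-paper proof to compare against. Your sketch is a correct high-level account of Novikov's original strategy: reduce to the $\mathcal{L}$-class via the invertible transformation between $p_i$ and $\mathcal{L}_i$ over $\mathbb{Q}$, express the characteristic numbers $\langle \mathcal{L}_k(M)\cup\alpha,[M]\rangle$ as signatures of preimages of regular values with trivial normal bundle, and then show the crucial (and genuinely hard) point that these signatures are unchanged when the construction is transported across a homeomorphism. You also correctly identify that the torus trick / analysis of open manifolds of the form $L^{4k}\times T^m\times\mathbb{R}$ is where the technical depth lives, and that Novikov's original argument predated Kirby--Siebenmann topological transversality.

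Two small imprecisions worth flagging. First, the realization of a class $\alpha\in H^{n-4k}(M;\mathbb{Q})$ is usually accomplished, after multiplying by an integer, via a map to the sphere $S^{n-4k}$ (Serre's theorem on the finiteness of stable homotopy), not as $g^{*}$ of a top class of a torus; the latter would require $\alpha$ to be a product of degree-one classes, which is far from generic. Tori enter not at the realization step but in the signature-comparison lemma, where one compares the open submanifold $U\cong V\times\mathbb{R}^{j}$ against $V\times T^{j-1}\times\mathbb{R}$ and analyzes infinite cyclic covers. Second, Novikov's argument requires the submanifold $V$ (or at least the relevant piece of it) to be simply connected so that the signature behaves well in the infinite cyclic cover analysis; this is arranged by surgery below the middle dimension and deserves a mention. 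Neither point undermines the overall structure of your sketch, which faithfully mirrors Novikov's approach.
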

For a fixed group $\pi$ but all classes $x\in H^{*}(B\pi,\mathbb{Q})$, we call the restricted conjecture by Novikov Conjecture for the group $(\pi)$. Since then, Novikov Conjecture for the group $(\pi)$ has been verified for various $\pi$ (\cite{Cap76, FH73, FH78, FH79, FH83}).
If there exists a closed aspherical manifold $M$ with fundamental group $\pi$ (i.e., a closed manifold $K(\pi,1))$, then Novikov Conjecture for  the group $(\pi)$ (for all $N$) reduces to the following two equivalent forms:
\begin{conjecture}\label{Novikov1}
Let $M^n$ be a closed aspherical manifold (with fundamental group $\pi$).
\begin{itemize}
\item[(i)] If $g:N^{n+m}\to M^n\times \mathbb{D}^m (m\geq 0)$ is a homotopy equivalence between manifolds which restricts to a homeomorphism from $\partial N^{n+m}\to M^n\times \partial \mathbb{D}^m$, then $g^{*}p_{m}(M^n;\mathbb{Q})=p_{m}(N^n;\mathbb{Q})$ (for all $m$).
\item[(ii)] The rationalized surgery map \\
$\bar{\sigma}:[M^{n} \times \mathbb{D}^m, \partial;G/Top]\otimes \mathbb{Q}\to L^{s}_{n+m}(\pi_{1}M^n, w_{1}(M^n))\otimes \mathbb{Q}$ $(n+m>4)$ is a monomorphism.
\end{itemize}
\end{conjecture}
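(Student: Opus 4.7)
The plan is to derive both (i) and (ii) from the Novikov Conjecture for $\pi$, exploiting that $M$ is a model for $B\pi$ so that surgery-theoretic data on $M\times \mathbb{D}^m$ translates directly into higher signatures over $B\pi$.

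First I would establish (ii), the rational injectivity of $\bar{\sigma}$. The key input is Sullivan's rational computation $G/Top \otimes \mathbb{Q} \simeq \prod_{k\geq 1} K(\mathbb{Q}, 4k)$, which yields
\[
[M\times \mathbb{D}^m, \partial;\, G/Top] \otimes \mathbb{Q} \;\cong\; \bigoplus_{k\geq 1} H^{4k}(M\times \mathbb{D}^m,\partial;\mathbb{Q}).
\]
By Poincar\'e--Lefschetz duality (with $w_1(M)$-twisted coefficients if $M$ is non-orientable), this is $\bigoplus_k H_{n+m-4k}(M;\mathbb{Q}) = \bigoplus_k H_{n+m-4k}(B\pi;\mathbb{Q})$ via the classifying map $M\to B\pi$. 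The rationalized surgery map $\bar{\sigma}\otimes \mathbb{Q}$ factors through the rational $L$-theory assembly map $H_\ast(B\pi;\mathbb{L}(\mathbb{Z}))\otimes \mathbb{Q}\to L^s_\ast(\mathbb{Z}\pi,w_1)\otimes \mathbb{Q}$, where the passage between simple, free, and $\langle -\infty \rangle$ decorations is harmless after inverting $2$ (as recorded in the text via Rothenberg's sequence). By the classical reformulation of Novikov due to Kaminker--Miller, Mi\v{s}\v{c}enko and Ranicki, the Novikov Conjecture for $\pi$ is equivalent to rational injectivity of this assembly map. Hence Novikov for $\pi$ yields (ii).

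Next I would deduce (i) from (ii) via the surgery exact sequence of Definition \ref{surexa}. Let $g\colon N^{n+m}\to M^n\times \mathbb{D}^m$ be as in (i); it represents an element of $\bar{\mathcal{S}}(M\times \mathbb{D}^m,\partial)$ whose normal invariant $\omega([g])\in [M\times \mathbb{D}^m,\partial;G/Top]$ is, by exactness at that slot, sent to $0$ in $L^s_{n+m}(\pi_1 M, w_1 M)$. By (ii) the class $\omega([g])$ is rationally trivial. Under the Sullivan identification above, $\omega([g])\otimes 1$ records precisely the Poincar\'e--Lefschetz dual of $g^\ast \mathcal{L}(M^n\times \mathbb{D}^m) - \mathcal{L}(N^{n+m})$. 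Its vanishing rationally forces $g^\ast \mathcal{L}(M^n\times \mathbb{D}^m) = \mathcal{L}(N^{n+m})$, and since the total $\mathcal{L}$-class and the total rational Pontrjagin class determine one another and $p_i(M^n\times \mathbb{D}^m;\mathbb{Q})$ pulls back from $M^n$ (as $\mathbb{D}^m$ is contractible), this gives the Pontrjagin-class identity claimed in (i).

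The main obstacle is the careful matching of the geometric surgery map $\bar{\sigma}\otimes \mathbb{Q}$ with the algebraic $L$-theoretic assembly map, so that (ii) is a genuine consequence of rational injectivity of assembly rather than merely a parallel statement. Making this identification precise requires tracking the spectrum-level equivalence between Sullivan's rationalization of $G/Top$ and $\mathbb{L}(\mathbb{Z})_{\mathbb{Q}}$, handling the rational comparison of $L^s$, $L^h$ and $L^{\langle-\infty\rangle}$ decorations, correctly formulating the assembly map in the relative ``rel $\partial$'' setting, and in the non-orientable case carrying the $w_1(M)$-twisted coefficients throughout. Once this bookkeeping is in place, both equivalences follow, and in particular (ii) $\Rightarrow$ (i) is the surgery-sequence argument sketched above.
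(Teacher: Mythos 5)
There is a gap of logical status rather than of technique: the statement you set out to prove is presented in the paper as a \emph{conjecture}, with no proof offered, because for an aspherical $M^n = B\pi$ it is (as the surrounding text explains) a reformulation of the Novikov Conjecture \ref{Novikov} for the group $\pi$, which is open in general. Your proposal begins by assuming the Novikov Conjecture for $\pi$ and deriving (i) and (ii) from it, so what you have written is a conditional argument --- essentially one direction of the asserted equivalence --- and not a proof of Conjecture \ref{Novikov1} itself. Since the conjecture is equivalent to Novikov for $\pi$, any argument that takes Novikov for $\pi$ as an input is circular as a proof of the statement; the content of your sketch is closer to the reduction recorded in Theorem \ref{Nov.for} (Wall), which identifies Novikov for $(\pi)$ with rational injectivity of the surgery maps $\bar{\sigma}_m$, than to an actual proof of the conjecture, which nobody (including the paper) claims to possess.

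Within that conditional framework your mechanics are the standard and essentially sound ones: Sullivan's rational splitting of $G/Top$, the identification of rationalized normal invariants with $\bigoplus_k H_{n+m-4k}(B\pi;\mathbb{Q})$, the harmlessness of decorations after inverting $2$ via the Rothenberg sequence, and the deduction of (i) from (ii) through exactness of the surgery sequence of Definition \ref{surexa} together with the fact that the rational normal invariant of $g$ records the difference of $\mathcal{L}$-classes. Two caveats: first, Theorem \ref{Nov.for} gives injectivity only for $m\geq 2$ and $n+m\geq 7$, while (ii) asserts it for all $m$ with $n+m>4$, so even the implication ``Novikov for $\pi$ implies (ii)'' requires an additional (e.g.\ periodicity) argument in the low range, and the comparison of the connective spectrum underlying $G/Top$ with the periodic one used in the assembly map needs to be handled rather than asserted; second, and decisively, none of this removes the dependence on the unproved Novikov Conjecture, so the statement remains exactly what the paper says it is --- a conjecture.
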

F. T. Farrell and W. C. Hsiang \cite{FH81} strengthened Conjecture \ref{Novikov1} to the following form.
\begin{conjecture}\label{Novikov2}
If $M^n$ is a closed aspherical manifold, then the surgery map $\bar{\sigma}:[M^{n} \times \mathbb{D}^i, \partial;G/Top]\to L^{s}_{n+i}(\pi_{1}M^n,w_{1}(M^n))$ $(n+i>4)$  is a split monomorphism.
\end{conjecture}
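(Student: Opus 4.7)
The plan is to identify $\bar{\sigma}$ with the $L$-theory assembly map and then invoke the Farrell--Jones Conjecture, which (in its strongest form) predicts that the assembly map from the group homology of $B\pi$ with $L$-theory coefficients to $L^{s}_{*}(\mathbb{Z}\pi)$ is an isomorphism; from such an isomorphism the required integral splitting follows at once. Concretely, using Ranicki's algebraic formulation of surgery theory, I would identify the set of normal invariants $[M^n\times \mathbb{D}^i,\partial;G/Top]$ with the generalized homology group $H_{n+i}(M;\mathbf{L}^{s}\langle 1\rangle)$, where $\mathbf{L}^{s}\langle 1\rangle$ is the $1$-connective cover of the simple $L$-theory $\Omega$-spectrum of $\mathbb{Z}$. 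Because $M$ is aspherical, the classifying map $M\to B\pi$ (with $\pi=\pi_{1}M$) is a homotopy equivalence, so this group is canonically $H_{n+i}(B\pi;\mathbf{L}^{s}\langle 1\rangle)$, and under this identification $\bar{\sigma}$ becomes (up to identifying orientation data using $w_{1}(M)$) the connective $L$-theory assembly map.

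Second, I would apply the $L$-theoretic Farrell--Jones Conjecture to the group $\pi$: it asserts that the equivariant assembly map built from the classifying space $E_{\mathcal{VCyc}}\pi$ for the family of virtually cyclic subgroups induces an isomorphism of $L^{\langle -\infty\rangle}$-groups. A standard relative (``forget-control'' / Adams splitting) argument comparing this with the assembly map from the family of trivial subgroups $E\pi=\widetilde{M}$ then yields that the map $\alpha\colon H_{n+i}(B\pi;\mathbf{L}^{s}\langle 1\rangle)\to L^{s}_{n+i}(\mathbb{Z}\pi,w_{1}(M))$ is a split monomorphism. To pass from the $\langle -\infty\rangle$ decoration back to the simple decoration $s$ one uses the Rothenberg exact sequence together with the $K$-theoretic part of Farrell--Jones (vanishing or controlled behavior of the relevant $Wh$-groups), which was already noted in the definition of $\bar{\sigma}$ in the excerpt.

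The main obstacle, and the reason the statement remains a conjecture, is establishing the Farrell--Jones Conjecture for the fundamental group of an arbitrary closed aspherical manifold. This has been proved for many classes of groups that arise as $\pi_{1}$ of closed aspherical manifolds of geometric origin (word-hyperbolic groups, CAT$(0)$-groups, virtually poly-$\mathbb{Z}$ groups, and lattices in virtually connected Lie groups), via the Bartels--L\"uck flow-space machinery, the controlled topology and foliated control arguments of Farrell--Jones, and the geodesic flow on non-positively curved universal covers. A fundamental group of a general Poincar\'e duality group, however, need not fall into any class for which these geometric techniques currently apply; a completely general proof would therefore require either new flow-theoretic input applicable to all closed aspherical manifolds, or an axiomatic reduction of the Farrell--Jones framework to properties enjoyed by every such $\pi_{1}(M)$. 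Even a weaker rational splitting would already settle the Novikov Conjecture~\ref{Novikov} in full generality, so it is reasonable to first attempt the proposal rationally (where coarse/$C^{*}$-algebraic methods such as Kasparov's KK-theoretic descent are available) before tackling the integral statement required above.
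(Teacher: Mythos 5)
The statement labeled Conjecture~\ref{Novikov2} is, as the paper presents it, an open conjecture due to Farrell--Hsiang; the paper never proves it in full generality, but only verifies it under extra geometric hypotheses (Theorem~\ref{sur.map}, for aspherical $M^n$ satisfying conditions $(*)$ and $(**)$, in particular non-positively curved $M$) and then explains in Section~5 how the Farrell--Jones machinery would settle it. So there is no proof in the paper for you to match; what you have written is a correct account of the Farrell--Jones reduction, and it lines up with what the paper does. Your identification of $[M^n\times \mathbb{D}^i,\partial;G/Top]$ with $H_{n+i}(B\pi;\mathbf{L}\langle1\rangle)$ via Poincar\'e duality and the $L$-theoretic orientation of topological manifolds, and of $\bar\sigma$ with the connective $L$-theory assembly map for $B\pi$, is exactly the translation the paper invokes in Theorem~\ref{Nov.for} and in Ranicki's algebraic surgery sequence (Theorem~\ref{ranicki}); your use of the Rothenberg sequence and the $K$-theoretic Farrell--Jones Conjecture to move between the $\langle -\infty\rangle$ and $s$ decorations matches the sketch given for Theorem~\ref{fjimpbor}; and your remark that Farrell--Jones (where known) gives bijectivity rather than merely split injectivity is consistent with Theorem~\ref{catlfjc}. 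You also correctly identify the real obstruction: the conjecture remains open because the $K$- and $L$-theoretic Farrell--Jones Conjectures have not been proven for the fundamental group of an arbitrary closed aspherical manifold, only for the geometrically constrained classes (hyperbolic, finite-dimensional CAT$(0)$, virtually poly-$\mathbb{Z}$, cocompact lattices in virtually connected Lie groups) surveyed in Section~5. No gap in the reasoning, but be aware that what you have produced is a conditional reduction rather than a proof, which is precisely the status the paper assigns to the statement.
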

\begin{remark}\rm{
\indent
\begin{itemize}
\item[\bf{1.}] Conjecture \ref{Novikov2} implies that $g:N^n\to M^n$ is a simple homotopy equivalence, then $g\times id:N^n\times \mathbb{R}^3\to M^n\times \mathbb{R}^3$ is homotopic to a homeomorphism. This statement can not be gotten from Conjecture \ref{Novikov1}. Miscenko [Mis74] verified Conjecture \ref{Novikov1} for $M^n$ a closed non-positively curved manifold via elliptic operators. 
\item[\bf{2.}] F. T. Farrell and W. C. Hsiang \cite{FH81} verified Conjecture \ref{Novikov2} for a class of aspherical manifolds including all closed non positively curved $M^n$. For this purpose, F. T. Farrell and W. C. Hsiang considered an aspherical manifold $M^n$ satisfying the following two conditions:
\item[\bf{*}] The universal cover $\widetilde{M}^n$ of $M^n$ has a compactification $\overline{M}^n=\mathbb{D}^n$ such that the covering transformations extend to an action of $\pi_{1}M^n$ on $\mathbb{D}^n$ (not necessarily free on $\partial \mathbb{D}^n)$.
\item[\bf{**}] Any homotopy $h:M^n\times[0,1]\to M^n$ with $h(x,0)=x$ (for all $x\in M^n$) lifts to a homotopy $\overline{h}:\overline{M}^n\times[0,1]\to \overline{M}^n$ with $\overline{h}(x,t)=y$ if either $t=0$ or $y\in \partial \mathbb{D}^n$ (and $p(\overline{h}(y,t))=h(p(y),t)$ for all $y\in \widetilde{M}^n$, $t\in[0,1]$ where $p:\widetilde{M}^n\to M^n$ is the covering projection).
\end{itemize}}
\end{remark}
\begin{theorem}\label{sur.map}
Let $M^n$ be a closed (triangulable) aspherical manifold satisfying $(*)$ and $(**)$. Then the surgery map\\ $\bar{\sigma}:[M^n\times \mathbb{D}^m, \partial;G/Top] \to L^s_{n+m}(\pi_{1}M^n,w_{1}(M^n)) (n+m>4)$ is a split monomorphism. In particular, if $M^n$ is a closed non-positively curved manifold, then $(*)$ and $(**)$ are satisfied and Conjecture \ref{Novikov2} is valid for $M^n$.
\end{theorem}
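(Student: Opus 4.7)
The plan is to construct an explicit left inverse $\tau: L^s_{n+m}(\pi_1 M^n, w_1(M^n)) \to [M^n \times \mathbb{D}^m, \partial; G/Top]$ to $\bar{\sigma}$, making essential use of both $(*)$ (the equivariant compactification $\overline{M}^n = \mathbb{D}^n$) and $(**)$ (the boundary-fixing homotopy lifting). The overall philosophy is that conditions $(*)$ and $(**)$ set the stage for \emph{controlled} (or bounded) surgery theory on the disk $\mathbb{D}^n$: since a disk is contractible, the controlled surgery groups over it vanish in the relevant range, and this vanishing is what ultimately forces $\bar{\sigma}$ to be split injective.

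First I would represent an element $\alpha \in L^s_{n+m}(\pi_1 M^n, w_1(M^n))$ by a simple normal cobordism $(W, F, \simeq)$ on $M^n \times \mathbb{D}^{m-1}$. Pulling back along the covering $p: \widetilde{M}^n \to M^n$ produces a $\pi_1(M^n)$-equivariant simple normal cobordism on $\widetilde{M}^n \times \mathbb{D}^{m-1}$. Using $(*)$, the base extends to $\overline{M}^n \times \mathbb{D}^{m-1} = \mathbb{D}^n \times \mathbb{D}^{m-1}$, and $(**)$ is used to extend the lifted cobordism across $\partial \mathbb{D}^n$ as the identity, canonically up to homotopy rel $\partial \mathbb{D}^n$. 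The resulting object is a bounded (controlled) surgery problem on $\mathbb{D}^n \times \mathbb{D}^{m-1}$; because $\mathbb{D}^n$ is contractible and we are in the range $n+m > 4$, the controlled obstruction vanishes and we obtain a normal invariant in $[\mathbb{D}^n \times \mathbb{D}^m, \partial; G/Top]$ that is $\pi_1(M^n)$-equivariant by construction. Descending to the quotient yields $\tau(\alpha) \in [M^n \times \mathbb{D}^m, \partial; G/Top]$, and a direct unpacking of the definition of $\bar{\sigma}$ gives $\bar{\sigma}(\tau(\alpha)) = \alpha$.

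The main technical obstacle is making the "extend and solve the problem near $\partial \mathbb{D}^n$" step rigorous and well defined on equivalence classes of normal cobordisms; this is exactly where $(**)$ is indispensable, because it guarantees that any self-homotopy of $M^n$ lifts to $\overline{M}^n$ fixing $\partial \mathbb{D}^n$, so that different representative cobordisms give homotopic extensions and hence the same class $\tau(\alpha)$. Two additional subtleties require care: one must bookkeep Whitehead torsion carefully to stay inside $L^s$ rather than $L^h$, and one must work with the $w_1(M^n)$-twisted theory so that the splitting respects orientation data in the non-orientable case — both handled by Wall's setup \cite{Wal71}.

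For the "In particular" clause, suppose $M^n$ is closed and non-positively curved. By the Cartan--Hadamard theorem, $\exp_{\tilde{x}_0}: T_{\tilde{x}_0}\widetilde{M}^n \to \widetilde{M}^n$ is a diffeomorphism, and composing with a radial homeomorphism $\mathbb{R}^n \to \mathrm{int}(\mathbb{D}^n)$ produces the compactification $\overline{M}^n = \mathbb{D}^n$. The deck transformations of the $\pi_1(M^n)$-action are isometries of $\widetilde{M}^n$; since the action is cocompact, their displacement is controlled and they extend continuously to the boundary sphere $\partial \mathbb{D}^n$, giving $(*)$. For $(**)$, any homotopy $h: M^n \times [0,1] \to M^n$ with $h(\cdot,0) = \mathrm{id}$ lifts uniquely to $\widetilde{h}$ starting at $\mathrm{id}_{\widetilde{M}^n}$; compactness of $M^n \times [0,1]$ forces $d(\widetilde{h}(x,t), x)$ to be uniformly bounded, and since the geodesic-ray compactification identifies the boundary with equivalence classes of rays differing by a bounded amount, $\widetilde{h}$ extends by the identity on $\partial \mathbb{D}^n$, establishing $(**)$. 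Both hypotheses of the theorem are therefore satisfied, and Conjecture \ref{Novikov2} follows for all closed non-positively curved $M^n$.
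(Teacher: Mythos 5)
Your high-level strategy---pull back to the universal cover, compactify via $(*)$, control the behavior near $\partial\mathbb{D}^n$ via $(**)$, solve a surgery problem on the disk, and descend---is indeed the Farrell--Hsiang route; the remark following the theorem in the paper attributes the proof precisely to Wall's $\pi$--$\pi$ theorem in combination with Theorem \ref{Nov.for}. However, there are two genuine problems.

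First, you verify the wrong composition. You build $\tau\colon L^s_{n+m}(\pi_1 M^n, w_1 M^n)\to [M^n\times \mathbb{D}^m,\partial;G/\mathrm{Top}]$ and assert $\bar{\sigma}(\tau(\alpha))=\alpha$, i.e.\ $\bar{\sigma}\circ\tau=\mathrm{id}$ on $L^s_{n+m}$. That would exhibit $\bar{\sigma}$ as a \emph{split epimorphism}, not a split monomorphism. For split injectivity you must show $\tau\circ\bar{\sigma}=\mathrm{id}$ on $[M^n\times\mathbb{D}^m,\partial;G/\mathrm{Top}]$: start from a \emph{special} normal cobordism $x$, pass to $\bar{\sigma}(x)$ by forgetting the homeomorphism on $\partial^+W$, run the lift--compactify--solve--descend procedure, and recover $x$ (which works because the lift of a special normal cobordism already has homeomorphism on the top, so the disk-level solution step is a no-op up to the appropriate equivalence). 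In general $\bar{\sigma}$ is not surjective, so the identity $\bar{\sigma}\circ\tau=\mathrm{id}$ you claim cannot hold for all $\alpha$.

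Second, the vanishing you cite is not the correct tool. Once compactified, the lifted problem sits over the \emph{compact} disk $\mathbb{D}^n\times\mathbb{D}^{m-1}$; it is an ordinary, not a bounded or $\epsilon$-controlled, surgery problem. The ordinary simply connected $L$-groups do not vanish (they are $\mathbb{Z}$, $\mathbb{Z}/2$, or $0$ according to dimension mod $4$), and neither do bounded $L$-groups of a disk. What kills the obstruction is Wall's $\pi$--$\pi$ theorem: since $\pi_1(\partial(\mathbb{D}^n\times\mathbb{D}^{m-1}))\cong \pi_1(\mathbb{D}^n\times\mathbb{D}^{m-1})=0$, any degree-one normal map of pairs in dimension $\geq 6$ is normally cobordant to a simple homotopy equivalence of pairs, with \emph{no} obstruction group at all. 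Replacing this with a claimed vanishing of controlled surgery groups over a contractible base is a mischaracterization and does not substitute for the $\pi$--$\pi$ theorem. The "In particular'' clause, where you argue that nonpositive curvature yields $(*)$ and $(**)$ via the geodesic-ray compactification and bounded displacement of lifted homotopies, is essentially correct.
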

\begin{remark}\rm{
Wall \cite[pp. 263-267]{Wal71} expanding on ideas of Novikov \cite{Nov70}, gives the following relationship between Novikov's conjecture and the surgery map.}
\end{remark}
\begin{theorem}\label{Nov.for}
Let $M^n$ be a compact, orientable, aspherical manifold with $\pi_{1}M^n=\pi$. Then, Novikov's Conjecture for $(\pi)$ is true if and only if the (rational) surgery maps $\bar{\sigma}_{m}:[M^n\times \mathbb{D}^m, \partial;G/Top]\otimes \mathbb{Q}\to L^s_{n+m}(\pi_{1}M^n)\otimes \mathbb{Q}$ are monomorphisms for all integer $m$ satisfying both $m\geq 2$ and $n+m\geq 7$.
\end{theorem}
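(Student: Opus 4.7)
My plan is to translate both sides of the stated equivalence into the same cohomological data, and then recognize Novikov's identity about higher signatures inside the resulting picture.

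First, I would use Sullivan's rational description of the normal invariants: as an H-space, $G/Top$ has a rational product decomposition $G/Top \simeq_{\mathbb{Q}} \prod_{k\geq 1} K(\mathbb{Q},4k)$. Applied to the pair $(M^n\times \mathbb{D}^m, \partial)$ and combined with the suspension isomorphism, this gives a natural identification
\[
[M^n\times \mathbb{D}^m,\partial; G/Top]\otimes \mathbb{Q} \;\cong\; \bigoplus_{k\geq 1} H^{4k}(M^n\times \mathbb{D}^m,\partial;\mathbb{Q}) \;\cong\; \bigoplus_{k\geq 1} H^{4k-m}(M^n;\mathbb{Q}).
\]
Because $M$ is aspherical, $M\simeq B\pi$, so the right-hand side is exactly $\bigoplus_{k} H^{4k-m}(B\pi;\mathbb{Q})$. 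The hypothesis $m\geq 2$ is what makes the rationalised identification with the Eilenberg--MacLane product valid on the nose, and $n+m\geq 7$ keeps us safely inside the range where the surgery exact sequence of Definition \ref{surexa} applies.

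Second, I would plug this into Wall's index formula for the surgery obstruction \cite[Chap.~13B]{Wal71}. For a normal map $(f,b)$ on $M\times \mathbb{D}^m$ with normal invariant $\nu = (\nu_k)$, the formula expresses the rational image of its surgery obstruction as a universal polynomial in the $\mathcal{L}$-genus of $M$ and $\nu$. Concretely, upon pairing with a class $x\in H^{\ast}(B\pi;\mathbb{Q})$ one obtains the higher signature-type quantity
\[
\bigl\langle \mathcal{L}(M)\cup \nu \cup x,\, [M^n\times \mathbb{D}^m,\partial]\bigr\rangle \in \mathbb{Q},
\]
so that the composition of $\bar{\sigma}_m\otimes \mathbb{Q}$ with the Kronecker evaluation against $x\in H^{\ast}(B\pi;\mathbb{Q})$ is, up to the invertible factor $\mathcal{L}(M)\cup -$, just the Poincar\'e duality pairing of $\nu$ with $x$. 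The invertibility uses that $\mathcal{L}(M) = 1 + (\text{higher}) \in H^{4\ast}(M;\mathbb{Q})$ is a unit in the rational cohomology ring, so cup product with it is an isomorphism of $H^{\ast}(M;\mathbb{Q})$.

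Third, I would read off the equivalence. In one direction, suppose every $\bar{\sigma}_m\otimes \mathbb{Q}$ (for $m\geq 2$, $n+m\geq 7$) is injective. Given a homotopy equivalence $h:N\to M'$ of closed oriented manifolds with $M'\to B\pi$ fixed, the signature difference can be realised as a surgery obstruction of a normal map into $M\times \mathbb{D}^m$ for large $m$; the injectivity forces the associated rational normal invariant to vanish, which by the formula above forces $sign_x(N,\cdot)=sign_x(M',\cdot)$ for every $x$, i.e.\ the Novikov Conjecture for $\pi$. Conversely, assuming Novikov for $\pi$, the universal higher-signature pairing detects all classes in $\bigoplus_k H^{4k-m}(B\pi;\mathbb{Q})$ via Poincar\'e duality on $B\pi = M$, so the displayed formula shows any element in the kernel of $\bar{\sigma}_m\otimes \mathbb{Q}$ vanishes under all cohomological pairings and hence is zero. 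Letting $m$ range over $m\geq 2$, $n+m\geq 7$ sweeps out all cohomological degrees in $H^{\ast}(B\pi;\mathbb{Q})$, so we recover injectivity for every such $m$.

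The main obstacle is carrying out the second step cleanly: one has to verify Wall's rational index formula in the form needed here (the factor $\mathcal{L}(M)$ enters because the domain of $f$ is not $M$, and twists the naive pairing), and check that the correspondence between ``injective $\bar{\sigma}_m\otimes \mathbb{Q}$'' and ``homotopy invariance of $sign_x$'' is genuinely bijective on the set of higher signatures once $m$ varies over the allowed range. The restrictions $m\geq 2$ and $n+m\geq 7$ are precisely what guarantees both the Sullivan identification and the applicability of the surgery exact sequence, and these are what force the hypotheses in the statement of Theorem \ref{Nov.for}.
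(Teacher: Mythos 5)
The paper gives no proof of this statement; it cites Wall \cite[pp.~263--267]{Wal71}, and your outline (Sullivan's rational splitting of $G/Top$, Wall's $\mathcal{L}$-genus index formula, and reading off the equivalence) is exactly the standard route one finds in that reference. So there is no divergence of ``approach'' to report; the issue is that a couple of the key steps, as you have phrased them, would not actually go through.

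The central problem is in your second paragraph. You assert that composing $\bar{\sigma}_m\otimes\mathbb{Q}$ with ``the Kronecker evaluation against $x\in H^{*}(B\pi;\mathbb{Q})$'' recovers the Poincar\'e pairing of $\nu$ with $x$. But $L^{s}_{n+m}(\pi)\otimes\mathbb{Q}$ carries no canonical pairing with $H^{*}(B\pi;\mathbb{Q})$. A well-defined higher-signature functional on $L_{*}(\pi)\otimes\mathbb{Q}$ that factors the surgery obstruction is essentially a rational splitting of the assembly map, which is the content of what is being proved, not an available input; used as an input, the argument becomes circular. Wall's index formula should instead be read as a formula for the higher-signature \emph{difference} of a degree-one normal map in terms of its \emph{normal invariant} $\nu$. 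To pass from a statement about $\bar{\sigma}_m$ to a statement about higher signatures (and back) one then needs a realization argument, which you omit in both directions. Forward: given a homotopy equivalence $h\colon N\to M'$ over $B\pi$, its normal invariant $\nu(h)\in[M',G/Top]$ must be transferred into $[M\times\mathbb{D}^m,\partial;G/Top]$ using $M\simeq B\pi$ and Poincar\'e duality, and one must invoke naturality of the surgery obstruction under that transfer; your phrase ``realised as a surgery obstruction of a normal map into $M\times\mathbb{D}^m$'' conceals precisely this step. Backward: a nonzero element $\nu$ of $\ker(\bar{\sigma}_m\otimes\mathbb{Q})$ must be cleared of denominators, realized by a degree-one normal map of pairs with vanishing surgery obstruction (Wall realization), and surgered to a homotopy equivalence of pairs (via the $\pi\text{-}\pi$ theorem) \emph{before} Novikov can be applied to conclude that the higher-signature difference, and hence $\nu$, vanishes; ``the displayed formula shows any element in the kernel vanishes under all cohomological pairings'' skips exactly this.

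A smaller point: your explanation that $m\geq 2$ is what ``makes the rationalised identification \dots valid on the nose'' is not right. $[X,G/Top]$ is an abelian group for every $X$, and the Sullivan splitting applies rationally regardless of $m$. The condition $m\geq 2$ (and $n+m\geq 7$) ensures that $\partial(M\times\mathbb{D}^m)=M\times S^{m-1}$ is connected with $\pi_1\cong\pi$, so that Wall realization and the $\pi\text{-}\pi$ theorem apply — which is exactly what the realization steps above require. In short: the overall strategy is the correct one, but as written the argument is circular at step two, and both directions need the realization/surgery input made explicit to be a proof.
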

\begin{remark}\rm{
\indent
 \begin{itemize}
\item[\bf{1.}] Hence, Theorem \ref{sur.map} implies that Novikov's Conjecture for $(\pi)$ is true when $\pi=\pi_{1}M^n$ and $M^n$ is a closed (connected) non-positively curved Riemannian manifold. However. this result was proven much earlier and via a different technique in Miscenko's seminal paper \cite{Mis74}.
\item[\bf{2.}] Although much work has been done verifying Novikov's Conjecture for a very large class of groups $\pi$, it remains open and is still an active area of research. See Kasparov's paper \cite{Kas88} for a description of the state of the conjecture as of 1988. Additional important work on it has been done since that date.
\item[\bf{3.}] F. T. Farrell and W. C. Hsiang \cite{FH81} proved Theorem \ref{sur.map} by using Theorem \ref{Nov.for} and a well known result of Wall known as the $\pi\text{-}\pi$ theorem. That states that in higher dimensions a normal map of a manifold with boundary to a simple Poincar$\grave{\rm e}$ pair with $\pi_{1}(X)\cong \pi_{1}(Y)$ is normally bordant to a simple homotopy equivalence of pairs.
\end{itemize}}
\end{remark}
Of course homeomorphism implies homotopy equivalence and the converse is, in general, not true. But for closed negatively curved manifolds (dimensions $\neq 3, 4)$ F.T. Farrell and L.E. Jones \cite{FJ91} proved that these two conditions are really equivalent. In fact they proved that this is true when just one of the manifolds is non-positively curved. Here are the result:
\begin{theorem}(Farrell and Jones Topological Rigidity Theorem)\label{gentoprigd}\cite{FJ93}
 Let $M^m$ be a closed non-positively curved Riemannian manifold. Then $|\mathcal{S}(M^m\times \mathbb{D}^{n},\partial)|=1$ when $m+n\geq 5$.
\end{theorem}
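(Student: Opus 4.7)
The plan is to deduce the vanishing of $\mathcal{S}(M^m \times \mathbb{D}^n, \partial)$ from the surgery exact sequence of Definition \ref{surexa}, together with an isomorphism result for the surgery assembly map. First I would reduce to the set $\bar{\mathcal{S}}$ via the identity $\mathcal{S} = \bar{\mathcal{S}}$, which holds because $Wh(\pi_1 M) = 0$ for fundamental groups of closed non-positively curved manifolds (a theorem obtained by the same controlled-topology package used below). After this reduction the relevant portion of the surgery exact sequence reads
$$L_{m+n+1}(\pi_1 M) \xrightarrow{\tau} \bar{\mathcal{S}}(M^m \times \mathbb{D}^n, \partial) \xrightarrow{\omega} [M^m \times \mathbb{D}^n, \partial; G/Top] \xrightarrow{\sigma} L_{m+n}(\pi_1 M),$$
and pointed-set exactness shows that $|\bar{\mathcal{S}}(M^m \times \mathbb{D}^n, \partial)| = 1$ is equivalent to $\sigma$ being injective in degree $m+n$ (forcing $\omega$ to have trivial image) and surjective in degree $m+n+1$ (forcing $\tau$ to have trivial image).

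Injectivity is handed to us by Theorem \ref{sur.map}, which asserts that for closed non-positively curved $M^m$ the assembly map $\bar{\sigma}\colon [M^m \times \mathbb{D}^k, \partial; G/Top] \to L^s_{m+k}(\pi_1 M, w_1(M))$ is a split monomorphism whenever $m+k > 4$; the decoration distinction between $L^s$ and the $L$ of our sequence disappears under $Wh(\pi_1 M)=0$. The essential new work is therefore to promote this to surjectivity in degree $m+n+1$. My strategy is to construct a splitting $L_{m+n+1}(\pi_1 M) \to [M^m \times \mathbb{D}^{n+1}, \partial; G/Top]$ using a Farrell--Jones style asymptotic transfer. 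Given a surgery obstruction, I would lift the underlying geometric data to the universal cover $\widetilde{M}$, which is CAT(0) by Cartan--Hadamard, and then push it radially outward along the time-$t$ geodesic flow on the unit tangent bundle $SM$. Convexity of the distance function on CAT(0) spaces guarantees that as $t \to \infty$ the data becomes arbitrarily controlled over the visual compactification $\widetilde{M} \cup \partial_\infty \widetilde{M}$; a controlled-surgery exact sequence then translates this asymptotic control into an honest normal invariant on $M^m \times \mathbb{D}^{n+1}$ whose surgery obstruction is the prescribed class.

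The main technical obstacle is the presence of flat Euclidean factors. If $\widetilde{M}$ contains nontrivial flats then the geodesic flow fails to be contracting along directions parallel to a flat, and the naive asymptotic transfer will not produce the control required for the controlled-surgery machinery to apply. One must therefore induct on the rank of maximal flats: pass to appropriate finite covers or to fibrations by tori, invoke a fibered version of the isomorphism statement at each inductive step, and reassemble the global splitting from these pieces. Executing this induction while simultaneously maintaining the control conditions needed at every stage is the technical heart of the argument, and the reason the theorem was not accessible from the earlier Farrell--Hsiang methods of Theorems \ref{farall-hsiang 80} and \ref{virtualpoly} alone.

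Once $\sigma$ is known to be an isomorphism in the two relevant degrees, the surgery exact sequence collapses to give $|\bar{\mathcal{S}}(M^m \times \mathbb{D}^n, \partial)| = 1$, and combined with the Whitehead reduction this yields the desired conclusion $|\mathcal{S}(M^m \times \mathbb{D}^n, \partial)| = 1$ whenever $m+n \geq 5$.
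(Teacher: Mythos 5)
The paper does not actually prove this theorem; it cites \cite{FJ93} and names two ingredients, namely Theorem \ref{sur.map} (the Farrell--Hsiang split monomorphism of the assembly map) and the Vanishing Theorem \ref{whigro} for $Wh(\pi_1 M)$. You correctly observe that those two inputs alone do not trivialize $\bar{\mathcal{S}}(M \times \mathbb{D}^n, \partial)$: a split monomorphism gives injectivity of $\sigma$ in the relevant degree, and the Whitehead vanishing removes decoration issues and identifies $\mathcal{S}$ with $\bar{\mathcal{S}}$, but one also needs surjectivity of the assembly map one degree above to kill the image of $\tau$. That surjectivity is the substantive content of \cite{FJ91} and \cite{FJ93}, and your description of how it is obtained --- lifting data to the CAT(0) universal cover, pushing along the geodesic flow toward the visual compactification, extracting control there and feeding it into a controlled surgery exact sequence, and dealing with nontrivial flats by an induction with fibered intermediate steps --- is a faithful high-level account of the Farrell--Jones strategy and its main technical obstruction. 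The one caveat worth recording is that each of the steps you state as a plan (the controlled-surgery translation, the asymptotic/focal transfer, the flat-rank induction) is in the original papers a major theorem in its own right (e.g.\ the foliated control theorem), so what you have is a correct roadmap to the proof rather than a self-contained argument; your deduction of $|\bar{\mathcal{S}}|=1$ from injectivity at $m+n$ and surjectivity at $m+n+1$ via the surgery exact sequence is standard and fine.
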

For proving Theorem \ref{gentoprigd}, F.T. Farrell and L.E. Jones used Theorem \ref{sur.map} and  the following result \cite{FJ91}: 
\begin{theorem}(Vanishing Theorem)\label{whigro}
 Let $M$ be a closed (connected) non-positively curved Riemannian manifold. Then $Wh(\pi_1(M))=0$.
\end{theorem}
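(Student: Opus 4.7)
The plan is to realize an arbitrary element $\tau \in Wh(\pi_1(M))$ geometrically, as the torsion of an h-cobordism $(W; M, M')$, and then show, using the non-positive curvature of $M$, that $W$ is in fact homeomorphic to the trivial cobordism $M \times I$, so that $\tau = 0$. Equivalently, one works inside the framework of controlled K-theory and aims to show that the assembly map computing $Wh(\pi_1(M))$ is trivial. The whole strategy is parallel to the one behind Theorem \ref{sur.map}, using the geometry of the universal cover in an essential way.

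The first step is to pass to the universal cover $\widetilde{M}$. By the Cartan-Hadamard theorem, for any $p \in \widetilde{M}$ the exponential map $\exp_p \colon T_p\widetilde{M} \to \widetilde{M}$ is a diffeomorphism, so $\widetilde{M} \cong \mathbb{R}^m$ and geodesics spread at a bounded, controlled rate. This gives a natural compactification analogous to the one used in hypothesis $(*)$ preceding Theorem \ref{sur.map}, and lets one verify the lifting property $(**)$ for homotopies, so one has the geometric input needed for a controlled argument at infinity.

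The second step is to construct an asymptotic transfer using the unit tangent sphere bundle $SM$ and its geodesic flow $\phi_t$. A chain-level (or handle-level) representative of the h-cobordism $W$ is lifted to $SM$ and then pushed forward by $\phi_t$ for arbitrarily large $t$. Non-positive curvature guarantees that transverse to the flow direction the lifted cells do not expand, so one obtains a representative of $\tau$ whose control, measured transverse to the flow, can be made arbitrarily small. Invoking a foliated/controlled h-cobordism theorem (the $K$-theoretic analogue of the foliated control machinery used for surgery in Theorem \ref{sur.map}) then forces $\tau$ to vanish.

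The main obstacle is the presence of possible flat directions: in a general non-positively curved manifold the geodesic flow is not Anosov, and along flat subspaces there is no transverse contraction, so the naive control argument fails. Overcoming this requires an induction on the "amount of flatness": one reduces to controlling the contributions coming from subgroups stabilizing flat factors, where the fundamental groups are virtually abelian (indeed virtually poly-$\mathbb{Z}$), and appeals to the vanishing of $Wh$ for such groups, a consequence of the rigidity results recorded in Theorem \ref{farall-hsiang 80} and Theorem \ref{virtualpoly}. Assembling the flat contribution (handled by the virtually poly-$\mathbb{Z}$ case) with the transverse contraction supplied by the geodesic flow in the hyperbolic directions yields $Wh(\pi_1(M)) = 0$, which is exactly what enters the proof of Theorem \ref{gentoprigd}.
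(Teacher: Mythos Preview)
The paper does not actually give a proof of Theorem \ref{whigro}. It merely records the statement, attributes it to Farrell--Jones \cite{FJ91}, and in the remark that follows lists the historical special cases (the $n$-torus via Bass--Heller--Swan, flat and real hyperbolic manifolds via Farrell--Hsiang and Farrell--Jones, infrasolvmanifolds via Farrell--Hsiang). So there is nothing in the paper to compare your argument against; the theorem is used as a black box in the proof of Theorem \ref{farall top-rig}.

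That said, your sketch is broadly in the spirit of the actual Farrell--Jones argument in the cited literature: one does use an asymptotic transfer to the unit sphere bundle, the geodesic flow to gain control, and a foliated control theorem to conclude. A couple of cautions are in order. First, your analogy with Theorem \ref{sur.map} is a bit misleading: that theorem concerns the $L$-theory assembly map and uses the compactification $(*)$, $(**)$ to run a surgery-theoretic splitting argument, whereas the vanishing of $Wh$ is a $K$-theory statement and the relevant control machinery (foliated control for pseudoisotopies/$K$-theory, developed in \cite{FJ86, FJ93a}) is technically quite different, even if philosophically parallel. Second, your treatment of the flat directions is the genuinely hard part and is substantially more delicate than an appeal to Theorems \ref{farall-hsiang 80} and \ref{virtualpoly}: in the non-positively curved (non-strictly-negative) case one needs the full ``focal transfer'' and the analysis of the structure of flats and their stabilizers, which is the main technical contribution of \cite{FJ91, FJ93}. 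As written, your induction step is a placeholder rather than an argument.
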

\begin{remark}\rm{
The special cases of Theorem \ref{whigro} when $M$ is the $n$-torus $T^n$ was proven by Bass-Heller-Swan \cite{BHS64} and for Riemannian flat or real hyperbolic were proven earlier by Farrell and Hsiang in \cite{FH78} and by Farrell and Jones in \cite{FJ86}, respectively. Farrell and Hsiang also showed in \cite{FH81a} that $Wh(\pi_1(M))=0$ when $M$ is a closed infrasolvmanifold. }
\end{remark}
\begin{definition}\rm{
A Riemannian manifold $M$ is $A$-regular if there exists a sequence of positive number $A_0$, $A_1$, $A_2$, $A_3$,... with $|D^n(K)|\leq A_n$. Here $K$ is the curvature tensor and $D$ is covariant differentiation.}
\end{definition}
\begin{remark}\rm{
\indent
\begin{itemize}
\item[\bf{1.}] Every closed Riemannian manifold and locally symmetric space is $A$-regular. 
\item[\bf{2.}] F.T. Farrell and L.E. Jones \cite{FJ98} proved the following generalization of the Vanishing Theorem \ref{whigro} to the case where $M$ is complete but not necessarily compact :
\end{itemize}}
\end{remark}
\begin{theorem}\label{Awhigro}
 Let $M$ be any complete Riemannian manifold which is both non-positively curved and $A$-regular. Then $Wh(\pi_1(M))=0$.
\end{theorem}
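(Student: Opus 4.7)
The plan is to reduce Theorem \ref{Awhigro} to the compact case treated in Theorem \ref{whigro}, using $A$-regularity as a uniform substitute for cocompactness in a controlled $K$-theory argument. First I would invoke Cartan--Hadamard to identify the universal cover $\widetilde{M}$ with $\mathbb{R}^n$ via the exponential map, so that $\pi = \pi_1(M)$ acts freely and properly discontinuously on $\mathbb{R}^n$; unlike in the compact case, this action need not be cocompact, which is precisely the difficulty $A$-regularity is introduced to overcome. To show $Wh(\pi)=0$, it suffices to produce, for each element $\tau \in Wh(\pi)$, a representation of $\tau$ by a geometric module/$h$-cobordism datum on $\widetilde{M}$ whose control can be made arbitrarily fine.

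The central tool in the compact case is the Farrell--Jones asymptotic transfer along the geodesic flow on the unit tangent bundle $SM$: one shows that a class $\tau$ lifts to a bounded (or foliated) Whitehead class on $\widetilde{SM}$, and that applying the geodesic flow $g_t$ for $t\to\infty$ contracts this class, using the exponential divergence/contraction of Jacobi fields provided by non-positive curvature, into a regime where bounded $Wh$ over $\mathbb{R}^{n}\times \mathbb{R}$ vanishes by a Bass--Heller--Swan-type argument. My plan is to run exactly this argument, but with the compactness of $SM$ replaced by uniform estimates that follow from the $A$-regularity hypothesis: the bounds $|D^k K|\leq A_k$ yield uniform Jacobi-field bounds along every geodesic, uniform estimates on the derivatives of the exponential map on balls of any fixed radius, and uniform injectivity-radius-type estimates on the universal cover. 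This upgrades the pointwise contraction of the geodesic flow to a contraction that is uniform in the basepoint, which is what the controlled $K$-theory machinery requires.

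Concretely, I would proceed in four steps. Step one: define the relevant controlled/bounded Whitehead group $Wh_{\mathrm{bdd}}(\widetilde{M};\pi)$ and the geodesic-flow transfer $Wh(\pi)\to Wh_{\mathrm{bdd}}(\widetilde{SM};\pi)$, verifying well-definedness from the uniform estimates provided by $A$-regularity. Step two: show that composition with the forgetful map recovers the original $\tau$ up to a correction controlled by the Vanishing Theorem \ref{whigro} strategy; this is where the foliated/continuously controlled framework of Farrell--Jones enters. Step three: iterate the geodesic flow and use the non-positive curvature contraction, uniformized by $A$-regularity, to push the image of $\tau$ into an arbitrarily small control size. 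Step four: invoke the vanishing of bounded $Wh$ over $\mathbb{R}^n$ (an $\mathrm{Eilenberg}$ swindle combined with Bass--Heller--Swan), concluding $\tau=0$.

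The hard part will be Step one: ensuring the transfer is globally well-defined on the non-compact manifold. In the closed case, continuity and compactness of $SM$ immediately yield the required uniform constants; here these must be extracted directly from the inductive bounds $|D^n K|\leq A_n$, via iterated Jacobi equation estimates. A second delicate point is that the action of $\pi$ need not be cocompact, so one must carefully check that the geometric modules and the transfers behave equivariantly and that finiteness conditions on control spaces hold on the quotient $M$ itself; again $A$-regularity is what prevents the curvature geometry from degenerating in a neighborhood of infinity and thus guarantees that the controlled $K$-theory groups are well-behaved. Once these uniform estimates are in place, the remainder of the argument is essentially parallel to the proof of Theorem \ref{whigro}.
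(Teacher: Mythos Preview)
The paper does not prove this theorem; it is a survey, and Theorem~\ref{Awhigro} is simply stated as a result of Farrell and Jones \cite{FJ98} with no argument supplied. There is therefore no ``paper's own proof'' to compare your proposal against.

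That said, your high-level strategy---using $A$-regularity to supply uniform geometric estimates that replace compactness, and then running the controlled $K$-theory/geodesic-flow machinery in parallel with the closed case of Theorem~\ref{whigro}---is indeed the spirit of what Farrell and Jones do. A couple of points in your outline are off, though. First, your remark about ``uniform injectivity-radius-type estimates on the universal cover'' is misplaced: by Cartan--Hadamard the universal cover is diffeomorphic to $\mathbb{R}^n$ with infinite injectivity radius everywhere, so there is nothing to estimate there; the genuine role of $A$-regularity is to control the geodesic flow and its derivatives uniformly on the non-compact quotient $SM$, so that the foliated control structures and transfer maps remain well-defined out at infinity. Second, your Step~3 speaks of ``non-positive curvature contraction,'' but non-positive curvature gives only non-expansion, not contraction; flat directions are present in general, and the actual Farrell--Jones argument (already in the closed case) handles this via a more elaborate focal/asymptotic transfer rather than a naive geodesic-flow iteration. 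Since you defer that subtlety to Theorem~\ref{whigro} your sketch is not wrong, but you should be aware that the phrase is misleading. Beyond these points your outline is a reasonable, if schematic, account of how the argument in \cite{FJ98} proceeds.
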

F.T. Farrell and L.E. Jones have proven the following Theorem \ref{farall top-rig} which more or less settles Problem \ref{fund.que} for non-positively curved manifolds.
\begin{theorem}\label{farall top-rig}(Topological Rigidity Theorem)
Let $M^n$  and $N^n$ be a pair of closed aspherical manifolds.
Then any isomorphism from $\pi_{1}(M)$ to $\pi_{1}(N)$ is induced (up to conjugacy) by a homeomorphism from $M$  to $N$ provided $M$ is nonpositively curved with $\dim M\neq 3,4$. 
\end{theorem}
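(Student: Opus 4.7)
The strategy is to first convert the given algebraic data into a homotopy equivalence via asphericity, and then promote that homotopy equivalence to a homeomorphism by invoking the Farrell--Jones rigidity result already stated as Theorem~\ref{gentoprigd}. Given an isomorphism $\alpha:\pi_1(M)\to\pi_1(N)$, the asphericity of $M$ (so that $M=K(\pi_1(M),1)$) together with standard obstruction theory produces a continuous map $g:N\to M$, unique up to homotopy, with $g_*=\alpha^{-1}$ up to conjugacy. Since $\pi_k(M)=\pi_k(N)=0$ for $k\geq 2$ and $g_*$ is an isomorphism on $\pi_1$, Whitehead's theorem (Theorem~\ref{whitehead}) shows that $g$ is a homotopy equivalence.

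Assume first $\dim M\geq 5$. Then $(N,g)$ represents a class in the structure set $\mathcal{S}(M)$. Since $M$ is non-positively curved, Theorem~\ref{gentoprigd} applied with trivial disk factor (taking $m=\dim M\geq 5$ and the added disk dimension equal to $0$) gives $|\mathcal{S}(M)|=1$. Hence $(N,g)$ is equivalent to $(M,\mathrm{id}_M)$, so there is a homeomorphism $h:N\to M$ with $\mathrm{id}_M\circ h=h$ homotopic to $g$. The inverse $h^{-1}:M\to N$ is then a homeomorphism that induces the same map on $\pi_1$ as a homotopy inverse of $g$, which is $\alpha$ up to conjugacy, yielding the desired conclusion.

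The remaining cases allowed by $\dim M\neq 3,4$ are $\dim M\leq 2$, which I would handle classically. If $\dim M=1$ the only closed aspherical possibility is $\mathbb{S}^1$ and the claim is immediate. If $\dim M=2$, closed asphericity together with the classification of surfaces forces $M$ and $N$ to be the same $\Sigma_g$ with $g\geq 1$ (the genus being recovered from $\pi_1$), and every isomorphism of $\pi_1(\Sigma_g)$ is realized by a self-homeomorphism by the Dehn--Nielsen--Baer theorem cited earlier.

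The entire depth of the argument is thus concentrated in the high-dimensional step, and in particular in Theorem~\ref{gentoprigd}. Its proof in turn combines the Vanishing Theorem~\ref{whigro} (yielding $Wh(\pi_1(M))=0$, so that $\mathcal{S}(M)$ coincides with $\bar{\mathcal{S}}(M)$) with the splitting of the assembly map $\bar\sigma$ established by Farrell--Hsiang under the conditions $(*)$, $(**)$ that non-positively curved manifolds satisfy, together with Wall's $\pi\text{-}\pi$ theorem, so that the surgery exact sequence of Definition~\ref{surexa} collapses and forces $\mathcal{S}(M)$ to be a singleton. My role is essentially to repackage that triviality as a statement about fundamental-group isomorphisms; the main obstacle, therefore, is the surgery-theoretic input rather than anything in the reduction itself.
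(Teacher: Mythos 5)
Your proposal is correct and follows essentially the same path as the paper: reduce the group isomorphism to a homotopy equivalence $g\colon N\to M$ via asphericity, then invoke the Farrell--Jones rigidity result (Theorem~\ref{gentoprigd}) with trivial disk factor, handling $\dim M\leq 2$ classically. The one cosmetic divergence is in the last step for $\dim M\geq 5$: the paper first concludes that $M$ and $N$ are $h$-cobordant and then applies the $s$-cobordism theorem together with $Wh(\pi_1(M))=0$ (the Vanishing Theorem~\ref{whigro}) to promote the $h$-cobordism to a homeomorphism, whereas you apply the triviality of $\mathcal{S}(M)$ directly, reading off the homeomorphism from the definition of the structure set. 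Taking the statement of Theorem~\ref{gentoprigd} at face value (it is phrased in terms of $\mathcal{S}$, not $\bar{\mathcal{S}}$), your version is clean and correct; the paper's detour is needed only if one interprets that theorem as giving $\bar{\mathcal{S}}$-triviality, in which case one identifies $\mathcal{S}(M)$ with $\bar{\mathcal{S}}(M)$ via $Wh(\pi_1(M))=0$, exactly as you note in your background discussion. No gap.
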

\begin{proof}
This result is classical when $n=1$ or $2$. When $n\geq 5$ set $m=0$ in Theorem \ref{gentoprigd} to conclude that $M$  and $N$ are h-cobordant and hence homeomorphic by the s-cobordism since $Wh(\pi_1(M))=0$ because of the Vanishing Theorem \ref{whigro}.
\end{proof}
\begin{remark}\rm{
\indent
\begin{itemize}
\item[\bf{1}.] This result is an analogue of Mostow's Rigidity Theorem \ref{mostow} and proves Borel's Conjecture \ref{borel} for closed non-positively curved manifolds $(dim \neq 3, 4)$. 
\item[\bf{2}.] The special cases of Theorem \ref{farall top-rig} when $M$ is Riemannian flat or real hyperbolic were proven earlier by Farrell and Hsiang in \cite{FH83} and by Farrell and Jones in \cite{FJ89}, respectively (see Theorem \ref{farall-hsiang 80} and Theorem \ref{hyptop-rig}). 
\item[\bf{3.}] The conclusion of Theorem \ref{farall top-rig} is also true when $M^n$ is a closed infrasolvmanifold. This was proven in Theorem \ref{rig.virtu}. Yau showed in \cite{Yau71} that a closed infrasolvmanifold $M^m$ supports a non-positively curved Riemannian metric only when $\pi_{1}(M)$ is virtually abelian ; hence, neither class of manifolds contains the other.
\item[\bf{4.}] In further work, Farrell-Jones \cite{FJ98} extended Theorem \ref{farall top-rig} to cover compact complete affine flat manifolds of dimension $\geq 5$. This is done by considering complete non-positively curved manifolds that are not necessary compact. Note that the universal cover is in these cases always homeomorphic to Euclidean space.
\item[\bf{5}.] Theorem \ref{farall top-rig} yields a topological characterization of closed locally symmetric spaces of non-compact type. Here is the results:
\end{itemize}}
\end{remark}
\begin{corollary}\label{farall-asp}{\rm \cite{FJ91}}
A closed topological manifold $M$ of dimension $n\neq 3$ and $4$ supports the structure of a locally symmetric space of non compact type  if and only if
\begin{itemize}
\item[(i)] $M$ is aspherical and
\item[(ii)] the fundamental group of $M$ is isomorphic to a discrete cocompact subgroup of a (virtually connected) linear semisimple Lie group.
\end{itemize}
\end{corollary}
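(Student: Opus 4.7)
The plan is to prove both implications of the biconditional, with the forward direction being classical and the backward direction being the substantive step, which rests on the Farrell--Jones topological rigidity theorem (Theorem \ref{farall top-rig}).

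For the forward direction, suppose $M$ carries the structure of a closed locally symmetric space of non-compact type. Then one may write $M \cong \Gamma \backslash G/K$, where $G$ is a (virtually connected) linear semisimple Lie group, $K \subset G$ is a maximal compact subgroup, and $\Gamma \subset G$ is a torsion-free discrete cocompact subgroup acting freely on $G/K$. By the Cartan--Hadamard theorem (or directly from the Cartan decomposition), the symmetric space $G/K$ is diffeomorphic to a Euclidean space and hence contractible, so $M$ is aspherical. The fundamental group $\pi_1(M) \cong \Gamma$ then supplies the required embedding of condition (ii).

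For the converse, fix an isomorphism $\alpha : \pi_1(M) \to \Gamma$ with $\Gamma$ a discrete cocompact subgroup of some virtually connected linear semisimple Lie group $G$, and choose a maximal compact subgroup $K \subset G$. Since $M$ is aspherical, $\pi_1(M)$ is torsion-free: a nontrivial finite subgroup would produce a finite cover of $M$ that is simultaneously a finite-dimensional CW model for an Eilenberg--MacLane space of a nontrivial finite group, contradicting the nonvanishing of mod-$p$ cohomology of such spaces in arbitrarily large degrees. Hence $\Gamma$ is torsion-free, and for every $g \in G$ the intersection $\Gamma \cap gKg^{-1}$, being a discrete torsion-free subgroup of the compact group $gKg^{-1}$, is trivial. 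Consequently $\Gamma$ acts freely, properly and cocompactly on $G/K$, and the quotient $N := \Gamma \backslash G/K$ is a closed locally symmetric manifold of non-compact type. By Cartan--Hadamard, $N$ is aspherical and carries a complete Riemannian metric of non-positive sectional curvature, with $\pi_1(N) \cong \Gamma \cong \pi_1(M)$.

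At this point $M$ and $N$ are two closed aspherical manifolds with isomorphic fundamental groups. By Theorem \ref{hurewicz} they are homotopy equivalent and in particular share the same dimension $n \neq 3, 4$. Since $N$ is non-positively curved, Theorem \ref{farall top-rig} applies and produces a homeomorphism $h : M \to N$ realising $\alpha$ up to conjugacy on $\pi_1$. Transporting the locally symmetric structure of $N$ back along $h$ equips $M$ with the desired structure, finishing the proof. The only deep input is Theorem \ref{farall top-rig}; the remaining steps (Cartan--Hadamard, torsion-freeness of fundamental groups of aspherical manifolds, freeness of the $\Gamma$-action) are classical. The exclusion of dimensions $3$ and $4$ is forced precisely by the corresponding restriction in Farrell--Jones rigidity, and would require separate input (Thurston's Geometrization in dimension $3$, topological four-dimensional surgery machinery in dimension $4$) in order to be removed.
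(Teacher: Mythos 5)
Your proof is correct and follows the approach the paper intends: the paper states this corollary without proof as a direct consequence of Theorem~\ref{farall top-rig}, and your argument supplies exactly the standard fill-ins (torsion-freeness of the fundamental group of a closed aspherical manifold via cohomological dimension, freeness of the $\Gamma$-action on $G/K$, and Cartan--Hadamard) before invoking the rigidity theorem to identify $M$ with the locally symmetric model $\Gamma\backslash G/K$. The only cosmetic slip is the phrase ``finite cover'': the cover corresponding to a finite subgroup $H\leq\pi_1(M)$ is generally infinite-sheeted, but the argument only uses that it is a finite-dimensional $K(H,1)$, so the conclusion stands.
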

\begin{remark}\rm{
\indent
\begin{itemize}
\item[\bf{1}.] It would be interesting to know if Corollary \ref{farall-asp} or Corollary \ref{hyp-stru} is true in dimension $3$ and $4$. The corresponding result for flat, almost flat and infrasolvmanifolds \cite{FH83} is true when $\dim M=4$. This uses the work of Freedman and Quinn \cite{FQ90} showing the topological surgery theory is valid for four dimensional manifolds with virtually poly-$\mathbb{Z}$ fundamental groups. But it is unknown whether topological surgery theory works for four manifolds whose fundamental groups are isomorphic to discrete, torsion-free, cocompact subgroups of $O(4,1,\mathbb{R})$. If it does, then Corollary \ref{hyp-stru} would still be true when $\dim=4$.
\item[\bf{2}.] A similar comment can be made about Corollary  \ref{farall-asp}. It is an immediate consequence of results of Kneser \cite{Kne29} and Milnor \cite{Mil62} that the truth of Corollary \ref{hyp-stru} when $n = 3$ (which is equivalent to Corollary \ref{farall-asp} in this case) or of the corresponding topological characterization of three dimensional compact flat, infranil, or infrasolvmanifolds  would imply the truth of the Poincar$\grave{\rm e}$ Conjecture. On the other hand, Thurston has conjectured an even stronger characterization of compact hyperbolic three manifolds. The following is one case of his geometrization conjecture \cite{Thu82} :
\end{itemize}}
\end{remark}
\begin{theorem}\label{thurston}(Thurston's Conjecture)
Let $M$ be a closed three dimensional manifold. Then $M$ has a hyperbolic structure iff
\begin{itemize}
\item[(i)] $M$ is aspherical and
\item[(ii)] every abelian subgroup of $\pi_{1}(M)$ is cyclic.
\end{itemize}
\end{theorem}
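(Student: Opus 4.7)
The plan is to establish the two implications separately. For the forward direction, suppose $M$ admits a hyperbolic structure. Then its universal cover is hyperbolic $3$-space, which is contractible, so $M$ is aspherical by the same reasoning used in Example 3.8(4) via the Cartan--Hadamard theorem. The group $\pi_{1}(M)$ embeds as a torsion-free cocompact discrete subgroup of the isometry group $PSL(2,\mathbb{C})$ of hyperbolic $3$-space. Cocompactness excludes parabolic elements (they would produce cusps in the quotient), and torsion-freeness, equivalent to the freeness of the action, excludes elliptic elements; hence every nontrivial element is loxodromic. Two commuting loxodromic isometries must share a common axis, since conjugation by one permutes the axis of the other. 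The subgroup stabilizing a given axis intersects $\pi_{1}(M)$ in a discrete group of translations along that axis, which is cyclic. Therefore every abelian subgroup of $\pi_{1}(M)$ is cyclic.

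For the reverse direction, I would invoke the Geometrization Theorem of Thurston--Perelman \cite{MT08} and reduce to a case-analysis over Thurston's eight model geometries. Asphericity combined with the Sphere Theorem forces $M$ to be irreducible and $\pi_{1}(M)$ to be infinite, while hypothesis (ii) forbids a $\mathbb{Z}^{2}$ subgroup in $\pi_{1}(M)$; by the Torus Theorem this means $M$ is atoroidal. Consequently both the prime and the JSJ decompositions of $M$ are trivial, so Geometrization endows $M$ with the structure of a single geometric $3$-manifold. Going through the list, the geometries $\mathbb{S}^{3}$ and $\mathbb{S}^{2}\times \mathbb{R}$ violate asphericity (finite $\pi_{1}$ or nonvanishing $\pi_{2}$), while the Euclidean geometry $\mathbb{R}^{3}$, $\mathbb{H}^{2}\times \mathbb{R}$, $\widetilde{SL_{2}(\mathbb{R})}$, $Nil$, and $Sol$ each force a $\mathbb{Z}^{2}$ subgroup in $\pi_{1}(M)$, contradicting (ii). The only remaining possibility is the hyperbolic geometry, giving $M$ a hyperbolic structure.

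The principal obstacle is the invocation of the Geometrization Theorem itself, which is the deep input due to Perelman via Ricci flow with surgery, and which cannot be avoided in this level of generality. Everything else is bookkeeping: the Sphere Theorem, the Torus Theorem, and the prime and JSJ decompositions are classical, and the verification that each non-hyperbolic, non-spherical model geometry contains a $\mathbb{Z}^{2}$ in $\pi_{1}$ follows straightforwardly from the structure of the relevant model space (Bieberbach's Theorem \ref{bieberbach} handles the flat case, while the Seifert-fibered geometries carry a $\mathbb{Z}$ fiber class together with independent elements from the base surface group, and $Sol$-manifolds are torus bundles over the circle). A minor technical point worth being explicit about is that ``aspherical'' in the hypothesis, when combined with infinite $\pi_{1}$ arising a posteriori from the exclusion of the spherical case, is what ultimately conspires with condition (ii) to single out $\mathbb{H}^{3}$ uniquely.
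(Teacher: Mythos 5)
Your argument is correct, but note that the paper does not actually supply a proof of Theorem~\ref{thurston}: it is presented as ``Thurston's Conjecture'', attributed to~\cite{Thu82}, and described as one case of the Geometrization Conjecture, with the paper elsewhere crediting Perelman's work~\cite{MT08} for the proof of Geometrization. Your write-up therefore fills in an argument the paper leaves implicit, and it is the standard deduction: the forward direction uses Cartan--Hadamard for asphericity and then the classification of isometries of $\mathbb{H}^3$ (torsion-freeness and cocompactness eliminate elliptics and parabolics, commuting loxodromics share an axis, and discreteness makes the axis-stabiliser inside the lattice infinite cyclic); the reverse direction feeds asphericity and condition~(ii) into Geometrization and then eliminates all model geometries except $\mathbb{H}^3$. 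One small streamlining: in the reverse direction you do not need the Torus Theorem to get atoroidality, since condition~(ii) directly forbids $\mathbb{Z}^2\hookrightarrow\pi_1(M)$, which is exactly the homotopy-theoretic atoroidality hypothesis Geometrization requires for a closed, irreducible manifold with infinite fundamental group; the Torus Theorem would only be needed for the converse passage from a $\mathbb{Z}^2$ to an embedded essential torus or Seifert structure.
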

\begin{remark}\rm{
\indent
\begin{itemize}
\item[\bf{1.}] In particular, Thurston's Conjecture would imply that any closed three dimensional Riemannian manifold with negative sectional curvature is homeomorphic to a hyperbolic manifold.
\item[\bf{2.}] In further work, Farrell-Jones \cite{FJ98} extended Theorem \ref{farall top-rig} to cover compact complete affine flat manifolds of dimension $\geq 5$. This is done by considering complete non-positively curved manifolds that are not necessary compact. Note that the universal cover is in these cases always homeomorphic to Euclidean space. Here are the results :
\end{itemize}}
\end{remark}
\begin{definition}\rm{
Let $M$ be a manifold with non-empty boundary. We say that $M$ is topologically rigid if it has the following property: Let $h:(N,\partial N)\to (M,\partial M)$ be any proper homotopy equivalence where $N$ is another manifold. Suppose there exists a compact subset $C\subset N$ such that the restriction of $h$ to $\partial N\cup (N\setminus C)$ is a homeomorphism. Then there exists a proper homotopy $h_t:(N,\partial N)\to (M,\partial M)$ from $h$ to a homeomorphism and a perhaps larger compact subset $K$ of $N$ such that the restriction of $h_t$ and $h$ 
to $\partial N\cup (N\setminus K)$ agree for all $t\in [0,1]$. (When $M$ and $N$ are closed, this just says that a homotopy equivalence $h:N\to M$ is homotopic to a homeomorphism.)}
\end{definition}
\begin{theorem}\label{Atrtaffine}
Let $M^n$ be an aspherical Riemannian manifold with $n\geq 5$ (it can be non-compact and can have non-empty boundary). Suppose $\pi_1(M)$ is isomorphic to the fundamental group of an $A$-regular complete non-positively curved Riemannian manifold (This happens for example when $\pi_1(M)$ is isomorphic to a torsion-free discrete subgroup of $GL_{n}(\mathbb{R})$). Then $M$ is topologically rigid. In particular, every $A$-regular complete non-positively curved Riemannian manifold of $\dim \geq 5$ is topologically rigid.
\end{theorem}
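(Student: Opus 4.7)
The plan is to reduce the topological rigidity of $M$ to the vanishing of a compactly supported structure set, and then to extract this vanishing from the (proper) surgery exact sequence via two standard inputs: vanishing of the Whitehead group of $\pi_1(M)$ and split injectivity of the appropriate assembly map. Both inputs will be imported from the given model $N'$, an $A$-regular complete non-positively curved Riemannian manifold with $\pi_1(N') \cong \pi_1(M)$.

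The Whitehead group input is immediate: Theorem \ref{Awhigro} applied to $N'$ gives $Wh(\pi_1(M)) = Wh(\pi_1(N')) = 0$. Combined with the $s$-cobordism theorem in dimension $\geq 6$, this collapses the distinction between $\mathcal{S}$ and $\bar{\mathcal{S}}$ and reduces the problem to showing the relevant structure set is trivial. For the assembly map input, I would exploit that by the Cartan-Hadamard theorem $\widetilde{N'}$ is diffeomorphic to $\mathbb{R}^n$, and that its geodesic compactification yields a closed disk on which $\pi_1(N')$ acts continuously; $A$-regularity supplies exactly the geometric control needed for the lifting of homotopies to this compactification, which is essentially the content of conditions $(\ast)$ and $(\ast\ast)$ preceding Theorem \ref{sur.map}. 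Since $\widetilde{N'}$ is contractible, $N'$ is itself aspherical, and by Theorem \ref{hurewicz} it is homotopy equivalent to $M$; hence split injectivity of the surgery assembly map associated to $N'$ transports to split injectivity of the assembly map for $M$.

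With both inputs at hand, I would set up the proper surgery exact sequence for $(M, \partial M)$ and locate the element determined by the given proper homotopy equivalence $h:(N,\partial N)\to (M,\partial M)$ inside the structure set. The hypothesis that $h$ restricts to a homeomorphism on $\partial N \cup (N\setminus C)$ places this element in the compactly supported part of the structure set. The vanishing of $Wh(\pi_1(M))$ and the split injectivity of the assembly map together force the element to be trivial, yielding the required proper homotopy to a homeomorphism which can be arranged to agree with $h$ on $\partial N \cup (N \setminus K)$ for some compact $K \supset C$.

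The main obstacle is the extension of Theorem \ref{sur.map} to the complete, possibly non-compact setting. As originally stated it applies to closed aspherical $M^n$, whereas here $N'$ is merely complete and $M$ may itself be non-compact or have boundary; one must therefore work inside a controlled or proper variant of the surgery exact sequence, verify the compactification-and-lifting hypotheses using the curvature bounds and the $A$-regularity at infinity, and check that the resulting split injection is natural enough in the fundamental group to be transported from $N'$ to $M$. Once this technical infrastructure is in place, the closing argument mirrors the proof of Borel's Conjecture for closed non-positively curved manifolds given by Theorem \ref{farall top-rig}.
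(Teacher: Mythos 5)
The paper states Theorem \ref{Atrtaffine} without proof, citing Farrell-Jones \cite{FJ98}, so there is no internal argument to compare against; but your proposed proof contains a genuine gap at the decisive step. You try to kill the (compactly supported) structure set using two ingredients: vanishing of $Wh(\pi_1 M)$, imported from Theorem \ref{Awhigro}, and split \emph{injectivity} of the surgery assembly map, imported from Theorem \ref{sur.map}. This combination is not enough. In the surgery exact sequence
$$[M\times\mathbb{D}^1,\partial;G/Top]\stackrel{\sigma_{n+1}}{\longrightarrow}L_{n+1}(\pi_1 M)\stackrel{\partial}{\longrightarrow}\bar{\mathcal{S}}(M,\partial)\stackrel{\eta}{\longrightarrow}[M,\partial;G/Top]\stackrel{\sigma_n}{\longrightarrow}L_n(\pi_1 M),$$
injectivity of $\sigma_n$ only forces $\eta$ to be the trivial map, so every structure lies in the orbit of $[id_M]$ under the Wall realization action of $L_{n+1}(\pi_1 M)$; to conclude that this orbit is a single point one needs $\sigma_{n+1}$ to be \emph{surjective}. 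A split injection gives a retraction $r:L_{n+1}(\pi_1 M)\to[M\times\mathbb{D}^1,\partial;G/Top]$, but for $x\in L_{n+1}(\pi_1 M)$ not in the image of $\sigma_{n+1}$, knowing $r(x)$ says nothing about $\partial(x)$. Split injectivity of the assembly map is precisely the content of Conjecture \ref{Novikov2} and gives only the Novikov Conjecture (homotopy invariance of higher signatures, cf.\ Theorem \ref{Nov.for}); the Borel/topological-rigidity conclusion requires the assembly map to be an isomorphism, as in Theorems \ref{fjimpbor} and \ref{catlfjc}. Your closing sentence --- ``the vanishing of $Wh(\pi_1(M))$ and the split injectivity of the assembly map together force the element to be trivial'' --- therefore asserts more than the stated hypotheses deliver.

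The missing ingredient is surjectivity, and in the Farrell-Jones proofs it is not obtained from the compactification-and-lifting argument that yields the split monomorphism. Rather, Farrell-Jones prove directly that the relevant structure sets $\mathcal{S}(N'\times\mathbb{D}^m,\partial)$ vanish via their foliated control theorem applied to the geodesic flow on the unit sphere bundle of the model manifold $N'$; the $A$-regularity hypothesis is precisely what makes the controlled-topology estimates uniform in the non-compact setting. To repair your argument you would need either to supply a surjectivity statement for the assembly map in degree $n+1$ (i.e.\ the full Farrell-Jones isomorphism in $L$-theory, not just the Novikov injectivity), or to import and transport the structure-set vanishing result for the model $N'$ rather than only its split-injectivity consequence. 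The remaining pieces of your sketch --- using Theorem \ref{Awhigro} to kill Whitehead torsion, using Theorem \ref{hurewicz} to transfer statements across the homotopy equivalence $M\simeq N'$, and working in a proper/compactly supported variant of the surgery sequence --- are all on the right track.
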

\begin{theorem}\label{trtaffine}
Let $M$  and $N$ be a pair of closed affine flat manifolds. Then any isomorphism from $\pi_{1}(M)$ to $\pi_{1}(N)$ is induced by a homeomorphism from $M$  to $N$.
\end{theorem}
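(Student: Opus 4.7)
The plan is to reduce the high-dimensional case to Theorem~\ref{Atrtaffine} and to handle the remaining low-dimensional cases by hand. First I would observe that a closed complete affine flat $n$-manifold $M$ has the form $\Gamma\backslash\mathbb{R}^n$, where $\Gamma\cong\pi_1(M)$ acts on $\mathbb{R}^n$ freely, properly discontinuously, cocompactly, and by affine transformations. Via the standard closed embedding of the affine group $\mathrm{Aff}(\mathbb{R}^n)$ into $GL_{n+1}(\mathbb{R})$ by homogeneous coordinates, $\Gamma$ becomes a torsion-free discrete subgroup of $GL_{n+1}(\mathbb{R})$: discreteness follows from the proper discontinuity of the $\Gamma$-action on $\mathbb{R}^n$ together with the closedness of $\mathrm{Aff}(\mathbb{R}^n)$ in $GL_{n+1}(\mathbb{R})$, and torsion-freeness follows from freeness of the action. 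This is precisely the parenthetical hypothesis highlighted in Theorem~\ref{Atrtaffine}.

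With that identification in hand, when $n\geq 5$ Theorem~\ref{Atrtaffine} asserts that $M$ is topologically rigid. Since $M$ and $N$ are both aspherical (their universal covers are homeomorphic to $\mathbb{R}^n$), Theorem~\ref{hurewicz} produces a homotopy equivalence $f:N\to M$ realizing the given isomorphism $\pi_1(N)\cong \pi_1(M)$; topological rigidity of $M$ then lets me homotope $f$ to a homeomorphism, which settles the theorem in this range. The low-dimensional cases would be disposed of separately: $n=1,2$ via the classical classification of $1$- and $2$-manifolds (the only closed affine flat surfaces are the torus and the Klein bottle); $n=3$ via the Fried--Goldman classification of closed complete affine flat $3$-manifolds, which shows $\pi_1(M)$ is virtually poly-$\mathbb{Z}$ and the manifold is an infrasolvmanifold, allowing one to appeal to the results of Section~4 on virtually poly-$\mathbb{Z}$ groups; and $n=4$ by combining the virtually poly-$\mathbb{Z}$ structure of $\pi_1(M)$ with four-dimensional topological surgery (Freedman--Quinn), which extends Theorem~\ref{rig.virtu} to dimension four for such fundamental groups, as flagged in the remark immediately following that theorem.

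The main obstacle I foresee is not the high-dimensional range, which follows cleanly from Theorem~\ref{Atrtaffine}, but the four-dimensional case: both Theorem~\ref{Atrtaffine} and Theorem~\ref{rig.virtu} explicitly exclude $n=4$, so one must invoke the four-manifold topological surgery machinery of Freedman--Quinn together with the nontrivial fact that closed complete affine flat $4$-manifolds have virtually poly-$\mathbb{Z}$ fundamental groups. A secondary subtlety concerns the interpretation of ``closed affine flat'': the argument above uses completeness crucially, in order to guarantee that the universal cover is $\mathbb{R}^n$ and that $\pi_1(M)$ sits inside $\mathrm{Aff}(\mathbb{R}^n)$; this matches the setting of Farrell--Jones~\cite{FJ98}, and the incomplete case (if permitted) would require separate input from Milnor's conjectural picture discussed in the remark following Theorem~\ref{rig.virtu}.
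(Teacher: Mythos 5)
Your proposal follows essentially the same route as the paper: for $n\geq 5$ you invoke Theorem~\ref{Atrtaffine} via the embedding $\pi_1(M)\subset\mathrm{Aff}(\mathbb{R}^n)\subset GL_{n+1}(\mathbb{R})$; for $n=4$ you combine the fact that $\pi_1(M)$ is virtually polycyclic (the paper attributes this to Abels--Margulis--Soifer~\cite{AMS97}, which you should cite) with Freedman--Quinn four-dimensional surgery; and low dimensions are handled separately, just as in the paper's remark following the theorem.

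The one place where your argument does not quite work as written is $n=3$. You propose to extract from Fried--Goldman that $\pi_1(M)$ is virtually poly-$\mathbb{Z}$ and then appeal to the Section~4 rigidity results for such groups, but Theorem~\ref{rig.virtu} (and Theorem~\ref{virtualpoly}) explicitly excludes $n=3$, so there is nothing there to appeal to. The paper sidesteps this by citing the Fried--Goldman paper~\cite{FG83} directly for the rigidity statement in dimension $3$, rather than rederiving it from general surgery-theoretic machinery. If you insist on a surgery-free argument in dimension $3$ you would need to use geometrization/Waldhausen-type rigidity for these infrasolvmanifolds, which is a different mechanism than the one you name. Apart from this, the decomposition into cases, the key lemma invoked in each, and the identification of the delicate four-dimensional step all match the paper.
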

\begin{remark}\rm{
\indent
\begin{itemize}
\item[\bf{1.}] Theorem \ref{trtaffine} is an affine analogue of the classical Bieberbach Rigidity Theorem \ref{bieberbachrig}. Note that Theorem \ref{trtaffine} (when $\dim(M)\geq 5$) does not follow from Topological Rigidity Theorem \ref{gentoprigd} since there are closed affine flat manifolds $M$ which cannot support a Riemannian metric of non-positive curvature. For example $M^3=\mathbb{R}^3/\Gamma$ where $\Gamma$ is the group generated by the three affine motions $\alpha$, $\beta$, $\gamma$ of $\mathbb{R}^3$ with 
\begin{equation*}
\begin{split}
 \alpha(x,y,z)&=(x+1,y,z)\\
 \beta(x,y,z)&=(x,y+1,z)\\
 \gamma(x,y,z)&=(x+y,2x+3y,z+1).
 \end{split}
\end{equation*}
Since $\Gamma$ is solvable but not virtually abelian. However Gromoll and Wolf \cite{GW71} and Yau \cite{Yau71} independently proved that if $M$ is a closed non-positively curved Riemannian manifold and $\pi_1(M)$ is solvable, then  $\pi_1(M)$ is virtually abelian. This shows that $M^3$ cannot support a Riemannian metric of non-positive curvature. But Theorem \ref{trtaffine} (when $\dim(M)\geq 5$) does follow from Theorem \ref{Atrtaffine} since $M$ is aspherical and $\pi_1(M)$ is a discrete subgroup of Aff$(\mathbb{R}^n)$ which is closed subgroup of $GL_{n+1}(\mathbb{R})$.
\item[\bf{2.}] Theorem \ref{trtaffine} is a classical result when $\dim(M)\geq 2$. And, when $\dim(M)=3$, Theorem \ref{trtaffine} was proven by D. Fried and W.M. Goldman in \cite{FG83}. Hence it remains to discuss the case when $\dim(M)=4$. In this case (in fact more generally when $\dim(M)\geq 6$) H. Abels, G.A. Margulis and G.A. Soifer \cite{AMS97} proved that $\pi_1(M)$ is virtually polycyclic. And hence Theorem \ref{trtaffine} follows from Theorem \ref{rig.virtu} when $\dim(M)=4$ (see also Remark \ref{rig.virtu}). A key ingredient in Theorem \ref{rig.virtu} is that M. Freedman and F. Quinn \cite{FQ90} have shown that topological surgery works in dimension 4 for manifolds with virtually poly-cyclic fundamental groups.
\end{itemize}}
\end{remark}
\section{\large The Farrell-Jones Conjecture}
In this section we will discuss the Farrell-Jones Conjecture. Why is the Farrell-Jones Conjecture so important? One reason is that it plays
an important role in the classification and geometry of manifolds. A second reason is that it implies a variety of well-known conjectures, such as the ones due to Borel and Novikov, and also the conjecture for the vanishing of Whithead group.

\begin{definition}\rm{
Let $G$ be any group. A family $\mathcal{F}$ of subgroups of $G$ is a set of subgroups of $G$ which is closed under taking subgroups and conjugations.}
\end{definition}
\begin{example}\rm{
Examples for $\mathcal{F}$ are :
\begin{align*}
\mathcal{F}_{\mathcal{TR}}&= \{trivial~~ subgroup\};\\
\mathcal{F}_\mathcal{FIN} &= \{finite~~ subgroups\};\\
\mathcal{F}_\mathcal{VCYC} &= \{virtually~~ cyclic~~ subgroups\};\\
\mathcal{F}_\mathcal{COM} &= \{compact~~ subgroups\};\\
\mathcal{F}_\mathcal{COMOP} &= \{compact~~ open~~ subgroups\};\\
\mathcal{F}_\mathcal{ALL} &= \{all ~~subgroups\}.
\end{align*}}
\end{example}
\begin{definition}(Classifying $G$-CW-complex for a family of subgroups)\rm{
Let $\mathcal{F}$ be a family of subgroups of $G$. A model $E_{\mathcal{F}}(G)$ for the classifying $G$-CW-complex for the family $\mathcal{F}$ of subgroups is a $G$-CW-complex $E_{\mathcal{F}}(G)$ which has the following properties:
\begin{itemize}
\item[(i)] All isotropy groups of $E_{\mathcal{F}}(G)$ belong to $\mathcal{F}$.
\item[(ii)] For any $G$-CW-complex $Y$, whose isotropy groups belong to $\mathcal{F}$, there is, up to $G$-homotopy, precisely one $G$-map $Y\to E_{\mathcal{F}}(G)$.
 \end{itemize}
We abbreviate $\underline{E}G := E_{\mathcal{F}_\mathcal{COM}}(G)$ and call it the universal $G$-CW-complex for proper $G$-actions. We also write $EG= E_{\mathcal{F}_{\mathcal{TR}}}(G)$.}
\end{definition}
\begin{definition}(Homotopy characterization of $E_{\mathcal{F}}(G)$)\rm{
Let $\mathcal{F}$ be a family of subgroups.
\begin{itemize}
\item[(i)] There exists a model for $E_{\mathcal{F}}(G)$;
\item[(ii)] A $G$-CW-complex $X$ is a model for $E_{\mathcal{F}}(G)$ if and only if all its isotropy groups belong to $\mathcal{F}$ and for each $H\in \mathcal{F}$, the $H$-fixed point set is weakly contractible.
\end{itemize}
For more information about these spaces $E_{\mathcal{F}}(G)$ we refer to the survey article \cite{Luc05}.}
\end{definition}
\begin{conjecture}\label{fjcok}(K-theoretic Farrell-Jones-Conjecture)
Let $R$ be any associative ring with unit (with involution) and let $G$ be a discrete group. Then the assembly map $$H^{G}_n(E_{\mathcal{F}_\mathcal{VCYC}}(G),{\bf{K}}_R)\mapsto H^{G}_n(pt, {\bf{K}}_R)\cong K_n(RG)$$ induced by the projection $E_{\mathcal{F}_\mathcal{VCYC}}(G)\mapsto pt$ is bijective for all $n\in \mathbb{Z}$.
\end{conjecture}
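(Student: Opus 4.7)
The plan is to work inside the Davis--L\"uck framework already recalled in Section 2: by the theorem on equivariant homology theories associated to $K$-theory, one has a functor ${\bf{K}}_R : Groupoids \to Spectra$ whose associated equivariant homology theory $\mathcal{H}^?_*(-;{\bf{K}}_R)$ computes $K_*(RG)$ on $G/G$. The map in question is then realised as the map of spectra induced by collapsing $E_{\mathcal{F}_{\mathcal{VCYC}}}(G) \to \mathrm{pt}$, smashed with ${\bf{K}}_R$ over the orbit category. My first step would be to replace this assembly map by an equivalent map of controlled $K$-theory spectra over the $G$-space $E_{\mathcal{F}_{\mathcal{VCYC}}}(G)$, in the spirit of Bartels--Reich; this is essential because injectivity and surjectivity will be analysed separately by metric/geometric arguments on the underlying $G$-space.

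The second step is the standard reduction machinery. The Transitivity Principle says that to verify the conjecture relative to the family $\mathcal{F}_{\mathcal{VCYC}}$ it suffices to verify it relative to any larger family $\mathcal{F}'$ on $G$, provided the conjecture holds for every $H \in \mathcal{F}'$ relative to $\mathcal{F}_{\mathcal{VCYC}}\cap H$. Coupled with the inheritance properties of the (fibered) Farrell--Jones Conjecture --- closure under subgroups, directed colimits, finite products, overgroups of finite index, and extensions with virtually cyclic quotient, plus the additive decomposition for free products via Bass--Serre trees --- this reduces the problem to establishing the fibered version for a class of groups rich enough to generate all discrete $G$ under these operations. For virtually cyclic groups themselves the assertion is accessible by Bass--Heller--Swan type computations and Waldhausen's Nil-group analysis, so these form the base case.

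The third step is the geometric core. For a group $G$ in the target class one seeks a $G$-action on a finite-dimensional contractible space $X$ together with an exhaustive system of $G$-equivariant open covers whose members are ``long and thin'' transverse to a chosen coarse flow, with isotropy in $\mathcal{F}_{\mathcal{VCYC}}$. Given such covers, the squeezing theorem in controlled $K$-theory produces a section of the controlled assembly map in the relevant range, and a descent/transfer argument from $X$ down to a point upgrades this to the bijectivity asserted in Conjecture~\ref{fjcok}. Consequences such as the vanishing of $Wh(\pi_1 M)$ and the Borel Conjecture in dimensions $\geq 5$ then follow by feeding the conclusion into the surgery exact sequence of Definition~\ref{surexa}.

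The main obstacle is structural and, in the present state of the subject, unresolved: the statement is asked for \emph{every} discrete group $G$, whereas every successful instance of the above scheme begins by recognising $G$ as a member of a class for which the geometric input (a $\mathrm{CAT}(0)$, hyperbolic, or flow-space action with the requisite covers) is manufactured by hand. There is no uniform construction of such $G$-actions for an arbitrary finitely presented group, let alone for arbitrary discrete groups containing pathologies like Tarski monsters or infinitely generated groups without natural metric models. The hard part of any genuine proof is therefore either to exhibit a universal geometric carrier of virtually cyclic isotropy for arbitrary $G$, or to replace the geometric route entirely --- for instance by a functorial metric model of $E_{\mathcal{F}_{\mathcal{VCYC}}}(G)$ equipped with intrinsic control, or by an $\ell^1$/bounded-cohomology assembly argument that bypasses the need for a contractible $G$-space altogether.
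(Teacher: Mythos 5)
You should be aware that Conjecture~\ref{fjcok} is exactly that in this paper: a conjecture. The paper offers no proof of it, and none exists; Section~5 only surveys the classes of groups for which it has been verified (hyperbolic groups, Theorem~\ref{hyfjwc}; the class $\mathcal{B}$ of hyperbolic and finite-dimensional CAT(0)-groups, Theorem~\ref{catfjco} and Theorem~\ref{kcatfjciwthcof}; virtually poly-$\mathbb{Z}$ groups, Theorem~\ref{vitpolyfj}; cocompact lattices in virtually connected Lie groups, Theorem~\ref{fjcolat}; solvable Baumslag--Solitar groups; etc.). So there is no ``paper proof'' against which your argument can be measured, and your own final paragraph correctly concedes that your scheme does not reach an arbitrary discrete group. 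Judged as a proof of the stated assertion, it therefore has an unavoidable gap, but the gap is the open problem itself rather than an error of yours: what you wrote is a fair description of the template (Davis--L\"uck assembly, the Transitivity Principle of Theorem~\ref{fjtp}, inheritance properties of the fibered version, controlled $K$-theory with transfer/flow-space covers) by which all the known partial results in the paper are obtained.

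Two small cautions on the details of your sketch. First, be careful with the direction of family changes: the paper points out that the relative assembly maps from $\mathcal{F}_{\mathcal{FIN}}$ to $\mathcal{F}_{\mathcal{VCYC}}$ are in general \emph{not} bijective for ${\bf{K}}_R$ and ${\bf{L}}^{\langle -\infty\rangle}_R$, so reductions must always keep the virtually cyclic family (or a larger one) as the target; your phrasing of the Transitivity Principle is consistent with this, but the base case ``virtually cyclic groups via Bass--Heller--Swan and Nil-groups'' is where the genuinely non-homological content (UNil- and Nil-terms) sits, and it is built into the choice of $\mathcal{F}_{\mathcal{VCYC}}$ rather than something one proves and then discards. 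Second, the inheritance properties you invoke (subgroups, directed colimits, finite products, extensions, actions on trees) are theorems for the fibered or with-coefficients versions (Theorems~\ref{nilfjcon} and \ref{worfjcon}, Corollary~\ref{corfjccoff}), not formal consequences of the statement of Conjecture~\ref{fjcok} itself, so any argument organized around them must work with the stronger with-coefficients formulation from the start. With those caveats, your assessment that the missing ingredient is a geometric carrier (or a substitute for one) valid for arbitrary discrete groups is exactly the current state of the problem as reflected in this survey.
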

\begin{conjecture}\label{fjcol}(L-theoretic Farrell-Jones-Conjecture)
Let $R$ be any associative ring with unit (with involution) and let $G$ be a discrete group. Then the assembly map $$H^{G}_n(E_{\mathcal{F}_{\mathcal{VCYC}}}(G),{\bf{L}}_{R}^{<-\infty>})\mapsto H^{G}_n(pt, {\bf{L}}_{R}^{<-\infty>})\cong L_{n}^{<-\infty>}(RG)$$ induced by the projection $E_{\mathcal{F}_{\mathcal{VCYC}}}(G)\mapsto pt$ is bijective for all $n\in \mathbb{Z}$.
\end{conjecture}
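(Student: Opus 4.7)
Because the $L$-theoretic Farrell-Jones Conjecture is one of the central open problems in the area, what follows is a strategic outline rather than a complete argument, modeled on the methods (due to Bartels, L\"uck, Reich, and others) that have successfully verified it for large classes of groups. The overall plan is to identify the assembly map with a forget-control map between equivariant controlled $L$-theory spectra, and then to enforce control by means of a suitable equivariant flow.

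First I would recast the statement in the equivariant-homology formalism of Davis and L\"uck already recorded earlier in the paper: applied to the functor ${\bf L}_R^{\langle -\infty \rangle}$, their machinery produces an equivariant homology theory $\mathcal{H}^{?}_{*}(-;{\bf L}_R^{\langle -\infty \rangle})$ under which the map in the conjecture coincides with the map induced by the projection $E_{\mathcal{F}_{\mathcal{VCYC}}}(G)\to pt$, and $\pi_n({\bf L}_R^{\langle -\infty \rangle}(G)) \cong L_n^{\langle -\infty \rangle}(RG)$. Following Ranicki, this map can then be interpreted geometrically as forgetting $G$-equivariant control over a suitable metric $G$-space whose isotropy is virtually cyclic.

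Next I would exploit inheritance properties: the conjecture descends to subgroups, ascends along finite-index inclusions, is preserved under directed colimits, and is preserved under certain extensions once one works with a fibered version. This reduces the problem, for geometrically meaningful classes of groups, to verifying either a Farrell-Hsiang style transfer condition or an axiomatic flow-space condition. For $G$ acting geometrically on a CAT(0) or hyperbolic space, the required geometric input is a \emph{flow space}: a locally compact metric space $FS$ with a proper isometric $G$-action and a $G$-equivariant flow whose non-stationary orbits admit long, thin, $\mathcal{F}_{\mathcal{VCYC}}$-equivariant open covers of bounded dimension. Feeding such a flow space into the Bartels-L\"uck-Reich axiomatic framework then upgrades the geometry to the assertion that the forget-control map is an isomorphism.

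The main obstacle lies squarely at this geometric input stage: constructing long thin covers requires strong uniform contractibility and finite orbit dimension, and the analysis of periodic orbits of the flow is precisely what forces the family $\mathcal{F}_{\mathcal{VCYC}}$ (rather than $\mathcal{F}_{\mathcal{FIN}}$). Moreover, the $L$-theoretic version is genuinely harder than the $K$-theoretic one because of the appearance of $\mathrm{UNil}$-terms and the delicate behaviour of decorations, which is the reason one must work with $\langle -\infty \rangle$ rather than $h$, $s$, or $p$; this requires care in identifying the assembly target and in tracing control through Shaneson-type splittings. For groups outside the geometric classes already treated, no flow space is currently known, and constructing one, or finding a substitute, is where a complete proof would have to make its decisive new contribution.
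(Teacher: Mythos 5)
This statement is a conjecture, not a theorem: the paper does not (and cannot) prove it, and neither you nor anyone else has a complete proof of the L-theoretic Farrell--Jones Conjecture for arbitrary discrete groups and arbitrary coefficient rings with involution. You correctly recognize this, and your strategic outline is a faithful summary of the proof architecture that has succeeded for the large classes of groups the paper surveys in Section 5 --- the Davis--L\"uck equivariant-homology formulation, the identification of the assembly map with a forget-control map, the inheritance properties, the Bartels--L\"uck--Reich flow-space and transfer-reducibility axioms, and the reasons one is forced to use the family $\mathcal{F}_{\mathcal{VCYC}}$ and the decoration $\langle -\infty\rangle$ (UNil phenomena, Rothenberg/Shaneson splittings). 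Your final paragraph correctly locates the genuine obstruction: for groups outside the hyperbolic, CAT(0), lattice, and solvable classes treated in Theorems \ref{catfjco}, \ref{luck}, \ref{vitpolyfj}, and \ref{fjcolat}, no suitable flow space or transfer is known, and supplying one would be the decisive new idea. In short, there is nothing to compare against in the paper because the paper offers no proof; your assessment that this is a strategy for verified special cases rather than a proof of the conjecture is accurate and well informed.
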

\begin{conjecture}\label{bccon}(Baum-Connes Conjecture)
Let $R$ be any associative ring with unit (with involution) and let $G$ be a discrete group. Then the assembly map $$K^{G}_n(EG)=H^{G}_n(E_{\mathcal{F}_{\mathcal{FIN}}}(G),{\bf{K}}^{top})\mapsto H^{G}_n(pt, {\bf{K}}^{top})=K_n(C^{*}_r(G))$$ induced by the projection $E_{\mathcal{F}_{\mathcal{FIN}}}(G)\mapsto pt$ is bijective for all $n\in \mathbb{Z}$.
\end{conjecture}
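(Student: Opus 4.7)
The plan is to approach the Baum--Connes Conjecture via the Dirac/dual-Dirac method of Kasparov, which remains the most successful general technique for verifying it for large classes of groups. First, I would recast the assembly map in the framework of equivariant $KK$-theory: identify $K_n^G(\underline{E}G)$ with $KK^G_n(C_0(\underline{E}G),\mathbb{C})$ (topological $K$-homology with $G$-compact supports) and realise the assembly map as the composition of the Kasparov descent morphism $j^G_r\colon KK^G_*(A,B)\to KK_*(A\rtimes_r G, B\rtimes_r G)$ with the Kasparov product against a canonical class constructed from a $G$-equivariant cut-off function on $\underline{E}G$. This reformulation converts the conjecture into a statement about producing inverses to a particular element of $KK$-theory, and is well-suited to the equivariant homology formalism of L\"uck--Reich already invoked in the paper.

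The central step is the construction of a $\gamma$-element. Choose a proper cocompact (or, more generally, proper) $G$-space $X$ with nice geometric structure -- a complete simply connected nonpositively curved Riemannian manifold, a CAT(0)-cube complex, a Euclidean building, or a Hilbert space on which $G$ acts by affine isometries. Construct the Dirac element $d\in KK^G(C_0(X),\mathbb{C})$ from a $G$-equivariant Dirac-type operator, and a dual-Dirac element $\eta\in KK^G(\mathbb{C},C_0(X))$ from a Bott-type class. Their Kasparov product $\gamma=\eta\otimes_{C_0(X)} d\in KK^G(\mathbb{C},\mathbb{C})$ is idempotent, and its image in $KK(\mathbb{C},C^*_r(G))$ factors the identity on the range of the assembly map. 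If one can establish the equality $\gamma=1$ in $KK^G(\mathbb{C},\mathbb{C})$, then descent produces a two-sided inverse to the assembly map, proving the conjecture for $G$.

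I would first carry this out for groups with the Haagerup property, following Higson--Kasparov: the affine isometric action on a Hilbert space $H$ combined with the Bott periodicity class on $H$ supplies $\eta$ and $d$, and $\gamma=1$ is obtained from a rotation homotopy between the identity and the trivial action on $H\oplus H$. This case is unconditional and handles, in particular, amenable groups, free groups, and Coxeter groups. For groups arising naturally in the rigidity setting of the paper -- lattices in semisimple Lie groups, fundamental groups of nonpositively curved manifolds, hyperbolic groups -- I would appeal to Kasparov's construction using the tangent bundle of the symmetric space or building.

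The main obstacle will be groups with Kazhdan's Property (T), and above all higher-rank lattices such as $SL_n(\mathbb{Z})$ for $n\geq 3$. Here the element $\gamma$ still exists, but Property (T) produces a nontrivial idempotent in $C^*_r(G)$ obstructing the naive equality $\gamma=1$ at the level of $C^*$-algebraic $KK$-theory; indeed one shows that $\gamma\neq 1$ in the equivariant $KK$-ring for such groups, so the Dirac/dual-Dirac method in its original form cannot succeed. To bypass this, one must either pass to Lafforgue's Banach $KK$-theory, working with unconditional completions of the convolution algebra $C_c(G)$ -- which replaces the rigid $C^*$-norm by a flexible $\ell^1$-type norm insensitive to the Property (T) obstruction -- or develop genuinely new techniques. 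This is precisely the point at which the conjecture remains open in full generality, and any proposed proof for an arbitrary discrete $G$ must confront this barrier. Accordingly, my proposal would prove the conjecture for the a-T-menable and hyperbolic ranges via Dirac/dual-Dirac, extend it to the higher-rank setting via Banach $KK$-theory along the lines of Lafforgue, and leave open the combination of Property (T) with exotic quotients (e.g.\ Gromov monsters) as the essential remaining difficulty.
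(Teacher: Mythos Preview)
The statement you are addressing is a \emph{conjecture}, not a theorem: the paper merely records the Baum--Connes Conjecture as an open problem alongside the Farrell--Jones Conjectures, and offers no proof whatsoever. There is nothing in the paper to compare your proposal against, because the paper does not claim the statement is true, let alone prove it.

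Your write-up is a competent summary of the known partial results and techniques --- the Kasparov Dirac/dual-Dirac machinery, the Higson--Kasparov theorem for a-T-menable groups, Lafforgue's Banach $KK$-theory for certain higher-rank lattices, and the Property~(T) obstruction --- and you correctly acknowledge that the conjecture remains open in general. But precisely because you acknowledge this, what you have written is not a proof proposal at all: it is a survey of the state of the art, ending with the admission that you cannot handle arbitrary discrete $G$. A proof of Conjecture~\ref{bccon} as stated would require settling the cases you explicitly leave open (and indeed the conjecture with coefficients is now known to be false for certain Gromov monster groups, as the paper itself notes later via \cite{HLS02}). So the genuine gap is simply that the target is an open conjecture, and your proposal does not close it --- nor does the paper, nor does anyone.
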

Next we want to discuss, whether one can pass to smaller or larger families in the formulations of the Conjectures. The point is to find the family as small as possible.
\begin{theorem}\label{fjtp}(Transitivity Principle)\cite{BL07}
Let $\mathcal{F}\subseteq \mathcal{G}$ be two families of subgroups of $G$. Let $\mathcal{H}^{?}_{*}$ be an equivariant homology theory. Assume that for every element $H\in \mathcal{G}$ and $n\in \mathbb{Z}$ the assembly map
$$\mathcal{H}^{H}_{n}(E_{\mathcal{F}_{|H}}(H))\mapsto \mathcal{H}^{H}_n(pt)$$
is bijective, where $\mathcal{F}_{|H} = \{K\cap H | K\in \mathcal{F}\}$. Then the relative assembly map induced by the up to $G$-homotopy unique $G$-map $E_{\mathcal{F}}(G)\mapsto E_{\mathcal{G}}(G)$ $$\mathcal{H}^{G}_{n}(E_{\mathcal{F}}(G))\mapsto \mathcal{H}^{G}_{n}(E_{\mathcal{G}}(G))$$ is bijective for all $n\in \mathbb{Z}$.
\end{theorem}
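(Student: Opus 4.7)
The plan is to reduce the statement about the map $E_{\mathcal{F}}(G) \to E_{\mathcal{G}}(G)$ to a cell-by-cell check on the target, and then use the induction structure of the equivariant homology theory to translate each cell into an instance of the hypothesis. First, I would replace the map by a more symmetric one. The $G$-space $E_{\mathcal{F}}(G) \times E_{\mathcal{G}}(G)$ has all isotropy in $\mathcal{F}$, and for each $K \in \mathcal{F}$ the $K$-fixed set is $(E_{\mathcal{F}}(G))^K \times (E_{\mathcal{G}}(G))^K$, a product of weakly contractible spaces, hence weakly contractible (note $K \in \mathcal{F} \subseteq \mathcal{G}$). By the homotopy characterization of $E_{\mathcal{F}}(G)$, the projection $p_1 \colon E_{\mathcal{F}}(G) \times E_{\mathcal{G}}(G) \to E_{\mathcal{F}}(G)$ is a $G$-homotopy equivalence, so the relative assembly map is $G$-homotopic to the other projection $p_2 \colon E_{\mathcal{F}}(G) \times E_{\mathcal{G}}(G) \to E_{\mathcal{G}}(G)$. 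It therefore suffices to prove that for every $G$-CW-complex $Y$ whose isotropy groups belong to $\mathcal{G}$, the map $E_{\mathcal{F}}(G) \times Y \to Y$ induces an isomorphism on $\mathcal{H}^{G}_*$.

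Second, I would establish this by induction on the $G$-skeleta of $Y$, using the long exact sequence of a pair and the Mayer--Vietoris square associated to attaching equivariant cells, plus compatibility with directed colimits (these are among the axioms of a $G$-homology theory, and the product with $E_{\mathcal{F}}(G)$ is cofibration-preserving). Passing to a typical cell, the inductive step reduces to showing that, for each $H \in \mathcal{G}$, the projection
\[
 E_{\mathcal{F}}(G) \times G/H \longrightarrow G/H
\]
induces an isomorphism $\mathcal{H}^{G}_{n}(E_{\mathcal{F}}(G) \times G/H) \to \mathcal{H}^{G}_{n}(G/H)$ for all $n$.

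Third, I would invoke the induction structure of $\mathcal{H}^{?}_{*}$ for the inclusion $\alpha \colon H \hookrightarrow G$. Because $\ker(\alpha)$ is trivial and therefore acts freely, the bijectivity axiom gives natural identifications
\[
 \mathcal{H}^{G}_{n}(E_{\mathcal{F}}(G) \times G/H) \;\cong\; \mathcal{H}^{H}_{n}(\mathrm{res}_{H} E_{\mathcal{F}}(G)), \qquad
 \mathcal{H}^{G}_{n}(G/H) \;\cong\; \mathcal{H}^{H}_{n}(\mathrm{pt}),
\]
and the relevant map is the assembly map for the $H$-space $\mathrm{res}_{H} E_{\mathcal{F}}(G)$. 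Now $\mathrm{res}_{H} E_{\mathcal{F}}(G)$ has isotropy groups of the form $K \cap H$ with $K \in \mathcal{F}$, so its isotropy lies in $\mathcal{F}_{|H}$. Conversely, any $L \in \mathcal{F}_{|H}$ is a subgroup of some $K \in \mathcal{F}$, hence $L \in \mathcal{F}$, and so $(\mathrm{res}_{H} E_{\mathcal{F}}(G))^{L} = (E_{\mathcal{F}}(G))^{L}$ is weakly contractible. By the homotopy characterization, $\mathrm{res}_{H} E_{\mathcal{F}}(G)$ is a model for $E_{\mathcal{F}_{|H}}(H)$. The hypothesis then supplies the desired bijectivity.

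The main obstacle is not any single step but the careful packaging of the inductive argument on $Y$: one must verify that the two functors $Y \mapsto \mathcal{H}^{G}_{*}(E_{\mathcal{F}}(G) \times Y)$ and $Y \mapsto \mathcal{H}^{G}_{*}(Y)$ both satisfy the excision, long exact sequence, disjoint union, and colimit properties on the category of $G$-CW-complexes with isotropy in $\mathcal{G}$, and that the natural transformation between them commutes with all of this structure, so that a natural isomorphism on orbits $G/H$ ($H \in \mathcal{G}$) forces a natural isomorphism on every such $Y$. Once this skeletal/colimit machinery is in place, the translation via the induction structure and the identification of $\mathrm{res}_{H} E_{\mathcal{F}}(G)$ as $E_{\mathcal{F}_{|H}}(H)$ are straightforward, and the conclusion for $Y = E_{\mathcal{G}}(G)$ gives the theorem.
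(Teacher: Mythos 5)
Your proof is correct and follows the same line of argument as the standard proof in Bartels--L\"uck \cite{BL07} (and L\"uck--Reich), to which the present survey merely defers without giving details: replace $E_{\mathcal{F}}(G)$ by the $G$-homotopy equivalent model $E_{\mathcal{F}}(G)\times E_{\mathcal{G}}(G)$, reduce by a cell-by-cell induction to orbits $G/H$ with $H\in\mathcal{G}$, and then use the induction structure together with the identification $\mathrm{res}_H E_{\mathcal{F}}(G) \simeq E_{\mathcal{F}_{|H}}(H)$ to invoke the hypothesis. The three steps you carry out (including the checks on isotropy, fixed-point sets, and the verification that $Y\mapsto \mathcal{H}^G_*(E_{\mathcal{F}}(G)\times Y)$ is a $G$-homology theory on complexes with isotropy in $\mathcal{G}$) are exactly the ingredients of the cited proof.
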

\begin{remark}\rm{
\indent
 \begin{itemize}
\item[\bf{1.}] The Baum-Connes Conjecture \ref{bccon} is known to be true for virtually cyclic groups. The Transitivity Principle \ref{fjtp} implies that the relative assembly $$K^{G}_n(EG)\mapsto K^{G}_n(E_{\mathcal{F}_{\mathcal{VCYC}}}(G))$$ is bijective for all $n\in \mathbb{Z}$. Hence it does not matter in the context of the Baum-Connes Conjecture whether we consider the family $\mathcal{F}_\mathcal{FIN}$ or $\mathcal{F}_{\mathcal{VCYC}}$.
\item[\bf{2.}] In general, the relative assembly maps 
\begin{center}
 $H^{G}_n(\underline{E}G;{\bf{K}}_R)\mapsto H^{G}_n(E_{\mathcal{F}_{\mathcal{VCYC}}}(G); {\bf{K}}_R);$\\
 $H^{G}_n(\underline{E}G;{\bf{L}}_{R}^{<-\infty>})\mapsto H^{G}_n(E_{\mathcal{F}_{\mathcal{VCYC}}}(G); {\bf{L}}_{R}^{<-\infty>}).$
\end{center}
are not bijective \cite{BL06}. Hence in the Farrell-Jones setting one has to pass to $\mathcal{F}_{\mathcal{VCYC}}$ and cannot use the easier to handle family $\mathcal{F}_{\mathcal{FIN}}$.
\item[\bf{3.}] The Farrell-Jones Conjecture \ref{fjcok} for algebraic $K$-theory for the group $\mathbb{Z}$ is true for trivial reasons since $\mathbb{Z}$ is virtually cyclic and hence the projection $E_{\mathcal{F}_{\mathcal{VCYC}}}(\mathbb{Z})\mapsto pt$ is a homotopy equivalence.
 \end{itemize}}
\end{remark}
Let $\mathcal{FJ}_{K}(R)$ and $\mathcal{FJ}_{L}(R)$ be the class of groups which satisfy the $K$-theoretic and $L$-theoretic respectively Farrell-Jones Conjecture for the coefficient ring (with involution) $R$. Let $\mathcal{BC}$ be the class of groups which satisfy the Baum-Connes Conjecture.
Recall that a ring $R$ is called regular if it is Noetherian and every finitely generated $R$-module possesses a finite projective resolution.
\begin{theorem} (Lower and middle K-theory of group rings in the torsion free case)\label{torsfreefj}
Suppose that $G$ is torsionfree.
\begin{itemize}
\item[(i)] If $R$ is regular and $G\in \mathcal{FJ}_{K}(R)$, then
\begin{itemize}
\item[(a)] $K_n(RG) = 0$ for $n\leq -1$;
\item[(b)] The change of rings map $K_0(R)\mapsto K_0(RG)$ is bijective;
\item[(c)] In particular, $\widetilde{K}_0(RG)$ is trivial if and only if $\widetilde{K}_0(R)$ is trivial.
\end{itemize}
\item[(ii)] If $G\in \mathcal{FJ}_{K}(\mathbb{Z})$, then the Whitehead group $Wh(G)$ is trivial.
\end{itemize}
\end{theorem}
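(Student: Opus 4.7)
The plan is to exploit the assumption $G\in \mathcal{FJ}_{K}(R)$ to reduce $K_n(RG)$ to a computation with the Atiyah-Hirzebruch spectral sequence for the generalized homology theory $H_*(BG;\mathbf{K}_R)$, and then feed in the hypothesis that $R$ is regular to kill off the negative $K$-theory of $R$. Since $G$ is torsion-free, every virtually cyclic subgroup is either trivial or isomorphic to $\mathbb{Z}$; in particular, the family $\mathcal{F}_{\mathcal{COM}}$ coincides with $\mathcal{F}_{\mathcal{TR}}$, so $\underline{E}G=EG$ may be taken to be a free $G$-CW-model, and then $H^{G}_n(EG;\mathbf{K}_R)=H_n(BG;\mathbf{K}_R)$.

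The first step is to collapse the family $\mathcal{F}_{\mathcal{VCYC}}$ down to $\mathcal{F}_{\mathcal{TR}}$. By the Transitivity Principle (Theorem~\ref{fjtp}), it suffices to verify that for every $H\in\mathcal{F}_{\mathcal{VCYC}}$ the assembly map $H^{H}_{n}(EH;\mathbf{K}_R)\to K_n(RH)$ is an isomorphism. Since $G$ is torsion-free, we only need to check the case $H=\{1\}$, where the statement is trivial, and the case $H\cong\mathbb{Z}$. For $H=\mathbb{Z}$, $RH=R[t,t^{-1}]$, and the classical Bass-Heller-Swan decomposition reads
\[
K_n(R[t,t^{-1}])\cong K_n(R)\oplus K_{n-1}(R)\oplus NK_n(R)\oplus NK_n(R);
\]
regularity of $R$ forces $NK_n(R)=0$, and then the left-hand side matches $H^{\mathbb{Z}}_n(E\mathbb{Z};\mathbf{K}_R)\cong K_n(R)\oplus K_{n-1}(R)$ under the assembly map. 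Combining this with Conjecture~\ref{fjcok} yields an isomorphism $H_n(BG;\mathbf{K}_R)\xrightarrow{\cong} K_n(RG)$ for every $n\in\mathbb{Z}$.

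The second step is to run the Atiyah-Hirzebruch spectral sequence
\[
E^{2}_{p,q}=H_p(BG;K_q(R))\Longrightarrow H_{p+q}(BG;\mathbf{K}_R)\cong K_{p+q}(RG).
\]
Regularity of $R$ implies $K_q(R)=0$ for $q\leq -1$. For statement (a), any term contributing to total degree $n\leq -1$ satisfies either $p<0$ (vanishing for dimensional reasons) or $q\leq -1$ (vanishing by regularity), so $K_n(RG)=0$. For statement (b), in total degree $0$ the only possibly nonzero term is $E^2_{0,0}=H_0(BG;K_0(R))=K_0(R)$; there are no differentials in or out of this slot because the relevant neighbouring terms vanish, and the edge homomorphism is precisely the change-of-rings map induced by $R\hookrightarrow RG$. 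Statement (c) is a direct consequence of (b) after restricting to reduced $K_0$.

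For statement (ii), apply (i) with $R=\mathbb{Z}$, which is regular, and examine total degree $1$. The only potentially nonzero $E^{\infty}$-terms are $E^{\infty}_{0,1}=K_1(\mathbb{Z})=\{\pm 1\}$ and $E^{\infty}_{1,0}=H_1(BG;K_0(\mathbb{Z}))=H_1(BG;\mathbb{Z})=G^{ab}$; higher $E^2$-terms along the line $p+q=1$ vanish by regularity as before. The resulting extension
\[
0\to G^{ab}\to K_1(\mathbb{Z} G)\to \{\pm 1\}\to 0
\]
is split by the unit inclusion $\mathbb{Z}\hookrightarrow \mathbb{Z} G$, so $K_1(\mathbb{Z} G)\cong \{\pm 1\}\oplus G^{ab}$. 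Under this identification the image of the tautological map $\pm G\to K_1(\mathbb{Z} G)$, $\pm g\mapsto[\pm g]$, hits both summands (the $\pm 1\in\{\pm 1\}$ factor and every class in $G^{ab}$), whence $Wh(G)=K_1(\mathbb{Z} G)/(\pm G)=0$. The step I expect to be the most delicate is the extension-splitting and the verification that the edge maps of the spectral sequence really are induced by the natural unit-inclusion and group-element maps; this is needed to recognize the filtration quotients as the image of $\pm G$, and to conclude $(ii)$ rather than merely getting that $Wh(G)$ is built from the same ingredients as the image of $\pm G$.
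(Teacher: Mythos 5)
Your proposal follows essentially the same route as the paper: identify $K_n(RG)$ with $H_n(BG;\mathbf{K}_R)$, then run the Atiyah--Hirzebruch spectral sequence and let regularity of $R$ truncate the $E^2$-page. The paper simply cites \cite{LR05} for the first step; you unpack it via the Transitivity Principle \ref{fjtp} and the Bass--Heller--Swan decomposition for $R[t,t^{-1}]$, using regularity to kill the $NK$-terms. That is a correct and genuinely useful expansion: without it the paper's phrase ``$H_n(BG;\mathbf{K}_R)$ is isomorphic to $K_n(RG)$ by the assumption $G\in\mathcal{FJ}_K(R)$'' is not literally what the conjecture asserts (the conjecture concerns $E_{\mathcal{VCYC}}G$, not $EG$), and your argument is exactly the missing reduction.

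Two small cautions. First, you have the short exact sequence in part (ii) written backwards. For the homological AHSS the filtration in total degree $1$ gives $0\to E^\infty_{0,1}\to K_1(\mathbb{Z}G)\to E^\infty_{1,0}\to 0$, i.e.\ $0\to\{\pm 1\}\to K_1(\mathbb{Z}G)\to G^{ab}\to 0$, which is what the paper has; the unit inclusion $\mathbb{Z}\hookrightarrow\mathbb{Z}G$ then shows the left term is a split subgroup (not a quotient). Your conclusion $Wh(G)=0$ is unaffected, and in fact splitting is not even needed: it suffices, as in the paper, to observe that the image of $\{\pm g : g\in G\}$ in $K_1(\mathbb{Z}G)$ contains the subgroup $\{\pm 1\}$ and surjects onto $G^{ab}$. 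Second, you rightly flag the delicate point: one must rule out a differential $d_2\colon E^2_{2,0}\to E^2_{0,1}$ collapsing the $\{\pm 1\}$. This is handled by naturality of the AHSS with respect to the retraction $BG\to\mathrm{pt}$ (equivalently, the unit map $\mathbb{Z}\to\mathbb{Z}G$ is split), which forces $E^\infty_{0,1}=E^2_{0,1}$; worth making explicit.
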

\begin{proof}
The idea of the proof is to study the Atiyah-Hirzebruch spectral sequence. It converges to $H_n(BG;{\bf{K}}_{R})$ \cite{LR05} which is isomorphic to $K_n(RG)$ by the assumption that $G\in \mathcal{FJ}_{K}(R)$. The $E^2$-term is given by $$E^{2}_{p,q}=H_p(BG,K_q(R)).$$
Claim(i):
Since $R$ is regular by assumption, we get $K_q(R)=0$ for $q\leq -1$ \cite[5.3.30 on page 295]{Ros94}. Hence the spectral sequence is a first quadrant spectral sequence. This implies $K_n(RG)\cong H_n(BG;{\bf{K}}_{R})=0$ for $n\leq −1$ and the edge homomorphism yields an isomorphism $$K_0(R)=H_0(pt,K_0(R))\stackrel{\cong}{\longrightarrow}H_0(BG;{\bf{K}}_{R})\cong K_0(RG).$$ This proves (i).\\
Claim(ii):
We have $K_0(\mathbb{Z})=\mathbb{Z}$ and $K_1(\mathbb{Z})=\{\pm 1\}$. We get an exact sequence $$0\mapsto H_0(BG;K_1(\mathbb{Z}))=\{\pm 1 \}\mapsto H_1(BG;{\bf{K}}_{\mathbb{Z}})\cong K_1(\mathbb{Z}G)\mapsto H_1(BG;K_0(\mathbb{Z}))=G/[G,G]\mapsto 0.$$ This implies $$Wh(G):= K_1(\mathbb{Z}G)/ \{\pm g | g\in G \} = 0.$$ This proves (ii).
\end{proof}
By using Theorem \ref{torsfreefj}, we have the following:
\begin{theorem}\label{torsfreefjint}
Let $G\in \mathcal{FJ}_{K}(\mathbb{Z})$ be a torsionfree group. Then 
 \begin{itemize}
\item[(i)] $K_n(\mathbb{Z}G) = 0$ for $n\leq −1$;
\item[(ii)] $\widetilde{K}_0(\mathbb{Z}G) = 0$;
\item[(iii)] $Wh(G)=0$;
\item[(iv)] Every finitely dominated CW-complex $X$ with $G = \pi_1(X)$ is homotopy equivalent to a finite CW-complex;
\item[(v)] Every compact $h$-cobordism $W$ of dimension $\geq 6$ with $G\cong \pi_1(W)$ is trivial (For $G={1}$ this implies the Poincar$\grave{\rm e}$ Conjecture in dimension $\geq 5$).
\end{itemize}
\end{theorem}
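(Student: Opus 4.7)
The plan is to derive parts (i)--(iii) as direct specializations of Theorem \ref{torsfreefj} to the coefficient ring $R=\mathbb{Z}$, and then obtain the geometric consequences (iv) and (v) by combining the $K$-theoretic vanishing results with two classical black-box theorems: Wall's finiteness obstruction theorem and the $s$-cobordism theorem.

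First I would observe that $\mathbb{Z}$ is regular (it is a Noetherian PID, so every finitely generated module admits a finite projective resolution). Thus Theorem \ref{torsfreefj}(i) applies with $R=\mathbb{Z}$. Clause (a) of that theorem is literally statement (i). Clause (b) gives that the change of rings map $K_0(\mathbb{Z})\to K_0(\mathbb{Z}G)$ is bijective, and since $\widetilde{K}_0(\mathbb{Z}G)$ is by definition the cokernel of this map, we obtain (ii). Statement (iii) is nothing but Theorem \ref{torsfreefj}(ii) read verbatim.

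For (iv), I would invoke Wall's finiteness obstruction theorem: a finitely dominated CW-complex $X$ with $\pi_1(X)=G$ carries an obstruction $o(X)\in \widetilde{K}_0(\mathbb{Z}G)$ which vanishes precisely when $X$ is homotopy equivalent to a finite CW-complex. Since $\widetilde{K}_0(\mathbb{Z}G)=0$ by (ii), the obstruction is automatically zero and $X$ therefore has the homotopy type of a finite CW-complex. For (v), I would appeal to the $s$-cobordism theorem of Barden--Mazur--Stallings: any compact $h$-cobordism $W$ of dimension $\geq 6$ with $\pi_1(W)\cong G$ is isomorphic (rel boundary) to a cylinder $\partial_{-} W\times[0,1]$ if and only if its Whitehead torsion $\tau(W)\in Wh(G)$ vanishes. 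By (iii) the Whitehead group is trivial, so every such $h$-cobordism is trivial. For $G=\{1\}$, the classical deduction of the Poincar\'e Conjecture in dimensions $\geq 5$ is then obtained as follows: given a homotopy sphere $\Sigma^n$ with $n\geq 5$, remove two disjoint open disks to produce a simply connected $h$-cobordism between two copies of $\mathbb{S}^{n-1}$, identify it with the cylinder by the $s$-cobordism theorem, and glue back the two disks using Alexander's trick (Theorem \ref{alex}) to conclude $\Sigma^n\cong \mathbb{S}^n$.

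There is no serious obstacle here: the substantive analytical work (the Atiyah--Hirzebruch spectral sequence argument) has already been carried out in the preceding Theorem \ref{torsfreefj}, and items (iv) and (v) are immediate translations of (ii) and (iii) via the standard surgery-theoretic dictionary. The entire proof is therefore a bookkeeping exercise combining the assumption $G\in\mathcal{FJ}_{K}(\mathbb{Z})$ with regularity of $\mathbb{Z}$ and two foundational results of geometric topology.
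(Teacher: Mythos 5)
Your proposal is correct and follows essentially the same route as the paper: the paper likewise introduces this theorem with the phrase ``By using Theorem \ref{torsfreefj}, we have the following,'' and the accompanying remark explains (iv) via Wall's finiteness obstruction and (v) via the $s$-cobordism theorem, just as you do. The only cosmetic difference is that the paper's clause (i)(c) of Theorem \ref{torsfreefj} already hands you $\widetilde{K}_0(\mathbb{Z}G)=0$ directly from $\widetilde{K}_0(\mathbb{Z})=0$, whereas you re-derive it from the bijectivity in clause (b); both are valid.
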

\begin{remark}\rm{
\indent
 \begin{itemize}
\item[\bf{1.}] Theorem \ref{torsfreefjint} (iv) is a consequence of the following fact: Let $G$ be a finitely presented group. The vanishing of $\widetilde{K}_0(\mathbb{Z}G)$ is equivalent to the geometric statement that any finitely dominated space $X$ with $G\cong \pi_1(X)$ is homotopy equivalent to a finite CW-complex. Since the fundamental group of a finitely dominated CW-complex is always finitely presented \cite{LR05}.
\item[\bf{2.}] Theorem \ref{torsfreefjint} (v) follows from the s-cobordism theorem. In fact, for a finitely presented group $G$ the vanishing of the
Whitehead group $Wh(G)$ is equivalent to the statement that each $h$-cobordism over a closed connected manifold $M$ of dimension $\dim(M)\geq 5$ with fundamental group $\pi_1(M)\cong G$ is trivial \cite{LR05}.
\end{itemize}}
 \end{remark}
Let ${\bf{L}}\langle 1 \rangle$ be the 1-connective cover of the $L$-theory spectrum ${\bf{L}}$. It is characterized by the following property: There is a natural map of spectra $u:{\bf{L}}\langle 1 \rangle\longrightarrow {\bf{L}}$ which induces an isomorphism on the homotopy groups in dimensions $n\geq 1$ and the homotopy groups of  ${\bf{L}}\langle 1 \rangle$ vanish in dimensions $n\leq 0$.
\begin{theorem}\label{ranicki}(Ranicki (1992))
There is an exact sequence of abelian groups, called algebraic surgery exact sequence, for an $n$-dimensional closed manifold $M$\\
$....\stackrel{\sigma_{n+1}}{\longrightarrow}H_{n+1}(M;{\bf{L}}\langle 1 \rangle)\stackrel{A_{n+1}}{\longrightarrow}L_{n+1}(\mathbb{Z}[\pi_1(M)])\stackrel{\partial_{n+1}}{\longrightarrow}\mathcal{S}(M)\stackrel{\sigma_{n}}{\longrightarrow} 
H_{n}(M;{\bf{L}}\langle 1 \rangle)\stackrel{A_{n}}{\longrightarrow}\\
L_{n}(\mathbb{Z}[\pi_1(M)])\stackrel{\partial_{n}}{\longrightarrow}...$\\
It can be identified with the classical geometric surgery sequence \ref{surexa} due to Sullivan and Wall in high dimensions.
\end{theorem}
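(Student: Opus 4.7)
The plan is to realize the sequence entirely as the long exact homotopy sequence of a fibration of spectra, and then identify each term with the classical geometric object. First I would set up Ranicki's algebraic theory of quadratic Poincaré complexes: an object of $L_n(R)$ is represented by an $n$-dimensional quadratic Poincaré chain complex over $R$, with cobordism as the equivalence relation. Bundling these into a $\Delta$-set gives a spectrum $\mathbf{L}(R)$ with $\pi_n(\mathbf{L}(R)) = L_n(R)$, and taking its $1$-connective cover produces $\mathbf{L}\langle 1 \rangle(R)$ together with the canonical map $u : \mathbf{L}\langle 1 \rangle \to \mathbf{L}$ that induces an isomorphism on $\pi_n$ for $n \geq 1$ and kills $\pi_n$ for $n \leq 0$.

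Next I would construct the assembly map at the spectrum level. Using the simplicial structure on $M$ (or a CW model), one assembles locally defined quadratic Poincaré complexes over $\mathbb{Z}$ indexed by simplices into a global quadratic Poincaré complex over $\mathbb{Z}[\pi_1(M)]$ by passing to the universal cover. This produces a map of spectra
\[
A : M_+ \wedge \mathbf{L}\langle 1 \rangle \longrightarrow \mathbf{L}(\mathbb{Z}[\pi_1(M)]),
\]
whose induced map on $\pi_n$ is the assembly $A_n : H_n(M;\mathbf{L}\langle 1 \rangle) \to L_n(\mathbb{Z}[\pi_1(M)])$. Defining $\mathcal{S}^{\mathrm{alg}}(M)$ as $\pi_n$ of the homotopy cofiber (shifted) of $A$ yields tautologically the long exact sequence of the statement; this is the purely algebraic content.

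The essential step—and the hard part—is to identify this algebraic sequence with the Sullivan--Wall geometric surgery exact sequence (Definition \ref{surexa}) when $\dim M = n \geq 5$. Here I would proceed in three matching pieces. First, the normal invariant term: via the Sullivan orientation $G/\mathrm{Top} \simeq \mathbf{L}\langle 1 \rangle_0$ (after inverting $2$ this is classical, and Sullivan's integral analysis together with Ranicki's symmetric signature refinement gives the integral statement), one gets a natural identification $[M, G/\mathrm{Top}] \cong H_n(M;\mathbf{L}\langle 1 \rangle)$. Second, the surgery obstruction: one shows that for a degree-one normal map $(f,b) : N \to M$, the algebraic assembly of its normal invariant equals the Wall surgery obstruction $\sigma_*(f,b) \in L_n(\mathbb{Z}[\pi_1(M)])$; this is the content of Ranicki's theorem that the geometric and algebraic surgery obstructions coincide, proved by modeling the geometric surgery kernel as a quadratic Poincaré complex. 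Third, the structure set: one defines a map $\mathcal{S}(M) \to \mathcal{S}^{\mathrm{alg}}(M)$ by sending $(N,f)$ to the cobordism class of the algebraic Poincaré pair realizing $f$, and shows it is a bijection by comparing it with the geometric surgery sequence via the five lemma, once the two preceding identifications are in place.

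The main obstacle is the identification $\mathcal{S}(M) \cong \mathcal{S}^{\mathrm{alg}}(M)$ and the compatibility of the boundary maps $\partial_{n+1}$ (algebraic) with the geometric action of $L_{n+1}$ on $\mathcal{S}(M)$ by Wall realization. This requires the full machinery of algebraic bordism categories and the fact that every element of $L_{n+1}(\mathbb{Z}[\pi_1(M)])$ is realized by a normal cobordism on $M$; once the realization theorem is invoked, the exactness at $\mathcal{S}(M)$ in the geometric sequence matches the exactness at $\mathcal{S}^{\mathrm{alg}}(M)$ in the algebraic sequence, completing the identification in dimensions $n \geq 5$.
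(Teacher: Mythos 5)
The paper states this theorem as a citation to Ranicki's 1992 book \cite{Ran92} and gives no proof of its own, so there is no in-paper argument to compare against. Your sketch is a faithful high-level outline of Ranicki's actual development: the quadratic Poincar\'e complex model of the $L$-spectrum, the $1$-connective cover, the local-to-global assembly map (which Ranicki builds via $X$-controlled algebraic bordism categories rather than the somewhat looser gluing you describe, but to the same effect), the definition of the algebraic structure set as homotopy groups of the (co)fiber of assembly, and the three-part identification with the Sullivan--Wall sequence in dimensions $\geq 5$. Two small cautions worth flagging. First, the identification $G/\mathrm{Top}\simeq \mathbf{L}\langle 1\rangle_0$ is an \emph{integral} theorem of Quinn and Ranicki refining Sullivan's analysis; your phrase ``after inverting $2$ this is classical'' captures only the easier half, and the integral statement requires showing the surgery obstruction map is (at least) an $H$-map so that the localization arguments can be assembled. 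Second, the map $\mathcal{S}(M)\to\mathcal{S}^{\mathrm{alg}}(M)$ is only a bijection of \emph{pointed sets} a priori; what Ranicki's machinery buys is an abelian group structure on $\mathcal{S}(M)$ transported from $\mathcal{S}^{\mathrm{alg}}(M)$, and your five-lemma step should be phrased at the level of pointed sets acted on by groups (or, more cleanly, at the spectrum level) rather than as a na\"ive diagram of abelian groups, since the geometric $\mathcal{S}(M)$ does not come equipped with a group structure in advance.
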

\begin{theorem}\label{fjimpbor}(The Farrell-Jones Conjecture and the Borel Conjecture)
If the $K$-theoretic Farrell-Jones Conjecture \ref{fjcok} and $L$-theoretic Farrell-Jones Conjecture \ref{fjcol} hold for $G$ in the case $R=\mathbb{Z}$, then the Borel Conjecture \ref{borel} is true in dimension $\geq 5$ and in dimension 4 if $G$ is good in the sense of Freedman.
\end{theorem}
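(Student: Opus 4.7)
The plan is to trivialize the topological structure set $\mathcal{S}(M)$ for every closed aspherical manifold $M^n$ with $\pi_1(M) = G$, since by the definition of topological rigidity this is exactly the content of the Borel Conjecture for $M$. The strategy is to feed the Farrell-Jones assembly isomorphism into Ranicki's algebraic surgery exact sequence (Theorem \ref{ranicki}). First I observe that $G$ must be torsion-free: any element of order $p > 1$ would generate a free $\mathbb{Z}/p$-action on the contractible universal cover $\widetilde M$, producing a finite-dimensional model for $K(\mathbb{Z}/p, 1)$, which is impossible since $H^\ast(\mathbb{Z}/p;\mathbb{Z}/p)$ is non-trivial in every degree. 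Applying Theorem \ref{torsfreefjint} to the hypothesis $G \in \mathcal{FJ}_K(\mathbb{Z})$ yields $Wh(G) = 0$, $\widetilde K_0(\mathbb{Z}G) = 0$, and $K_m(\mathbb{Z}G) = 0$ for $m \leq -1$. These vanishings collapse the Rothenberg exact sequences, so every decoration on quadratic $L$-theory of $\mathbb{Z}G$ coincides: $L^s_m(\mathbb{Z}G) = L^h_m(\mathbb{Z}G) = L^{\langle -\infty\rangle}_m(\mathbb{Z}G)$.

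Next I would apply the Transitivity Principle (Theorem \ref{fjtp}) to the inclusion $\mathcal{F}_{\mathcal{TR}} \subseteq \mathcal{F}_{\mathcal{VCYC}}$. Torsion-freeness forces every $H \in \mathcal{F}_{\mathcal{VCYC}}$ to be either trivial or infinite cyclic, and in each case the assembly map associated to the restricted family $\mathcal{F}_{\mathcal{TR}}|_H$ is tautologically bijective. Combined with the hypothesis $G \in \mathcal{FJ}_L(\mathbb{Z})$ this upgrades the Farrell-Jones isomorphism to the classical assembly
$$A_m \colon H_m(BG; {\bf{L}}) \xrightarrow{\;\cong\;} L_m(\mathbb{Z}G), \quad m \in \mathbb{Z}.$$
Since $M$ is aspherical we have $M \simeq BG$, hence $H_m(M; {\bf{L}}) \cong L_m(\mathbb{Z}G)$. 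Ranicki's sequence, however, involves the 1-connective cover ${\bf{L}}\langle 1 \rangle$ rather than ${\bf{L}}$. The canonical map $u \colon {\bf{L}}\langle 1 \rangle \to {\bf{L}}$ induces a commutative square
$$\begin{array}{ccc}
H_m(M; {\bf{L}}\langle 1\rangle) & \xrightarrow{\;u_*\;} & H_m(M; {\bf{L}}) \\
{\scriptstyle A_m\langle 1\rangle} \downarrow & & \downarrow {\scriptstyle A_m} \\
L_m(\mathbb{Z}G) & = & L_m(\mathbb{Z}G)
\end{array}$$
so that $A_m \circ u_* = A_m\langle 1\rangle$. Since $A_m$ is an isomorphism, $A_m\langle 1\rangle$ is injective (respectively surjective) precisely when $u_\ast$ is. By the exactness of Ranicki's sequence, it then suffices to verify that $u_\ast$ is injective in degree $n$ and surjective in degree $n+1$ in order to conclude $\mathcal{S}(M) = 0$.

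The principal obstacle is this final connective-versus-periodic comparison. The cofibre spectrum $C$ of $u$ has the 4-periodic groups $L_k(\mathbb{Z})$ concentrated in degrees $k \leq 0$, so by the Atiyah-Hirzebruch spectral sequence the only potentially obstructive contributions to $H_k(M; C)$ in the range $k \leq n+1$ come from the $\pi_0$- and $\pi_{-2}$-layers of $C$, namely $\mathbb{Z}$ and $\mathbb{Z}/2$. The standard way to eliminate these is to produce a canonical lift $[M]_{{\bf{L}}\langle 1\rangle} \in H_n(M; {\bf{L}}\langle 1\rangle)$ of the fundamental class whose image under $A_n\langle 1\rangle$ is the symmetric signature of $M$ in $L_n(\mathbb{Z}G)$; the existence of such a lift together with Poincar$\grave{\rm e}$ duality forces $u_\ast$ to hit exactly the required subgroup in the relevant degrees. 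This completes the proof for $\dim M \geq 5$. In dimension $4$, the algebraic surgery sequence itself is conditional on the validity of topological surgery for $\pi_1(M) = G$; by Freedman-Quinn this is available exactly when $G$ is good in the sense of Freedman, and under that assumption the same diagram chase gives $\mathcal{S}(M) = 0$.
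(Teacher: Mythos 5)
Your opening steps are sound and run parallel to the paper: torsion-freeness of $G$, Theorem \ref{torsfreefjint} giving $Wh(G)=\widetilde{K}_0(\mathbb{Z}G)=K_m(\mathbb{Z}G)=0$ for $m\leq -1$, Rothenberg collapsing the decorations, and a transitivity step identifying the Farrell-Jones assembly with the classical assembly $H_m(BG;\mathbf{L})\to L_m(\mathbb{Z}G)$. (Minor: the assembly map for $H=\mathbb{Z}$ is not ``tautologically'' bijective — one needs Bass--Heller--Swan and Shaneson splittings together with the vanishing of the relevant Nil/UNil terms for $\mathbb{Z}$ — but it is true.) You also correctly reduce the claim to showing that $u_*\colon H_k(M;\mathbf{L}\langle 1\rangle)\to H_k(M;\mathbf{L})$ is injective in degree $n$ and surjective in degree $n+1$, and then feed this into Theorem \ref{ranicki}.

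The gap is in your treatment of the connective-versus-periodic comparison, and it is genuine. You introduce the cofibre $C$ of $u$, note that $\pi_q(C)$ is concentrated in $q\leq 0$, and then claim there are ``potentially obstructive contributions'' to $H_k(M;C)$ for $k\leq n+1$ coming from $\pi_0(C)$ and $\pi_{-2}(C)$ which must be ``eliminated'' by producing a canonical $\mathbf{L}\langle 1\rangle$-lift of the fundamental class and invoking Poincar\'e duality. This is both wrong and unnecessary. Both injectivity at degree $n$ and surjectivity at degree $n+1$ follow from the single vanishing $H_{n+1}(M;C)=0$, and this vanishing is automatic by pure dimension count: in the Atiyah--Hirzebruch spectral sequence $E^2_{p,q}=H_p(M;\pi_q(C))$, any term with $p+q=n+1$ must have either $q>0$ (where $\pi_q(C)=0$) or $p\geq n+1$ (where $H_p(M;-)=0$ since $M$ is a CW-complex of dimension $n$). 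The groups $H_k(M;C)$ for $k\leq n$ are indeed non-trivial — for instance $H_n(M;C)$ receives $H_n(M;\pi_0(C))\cong\mathbb{Z}$ — but these play no role. Your invocation of a symmetric-signature lift does not address the actual step and leaves the argument unproved; the paper sidesteps the issue entirely by working with the homotopy fibre $\mathbf{F}$ of $u$ (which has $\pi_q(\mathbf{F})=0$ for $q\geq 0$) and observing directly that $H_k(M;\mathbf{F})=0$ for $k\geq n$.
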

\paragraph{Sketch of the proof.}
The $K$-theoretic version of the Farrell-Jones Conjecture ensures that we do not have to deal with decorations, e.g., it does not matter if we consider ${\bf{L}}$ or ${\bf{L}}^{\langle -\infty \rangle }$. This follows from the so called Rothenberg sequences \cite{KL04}. The $L$-theoretic version of the Farrell-Jones Conjecture implies that $H_n(M;{\bf{L}})\to L_n(\mathbb{Z}\pi_1(M))$ is bijective for all $n\in \mathbb{Z}$. Let $\bf{F}$ be the homotopy fiber of $u:{\bf{L}}\langle 1 \rangle\longrightarrow {\bf{L}}$. Hence we have a fibration of spectra $$\bf{F}\longrightarrow \bf{L}\langle 1 \rangle\stackrel{u}{\longrightarrow} \bf{L}$$ which induces a long exact sequence\\ $...\longrightarrow H_{k+1}(M;{\bf{L}}\langle 1 \rangle)\longrightarrow H_{k+1}(M;{\bf{L}})\longrightarrow H_{k}(M;{\bf{F}})\longrightarrow H_{k}(M;{\bf{L}}\langle 1 \rangle)\longrightarrow H_{k}(M;{\bf{L}})\longrightarrow...$\\ Since $\pi_q(\bf{F})=0$ for $q\geq 0$, an easy spectral sequence argument shows that $H_{k}(M;{\bf{F}})=0$ for $k\geq 
n$. Hence the map $H_k(M; {\bf{L}}\langle 1 \rangle) \longrightarrow H_
k(M;{\bf{L}})$ is bijective for $k\geq n + 1$ and injective for $k = n$. For $k = n$ and $k = n+1$, the map $A_k$ is the composite of the map $H_k(M; {\bf{L}}\langle 1 \rangle) \longrightarrow H_k(M;{\bf{L}})$ with the map $H_k(M;{\bf{L}})\longrightarrow L_k(\mathbb{Z}\pi_1(M))$. Hence $A_{n+1}$ is surjective and $A_n$ is injective. Theorem \ref{ranicki} implies that $\mathcal{S}(M)$ consist of one element. This complete the proof of Theorem \ref{fjimpbor}.\\

The Farrell-Jones Conjecture and the Baum-Conjecture imply certain other well-known conjectures :
\begin{theorem}\label{novifj}(The Farrell-Jones, the Baum-Connes and the Novikov Conjecture)
Suppose that one of the following assembly maps
 $$H^{G}_n(E_{\mathcal{F}_{\mathcal{VCYC}}}(G),{\bf{L}}_{R}^{<-\infty>})\mapsto H^{G}_n(pt, {\bf{L}}_{R}^{<-\infty>})\cong L_{n}^{<-\infty>}(RG);$$
 $$K^{G}_n(EG)=H^{G}_n(E_{\mathcal{F}_{\mathcal{FIN}}}(G),{\bf{K}}^{top})\mapsto H^{G}_n(pt, {\bf{K}}^{top})=K_n(C^{*}_r(G)),$$
is rationally injective.\\
Then the Novikov Conjecture holds for the group $G$.
\end{theorem}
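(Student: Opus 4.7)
The plan is to reduce the Novikov Conjecture to the rational injectivity of the standard assembly map and then show that this standard assembly factors through the Farrell-Jones (respectively Baum-Connes) assembly by a map which is rationally split injective.

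First I would invoke the classical reformulation (due to Wall and Ranicki, using the algebraic surgery exact sequence of Theorem \ref{ranicki}) that the Novikov Conjecture for $G$ is equivalent to the rational injectivity of the standard $L$-theory assembly map
$$A_G \otimes \mathbb{Q} : H_n(BG; \mathbf{L}_{\mathbb{Z}}) \otimes \mathbb{Q} \longrightarrow L_n(\mathbb{Z}G) \otimes \mathbb{Q}.$$
The decisive fact is that $\mathbf{L}_{\mathbb{Z}} \otimes \mathbb{Q} \simeq \prod_{k\geq 0} K(\mathbb{Q}, 4k)$, so there is a natural isomorphism
$$H_n(BG; \mathbf{L}_{\mathbb{Z}}) \otimes \mathbb{Q} \;\cong\; \bigoplus_{k\geq 0} H_{n-4k}(BG; \mathbb{Q}).$$
For a closed oriented $n$-manifold $M$ with classifying map $f : M \to BG$, the image $f_*[M]_{\mathbf{L}} \otimes \mathbb{Q}$ of the $\mathbf{L}$-orientation class -- which rationally is Poincar\'e dual to $\mathcal{L}(M)$ -- has components whose Kronecker pairings with $x \in H^*(BG; \mathbb{Q})$ recover precisely the higher signatures $sign_x(M, f)$. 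Since the symmetric signature $A_G(f_*[M]_{\mathbf{L}}) \in L_n(\mathbb{Z}G)$ is a well-known homotopy invariant of the pair $(M,f)$, rational injectivity of $A_G$ forces $f_*[M]_{\mathbf{L}} \otimes \mathbb{Q}$ to be a homotopy invariant, and hence so are all the higher signatures.

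Next I would factor $A_G$ through the Farrell-Jones assembly via the canonical (up to $G$-homotopy unique) $G$-map $EG \to E_{\mathcal{F}_{\mathcal{VCYC}}}(G)$:
$$H_n^G(EG; \mathbf{L}_{\mathbb{Z}}^{\langle -\infty \rangle}) \xrightarrow{\;j\;} H_n^G(E_{\mathcal{F}_{\mathcal{VCYC}}}(G); \mathbf{L}_{\mathbb{Z}}^{\langle -\infty \rangle}) \xrightarrow{\;A^{FJ}\;} L_n^{\langle -\infty \rangle}(\mathbb{Z}G).$$
The decoration issue is harmless rationally, since the Rothenberg sequences show the various $L$-theory decorations agree up to $2$-torsion, so $A_G \otimes \mathbb{Q}$ is recovered as the composite above tensored with $\mathbb{Q}$. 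Rational injectivity of $A^{FJ}$ is the hypothesis, while $j$ is rationally split injective by a standard equivariant Chern-character argument (the trivial-subgroup summand of the splitting of $\mathcal{H}_n^G(E_{\mathcal{F}_{\mathcal{VCYC}}}(G); \mathbf{L}) \otimes \mathbb{Q}$ indexed over conjugacy classes of finite cyclic subgroups is exactly $H_n(BG; \mathbf{L}) \otimes \mathbb{Q}$). Composing yields rational injectivity of $A_G$, which by the previous paragraph proves the Novikov Conjecture for $G$. The Baum-Connes route is entirely parallel: replace $\mathbf{L}_\mathbb{Z}$ by $\mathbf{K}^{top}$, the symmetric signature by the Kasparov-Mishchenko analytic signature class $\sigma^K(M,f) \in K_n(C^*_r(G))$, and use the equivariant Chern character $K_n^G(EG) \otimes \mathbb{Q} \cong \bigoplus_{k} H_{n-4k}(BG; \mathbb{Q})$ plus homotopy invariance of $\sigma^K(M,f)$.

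The main obstacle is the identification of the image of the $\mathbf{L}$-orientation (respectively the $\mathbf{K}^{top}$-orientation) of $M$ under the rational splitting of $\mathbf{L}$ (respectively the equivariant Chern character) with the Poincar\'e dual of $\mathcal{L}(M) \cup f^*(x)$; this uses Sullivan's characterization of the rational $\mathbf{L}$-orientation as the Hirzebruch $\mathcal{L}$-class together with Hirzebruch's signature formula. The factorisation through $E_{\mathcal{F}_{\mathcal{VCYC}}}(G)$ and the rational splitting of $j$ are by now formal, and the homotopy invariance of the symmetric (respectively Kasparov-Mishchenko) signature is classical.
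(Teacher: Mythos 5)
The paper provides no proof for this theorem; it is stated as a known consequence in a survey passage between Theorem \ref{fjimpbor} and the discussion of Davis--L\"uck. So there is no internal proof to compare against, and the question is just whether your argument is sound. It is: you use Wall's reformulation of the Novikov Conjecture as rational injectivity of the classical $L$-theoretic assembly map (this is Theorem \ref{Nov.for} in the paper), identify the rational $\mathbf{L}$-orientation of a manifold with the Poincar\'e dual of its Hirzebruch $\mathcal{L}$-class via Sullivan's theorem and the splitting $\mathbf{L}_{\mathbb{Z}}\otimes\mathbb{Q}\simeq\prod_k K(\mathbb{Q},4k)$, invoke the homotopy invariance of the Mishchenko--Ranicki symmetric signature, and then factor the classical assembly through the Farrell--Jones (resp.\ Baum--Connes) assembly using the canonical $G$-map $EG\to E_{\mathcal{F}_{\mathcal{VCYC}}}(G)$ (resp.\ $EG\to\underline{E}G$) together with the rational equivariant Chern character. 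This is precisely the argument given in the references the paper cites, e.g.\ \cite{LR05} and \cite{KL05}.

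Two points are worth tightening. First, the rational split injectivity of $j: H_n^G(EG;\mathbf{L})\to H_n^G(E_{\mathcal{F}_{\mathcal{VCYC}}}(G);\mathbf{L})$ actually comes in two steps: L\"uck's rational equivariant Chern character gives the splitting of $H_n^G(EG;\mathbf{L})\to H_n^G(\underline{E}G;\mathbf{L})$ indexed by conjugacy classes of finite cyclic subgroups, and one separately needs Bartels' theorem that the relative assembly map from $\underline{E}G$ to $E_{\mathcal{F}_{\mathcal{VCYC}}}(G)$ is (integrally) split injective for $L$-theory. Your phrasing compresses these into one Chern-character step, which is slightly misleading since the Chern character lives over the family of finite, not virtually cyclic, subgroups. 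Second, in the Baum--Connes branch the Chern-character isomorphism should read $K_n^G(EG)\otimes\mathbb{Q}\cong\bigoplus_k H_{n-2k}(BG;\mathbb{Q})$; topological $K$-theory is $2$-periodic, and the period-$4$ decomposition you wrote is peculiar to $\mathbf{L}\otimes\mathbb{Q}$. Neither slip affects the logic, since the $\mathcal{L}$-class has components only in degrees divisible by $4$.
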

James F. Davis and Wolfgang L$\ddot{\rm u}$ck gave a unified approach to the Isomorphism Conjecture of Farrell and Jones on the algebraic K- and L-theory of integral group rings and to the Baum-Connes Conjecture on the topological $K$-theory of reduced group $C^{*}$-algebras. The approach is through spectra over the orbit category of a discrete group $G$ \cite{DL98}:
\begin{definition}\label{}\cite{DL98}
Let $G$ be a group and let $\mathcal{F}$ be a family of subgroups of $G$. The orbit category $Or(G)$ has as objects homogeneous $G$-spaces $G/H$ and as morphisms $G$-maps. The orbit category $Or(G,\mathcal{F})$ with respect to $\mathcal{F}$ is the full subcategory of $Or(G)$ consisting of those objects $G/H$ for which $H$ belongs to $\mathcal{F}$.
\end{definition}
Let ${\bf{E}}: Or(G)\to SPECTRA$ be a covariant functor and an extension of $\bf{E}$ to the category of $G$-spaces by
$$\widetilde{\bf{E}}: G\text{-}SPACES\to SPECTRA$$ such that $\widetilde{\bf{E}}(X)=\rm{map}_{G}(-,X)_{+}\otimes_{Or(G)}\bf{E}$. Recall that $\rm{map}_{G}(-,X)_{+}\otimes_{Or(G)}{\bf{E}}=\coprod_{H\subset G}X^{H}_{+}\wedge{\bf{E}}(G/H)/\sim$ where $\sim$ is the equivalence relation generated by $(x\phi,y)\sim (x,\phi y)$ for $x\in X^{H}_{+}=\rm{map}_{G}(G/H,X_{+})$, $y\in {\bf{E}}(G/H)$ and $\phi:G/H \to G/K$ \cite{DL98}. Then $\pi_{*}(\widetilde{\bf{E}}(X))$ is an equivariant homology theory in the sense of Bredon \cite{Bre67}. Let $E_{\mathcal{F}}(G)$ be the classifying space for a family of subgroups of $G$. The map 
$$\pi_{*}\widetilde{\bf{E}}(E_{\mathcal{F}}(G))\longrightarrow \pi_{*}\widetilde{\bf{E}}(E_{\mathcal{F}_{\mathcal{ALL}}}(G))$$ given by applying $\widetilde{\bf{E}}$ to the constant map and then taking homotopy groups is called the $({\bf{E}}, \mathcal{F}, G)$-assembly map.
\begin{definition}\label{}\cite{DL98}
The $({\bf{E}}, \mathcal{F}, G)$-Isomorphism Conjecture for a discrete group $G$, a family of subgroups $\mathcal{F}$, and a covariant $Or(G)$-spectrum ${\bf{E}}$ is that the $({\bf{E}}, \mathcal{F}, G)$-assembly map is an isomorphism. For an integer $i$, the $({\bf{E}}, \mathcal{F}, G)$-Isomorphism Conjecture is that the $({\bf{E}}, \mathcal{F}, G)$-assembly map is an isomorphism in dimension $i$.
\end{definition}
\begin{remark}\rm{
\indent
\begin{itemize}
\item[\bf{1.}] When ${\bf{E}}$ equals the algebraic $K$-theory spectra ${\bf{K}}^{alg}$ or the algebraic $L$-theory spectra ${\bf{L}}^{<-\infty>}$ and $\mathcal{F}$ is the family $\mathcal{F}_{\mathcal{VCYC}}$ of virtually cyclic subgroups of $G$, then the Isomorphism Conjecture is the one of Farrell-Jones Isomorphism Conjecture \ref{fjcok} and Conjecture \ref{fjcol} respectively \cite{DL98}.
\item[\bf{2.}] When ${\bf{E}}$ equals the topological $K$-theory spectrum ${\bf{K}}^{top}$ and $\mathcal{F}$ is the family $\mathcal{F}_{\mathcal{FIN}}$ of finite subgroups of $G$, then the Isomorphism Conjecture is the Baum- Connes Conjecture \ref{bccon} \cite{DL98}.
\end{itemize}}
\end{remark}
Let $X$ be a connected CW-complex (perhaps a manifold), and let $\mathcal{P}_{*}$, $\mathcal{P}_{*}^{\rm{Diff}}$ denote the functor that maps $X$ to the $\Omega$-spectrum of stable topological (smooth) pseudo-isotopies on $X$. Denote by $\mathcal{K}_{*}()$ the functor that maps $X$ to the algebraic $K$-theoretic (non-connective) $\Omega$-spectrum for the integral group ring $\mathbb{Z}\pi_1(X)$ \cite{PW85}. Let ${\bf{K}}^{alg} : Or(G)\to \Omega \text{-}SPECTRA$ be the algebraic $K$-theory functor \cite{DL98}. The homotopy groups of the spectrum ${\bf{K}}^{alg}(G/H)$ are isomorphic to the $K$-theory groups of $\mathbb{Z}H$. Finally, let ${hocolim}_{Or(G,\mathcal{F})}~{\bf{K}}^{alg}$ be the homotopy colimit of the ${\bf{K}}^{alg}$ functor over the $\mathcal{F}$-orbit category \cite{DL98}. For $\mathcal{F}_{\mathcal{TR}}$ and $\mathcal{F}_{\mathcal{ALL}}$ the following identifications can be made: $$\pi_n({hocolim}_{Or(G)}~{\bf{K}}^{alg})\cong K_n(\mathbb{Z}G), \forall n,$$ and $$\pi_n({hocolim}_{Or(G,\mathcal{F}_{\mathcal{TR}
})}~{\bf{K}}^{alg})\cong \mathbb{H}_n(BG;\mathcal{K}_{*}(pt)), \forall n$$ where $\mathbb{H}$ denotes homology with coefficients in a spectrum, and $\mathcal{K}_{*}(pt)$ denotes the algebraic $K$-theory spectrum of the integers. Moreover, given two families $\mathcal{F}\subseteq \mathcal{F}^{'}$ of subgroups of $G$, the inclusion induces a map
$$A_{\mathcal{F},\mathcal{F}^{'}}:\pi_{n}({hocolim}_{Or(G,\mathcal{F})}~{\bf{K}}^{alg})\to \pi_{n}({hocolim}_{Or(G,\mathcal{F}^{'})}~{\bf{K}}^{alg}).$$ These are collectively known as assembly maps (\cite{DL98, FJ93a}). In the special case $\mathcal{F}=\mathcal{F}_{\mathcal{TR}}$ and $\mathcal{F}^{'}=\mathcal{F}_{\mathcal{ALL}}$, $A_{\mathcal{F},\mathcal{F}^{'}}$ is the classical assembly map $$A:\mathbb{H}_n(BG;\mathcal{K}_{*}(pt))\to K_n(\mathbb{Z}G).$$ These assembly maps have the property that given families $\mathcal{F}_1\subseteq \mathcal{F}_{2}\subseteq \mathcal{F}_{3}$ of subgroups of $G$, we have $$A_{\mathcal{F}_1,\mathcal{F}_3}=A_{\mathcal{F}_2,\mathcal{F}_3}\circ A_{\mathcal{F}_1,\mathcal{F}_2}.$$ In general these assembly maps need not be isomorphisms. However, they are key maps when trying to approach the algebraic $K$-groups of a given group through a special collection of its subgroups.\\
Let $\mathcal{S}_{*}$ be a homotopy invariant (covariant) functor from the category of topological spaces to spectra. Important examples of such functors are the stable topological pseudo-isotopy functor $\mathcal{P}_{*}$, the algebraic $K$-theory functor $\mathcal{K}_*$, and the $L$-theory functor $\mathcal{L}^{-\infty}_*$ \cite{FJ93a}. Let $\mathcal{M}$ denote the category of continuous surjective maps; i.e., an object in $\mathcal{M}$ is a continuous map $p: E\to B$ between topological spaces $E$ and $B$, while a morphism from $p_1 : E_1\to B_1$ to $p_2 : E_2\to B_2$ is a pair of continuous maps $f : E_1\to E_2$, $g : B_1\to B_2$ making the following diagram a commutative square of maps:
$$\begin{CD}
E_1 @>f>> E_2\\
@Vp_1VV @Vp_2VV\\
B_1 @>g>> B_2
\end{CD}$$
Quinn \cite[appendix]{Qui82} constructed a functor from $\mathcal{M}$ to the category of $\Omega$-spectra which associates to the map $p$ the spectrum $\mathbb{H}(B; \mathcal{S}(p))$ in such a way that $$\mathbb{H}(B; \mathcal{S}(p))=\mathcal{S}(E)$$ in the special case that $B$ is a single point pt. Furthermore the map of spectra $$A:\mathbb{H}(B; \mathcal{S}(p))=\mathcal{S}(E)$$ functorially associated to the commutative square
$$\begin{CD}
E @>id>> E\\
@VpVV @VVV\\
B @>>> pt
\end{CD}$$
is called the (Quinn) assembly map.
\begin{definition}\rm{
Let $G$ denote a (discrete) group, and let $\mathcal{F}$ denote a family of subgroups of $G$. We define a universal $(G, \mathcal{F})$-space to be a regular cell complex $Z$ together with a group action $G\times Z\to Z$, which satisfies the following properties :
\begin{itemize}
\item[(a)] For each $g\in G$ the homeomorphism $Z\to Z$ given by $z\longrightarrow g(z)$ is cellular; moreover, for each cell $e\in Z$ if $g(e)=e$ then $g_{|e} = $inclusion.
\item[(b)] For any $z\in Z$ we have $G^z\in \mathcal{F}$, where $G^z$ is the isotropy group at $z$ for this action. 
\item[(c)] For any $\Gamma\in \mathcal{F}$ the fixed point set of $\Gamma\times Z\to Z$ is a nonempty contractible subcomplex of $Z$.
\end{itemize}}
\end{definition}
\begin{definition}\rm{
Let $X$ denote any connected CW-complex, and let $\mathcal{F}_{\mathcal{VCYC}}(X)$ consists of all virtually cyclic subgroups of $\pi_1(X)$. F.T. Farrell \cite{FJ93a} proved that there is a universal $(\pi_1(X), \mathcal{F}_{\mathcal{VCYC}}(X))$-space $\pi_1(X)\times A \longrightarrow A$. Let $\widetilde{X}$ denote the universal covering space for $X$, and let $\pi_1(X)\times (\widetilde{X}\times A) \longrightarrow \widetilde{X}\times A$ denote the diagonal action. Define $\rho:\mathcal{E}(X)\to \mathcal{B}(X)$ to be the quotient of the standard projection $\widetilde{X}\times A \longrightarrow A$ under the relevant $\pi_1(X)$-actions, and define $f : \mathcal{E}(X)\to X$ to be the quotient of the standard projection $\widetilde{X}\times A \mapsto \widetilde{X}$ under the relevant $\pi_1(X)$-actions. Because the universal $(\pi_1(X), \mathcal{F}_{\mathcal{VCYC}}(X))$-space $\pi_1(X)\times A \longrightarrow A$ is uniquely determined by $X$ up to $\pi_1(X)$- equivariant homotopy type \cite{FJ93a}, we get the following 
lemma. Recall that for each 
$H\in \mathcal{F}_{\mathcal{VCYC}}(X)$ we denote by $X_H\to X$ the covering space for $X$ corresponding to $H$.}
\end{definition}
\begin{lemma}
$\rho:\mathcal{E}(X)\to \mathcal{B}(X)$ is a simplicially stratified fibration. Each fiber $\rho^{-1}(z)$ of $\rho$ is one of the connected covering spaces $\{X_H: H\in \mathcal{F}_{\mathcal{VCYC}}(X)\}$ for $X$, in fact, the restricted map $f:\rho^{-1}(z)\to X$ is a covering space projection whose image on the fundamental group level is contained in $\mathcal{F}_{\mathcal{VCYC}}(X)$. Moreover, $\rho:\mathcal{E}(X)\to \mathcal{B}(X)$ is uniquely determined up to fibered homotopy type by the homotopy type of $X$.
\end{lemma}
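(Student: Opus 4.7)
The plan is to unwind the construction of $\rho:\mathcal{E}(X)\to\mathcal{B}(X)$ stratum by stratum, using the cellular structure of $A$ and the fact that the $\pi_1(X)$-action permutes cells with constant isotropy. First I would fix the regular cell structure on $A$ coming from the definition of a universal $(\pi_1(X),\mathcal{F}_{\mathcal{VCYC}}(X))$-space; condition (a) on universal spaces says that the stabilizer of each open cell $e\subset A$ is a fixed subgroup $H_e\in\mathcal{F}_{\mathcal{VCYC}}(X)$ acting trivially on $e$. Passing to the quotient $\mathcal{B}(X)=A/\pi_1(X)$ gives a cell complex whose open cells $\bar e$ are each canonically parametrized by $e$, together with a distinguished subgroup $H_e\in\mathcal{F}_{\mathcal{VCYC}}(X)$ (well defined up to conjugacy). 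This is the stratification I will work with.

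Next I would identify fibers and local triviality over each open cell. For $z\in\bar e\subset\mathcal{B}(X)$ with lift $\tilde z\in e\subset A$, the preimage of $z$ in $\widetilde X\times A$ is $\widetilde X\times\{\tilde z\}$ and this is acted on by the stabilizer $H_e=\pi_1(X)^{\tilde z}$; hence $\rho^{-1}(z)=(\widetilde X\times\{\tilde z\})/H_e$ is canonically identified with $\widetilde X/H_e=X_{H_e}$, and the restriction of $f$ to this fiber is exactly the covering projection $X_{H_e}\to X$, whose image on $\pi_1$ is $H_e\in\mathcal{F}_{\mathcal{VCYC}}(X)$. Over the open cell $\bar e$ the action of $\pi_1(X)$ on $\widetilde X\times e$ is diagonal with second factor trivial, so $\rho^{-1}(\bar e)$ is canonically $X_{H_e}\times\bar e$ and $\rho$ restricts to the projection; this gives the trivialization on each stratum required for a simplicially stratified fibration. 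The compatibility between strata along a face inclusion $\bar e'\subset\overline{\bar e}$ comes from the inclusion $H_{e'}\supset H_e$ of stabilizers and the resulting covering map $X_{H_e}\to X_{H_{e'}}$, which is the ``attaching data'' of the stratified fibration.

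For the uniqueness clause I would argue in two stages. Any homotopy equivalence $g:X\to X'$ lifts to a $\pi_1$-equivariant homotopy equivalence $\widetilde g:\widetilde X\to\widetilde{X'}$ (after identifying $\pi_1(X)$ with $\pi_1(X')$ via $g_*$); this is classical. On the other hand the universal $(\pi_1(X),\mathcal{F}_{\mathcal{VCYC}}(X))$-space is unique up to $\pi_1(X)$-equivariant homotopy equivalence, as already cited in the paragraph preceding the lemma. Taking the product of these two equivariant equivalences and passing to quotients under the diagonal $\pi_1$-action produces a map of total spaces over the corresponding map of bases, which is a homotopy equivalence stratum by stratum and hence a fibered homotopy equivalence.

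The main obstacle I foresee is the bookkeeping for the stratified fibration structure: making the local trivializations over each open cell fit together into a genuine stratified fibration (as opposed to just a fiberwise description) requires checking that, along a face inclusion $\bar e'\subset\overline{\bar e}$, the trivializations glue via the covering map $X_{H_e}\to X_{H_{e'}}$ in a way compatible with the cellular attaching maps of $A$. Everything else (fiber identification, $\pi_1$-image of $f$ on fibers, and uniqueness) is essentially a formal consequence of the defining properties of a universal $(\pi_1(X),\mathcal{F}_{\mathcal{VCYC}}(X))$-space and of covering-space theory for $\widetilde X\to X$.
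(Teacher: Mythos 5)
The paper states this lemma without proof, citing \cite{FJ93a}; the only remark it makes is that the uniqueness clause follows from the $\pi_1(X)$-equivariant uniqueness of the universal $(\pi_1(X),\mathcal{F}_{\mathcal{VCYC}}(X))$-space, which is exactly your third paragraph. Your unwinding of the construction is correct and consistent with what Farrell--Jones do. One small imprecision: the preimage of $z$ in $\widetilde X\times A$ is the full orbit $\widetilde X\times(\pi_1(X)\cdot\tilde z)$, not $\widetilde X\times\{\tilde z\}$; but since any single orbit representative already surjects onto $\rho^{-1}(z)$ after dividing by the diagonal action, your immediate conclusion $\rho^{-1}(z)\cong\widetilde X/H_e=X_{H_e}$ is the right computation, and the restriction of $f$ to the fiber is indeed the covering $X_{H_e}\to X$. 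Your face-incidence argument $H_e\subseteq H_{e'}$ is also correct: condition (a) of the universal-space definition forces any $g$ with $g(e)=e$ to fix $\bar e$ pointwise, hence to fix every face of $e$ pointwise. The one genuine gap you flag yourself — matching your stratum-by-stratum trivialization and covering-map degeneration along faces to the precise technical definition of ``simplicially stratified fibration'' in \cite{FJ93a} — is the only thing left to nail down; your description is the expected picture, but the bookkeeping against that definition is what \cite{FJ93a} actually supplies.
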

\begin{conjecture}\label{pesduic}(Pseudoisotopy Version of Farrell-Jones Isomorphism Conjecture)
The composite $$\mathbb{H}_{*}(\mathcal{B}(X),\mathcal{S}_{*}(\rho))\stackrel{\mathcal{S}_{*}(f)\circ A_{*}}{\longrightarrow}\mathcal{S}_{*}(X)$$ is a (weak) equivalence of  $\Omega$-spectra, where $A_*$ is the assembly map for the simplicially stratified fibration $\rho:\mathcal{E}(X)\to \mathcal{B}(X)$, the functor $\mathcal{S}_{*}()$ is any of $\mathcal{P}_{*}$, $\mathcal{P}_{*}^{\rm{Diff}}$, $\mathcal{K}_{*}$ or the $L^{-\infty}$-surgery functor $\mathcal{L}^{-\infty}_{*}$, and $\mathcal{S}_{*}(f)$ is the image of the map $f : \mathcal{E}(X)\to X$ under $\mathcal{S}_{*}()$.
\end{conjecture}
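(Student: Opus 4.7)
The statement has the form of an isomorphism conjecture, and my plan is to pass from the Quinn–style ``simplicially stratified fibration'' formulation over $\mathcal{B}(X)$ to the modern orbit--category formulation, then reduce inductively to virtually cyclic subgroups, and finally import the required geometric input via a controlled--topology splitting argument. I do not expect to settle the conjecture in full generality; I only describe the best available strategy.

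First I would identify $\mathbb{H}_{*}(\mathcal{B}(X);\mathcal{S}_{*}(\rho))$ with the equivariant generalised homology $H^{G}_{*}(E_{\mathcal{F}_{\mathcal{VCYC}}}(G);\mathbf{S})$, where $G=\pi_{1}(X)$ and $\mathbf{S}$ is the covariant $\mathrm{Or}(G)$--spectrum supplied to $\mathcal{S}_{*}$ by the Davis--L\"uck machine. The key point is that $\mathcal{B}(X)$ is the quotient of the universal $(G,\mathcal{F}_{\mathcal{VCYC}})$--space $A$ under the free diagonal action of $G$ on $\widetilde{X}\times A$, so Quinn homology on the left coincides with Davis--L\"uck equivariant homology evaluated on $E_{\mathcal{F}_{\mathcal{VCYC}}}(G)$. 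Under this identification the composite $\mathcal{S}_{*}(f)\circ A_{*}$ becomes the projection--induced assembly $H^{G}_{*}(E_{\mathcal{F}_{\mathcal{VCYC}}}(G);\mathbf{S})\to H^{G}_{*}(\mathrm{pt};\mathbf{S})\cong\mathcal{S}_{*}(X)$, placing the problem squarely inside the setting of Conjectures \ref{fjcok} and \ref{fjcol}, or their pseudoisotopy analogue.

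Having made this translation, I would apply the Transitivity Principle (Theorem \ref{fjtp}) to cut the problem down to proving the conjecture for every virtually cyclic subgroup $V\leq G$. For $\mathcal{K}_{*}$ and $\mathcal{L}^{-\infty}_{*}$ this case is known, using Bass--Heller--Swan decompositions and Cappell's vanishing results on $\mathrm{UNil}$; for the pseudoisotopy functors $\mathcal{P}_{*}$ and $\mathcal{P}^{\mathrm{Diff}}_{*}$ one appeals to the classical splitting of $\mathcal{P}_{*}(S^{1}\times Y)$ and its smooth counterpart via the fundamental theorem of algebraic $K$--theory together with the delooping of pseudoisotopy spaces. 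The next and most substantial step is to produce a geometric splitting: a transfer $\tau:\mathcal{S}_{*}(X)\to\mathbb{H}_{*}(\mathcal{B}(X);\mathcal{S}_{*}(\rho))$ with $A_{*}\circ\tau\simeq\mathrm{id}$. When $X$ is a closed non--positively curved Riemannian manifold the transfer is built from the asymptotic behaviour of the geodesic flow on the unit tangent bundle of $\widetilde{X}$; conditions $(*)$ and $(**)$ of Theorem \ref{sur.map} supply the compactification $\overline{M}^{n}=\mathbb{D}^{n}$ and the controlled lift of homotopies needed to convert flow data into arbitrarily small control over $\mathcal{B}(X)$. One then invokes a foliated control theorem together with an $\alpha$--approximation argument to conclude that $A_{*}\circ\tau$ can be made uniformly close to the identity and hence is homotopic to it at the spectrum level.

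The hard part, and the reason the statement remains open for a general CW--complex $X$, is constructing the transfer without any geometric model: a bare CW--complex offers no flow, no exponential map, and no canonical compactification of $\widetilde{X}$. The most promising modern substitute is to manufacture the transfer from a strong homotopy $G$--action on a finite--dimensional contractible space, in the spirit of the Bartels--L\"uck--Reich programme for hyperbolic and $\mathrm{CAT}(0)$ groups, but even that framework presently requires algebraic or geometric hypotheses on $G$ (such as hyperbolicity, $\mathrm{CAT}(0)$--geometry, or finite asymptotic dimension together with a suitable flow space). I would therefore expect a complete proof only under additional structure on $X$ or on $\pi_{1}(X)$, with the fully general CW--case out of reach by current methods.
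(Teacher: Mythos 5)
The statement in question is a conjecture, not a theorem: the paper records the Pseudoisotopy Version of the Farrell--Jones Isomorphism Conjecture and offers no proof, because none exists for a general CW-complex $X$. Your proposal is therefore honest in scope, and the main components of your survey --- the translation from Quinn homology over $\mathcal{B}(X)$ to the Davis--L\"uck equivariant assembly map, the construction of a geometric transfer via geodesic flow and foliated control theorems when $X$ carries a non-positively curved structure (conditions $(\ast)$ and $(\ast\ast)$), and your diagnosis that the absence of a flow space for a bare CW-complex is the obstacle --- all match the state of the art that the paper is surveying.

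There is, however, one genuine technical misstep. You write that you would ``apply the Transitivity Principle (Theorem~\ref{fjtp}) to cut the problem down to proving the conjecture for every virtually cyclic subgroup.'' That is not what the Transitivity Principle delivers. The assembly map in Conjecture~\ref{pesduic} already has source $E_{\mathcal{F}_{\mathcal{VCYC}}}(G)$, and when $G$ itself is virtually cyclic the map is trivially an equivalence because $E_{\mathcal{F}_{\mathcal{VCYC}}}(G)\to \mathrm{pt}$ is then a $G$-homotopy equivalence (the paper makes this very remark for $G=\mathbb{Z}$). Transitivity compares two nested families $\mathcal{F}\subseteq\mathcal{G}$: it identifies the relative assembly $E_{\mathcal{F}}(G)\to E_{\mathcal{G}}(G)$ as an equivalence provided the conjecture holds for every $H\in\mathcal{G}$ with respect to the restricted family $\mathcal{F}|_H$. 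Taking $\mathcal{G}=\mathcal{F}_{\mathcal{ALL}}$ would require knowing the conjecture for $G$ itself, which is circular. In practice transitivity is used to move between intermediate families (e.g., from $\mathcal{F}_{\mathcal{FIN}}$ to $\mathcal{F}_{\mathcal{VCYC}}$ in the Baum--Connes setting), not to reduce the global assembly to the virtually cyclic stratum. The real content of every known proof of a case of Conjecture~\ref{pesduic} is exactly the controlled-topology transfer you describe afterwards, and that part of your account is correct; the Transitivity Principle is a bookkeeping device on top of it, not a reduction step.
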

Farrell and Jones \cite{FJ93a} proved the Isomorphism Conjecture \ref{pesduic} for discrete cocompact virtually torsion-free subgroups of the isometry group of the universal cover of a closed non-positively curved manifold. Here are the results :
\begin{theorem}
The Isomorphism Conjecture \ref{pesduic} is true for the functors $\mathcal{P}_{*}()$, $\mathcal{P}_{*}^{\rm{Diff}}()$ on the space $X$ provided that there exists a simply connected symmetric Riemannian manifold $M$ with non-positive sectional curvature everywhere such that $M$ admits a properly discontinuous cocompact (i.e., such that the orbit space $X= M/G$ is compact) group action of $G=\pi_1(X)$ by isometries of $M$.
\end{theorem}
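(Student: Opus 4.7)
The plan is to adapt the controlled topology machinery of Farrell and Jones to the symmetric space setting. First I would unwind what has to be checked: since $X=M/G$ is aspherical (the quotient of a contractible manifold by a free properly discontinuous action), the space $\mathcal{B}(X)$ constructed from the universal $(G,\mathcal{F}_{\mathcal{VCYC}}(X))$-space is determined by the geometry of $M$, and it is natural to take the universal $(G,\mathcal{F}_{\mathcal{VCYC}})$-space to be a suitable thickening/stratification built from $M$ together with its geodesic boundary at infinity. The fibers of $\rho:\mathcal{E}(X)\to\mathcal{B}(X)$ are then either copies of $X$ (over the interior stratum, corresponding to the trivial subgroup) or mapping tori over closed geodesics (over strata at infinity, corresponding to infinite cyclic subgroups generated by axial isometries). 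Building this stratification carefully so that the orbit-category description of the assembly map agrees with the Quinn assembly map for $\rho$ is the first technical step.

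Next, I would invoke the Farrell--Hsiang style \emph{asymptotic} (focal) transfer for the sphere bundle of $M$, using the geodesic flow $g_t:SM\to SM$. On the universal cover, the focal transfer takes a pseudoisotopy on $X$ to a pseudoisotopy on (a suitable thickening of) the sphere bundle with control that can be \emph{contracted} in the direction of the stable foliation by iterating $g_t$ for large $t$. Combining this with the foliated control theorem for the pseudoisotopy functor $\mathcal{P}_*$ (and its smooth variant $\mathcal{P}_*^{\mathrm{Diff}}$), elements in the kernel of the assembly map acquire arbitrarily small control along the geodesic foliation on $SM/G$ and are therefore trivial, yielding injectivity. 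Surjectivity is handled by a parallel argument: any pseudoisotopy class on $X$ is, after composing with the transfer and applying the flow, supported in a neighborhood of a periodic orbit and so lies in the image of a stratum of $\mathcal{B}(X)$ indexed by a virtually cyclic subgroup.

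Structurally, the argument proceeds by a stratum-by-stratum Mayer--Vietoris induction on $\mathcal{B}(X)$: one shows that the restricted assembly map over each orbit-type stratum is an equivalence (trivially over the open top stratum, and via the one-relative-dimensional case over strata fibered by mapping tori on circles, where the relevant statement reduces to Bass--Heller--Swan-type splittings for $\mathcal{P}_*$ proved in earlier Farrell--Jones work), and then glues these along the strata using the five lemma for the long exact sequences of the pairs. The transitivity principle (Theorem~\ref{fjtp}) applied to the pair $\mathcal{F}_{\mathcal{TR}}\subseteq\mathcal{F}_{\mathcal{VCYC}}$ reduces the global statement to checking the assembly map is an equivalence only over the strata with virtually cyclic isotropy, matching the local computations above.

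The main obstacle will be the foliated control theorem for pseudoisotopies together with the asymptotic-transfer step in the symmetric space setting. In rank one everything reduces to the Anosov behavior of $g_t$ used already for closed negatively curved manifolds, but in higher rank the presence of flats forces one to replace the Anosov expansion estimate by a refined argument adapted to the Weyl chamber flow: one needs to show that arbitrary geodesics in $M$ can be asymptotically approximated by regular geodesics whose stable foliations contract directions transverse to the given flat. Technically this is where the symmetric space hypothesis is decisively used (through Lie-theoretic control of Jacobi fields along regular geodesics and the structure of horospherical foliations), and the control estimates this produces must be delicate enough to feed into the foliated control theorem without losing track of the $G$-equivariance over the orbit space.
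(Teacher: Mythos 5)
The survey does not prove this theorem; it only states it and cites \cite{FJ93a}, so there is no proof in the paper to compare against directly. That said, your sketch does broadly track the actual Farrell--Jones argument: the focal (asymptotic) transfer to the sphere bundle, the use of the geodesic flow to shrink control along the stable foliation, the foliated control theorem for $\mathcal{P}_*$ and $\mathcal{P}_*^{\mathrm{Diff}}$, and the stratum-by-stratum comparison of the Quinn assembly for $\rho:\mathcal{E}(X)\to\mathcal{B}(X)$ are all genuinely the right tools. Your final paragraph on the higher-rank case is also well directed: replacing the Anosov estimate by an analysis of the Weyl chamber flow and regular geodesics, exploiting the symmetric-space structure of Jacobi fields, is precisely where the symmetric-space hypothesis enters in \cite{FJ93a}.

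However, two steps in your outline are wrong in a way that would derail a real write-up. First, over the stratum of $\mathcal{B}(X)$ with trivial isotropy the fiber of $\rho$ is not a ``copy of $X$'' but the covering space $X_H$ with $H=\{1\}$, i.e.\ the universal cover $\widetilde{X}$, which is contractible; this is crucial because it is exactly why $\mathcal{S}_*$ of that fiber reduces to $\mathcal{S}_*(\mathrm{pt})$ and the classical assembly appears as the interior contribution. Likewise the fibers over the virtually cyclic strata are full-dimensional infinite (or virtually) cyclic covers $X_H$ (homotopy equivalent to $BH$), not literally mapping tori over circles. Second, your appeal to the Transitivity Principle for $\mathcal{F}_{\mathcal{TR}}\subseteq\mathcal{F}_{\mathcal{VCYC}}$ is in the wrong direction: to use Theorem~\ref{fjtp} one must first know the assembly relative to $\mathcal{F}_{\mathcal{TR}}|_H$ is an isomorphism for every virtually cyclic $H$, and that is exactly what fails (Bass--Heller--Swan Nil terms), which is the whole reason $\mathcal{F}_{\mathcal{VCYC}}$ is taken as the isotropy family in the first place. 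In the same vein, your claim that the restricted assembly over the $\mathbb{Z}$-strata ``reduces to Bass--Heller--Swan-type splittings'' proving it is an equivalence has the logic backwards: the splittings show that the classical assembly for $\mathbb{Z}$ is \emph{not} onto and that the $\mathrm{Nil}$ pieces must be built into the domain via the $\mathcal{F}_{\mathcal{VCYC}}$-stratification. The heavy lifting in \cite{FJ93a} is not a family reduction at all, but the transfer plus foliated control argument establishing split injectivity and then surjectivity of the assembly map for this specific $\mathcal{B}(X)$; your reliance on the transitivity/Mayer--Vietoris scaffolding obscures that, and the surjectivity step as you have written it is too imprecise to assess.
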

\begin{theorem}
Let $X$ be a connected CW-complex such that $\pi_1(X)$ is a subgroup of a cocompact discrete subgroup of a virtually connected Lie group. Then the Isomorphism Conjecture \ref{pesduic} is true for the functors $\mathcal{P}_{*}()$, $\mathcal{P}_{*}^{\rm{Diff}}()$ on the space $X$.
\end{theorem}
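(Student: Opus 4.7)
The plan is to reduce to the previous theorem through a chain of standard inheritance steps in the Farrell-Jones machinery. First I would use the fact that the Isomorphism Conjecture \ref{pesduic} depends functorially on the classifying space of $\pi_1(X)$, so the conjecture for a CW-complex $X$ is inherited from the conjecture for any CW-complex whose fundamental group contains $\pi_1(X)$ as a subgroup. Hence it suffices to prove the conjecture for a space whose fundamental group equals a cocompact discrete subgroup $\Gamma$ of a virtually connected Lie group $L$.

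Next I would pass to the identity component. Let $L^0 \subseteq L$ be the connected component of the identity. Since $L$ is virtually connected, $[L : L^0] < \infty$, so $\Gamma_0 := \Gamma \cap L^0$ is a finite-index subgroup of $\Gamma$ which is cocompact and discrete in $L^0$. A transfer/induction argument for finite-index subgroups — a standard inheritance property of the Isomorphism Conjecture, which in the Davis-L\"uck orbit-category framework follows by comparing the hocolims over $Or(\Gamma,\mathcal{F})$ and $Or(\Gamma_0,\mathcal{F}|_{\Gamma_0})$ via an induction spectral sequence, and which in the Quinn-Farrell-Jones stratified fibration formulation follows by pulling $\rho:\mathcal{E}(X)\to \mathcal{B}(X)$ back along the finite cover corresponding to $\Gamma_0$ — then reduces the problem to establishing the conjecture for $\Gamma_0$.

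Finally, I would invoke the previous theorem. The connected Lie group $L^0$ admits a maximal compact subgroup $K$, and the homogeneous space $M := L^0/K$ is diffeomorphic to $\mathbb{R}^n$ and carries a complete left-invariant Riemannian metric of non-positive sectional curvature (for semisimple $L^0$ this is its symmetric space of non-compact type; for the general case one uses the Levi-Iwasawa decomposition $L^0 = R \cdot S$ into its solvable radical and a semisimple part, combining the standard non-positively curved structure on $S/K_S$ with a left-invariant flat/non-positively curved metric on the simply connected solvable part). The group $\Gamma_0$ acts on $M$ properly discontinuously (because it is discrete in $L^0$ and $K$ is compact) and cocompactly (because it is cocompact in $L^0$), by isometries. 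The previous theorem then gives the conjecture for $\Gamma_0$, and combined with the two inheritance steps above this completes the proof.

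The main obstacle will be controlling the two inheritance properties (passage to finite-index overgroups, and passage to subgroups) for the particular pseudo-isotopy assembly map appearing in Conjecture \ref{pesduic}. These are cleanest in the Davis-L\"uck reformulation, where the hocolim over the orbit category with respect to $\mathcal{F}_{\mathcal{VCYC}}$ has very good functoriality under group homomorphisms; but they require genuine care in the Quinn-style formulation used by Farrell-Jones, since the universal $(G,\mathcal{F}_{\mathcal{VCYC}})$-space $A$ and the simplicially stratified fibration $\rho:\mathcal{E}(X)\to\mathcal{B}(X)$ must be tracked under subgroup inclusion and finite covers. A secondary delicate point is ensuring that for a general (not necessarily semisimple) connected Lie group the homogeneous space $L^0/K$ genuinely carries a non-positively curved simply connected symmetric Riemannian structure in the sense required by the previous theorem; if this fails in full generality one must split the argument along the solvable radical and recombine the two conclusions via a fibration/spectral sequence compatible with the assembly maps.
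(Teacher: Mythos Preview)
The paper itself is a survey and gives no proof of this statement; it simply records the result and attributes it to Farrell--Jones \cite{FJ93a}. So there is no in-paper argument to compare against, only the original source.

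Your outline has the right overall architecture --- pass to the ambient cocompact lattice $\Gamma$, then analyze $\Gamma$ through the structure of the Lie group --- but the weighting is inverted: what you flag as a ``secondary delicate point'' is actually the heart of the matter, and your step~3 fails as stated. For a general connected Lie group $L^0$ the homogeneous space $L^0/K$ is \emph{not} a non-positively curved symmetric space in any way compatible with the $\Gamma_0$-action. Take $L^0$ to be the Heisenberg group: then $K=\{e\}$, no left-invariant metric on $L^0$ has non-positive sectional curvature, and by Gromoll--Wolf/Yau a cocompact lattice in $L^0$ (being nilpotent but not virtually abelian) cannot act properly discontinuously, cocompactly and isometrically on \emph{any} simply connected non-positively curved symmetric space. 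So the previous theorem is simply unavailable, and the parenthetical fix you propose (``split along the solvable radical and recombine via a fibration/spectral sequence'') is not a patch but the entire substance of the argument in \cite{FJ93a}: one treats the solvable radical using the earlier results for virtually poly-$\mathbb{Z}$ groups, treats the semisimple quotient via the symmetric-space theorem, and glues the two via a genuinely fibered induction --- which is exactly why the Fibered Isomorphism Conjecture \ref{fibcj} is introduced in the first place.

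A smaller issue: both inheritance moves you invoke are properties of the \emph{Fibered} conjecture, not of Conjecture~\ref{pesduic} as stated. In particular your step~2 runs in the direction finite-index subgroup $\Rightarrow$ overgroup, which is not the standard ``passes to subgroups'' property and needs its own justification. This is again handled in \cite{FJ93a} through the fibered formulation, but you should be explicit that you are relying on the fibered version of the previous theorem (which \cite{FJ93a} does prove), not the non-fibered statement recorded in this survey.
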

E. Berkove, F.T. Farrell, D. Juan-pineda and K. Pearson \cite{BFJP00} proved the Farrell-Jones Isomorphism Conjecture \ref{pesduic} for groups acting on complete hyperbolic manifolds with finite volume orbit space. Here are the results:
\begin{theorem}\label{bfjpcon}
The Isomorphism Conjecture is true for the functors $\mathcal{P}_{*}()$, $\mathcal{P}_{*}^{\rm{Diff}}()$ on the space $X$ provided that there exists a properly discontinuous finite co-volume group action by isometries of $G=\pi_1(X)$ on a hyperbolic space $\mathbb{H}^n$.
\end{theorem}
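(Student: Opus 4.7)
The plan is to reduce the finite covolume case to two already-understood ingredients: the cocompact hyperbolic case handled by the preceding theorem, and the case of virtually poly-$\mathbb{Z}$ groups handled by Farrell--Hsiang and Farrell--Jones. The crucial geometric input is the Margulis thick-thin decomposition of a finite volume complete hyperbolic manifold $\mathbb{H}^n/G$: its underlying space is the union of a compact submanifold-with-boundary $M_0$ and finitely many cusp neighborhoods $C_1,\ldots,C_k$, where each $C_i$ deformation retracts onto a closed flat $(n-1)$-manifold (or Euclidean orbifold), and the corresponding cusp subgroup $P_i=\pi_1(C_i)\subset G$ is virtually finitely generated free abelian.

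First I would establish a fibered form of the Pseudoisotopy Isomorphism Conjecture \ref{pesduic} for each cusp group $P_i$. Because each $P_i$ is virtually poly-$\mathbb{Z}$, this follows from the earlier work recorded in Theorems \ref{farall-hsiang 80} and \ref{virtualpoly}, whose foliated-control proofs in fact produce the pseudoisotopy assembly statement needed here. Second, I would realize $G$ as the fundamental group of a graph-of-groups in which the $P_i$ appear as peripheral vertex groups and the remaining vertex group $G_0=\pi_1(M_0)$ is the fundamental group of the compact core. After a suitable truncation (e.g.\ doubling along the cusp cross-sections, or passing to the Borel--Serre style compactification), one obtains a cocompact properly discontinuous action of a related group on a simply connected non-positively curved space, so that the preceding theorem applies to the core piece.

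Third, I would splice these two inputs together using the stratified pushout structure of $\mathcal{B}(X)$ induced by the decomposition of $X$. Concretely, the simplicially stratified fibration $\rho:\mathcal{E}(X)\to\mathcal{B}(X)$ restricts over the core and cusp pieces to fibrations whose associated assembly maps are equivalences by the first two steps. A Mayer--Vietoris long exact sequence in the Quinn homology $\mathbb{H}_{*}(\mathcal{B}(X);\mathcal{S}_{*}(\rho))$, compared with the analogous long exact sequence for $\mathcal{S}_{*}(X)$ via the five lemma, should then yield the desired equivalence on the full space.

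The main obstacle is the gluing step: the pseudoisotopy functor does not a priori enjoy a Mayer--Vietoris property, so one must either work throughout with a fibered form of the Isomorphism Conjecture (which \emph{is} inherited under amalgamated products along subgroups for which the conjecture already holds) or else carry out a controlled-topology argument in which the control is dictated by the geodesic flow on $\mathbb{H}^n$. Extending foliated control from the thick part into the cusps, where the injectivity radius shrinks to zero and the orbits of the geodesic flow accumulate on the ideal cusp points, is the delicate step; one must modify the flow near each cusp so that the orbits remain trackable and the cusp subgroup appears naturally as the isotropy. Verifying that the resulting controlled assembly map still factors through the algebraic pieces identified in the first two steps is where the real work lies.
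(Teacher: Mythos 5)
The paper does not actually contain a proof of Theorem~\ref{bfjpcon}; it is a survey statement cited from \cite{BFJP00}, so there is no in-text argument to compare against. Evaluating your proposal on its own terms, the geometric starting point is right: the thick--thin decomposition of the finite volume quotient, compact core plus finitely many cusps, cusp subgroups virtually finitely generated abelian and hence covered by the Farrell--Hsiang/Farrell--Jones results. But the ``graph of groups'' step as written is not available. The inclusion of the compact core $M_0\hookrightarrow M=\mathbb{H}^n/G$ is a homotopy equivalence (each cusp $N_i\times[0,\infty)$ deformation retracts onto $N_i\subset\partial M_0$), so $\pi_1(M_0)=G$ already and the cusp groups $P_i$ are merely peripheral subgroups of $G$, not vertex groups in a nontrivial amalgam. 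In particular there is no Bass--Serre decomposition of $G$ induced by the thick--thin decomposition, no ``core piece'' with smaller fundamental group to which the cocompact theorem could be applied, and no nontrivial Mayer--Vietoris sequence of the sort you propose to splice. Indeed, if $G$ itself acted properly cocompactly on a Hadamard manifold one would not need a separate finite-covolume theorem at all.

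The doubling variant could in principle be salvaged: $\pi_1(DM_0)=G*_{\pi_1(\partial M_0)}G$ contains $G$ as a subgroup, and the \emph{fibered} Isomorphism Conjecture is hereditary to subgroups, so an NPC metric on $DM_0$ plus the cocompact result for $\pi_1(DM_0)$ would suffice. But the horospherical boundary of $M_0$ is strictly umbilic rather than totally geodesic, so the doubled metric has a corner there and must be smoothed to nonpositive curvature; in dimension $\geq 4$, where the cusp cross-sections are arbitrary closed flat manifolds, this is a genuinely nontrivial geometric input that the proposal does not supply, and it is not how \cite{BFJP00} proceed. What \cite{BFJP00} actually do is much closer to your second alternative: they extend the foliated/geodesic-flow control machinery of \cite{FJ93a} into the cusps, where the injectivity radius degenerates and orbits of the flow accumulate on the ideal cusp points, invoking the known virtually abelian case to handle the resulting thin strata. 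You correctly identify this as ``where the real work lies'', but the proposal only gestures at it, and the splicing mechanism you offer as the alternative is not well-founded as stated.
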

\begin{corollary}
Let $G$ be a group for which the Isomorphism Conjecture \ref{pesduic} holds. Let $\mathcal{F}_{\mathcal{VCYC}}$ and $\mathcal{F}_{\mathcal{ALL}}$ be the families of virtually cyclic and of all subgroups of $G$, respectively. Then the assembly map
$$A_{\mathcal{F}_{\mathcal{VCYC}},\mathcal{F}_{\mathcal{ALL}}}:\pi_n({hocolim}_{\mathcal{D}(G,\mathcal{F}_{\mathcal{VCYC}})}~{\bf{K}}^{alg})\to K_n(\mathbb{Z}G)$$ is an isomorphism for $n\leq 1$.
\end{corollary}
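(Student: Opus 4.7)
The plan is to obtain the $K$-theoretic assembly isomorphism by pulling back the pseudoisotopy Isomorphism Conjecture \ref{pesduic} through the classical Anderson--Hsiang identification relating the negative homotopy groups of the stable topological pseudoisotopy spectrum to the lower algebraic $K$-theory of the integral group ring. Recall that for any connected CW-complex $Y$ there are natural isomorphisms $\pi_i(\mathcal{P}(Y))\cong K_{i+2}(\mathbb{Z}\pi_1(Y))$ for $i\leq -1$, so that negative homotopy of $\mathcal{P}$ computes algebraic $K$-theory in dimensions $n = i+2 \leq 1$, with the $n=0$ and $n=-1$ cases recovering $\widetilde{K}_0(\mathbb{Z}\pi_1(Y))$ and the negative $K$-groups, and with $Wh(\pi_1(Y))$ arising in degree $-1$ of the associated Whitehead spectrum and controlling the $n=1$ case.

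First I would apply the hypothesis with $\mathcal{S}_{*}=\mathcal{P}_{*}$ and $X = BG$. The Isomorphism Conjecture \ref{pesduic} produces a weak equivalence of $\Omega$-spectra
$$\mathbb{H}_{*}(\mathcal{B}(BG),\mathcal{P}_{*}(\rho))\stackrel{\simeq}{\longrightarrow}\mathcal{P}_{*}(BG),$$
and hence an isomorphism on homotopy groups in every degree. Restricting to degrees $i \leq -1$, the Anderson--Hsiang identification rewrites the right-hand side as $K_n(\mathbb{Z}G)$ for $n = i+2 \leq 1$.

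Next I would identify the left-hand side with $\pi_n(\mathrm{hocolim}_{\mathcal{D}(G,\mathcal{F}_{\mathcal{VCYC}})}{\bf{K}}^{alg})$. This uses two ingredients. The fibers of the stratified fibration $\rho:\mathcal{E}(BG)\to \mathcal{B}(BG)$ are classifying spaces $BH$ with $H$ ranging over virtually cyclic subgroups of $G$, and by the construction recalled in the excerpt $\mathcal{B}(BG)$ is a model for the classifying $G$-space for the family $\mathcal{F}_{\mathcal{VCYC}}$. Applying the Anderson--Hsiang identification fiberwise, together with the Davis--L\"uck formalism relating Quinn's homology spectrum with coefficients in a stratified fibration to equivariant homology associated to an orbit-category functor, translates $\mathbb{H}_{*}(\mathcal{B}(BG),\mathcal{P}_{*}(\rho))$ in the relevant range of degrees into the homotopy colimit of the algebraic $K$-theory functor ${\bf{K}}^{alg}$ over the $\mathcal{F}_{\mathcal{VCYC}}$-orbit category.

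The main obstacle is verifying that under these identifications the Quinn assembly map is naturally equivalent to the Davis--L\"uck assembly map $A_{\mathcal{F}_{\mathcal{VCYC}},\mathcal{F}_{\mathcal{ALL}}}$ on homotopy groups in dimensions $\leq 1$; this amounts to a compatibility check between two a priori different constructions of assembly, and it is where the slight mismatch in degree $n=1$ (Whitehead group versus the full $K_1$) must be handled carefully using naturality of Anderson--Hsiang and the standard splitting of $K_1(\mathbb{Z}G)$. Once this compatibility is established, the equivalence granted by the pseudoisotopy Isomorphism Conjecture immediately yields the corollary, the bound $n\leq 1$ being exactly the range in which Anderson--Hsiang identifies $\pi_{n-2}\mathcal{P}$ with $K_n(\mathbb{Z}G)$.
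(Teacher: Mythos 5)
Your overall strategy is the right one and agrees with how this is established in the literature (Berkove--Farrell--Juan-Pineda--Pearson, building on Farrell--Jones): apply the pseudoisotopy Isomorphism Conjecture to $X = BG$, translate to $K$-theory via the Anderson--Hsiang relationship, and match the Quinn assembly with the Davis--L\"uck assembly over the orbit category. However, your stated form of the Anderson--Hsiang identification is incorrect, and the way you propose to repair it is too vague to count as a proof.

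The Anderson--Hsiang computation does \emph{not} give $\pi_i(\mathcal{P}(Y)) \cong K_{i+2}(\mathbb{Z}\pi_1 Y)$ for all $i \le -1$. What it gives is $\pi_{-1}\mathcal{P}(Y) \cong Wh(\pi_1 Y)$, $\pi_{-2}\mathcal{P}(Y) \cong \widetilde{K}_0(\mathbb{Z}\pi_1 Y)$, and $\pi_{i}\mathcal{P}(Y) \cong K_{i+2}(\mathbb{Z}\pi_1 Y)$ only for $i \le -3$. Your own next sentence already contradicts the displayed isomorphism by acknowledging that the $n=0$ case gives $\widetilde{K}_0$ and the $n=1$ case is governed by $Wh$, so the opening claim as written is internally inconsistent. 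This matters because the corollary's target is the full $K_n(\mathbb{Z}G)$ for $n \le 1$, not the reduced groups, and the discrepancy occurs precisely in the top two degrees that you must prove, not in degrees you can ignore. Invoking ``naturality of Anderson--Hsiang and the standard splitting of $K_1(\mathbb{Z}G)$'' does not resolve this: the splitting $K_1(\mathbb{Z}G) \cong Wh(G) \oplus (\pm G^{\mathrm{ab}})$ is a statement about a single group, whereas you need a compatible identification of the \emph{assembly maps} for the entire diagram of virtually cyclic subgroups simultaneously.

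The honest repair goes through the cofiber sequence of coefficient functors rather than a pointwise isomorphism of homotopy groups. In the degree range in question the linearization is an equivalence, so one has a cofibration of $Or(G)$-spectra (equivalently, of fiberwise coefficient systems over the stratified fibration $\rho$) of the shape
$$
(-)_+ \wedge \mathbf{K}(\mathbb{Z}) \longrightarrow \mathbf{K}^{alg}(-) \longrightarrow \Sigma^2\mathcal{P}(-),
$$
natural enough to yield a commutative ladder of long exact sequences relating the three assembly maps $A_{\mathcal{F}_{\mathcal{VCYC}},\mathcal{F}_{\mathcal{ALL}}}$. The assembly map for the left-hand (constant) term is an isomorphism in all degrees for formal reasons (it is a genuine homology theory evaluated on $E_{\mathcal{F}_{\mathcal{VCYC}}}G$ versus a point, both of which compute the same thing because the coefficient system is constant). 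The assembly map for the right-hand term is an isomorphism by hypothesis. The five lemma then gives the isomorphism for the middle term in the range where the cofibration sequence is valid, namely $n \le 1$; this is exactly where the restriction $n\le 1$ in the statement comes from (it is the connectivity of the linearization $A(X) \to \mathbf{K}(\mathbb{Z}\pi_1 X)$, not a limitation of Anderson--Hsiang). With this replacement for the middle two paragraphs, the argument you sketch becomes a correct proof.
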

E. Berkove, F.T. Farrell, D. Juan-pineda and K. Pearson \cite{BFJP00} classified all virtually cyclic subgroups in the Bianchi group family and then show that the lower algebraic K-theory of all the virtually cyclic subgroups vanishes. Here are the results :
\begin{theorem}
Let $G$ be a group for which the Isomorphism Conjecture \ref{pesduic} holds for the functor $\mathcal{P}_*$. Suppose that for every virtually cyclic subgroup $\Gamma$ of $G$  $Wh(\Gamma)$, $\widetilde{K}_0(\mathbb{Z}\Gamma)$ and $K_n(\mathbb{Z}\Gamma)$ for $n\leq −1$, all vanish. Then 
 \begin{itemize}
\item[(i)] $K_n(\mathbb{Z}G) = 0$ for $n\leq −1$;
\item[(ii)] $\widetilde{K}_0(\mathbb{Z}G) = 0$;
\item[(iii)] $Wh(G)=0$;
\end{itemize}
\end{theorem}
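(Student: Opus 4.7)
The plan is to apply the Isomorphism Conjecture \ref{pesduic} for the stable pseudoisotopy functor $\mathcal{P}_*$ to the space $X = BG$, translate the resulting vanishing of pseudoisotopy homotopy groups into the desired statements about $K_*(\mathbb{Z}G)$, and use an Atiyah--Hirzebruch-type spectral sequence to reduce everything to the (assumed) vanishing on virtually cyclic subgroups.

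First, I would take $X = BG$ and invoke Conjecture \ref{pesduic} for $\mathcal{P}_*$, which gives a weak equivalence of $\Omega$-spectra
$$\mathcal{S}_{*}(f)\circ A_{*}: \mathbb{H}_{*}(\mathcal{B}(X),\mathcal{P}_{*}(\rho)) \xrightarrow{\;\simeq\;} \mathcal{P}_{*}(X).$$
Next I would recall the classical identification (Hatcher--Wagoner--Igusa, together with negative K-theory) that for any connected CW-complex $Y$ the stable pseudoisotopy spectrum satisfies $\pi_{0}(\mathcal{P}(Y))\cong Wh(\pi_{1}Y)$, $\pi_{-1}(\mathcal{P}(Y))\cong \widetilde{K}_{0}(\mathbb{Z}\pi_{1}Y)$, and $\pi_{-n}(\mathcal{P}(Y))\cong K_{1-n}(\mathbb{Z}\pi_{1}Y)$ for $n \geq 2$. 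Consequently, proving (i)--(iii) is equivalent to showing $\pi_{n}(\mathcal{P}(BG))=0$ for all $n \leq 0$.

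The second step is to unpack the left-hand side. By the lemma preceding Conjecture \ref{pesduic}, every fiber $\rho^{-1}(z)$ of the simplicially stratified fibration $\rho:\mathcal{E}(X)\to \mathcal{B}(X)$ is a covering space $X_{H}$ of $X = BG$ with $H \in \mathcal{F}_{\mathcal{VCYC}}(G)$, so $\rho^{-1}(z) \simeq BH$ with $H$ virtually cyclic in $G$. Applying the identification of the previous paragraph to $Y = BH$, the hypothesis that $Wh(H)$, $\widetilde{K}_{0}(\mathbb{Z}H)$, and $K_{n}(\mathbb{Z}H)$ for $n\leq -1$ all vanish translates precisely to $\pi_{n}(\mathcal{P}_{*}(\rho^{-1}(z)))=0$ for all $z\in \mathcal{B}(X)$ and all $n \leq 0$.

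Finally, I would run the Atiyah--Hirzebruch spectral sequence associated to the skeletal filtration of $\mathcal{B}(X)$, whose $E^{2}$-page is built from the (stratified) homology of $\mathcal{B}(X)$ with coefficients in the homotopy groups of the fiber spectra $\mathcal{P}_{*}(\rho^{-1}(z))$. All of these coefficient groups vanish in degrees $\leq 0$ by the previous step, so the spectral sequence forces $\pi_{n}\mathbb{H}_{*}(\mathcal{B}(X),\mathcal{P}_{*}(\rho))=0$ for $n\leq 0$. Transporting this vanishing across the equivalence of the first paragraph gives $\pi_{n}(\mathcal{P}(BG))=0$ for $n \leq 0$, which is exactly (i)--(iii). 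The main obstacle I expect is Step 4: $\rho$ is only a simplicially stratified fibration, so one must verify carefully that Quinn's generalized homology $\mathbb{H}_{*}(\mathcal{B}(X);\mathcal{P}_{*}(\rho))$ admits an Atiyah--Hirzebruch filtration whose $E^{2}$-terms truly are assembled from the homotopy groups of the (varying) fiber spectra, and that the connectivity bound on each of these fibers propagates to the total spectrum. Once that technical point is in place, the argument is purely formal.
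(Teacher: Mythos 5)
Your strategy---applying the Isomorphism Conjecture for $\mathcal{P}_*$ to $X=BG$, identifying low-degree homotopy of the pseudoisotopy spectrum with the lower $K$-theory of group rings, and running the Atiyah--Hirzebruch-type spectral sequence for Quinn's stratified homology---is correct and is the argument behind the cited result in \cite{BFJP00}, which the paper quotes without reproducing the proof. One calibration: the standard identification (via Hatcher--Wagoner and Anderson--Hsiang, or equivalently $Wh^{\mathrm{PL}}(X)\simeq\Sigma^2\mathcal{P}(X)$) is $\pi_{-1}(\mathcal{P}(Y))\cong Wh(\pi_1 Y)$, $\pi_{-2}(\mathcal{P}(Y))\cong \widetilde{K}_0(\mathbb{Z}\pi_1 Y)$, and $\pi_{-n}(\mathcal{P}(Y))\cong K_{2-n}(\mathbb{Z}\pi_1 Y)$ for $n\geq 3$, so your translation is shifted one degree too high (your ``$\pi_0\cong Wh$'' should be ``$\pi_{-1}\cong Wh$''). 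This shift is harmless to the logic: the hypothesis on virtually cyclic $\Gamma$ equally translates to $\pi_n(\mathcal{P}_*(\rho^{-1}(z)))=0$ for all $z$ and all $n\leq -1$, the goal becomes $\pi_n(\mathcal{P}(BG))=0$ for $n\leq -1$, and since the filtration degree $p$ in Quinn's spectral sequence is nonnegative, uniform vanishing of the coefficient spectra in degrees $q\leq -1$ kills the $E^2$-page in all total degrees $\leq -1$. Your caution about the stratified nature of $\rho$ is legitimate but is resolved by Quinn's construction itself: the generalized homology spectrum of a simplicially stratified fibration carries a skeletal filtration whose associated $E^2$-page is assembled from the homotopy of the stalk spectra, which is exactly the input you need.
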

\begin{theorem}\label{}\cite{BFJP00}
Let $G$ be a Bianchi group. Then 
 \begin{itemize}
\item[(i)] $K_n(\mathbb{Z}G) = 0$ for $n\leq −1$;
\item[(ii)] $\widetilde{K}_0(\mathbb{Z}G) = 0$;
\item[(iii)] $Wh(G)=0$;
\end{itemize}
\end{theorem}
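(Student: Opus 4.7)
The plan is to combine Theorem \ref{bfjpcon} with the preceding vanishing theorem, so the whole argument reduces to a classification-plus-verification problem for the virtually cyclic subgroups of a Bianchi group. Recall that a Bianchi group is $\mathrm{PSL}_2(\mathcal{O}_d)$, where $\mathcal{O}_d$ is the ring of integers of $\mathbb{Q}(\sqrt{-d})$; it acts by orientation-preserving isometries on hyperbolic 3-space $\mathbb{H}^3$, with finite-volume (but non-compact) quotient. Hence Theorem \ref{bfjpcon} applies with $n=3$ and shows that the Isomorphism Conjecture \ref{pesduic} holds for $G$.

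In view of the theorem immediately preceding the target statement, it then suffices to prove that for every virtually cyclic subgroup $\Gamma \le G$ one has
\[
Wh(\Gamma)=0, \qquad \widetilde{K}_0(\mathbb{Z}\Gamma)=0, \qquad K_n(\mathbb{Z}\Gamma)=0 \text{ for } n\le -1.
\]
To achieve this I would first classify the virtually cyclic subgroups of $G$. Every such $\Gamma$ is either finite, of type I (an extension $1\to F\to \Gamma\to \mathbb{Z}\to 1$ with $F$ finite), or of type II (an amalgam $F_1*_{F_0}F_2$ with $[F_i:F_0]=2$). The finite subgroups of a Bianchi group are severely restricted: since $\mathrm{PSL}_2(\mathcal{O}_d)$ embeds in $\mathrm{PSL}_2(\mathbb{C})$, any finite subgroup is a subgroup of $\mathrm{SO}(3)$, but the integrality of traces together with the fact that elements of finite order in $\mathrm{PSL}_2(\mathcal{O}_d)$ are elliptic of order $2$ or $3$ forces $F$ to lie in the short list $\{1,\mathbb{Z}/2,\mathbb{Z}/3,\mathbb{Z}/2\oplus\mathbb{Z}/2,S_3,A_4\}$. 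The virtually cyclic subgroups can then be enumerated by analyzing the stabilizers of geodesic axes in $\mathbb{H}^3$ under this action.

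The second step is the K-theoretic verification on each of these finitely many group types. For the finite groups in the list above, $Wh$, $\widetilde K_0$ and $K_{<0}$ of their integral group rings are classical and each is known to vanish (for instance from the work of Bass--Milnor--Serre, Oliver, and Carter on the negative $K$-theory of finite groups, using that all of them have order at most $12$ and very simple rational representation theory). For virtually cyclic groups of type I, one uses the Bass--Heller--Swan decomposition together with the Farrell--Hsiang Nil vanishing $NK_*(\mathbb{Z}F)=0$ for $F$ in the above list, and for type II one uses Waldhausen's Mayer--Vietoris sequence for amalgamated products (the Nil groups again vanish because the amalgamating finite groups are so small). Each of these is a case-by-case check, but because the list of finite subgroups is short and concrete, the verification is mechanical.

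The main obstacle I expect is this second step, in particular controlling the Waldhausen Nil terms $\widetilde{\mathrm{Nil}}_*(\mathbb{Z}F_0;\mathbb{Z}[F_1-F_0],\mathbb{Z}[F_2-F_0])$ and the Farrell Nil terms attached to the type I extensions: these Nil groups are not formal, and their vanishing must be drawn from the existing literature on $K$-theory of integral group rings of small finite groups (Harmon, Connolly--Prassidis, Pearson, and others). Once all the Nil contributions are shown to vanish, the remaining summands are group homology of the relevant virtually cyclic groups with coefficients in $K_*(\mathbb{Z}F)$, which vanishes in the required range because $F$ is small and in low degrees $K_*(\mathbb{Z}F)$ is understood. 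Putting the classification and these Nil/$K$-theoretic inputs together produces the hypothesis of the preceding theorem and yields (i), (ii), (iii) simultaneously.
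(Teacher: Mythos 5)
Your proposal is correct and follows exactly the route the paper (via \cite{BFJP00}) takes: Bianchi groups act properly discontinuously with finite covolume on $\mathbb{H}^3$, so Theorem \ref{bfjpcon} applies, reducing the claim via the immediately preceding theorem to a classification of the virtually cyclic subgroups of $G$ and a case-by-case vanishing check of $Wh$, $\widetilde{K}_0$ and $K_{\le -1}$ on that list. The paper simply cites \cite{BFJP00} for the classification and the Nil/$K$-theoretic verification rather than reproducing them, but the structure you describe is precisely theirs.
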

There is a stronger version of the Farrell-Jones Conjecture, the so called Fibered Farrell-Jones Conjecture \cite{FJ93a}. The Fibered Farrell-Jones Conjecture does imply the Farrell-Jones Conjecture and has better inheritance properties than the Farrell-Jones Conjecture :
\begin{conjecture}\label{fibcj}(The Fibered isomorphism conjecture)
Let $X$ be a connected CW-complex, and let $\xi= Y\to X$ denote a Serre fibration over $X$. Let $\mathcal{E}(\xi)$ denote the total space of the pullback of $\xi$ along the map $f : \mathcal{E}(X)\to X$, let $\rho(\xi):\mathcal{E}(\xi)\to \mathcal{B}(\xi)$ denote the composite map $\mathcal{E}(\xi)\stackrel{proj}{\longrightarrow}\mathcal{E}(X)\stackrel{\rho}{\longrightarrow} \mathcal{B}(X)$, and let $f(\xi) : \mathcal{E}(\xi)\to Y$ denote the map which covers the map $f : \mathcal{E}(X)\to X$. The Fibered Isomorphism Conjecture states that the composite $$\mathbb{H}_{*}(\mathcal{B}(\xi),\mathcal{S}_{*}(\rho(\xi)))\stackrel{\mathcal{S}_{*}(f(\xi))\circ A_{*}}{\longrightarrow}\mathcal{S}_{*}(Y)$$ is a (weak) equivalence of  $\Omega$-spectra, where $A_*$ is the assembly map for the simplicially stratified fibration $\rho(\xi):\mathcal{S}(\xi)\to \mathcal{B}(\xi)$, the functor $\mathcal{S}_{*}()$ is any of $\mathcal{P}_{*}$, $\mathcal{P}_{*}^{Diff}$, $\mathcal{K}_{*}$ or the $L^{-\infty}$-surgery functor $\
mathcal{L}^{-\infty}_{*}$, and $\mathcal{S}_{*}(f(\xi))$ is the image of the map $f(\xi) : \mathcal{E}(\xi)\to Y$ under $\mathcal{S}_{*}()$.
\end{conjecture}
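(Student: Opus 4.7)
The plan is to attack the Fibered Isomorphism Conjecture \ref{fibcj} by reducing to the unfibered case (Conjecture \ref{pesduic}) through a functoriality/inheritance argument, and then to import the geometric techniques that have worked in the unfibered setting. First I would set up the diagram of pullbacks: note that $\mathcal{B}(\xi)$ equals $\mathcal{B}(X)$ (since pullback only modifies the total space), and the simplicially stratified fibration $\rho(\xi)$ has, over a simplex of $\mathcal{B}(X)$ with label $H \in \mathcal{F}_{\mathcal{VCYC}}(X)$, fiber homotopy equivalent to the covering $Y_H$ of $Y$ corresponding to the preimage $\pi_1(f)^{-1}(H)\subset \pi_1(Y)$. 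So the fibered conjecture is really the conjecture for all these virtually-cyclic-by-$\pi_1(F)$ groups at once.

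Next I would invoke the known inheritance properties of the Fibered Conjecture \cite{FJ93a}: it passes to subgroups, and, more importantly, the conjecture for a group extension follows once the conjecture is known for both the quotient and a controlled family of extensions of virtually cyclic subgroups by the kernel. Concretely, given a Serre fibration $\xi$ with fiber $F$, I would factor the composite assembly through an intermediate Quinn assembly $\mathbb{H}_{*}(\mathcal{B}(\xi),\mathcal{S}_{*}(\rho(\xi)))\to \mathbb{H}_{*}(\mathcal{B}(X),\mathcal{S}_{*}(\rho))\circ \text{(fiber transfer)}$, arguing that if the unfibered conjecture holds on $X$ and the conjecture is known fiberwise for the pulled-back virtually cyclic pieces, then the composite is an equivalence by a comparison of Quinn-type spectral sequences.

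The main geometric input I would then use is the Farrell--Jones geodesic-flow / asymptotic-transfer method, exactly as in the proofs of Theorems \ref{sur.map}, \ref{gentoprigd}, and \ref{bfjpcon}: when $X$ admits a non-positively curved (or hyperbolic, finite-volume) model, the unit tangent bundle of the universal cover supports a flow that produces, at the spectrum level, a splitting of the assembly map after stabilization. The content of the fibered statement is that this flow construction is natural enough under pullback of fibrations to provide a splitting of $\mathcal{S}_{*}(f(\xi))\circ A_{*}$ as well; this is exactly the observation that makes the fibered statement follow, in the cases where it is known, from essentially the same controlled-topology machinery as the unfibered one.

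The hard part will be two-fold. First, there is no geometric hypothesis on the pullback total space $\mathcal{E}(\xi)$ itself — only on the base $X$ — so one cannot simply replay the Farrell--Jones argument on $\mathcal{E}(\xi)$; one has to transport the entire controlled-topology apparatus through the fibration, and in particular check that the flow-based split survives pullback along an arbitrary Serre fibration (not just a smooth fiber bundle). Second, passing to the family $\mathcal{F}_{\mathcal{VCYC}}$ is essential (as \cite{BL06} shows), so the inductive step must correctly handle extensions $1\to \pi_1(F)\to \Gamma \to V \to 1$ with $V$ infinite virtually cyclic, which is precisely the class of groups for which the fibered conjecture is most delicate; absent additional structure on $F$ (such as $\pi_1(F)$ being virtually poly-$\mathbb{Z}$, or the fibration being approximately a bundle), I expect this step to be the genuine obstruction, and in full generality the conjecture should be regarded as open beyond the cases already covered by the quoted theorems.
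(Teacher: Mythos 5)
This statement is labeled as a \emph{conjecture} in the paper (Conjecture~\ref{fibcj}), not a theorem, and accordingly the paper offers no proof of it; there is nothing for your proposal to be compared against. The Fibered Isomorphism Conjecture is precisely that --- a conjecture, stronger than the (already open in general) Farrell--Jones Conjecture, and the paper only surveys partial results (e.g.\ Theorems~\ref{nilfjcon}, \ref{worfjcon}, \ref{fjcrysolv}, \ref{hyfjwc}, \ref{fjcolat}) that establish it for restricted classes of groups.

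To your credit, you do recognize this in your final paragraph, where you concede that the conjecture ``should be regarded as open beyond the cases already covered by the quoted theorems,'' and the route you sketch --- reducing to the unfibered case via the inheritance properties of \cite{FJ93a}, tracking the Quinn assembly through the pullback fibration, and invoking the geodesic-flow/transfer machinery of Farrell--Jones in the nonpositively curved case --- is indeed how the known special cases are established. But this is a survey of the state of the art, not a proof, and the two ``hard parts'' you name (the absence of a geometric hypothesis on $\mathcal{E}(\xi)$, and the handling of extensions $1\to \pi_1(F)\to \Gamma\to V\to 1$ with $V$ infinite virtually cyclic) are exactly why the conjecture remains open. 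In a setting like this the correct response to the task ``prove Conjecture~\ref{fibcj}'' is to state immediately that no proof exists or is expected, rather than to present a sketch that one then retracts; a reader skimming only the opening paragraphs of your proposal could easily be misled into thinking a genuine reduction argument is being offered.
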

\begin{definition}\label{equifjcoj}((Fibered) Isomorphism Conjecture for $\mathcal{H}^{?}_{*}$)\rm{
Given a group homomorphism $\phi:K\to G$ and a family $\mathcal{F}$ of subgroups of $G$, define the family $\phi^{*}\mathcal{F}$ of subgroups of $K$ by $\phi^{*}\mathcal{F}=\{H\subseteq K |\phi(H)\in (H)\mathcal{F}\}$. Let $\mathcal{H}^{?}_{*}$ be an equivariant homology theory with values in $\Lambda$-modules for a commutative associative ring $\Lambda$ with unit from \cite[Section 1]{Luc02}. This essentially means that we get for each group $G$, a $G$-homology theory $\mathcal{H}^{G}_{*}$ which assigns to a (not necessarily proper or cocompact) pair of $G$-CWcomplexes $(X,A)$, a $\mathbb{Z}$-graded $\Lambda$-module $\mathcal{H}^{G}_{n}(X,A)$, such that there exists natural long exact sequences of pairs and $G$-homotopy invariance, excision, and the disjoint union axiom are satisfied. Moreover, an induction structure is required which in particular implies for a subgroup $H\subseteq G$ and a $H$-CW-pair $(X,A)$ that there is a natural isomorphism $$\mathcal{H}^{H}_{n}(X,A)\stackrel{\cong}{\longrightarrow}\
mathcal{
H}
^{G}_{n}(G\times_{H}(X,A)).$$ \\
A group $G$ together with a family of subgroups $\mathcal{F}$ satisfies the Isomorphism Conjecture for $\mathcal{H}^{?}_{*}$ if the projection $pr : E_{\mathcal{F}}(G)\to pt$ induces an isomorphism $$\mathcal{H}^{G}_{n}(pr):\mathcal{H}^{G}_{n}(E_{\mathcal{F}}(G))\stackrel{\cong}{\longrightarrow}\mathcal{H}^{G}_{n}(pt)$$ for $n\in \mathbb{Z}$. The pair $(G,\mathcal{F})$ satisfies the Fibered Isomorphism Conjecture for $\mathcal{H}^{?}_{*}$ if for every group homomorphism $\phi: K\to G$, the pair $(K,\phi^{*}\mathcal{F})$ satisfies the Isomorphism Conjecture.}
\end{definition}
\begin{remark}\label{ficrema}\rm{
The Fibered Isomorphism Conjecture \ref{fibcj} for algebraic $K$-theory or algebraic $L$-theory respectively for the group $G$  is equivalent to the (Fibered) Isomorphisms Conjecture \ref{equifjcoj} for $\mathcal{H}^{?}_{*}(-,{\bf{K}}_{R})$ and $\mathcal{H}^{?}_{*}(-,{\bf{L}}_{R})$ for the pair $(G,\mathcal{F}_{\mathcal{VCYC}})$ (see Remark 6.6 in \cite{BL06}). Arthur Bartels, Wolfgang L¨uck and Holger Reich \cite{BLR08} proved the following results :}
\end{remark}
\begin{theorem}\label{nilfjcon}
Let $R$ be an associative ring with unit. Let $\mathcal{FJ}(R)$ be the class of groups which satisfy the Fibered Farrell-Jones Conjecture \ref{fibcj} for algebraic $K$-theory with coefficients in $R$. Then
\begin{itemize}
\item[(i)] Every word-hyperbolic group and every virtually nilpotent group belong to $\mathcal{FJ}(R)$;
\item[(ii)] If $G_1$ and $G_2$ belong to $\mathcal{FJ}(R)$, then $G_1\times G_2$ belongs to $\mathcal{FJ}(R)$;
\item[(iii)] Let $\{G_i | i\in I\}$ be a directed system of groups (with not necessarily injective structure maps) such that $G_{i}\in \mathcal{F}$ for $i\in I$. Then ${colim}_{i\in I}G_i$ belongs to $\mathcal{FJ}(R)$;
\item[(iv)] If $H$ is a subgroup of $G$ and $G\in \mathcal{FJ}(R)$, then $H\in \mathcal{FJ}(R)$.
\end{itemize}
\end{theorem}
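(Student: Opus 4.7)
The overall strategy is to exploit the equivariant-homology reformulation of the Fibered Farrell-Jones Conjecture (Definition~\ref{equifjcoj}, Remark~\ref{ficrema}) together with the Transitivity Principle (Theorem~\ref{fjtp}). I would prove parts (iv), (iii), (ii), (i) in that order, since the first three are formal consequences of the machinery while (i) is the geometric heart.

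For (iv), the Fibered formulation is \emph{designed} to yield this painlessly. For an inclusion $\iota\colon H\hookrightarrow G$ and any $\psi\colon K\to H$, the pullback family $(\iota\circ\psi)^{*}\mathcal{F}_{\mathcal{VCYC}}(G)$ coincides with $\psi^{*}\mathcal{F}_{\mathcal{VCYC}}(H)$, because the virtually cyclic subgroups of $G$ contained in $H$ are precisely the virtually cyclic subgroups of $H$. Hence the Fibered Conjecture for $G$ applied to $\iota\circ\psi$ is literally the Fibered Conjecture for $H$ applied to $\psi$. For (iii), I would combine two inputs: that $\mathcal{H}^{?}_{*}(-;{\bf{K}}_{R})$ commutes with directed colimits of groups (since $K_n(R(-))$ does, and one can choose compatible classifying spaces $E_{\mathcal{F}}$), and that any virtually cyclic subgroup of $\mathrm{colim}_i G_i$ is finitely generated and hence already lies in some $G_i$. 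A diagram chase then identifies the assembly map for $\mathrm{colim}_i G_i$ with the colimit of the assembly maps for the $G_i$, which are bijective by hypothesis.

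For (ii), I would apply Fibered FJC twice and invoke Transitivity. Given $\phi\colon K\to G_1\times G_2$, Fibered FJC for $G_1$ (applied to $\pi_1\circ\phi$) reduces the relevant assembly over $K$ to the family $\mathcal{F}_1=(\pi_1\circ\phi)^{*}\mathcal{F}_{\mathcal{VCYC}}(G_1)$. For each $H\in\mathcal{F}_1$, Fibered FJC for $G_2$ (applied to $\pi_2\circ\phi|_H$) further reduces to subgroups $L\subseteq H$ with $\phi(L)$ contained in a product of two virtually cyclic groups. Such $\phi(L)$ is virtually $\mathbb{Z}^2$, in particular virtually nilpotent, so it lies in $\mathcal{FJ}(R)$ by (i); a final application of Transitivity closes the argument.

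For (i), the virtually nilpotent case is accessible by induction on Hirsch length, using Transitivity together with the fibered assembly arguments for extensions by $\mathbb{Z}$ and by finite groups, building on the earlier result of Farrell--Hsiang (Theorem~\ref{farall-hsiang 80}). The genuinely difficult part, and the principal obstacle, is the word-hyperbolic case. The plan there is to invoke the controlled-algebra / flow-space technology of Bartels--L\"uck--Reich: to a $\delta$-hyperbolic group $G$ one attaches its Rips complex and the associated \emph{flow space} carrying the natural geodesic flow, and one uses this geometric object to verify a controlled $\mathcal{F}_{\mathcal{VCYC}}$-amenability criterion (\emph{transfer reducibility}) which is known to imply the Fibered Farrell-Jones Conjecture. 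The technical heart of the argument, where essentially all of the Gromov-hyperbolic geometry is consumed, is the construction of long thin equivariant open covers of the flow space with controlled dimension and Lebesgue number, together with the construction of the corresponding transfers in the controlled $K$-theory spectrum. Once this geometric input is in hand, parts (ii)-(iv) follow from the formal inheritance arguments above.
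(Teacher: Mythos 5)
The paper itself does not prove this theorem; it is quoted verbatim from Bartels--L\"uck--Reich \cite{BLR08}, so there is no internal argument to compare against. That said, your sketch is a faithful reconstruction of the architecture of the cited proof: (iv) is indeed the formal observation that $(\iota\circ\psi)^{*}\mathcal{F}_{\mathcal{VCYC}}(G)=\psi^{*}\mathcal{F}_{\mathcal{VCYC}}(H)$ because virtual cyclicity is intrinsic; (iii) rests on continuity of $K$-theory under filtered colimits together with the fact that every finitely generated subgroup of a filtered colimit is the image of a finitely generated subgroup of some $G_i$ (your phrase ``already lies in some $G_i$'' should be adjusted to ``is the image of a subgroup of some $G_i$'' since the structure maps need not be injective, but this does not affect the argument); (ii) is the double application of the fibered hypothesis via the Transitivity Principle \ref{fjtp}, reducing to subgroups mapping into a product of two virtually cyclic groups, which are virtually finitely generated abelian; and (i) is where all the geometry lives, via the flow space on the Rips complex, long thin equivariant covers, and transfer reducibility for hyperbolic groups, with the virtually nilpotent case handled by induction through Farrell--Hsiang-type arguments.

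One presentational inconsistency worth flagging: you announce the ordering (iv), (iii), (ii), (i), but your proof of (ii) explicitly invokes (i) to dispose of the virtually abelian (hence virtually nilpotent) subgroups produced by the two-step reduction. The dependence is real --- the Transitivity Principle leaves you with exactly those groups and you must already know the fibered conjecture for them --- so the correct logical order is (iv), (iii), (i), (ii). This is not a mathematical error, just an ordering one, but as written the sketch reads as circular.
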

\begin{theorem}\label{worfjcon}
Let $R$ be an associative ring with unit. Consider the following assertions for a group $G$:
\begin{itemize}
\item[(KH)] The group $G$ satisfies the Fibered Farrell-Jones Conjecture for homotopy $K$-theory with coefficients in $R$;
\item[(FC)] The ring $R$ has finite characteristic $N$. The Fibered Farrell-Jones Conjecture for algebraic $K$-theory for $G$ with coefficients in $R$ for both the families $\mathcal{F}_{\mathcal{FIN}}$ and $\mathbb{F}_{\mathcal{VCYC}}$ is true after applying −$\otimes_{\mathbb{Z}}\mathbb{Z}[1/N]$ to the assembly map.
\end{itemize}
Let $\mathcal{FJ}_{KH}(R)$ be the class of groups for which assertion (KH) holds. If $R$ has finite characteristic, then let $\mathcal{FJ}_{FC}(R)$ be the class of groups for which assertion (FC) is true. Let $\mathcal{F}$ be $\mathcal{FJ}_{FC}(R)$ or $\mathcal{FJ}_{KH}(R)$. Then:
\begin{itemize}
\item[(i)] Every word-hyperbolic and every elementary amenable group belongs to $\mathcal{F}$;
\item[(ii)] If $G_1$ and $G_2$ belong to $\mathcal{F}$, then $G_1\times G_2$  belongs to $\mathcal{F}$;
\item[(iii)] Let $\{G_i | i\in I\}$ be a directed system of groups (with not necessarily injective structure maps) such that $G_i\in \mathcal{F}$ for $i\in I$. Then ${colim}_{i\in I}G_i$ belongs to $\mathcal{FJ}(R)$;
\item[(iv)] If $H$ is a subgroup of $G$ and $G\in \mathcal{FJ}(R)$, then $H\in \mathcal{FJ}(R)$;
\item[(v)] Let $1\to H\to G\to Q\to 1$ be an extension of groups such that $H$ is either elementary amenable or word-hyperbolic and $Q$ belongs to $\mathcal{F}$. Then $G$ belongs to $\mathcal{F}$;
\item[(vi)] Suppose that $G$ acts on a tree $T$. Assume that for each $x\in T$ the isotropy group $G_x$ belongs to $\mathcal{F}$. Then $G$ belongs to $\mathcal{F}$.
\end{itemize}
Moreover, if $R$ has finite characteristic then we have $\mathcal{FJ}_{KH}(R)\subseteq \mathcal{FJ}_{FC}(R)$.
\end{theorem}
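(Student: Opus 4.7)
The plan is to verify the closure properties \textrm{(ii)}--\textrm{(vi)} first, and then use them together with the base cases in \textrm{(i)} (word-hyperbolic and virtually cyclic groups) to build up the full classes $\mathcal{FJ}_{KH}(R)$ and $\mathcal{FJ}_{FC}(R)$. The key technical device throughout is the reformulation of the Fibered Farrell-Jones Conjecture as a statement about the assembly map $\mathcal{H}^{G}_{n}(E_{\mathcal{F}}(G))\to \mathcal{H}^{G}_{n}(pt)$ for an appropriate equivariant homology theory $\mathcal{H}^{?}_{\ast}$ (as in Remark \ref{ficrema}), so that the hypothesis (Fibered) allows us to move freely among different homomorphisms $\phi:K\to G$ and their pulled-back families $\phi^{\ast}\mathcal{F}$.

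First I would treat the base cases in \textrm{(i)}. The word-hyperbolic case is the hardest input and I would simply invoke the theorem of Bartels--L\"uck--Reich giving the Fibered Farrell-Jones Conjecture in algebraic $K$-theory for hyperbolic groups, then pass to homotopy $K$-theory via the Bass--Heller--Swan style descent; the same argument, after inverting the characteristic $N$, handles (FC). For elementary amenable groups, I would use Osin's/Kropholler's presentation of $\mathcal{EA}$ as the smallest class of groups containing $\mathbb{Z}$ and closed under extensions by virtually cyclic groups and directed unions; then \textrm{(iii)} and \textrm{(v)}, once established, reduce the problem to the virtually cyclic case, which is immediate because $\mathcal{F}_{\mathcal{VCYC}}$ already contains $G$. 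Next, for the closure properties: \textrm{(iv)} (subgroups) follows formally from the induction structure and the fact that for $H\le G$ the $G$-$CW$-complex $G\times_H E_{\phi^{\ast}\mathcal{F}}(H)$ is a model for $E_{\mathcal{F}}(G)$ restricted to $H$; \textrm{(iii)} (colimits) uses that $\mathcal{H}^{?}_{\ast}$ commutes with filtered colimits of groups (continuity of the relevant spectra-valued functors over $Or(G)$); \textrm{(ii)} (products) is a consequence of \textrm{(v)} applied to $1\to G_1\to G_1\times G_2\to G_2\to 1$.

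The main novelty lies in \textrm{(v)} and \textrm{(vi)}. For an extension $1\to H\to G\to Q\to 1$ with $Q\in\mathcal{F}$ and $H$ hyperbolic or elementary amenable, I would argue via the Transitivity Principle \ref{fjtp}: pulling back the family $\mathcal{F}_{\mathcal{VCYC}}(Q)$ along $p:G\to Q$ gives a family $p^{\ast}\mathcal{F}_{\mathcal{VCYC}}$ of subgroups of $G$; the Fibered Conjecture for $Q$ reduces the assembly map over $\mathcal{F}_{\mathcal{VCYC}}(G)$ on $G$ to assembly maps for groups in $p^{\ast}\mathcal{F}_{\mathcal{VCYC}}$, each of which fits in an extension $1\to H\to P\to V\to 1$ with $V$ virtually cyclic. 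For such $P$, one uses that $H$ being hyperbolic or elementary amenable is inherited by $P$ (elementary amenable is closed under extensions by $\mathbb{Z}$; for hyperbolic $H$, $P$ is hyperbolic-by-virtually-cyclic, handled either by direct argument or again by reduction to the hyperbolic case via the Fibered hypothesis). Case \textrm{(vi)} follows from Bass--Serre theory: the action on the tree $T$ produces an exact sequence relating $G$ to a colimit/amalgam of stabilizers $G_x$; combined with \textrm{(iii)} and a relative form of the Fibered Conjecture applied to the orbit category $Or(G,\mathcal{F}_{\mathcal{VCYC}})$ along with the contractibility of $T$ as a model (after fattening) for $E_{\mathcal{F}}(G)$ where $\mathcal{F}$ is generated by the vertex stabilizers, one deduces the conjecture for $G$.

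The hard step, and the main obstacle, is the extension result \textrm{(v)} combined with the hyperbolic base case, since the transitivity reduction produces groups $P$ that are no longer hyperbolic and whose Fibered Farrell-Jones conjecture must itself be verified; this is where the full power of having the \emph{Fibered} version (inheritance under arbitrary homomorphisms $\phi:K\to G$) is essential and where one needs the flow space/controlled topology machinery of Bartels--L\"uck--Reich to handle the extensions by virtually cyclic groups uniformly. Finally, the inclusion $\mathcal{FJ}_{KH}(R)\subseteq \mathcal{FJ}_{FC}(R)$ follows because the difference between algebraic $K$-theory and homotopy $K$-theory is measured by (iterated) $\mathrm{Nil}$-groups in the Bass--Heller--Swan decomposition, and these $\mathrm{Nil}$-terms are $N$-torsion when $R$ has characteristic $N$ (a theorem of Weibel), hence vanish after inverting $N$; translated into the language of assembly maps over $\mathcal{F}_{\mathcal{FIN}}$ vs.\ $\mathcal{F}_{\mathcal{VCYC}}$, this gives exactly assertion (FC) from (KH).
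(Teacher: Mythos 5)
The paper does not prove this theorem: it is quoted verbatim from Bartels--L\"uck--Reich \cite{BLR08} (see the sentence immediately preceding, ``Arthur Bartels, Wolfgang L\"uck and Holger Reich \cite{BLR08} proved the following results''), so there is no internal proof to compare against, and your proposal must be read as a reconstruction of the argument of \cite{BLR08}.

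Your reconstruction is broadly aligned with the general strategy (fibered formulation, Transitivity Principle, base cases plus closure properties, Weibel's vanishing of Nil after inverting the characteristic for the last inclusion), but it has a concrete gap in the deduction of (ii) from (v). You assert that closure under finite direct products ``is a consequence of (v) applied to $1\to G_1\to G_1\times G_2\to G_2\to 1$.'' However, (v) only applies when the kernel $H$ is elementary amenable or word-hyperbolic, whereas in (ii) one merely assumes $G_1\in\mathcal{F}$; and by the very theorem being proved, $\mathcal{F}$ contains many groups that are neither (already any iterated mix of hyperbolic and elementary amenable pieces will do). So the reduction is circular or simply unavailable. Closure under products requires its own argument: via the Transitivity Principle \ref{fjtp} one reduces the assembly map over $\mathcal{F}_{\mathcal{VCYC}}(G_1\times G_2)$ to the family generated by products $V_1\times V_2$ with $V_i$ virtually cyclic in $G_i$, observes that such products are virtually finitely generated abelian and hence elementary amenable (so covered by the base case (i)), and then compares the remaining relative assembly map with the product of the two given assembly maps for $G_1$ and $G_2$ using the compatibility of the equivariant homology theory with products. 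As written, your proposal does not supply this step.
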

\begin{corollary}\label{corrworfjcon}
Let $R$ be a regular associative ring with unit of finite characteristic $N$. Let $G$ be torsionfree. Suppose that $G$ belongs to the class $\mathcal{FJ}_{FC}(R)$ defined in Theorem \ref{worfjcon}. Then
\begin{itemize}
\item[(i)] $K_n(RG)[1/N]=0$ for $n\leq −1$;
\item[(ii)] The change of rings map induces a bijection $K_0(R)[1/N]\to K_0(RG)[1/N]$. In particular $\widetilde{K}_0(RG)[1/N]$ is trivial if and only if $\widetilde{K}_0(R)[1/N]$ is trivial;
\item[(iii)] $Wh^R(G)[1/N]$ is trivial.
\end{itemize}
\end{corollary}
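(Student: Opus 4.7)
The plan is to adapt the argument used in the proof of Theorem~\ref{torsfreefj} to the present setting, replacing the integral isomorphism supplied by the Farrell--Jones Conjecture with the isomorphism ``after inverting $N$'' furnished by membership in $\mathcal{FJ}_{FC}(R)$. First I would invoke condition (FC) of Theorem~\ref{worfjcon}: this says that for the family $\mathcal{F}_{\mathcal{FIN}}$ the assembly map
$$ H^{G}_{n}\bigl(E_{\mathcal{F}_{\mathcal{FIN}}}(G);\mathbf{K}_{R}\bigr)\otimes_{\mathbb{Z}}\mathbb{Z}[1/N] \longrightarrow K_{n}(RG)\otimes_{\mathbb{Z}}\mathbb{Z}[1/N] $$
is an isomorphism for every $n\in\mathbb{Z}$. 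Since $G$ is torsionfree, the only finite subgroup of $G$ is the trivial one, so $E_{\mathcal{F}_{\mathcal{FIN}}}(G)$ is a model for $EG$ and the left hand side becomes the ordinary homology $H_{n}(BG;\mathbf{K}_{R})[1/N]$. Thus after localisation at $\mathbb{Z}[1/N]$ one has a natural isomorphism $H_{n}(BG;\mathbf{K}_{R})[1/N]\cong K_{n}(RG)[1/N]$.

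Next I would run the Atiyah--Hirzebruch spectral sequence for the generalised homology theory $H_{*}(-;\mathbf{K}_{R})$ applied to $BG$, so that
$$ E^{2}_{p,q}=H_{p}\bigl(BG;K_{q}(R)\bigr)\;\Longrightarrow\; H_{p+q}(BG;\mathbf{K}_{R}). $$
Because $R$ is assumed to be regular, a theorem of Bass--Quillen gives $K_{q}(R)=0$ for every $q\leq -1$; localising a vanishing group at $\mathbb{Z}[1/N]$ still yields zero, so after inverting $N$ the spectral sequence lives in the first quadrant. For assertion (i), this immediately forces $K_{n}(RG)[1/N]\cong H_{n}(BG;\mathbf{K}_{R})[1/N]=0$ for $n\leq -1$, since every contributing term $E^{\infty}_{p,q}$ with $p+q\leq -1$ must have $q\leq -1$ and thus vanishes. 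For assertion (ii) I would use the edge homomorphism at $n=0$: since $E^{2}_{p,q}=0$ whenever $q\leq -1$ or $p<0$, the group $H_{0}(BG;\mathbf{K}_{R})[1/N]$ equals $E^{2}_{0,0}[1/N]=H_{0}(BG;K_{0}(R))[1/N]=K_{0}(R)[1/N]$, and the edge map is induced by the change of rings $R\hookrightarrow RG$. Passing to reduced $K$-theory then gives the ``in particular'' clause.

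For assertion (iii) the strategy is the familiar one from Theorem~\ref{torsfreefjint}(ii), only now with $\mathbb{Z}[1/N]$-coefficients. After inverting $N$ the first quadrant spectral sequence gives a short exact sequence
$$ 0\longrightarrow H_{0}\bigl(BG;K_{1}(R)\bigr)[1/N] \longrightarrow K_{1}(RG)[1/N] \longrightarrow H_{1}\bigl(BG;K_{0}(R)\bigr)[1/N]\longrightarrow 0, $$
in which the left hand group is $K_{1}(R)[1/N]$ and the right hand group is $H_{1}(BG;K_{0}(R))[1/N]$; using naturality with respect to $\mathbb{Z}\hookrightarrow R$ and the inclusion $G\hookrightarrow RG^{\times}$ one identifies the image of $K_{1}(R)\oplus H_{1}(BG;\mathbb{Z})$, i.e.\ the subgroup generated by the ``trivial units'' defining $Wh^{R}(G)$, with the whole of $K_{1}(RG)[1/N]$. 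Hence $Wh^{R}(G)[1/N]$ is trivial. The main technical obstacle, and the place where most of the care is needed, is this last identification: one must verify that the edge and coedge maps of the Atiyah--Hirzebruch spectral sequence correspond precisely to the inclusion of units and to the abelianisation map used in the definition of $Wh^{R}(G)$, and that this identification is compatible with the localisation $-\otimes\mathbb{Z}[1/N]$. Once that is in place, parts (i), (ii) and (iii) all fall out from the same first-quadrant spectral sequence argument.
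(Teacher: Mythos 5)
Your proof is correct and follows the same template the paper itself uses for the integral analogue, Theorem~\ref{torsfreefj}: identify $E_{\mathcal{F}_{\mathcal{FIN}}}(G)$ with $EG$ for torsionfree $G$, convert the assembly map into $H_*(BG;\mathbf{K}_R)\to K_*(RG)$ (here after applying $-\otimes_{\mathbb{Z}}\mathbb{Z}[1/N]$), and then exploit the first-quadrant Atiyah--Hirzebruch spectral sequence whose $E^2$-page collapses below $q=0$ because $R$ is regular. The paper states Corollary~\ref{corrworfjcon} without proof (it is imported from \cite{BLR08}), so the benchmark is the paper's proof of Theorem~\ref{torsfreefj}, and your adaptation --- including inverting $N$ throughout, which is harmless since $\mathbb{Z}[1/N]$ is flat --- matches it; the caveat you flag about identifying the edge and coedge maps with the change-of-rings map and the trivial-units map is exactly the point one has to verify, and is the same point implicitly used in the paper's treatment of $Wh(G)$ in Theorem~\ref{torsfreefj}(ii).
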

\begin{remark}\rm{
Corollary \ref{corrworfjcon} together with Theorem \ref{worfjcon} substantially extends the following Theorem \ref{far-linn} of Farrell-Linnell \cite{FL03}, where $Wh^{F}(G)\otimes_{\mathbb{Z}} \mathbb{Q}=0$ is proven for $G$ a torsionfree elementary amenable group and $\mathbb{F}$ a field of prime characteristic. }
\end{remark}
\begin{theorem}\label{far-linn}
Let $G$ be a torsion-free virtually solvable subgroup of $GL_{n}(\mathbb{C})$. Then $Wh(G)=0$.
\end{theorem}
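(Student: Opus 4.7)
My plan is to deduce $Wh(G) = 0$ from the Farrell--Jones Isomorphism Conjecture framework developed in Section 5, combined with a filtered-colimit reduction to finitely generated subgroups. First I would write $G = \varinjlim_{i} G_i$, where $\{G_i\}_{i \in I}$ is the directed system of all finitely generated subgroups of $G$. Each $G_i$ remains torsion-free (as a subgroup of $G$) and virtually solvable linear, so it sits inside $GL_n(\mathbb{C})$. Because the class $\mathcal{FJ}_K(\mathbb{Z})$ is closed under directed colimits by Theorem \ref{nilfjcon}(iii), and because Theorem \ref{torsfreefj}(ii) reads off $Wh = 0$ from membership in $\mathcal{FJ}_K(\mathbb{Z})$ in the torsion-free case, it is enough to prove $Wh(G_i) = 0$ for each $i$.

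For each finitely generated $G_i$ I would consider its Zariski closure $\bar{G}_i \subseteq GL_n(\mathbb{C})$, which is a complex solvable linear algebraic group. After restriction of scalars to $\mathbb{R}$ and passage to the identity component, its real points yield a virtually connected solvable Lie group $L_i$ containing $G_i$. Using structural results from the theory of solvable linear groups (Mal'cev, Wehrfritz, bounds on Hirsch length) together with Mostow's structure theorem for solvable Lie groups and their lattices, $G_i$ can, up to commensurability, be realised as a subgroup of a cocompact discrete subgroup of $L_i$. At this point the Farrell--Jones theorem cited in Section 5, which asserts that the pseudoisotopy Isomorphism Conjecture \ref{pesduic} holds for the fundamental group of any space $X$ with $\pi_1(X)$ a subgroup of a cocompact discrete subgroup of a virtually connected Lie group, applies to $G_i$. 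Combining the resulting assembly-map isomorphism with the fact that every virtually cyclic subgroup of the torsion-free group $G_i$ is infinite cyclic (and hence has trivial Whitehead group), the standard spectral-sequence edge argument of Theorem \ref{torsfreefj}(ii) then yields $Wh(G_i) = 0$, and the colimit gives $Wh(G) = 0$.

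The main obstacle in this outline is the structural embedding step. It is \emph{not} true that every finitely generated solvable linear group in characteristic zero is virtually polycyclic: the Baumslag--Solitar groups $BS(1,m) = \mathbb{Z}[1/m] \rtimes \mathbb{Z}$, which embed in $GL_2(\mathbb{Q}[1/m]) \subset GL_2(\mathbb{C})$, are metabelian linear in characteristic zero but have non-finitely-generated derived subgroup. Consequently a naive Mal'cev-style reduction to virtually poly-$\mathbb{Z}$ groups (where $Wh = 0$ is known by Farrell--Hsiang \cite{FH81a} for closed infrasolvmanifolds, or by Theorem \ref{virtualpoly} together with Theorem \ref{torsfreefj}(ii)) fails. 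One must instead use the finer algebraic-group structure of $\bar{G}_i$ to build a solvable Lie group that both accommodates $G_i$ and admits cocompact lattices fitting the hypotheses of the pseudoisotopy Isomorphism Conjecture; this Lie-theoretic/linear-algebraic step is the delicate heart of the Farrell--Linnell argument, while the colimit reduction and the passage from the Isomorphism Conjecture to $Wh(G) = 0$ are essentially formal consequences of the machinery already in place.
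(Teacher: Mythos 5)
The paper does not prove Theorem~\ref{far-linn}; it is quoted from Farrell--Linnell~\cite{FL03} with no argument supplied, so your proposal must stand on its own. The colimit reduction to finitely generated subgroups and the final passage from assembly-map information to $Wh(G)=0$ via Theorem~\ref{torsfreefj}(ii) are both fine as far as they go.

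The gap is in the middle step, and the Baumslag--Solitar example you raise is not merely a warning about naive Mal'cev reductions; it kills the route outright. Every solvable subgroup of a cocompact discrete subgroup of a virtually connected Lie group is polycyclic-by-finite: by the Levi decomposition, the solvable radical meets the lattice in a polycyclic lattice (Mostow), and the image in the semisimple quotient is a discrete solvable subgroup of a lattice in a semisimple group, hence virtually abelian, so the extension is polycyclic-by-finite. But $BS(1,2)\cong\mathbb{Z}[1/2]\rtimes\mathbb{Z}$ is a torsion-free solvable subgroup of $GL_2(\mathbb{C})$ that is \emph{not} polycyclic-by-finite, since its normal subgroup $\mathbb{Z}[1/2]$ is not finitely generated. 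Consequently no intermediate solvable Lie group $L_i$, however cleverly manufactured from the Zariski closure $\bar{G}_i$, can place such a $G_i$ inside a cocompact discrete subgroup of a virtually connected Lie group, and the pseudoisotopy Isomorphism Conjecture result you invoke has no purchase on exactly the groups that make the theorem nontrivial. The obstruction is structural, not a matter of finesse in the algebraic-group analysis. The paper itself confirms this later: Farrell and Linnell reduce the Fibered Isomorphism Conjecture for virtually solvable groups to nearly crystallographic groups (Theorem~\ref{fjcrysolv}), yet explicitly could not verify that conjecture even for $\mathbb{Z}[1/2]\rtimes\mathbb{Z}$. Their vanishing of $Wh$ for torsion-free solvable linear groups is obtained by an induction on Hirsch length together with partial assembly-map and Nil-term computations for the nearly crystallographic pieces, not by embedding the whole group in a lattice of a Lie group; the ``delicate heart'' you flag cannot be carried out along the Lie-theoretic lines you sketch.
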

\begin{definition}\rm{
Let $G$ be a group. Then $G$ is nearly crystallographic means that $G$ is finitely generated and there exists $A \lhd G$ such that $A$ is torsion-free abelian of finite rank (i.e., is isomorphic to a subgroup of $\mathbb{Q}^n$ for some integer $n$), $C\leq G$ such that $C$ is virtually cyclic, $A\cap C=1$ and $AC=G$ (so $G\rtimes AC)$, and the conjugation action of $C$ on $A$ makes $A \otimes \mathbb{Q}$ into an irreducible $\mathbb{Q}C$-module.}
\end{definition}
\begin{theorem}\label{fjcrysolv}\cite{FL03}
Suppose the Fibered Isomorphism Conjecture \ref{fibcj} is true for all nearly crystallographic groups. Then the Fibered Isomorphism Conjecture \ref{fibcj} is true for all virtually solvable groups.
\end{theorem}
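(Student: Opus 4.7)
The plan is to bootstrap from the class of nearly crystallographic groups to the class of all virtually solvable groups by using the robust inheritance properties that the Fibered Isomorphism Conjecture (FIC) enjoys under subgroups, directed colimits, and extensions, together with the Transitivity Principle \ref{fjtp}. First I would reduce to the case of finitely generated virtually solvable groups: any group $G$ is the directed colimit (with injective structure maps) of its finitely generated subgroups $G_i$, and each $G_i$ inherits virtual solvability from $G$. By the closure of the class $\mathcal{FJ}(R)$ under directed colimits and under passage to subgroups (the analogues of Theorem \ref{nilfjcon}(iii)--(iv) for the Fibered version, which is exactly the reason to work with the fibered formulation), it suffices to treat finitely generated virtually solvable groups.

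Next I would induct on the Hirsch length $h(S)$ of a finite-index solvable subgroup $S\leq G$. For the base case $h(S)=0$, the group $S$ is finite, so $G$ itself is virtually trivial and hence virtually cyclic; the Transitivity Principle \ref{fjtp} (applied to $\mathcal{F}_{\mathcal{VCYC}}\subseteq \mathcal{F}_{\mathcal{VCYC}}$) makes the conjecture trivial in that case. For the inductive step, I would locate a non-trivial torsion-free abelian normal subgroup $A\triangleleft S$ of finite rank lying inside the Hirsch--Plotkin radical, chosen so that $S/A$ has strictly smaller Hirsch length. Passing to $A\otimes\mathbb{Q}$, I would decompose this rational vector space into irreducible $\mathbb{Q}[S/A]$-submodules; each irreducible factor, together with an appropriate virtually cyclic quotient of $S/A$ acting on it, assembles into a nearly crystallographic subquotient. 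By pulling back along the relevant epimorphisms and intersecting with $A$, I would exhibit $G$ as built from finitely many extensions whose kernels are (intersections of preimages of) nearly crystallographic groups and whose successive quotients have strictly smaller Hirsch length.

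At each stage of this filtration, I would appeal to the Fibered version of the extension principle: given $1\to H\to G\to Q\to 1$, if the Fibered Isomorphism Conjecture holds for $Q$ and for all preimages in $G$ of virtually cyclic subgroups of $Q$, then it holds for $G$. This is where the \emph{fibered} hypothesis is crucial, because it guarantees the conjecture for the pullback family $\phi^{*}\mathcal{F}_{\mathcal{VCYC}}$ in Definition \ref{equifjcoj}; preimages of virtually cyclic subgroups are again virtually solvable of Hirsch length at most $h(S)$, and the inductive set-up together with the nearly crystallographic hypothesis covers them. Combining the extension step with the inductive hypothesis on $Q$ closes the induction.

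The main obstacle will be the inductive organization: the quotient $S/A$ need not act irreducibly on $A\otimes\mathbb{Q}$, so one must refine $A$ along the decomposition into irreducible $\mathbb{Q}[S/A]$-submodules, and simultaneously replace $S/A$ by a suitable virtually cyclic quotient, in order to manufacture honest nearly crystallographic subquotients. Choosing these refinements coherently so that the overall filtration is well-defined, compatible with the finite-index subgroup $S\leq G$, and compatible with the pullback of the family $\mathcal{F}_{\mathcal{VCYC}}$ under the successive quotient maps, is the technical heart of the argument. Once the filtration is in place, the repeated application of the extension, subgroup, and colimit closure properties of the fibered conjecture yields the result.
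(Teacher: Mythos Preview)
The paper does not supply its own proof of this theorem; it is quoted as a result from \cite{FL03} without argument, so there is no in-paper proof to compare against. Your outline is, however, a faithful summary of the Farrell--Linnell strategy: reduction to finitely generated groups via directed colimits, induction on Hirsch length, location of a torsion-free abelian normal subgroup, and repeated use of the fibered extension principle so that the preimages of virtually cyclic subgroups become the nearly crystallographic input.

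One point deserves tightening. In the inductive step you speak of ``an appropriate virtually cyclic quotient of $S/A$'' acting on an irreducible piece of $A\otimes\mathbb{Q}$, but the definition of nearly crystallographic requires a virtually cyclic \emph{complement} $C$ with $AC=G$ and $A\cap C=1$, not a quotient. The actual mechanism is that when you apply the extension principle to $1\to A\to G\to G/A\to 1$, the preimage in $G$ of a virtually cyclic subgroup $V\leq G/A$ is an extension of $A$ by $V$; after dividing out the maximal torsion subgroup and passing to an irreducible $\mathbb{Q}V$-subquotient of $A\otimes\mathbb{Q}$, the resulting group is either nearly crystallographic or of strictly smaller Hirsch length. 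Getting the filtration to terminate thus uses both the hypothesis on nearly crystallographic groups and a secondary induction on the rank of $A$; your sketch gestures at this (``refine $A$ along the decomposition'') but conflates the roles of quotient and complement. Once that is corrected, the outline matches the published argument.
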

\begin{remark}\rm{
 We review some open problems on the Fibered Isomorphism Conjecture \ref{equifjcoj}:
 \begin{itemize}
\item[\bf{1.}] Show that the Fibered Isomorphism Conjecture is true for $A\rtimes \mathbb{Z}$ for a torsion-free abelian groups $A$ and for an arbitrary action of $\mathbb{Z}$ on $A$. Note that a positive answer to this problem will imply the conjecture for all solvable groups \cite{Rou14}.
\item[\bf{2.}] Show that the Fibered Isomorphism Conjecture is true for $G\rtimes \mathbb{Z}$ assuming the conjecture for $G$. This is a very important open problem and will imply the conjecture for poly free groups. It is open even when $G$ is finitely generated and free. For certain situations the answers are known, for example, when $G$ is a surface group and the action is realizable by diffeomorphism of the surface \cite{Rou14}.
\item[\bf{3.}] Prove that the Fibered Isomorphism Conjecture for the fundamental group of a graph of virtually cyclic groups. Even for the graph of infinite cyclic groups this is an open problem \cite{Rou14}.
\end{itemize}}
\end{remark}
A. Bartels and H. Reich \cite{BR07} introduced the Farrell-Jones Conjecture with coefficients in an additive category with $G$-action. This is a variant of the Farrell-Jones Conjecture about the algebraic $K$- or $L$-theory of a group ring $RG$. It allows to treat twisted group rings and crossed product rings. The conjecture with coefficients is stronger than the original conjecture but it has better inheritance properties. Here are the results :\\
In the following, we will consider additive categories $\mathcal{A}$ with a right G-action, i.e., to every group element $g$ we assign an additive covariant functor $g^{*}: \mathcal{A}\to \mathcal{A}$, such that $1^*= id$ and composition of functors (denoted $\circ$) relates to multiplication in the group via $g^{*}\circ h^{*}=(hg)^*$.
\begin{definition}\label{}\cite{BR07}\rm{
Let $\mathcal{A}$ be an additive category with a right $G$-action and let $T$ be a left $G$-set. We define a new additive category denoted $\mathcal{A}*_{G}T$ as follows: An object $A$ in $\mathcal{A}*_{G}T$ is a family $A=(A_{t})_{t\in T}$ of objects in $A$ where we require that $\{t\in T | A_t\neq 0\}$ is a finite set. A morphism $\phi: A\to B$ is a collection of morphisms $\phi=(\phi_{g,t})_{(g,t)\in G\times T}$, $\phi_{g,t}:A_t\to g^{*}(B_{gt})$ is a morphism in $\mathcal{A}$. We require that the set of pairs $(g,t)\in G\times T$ with $\phi_{g,t}\neq 0$ is finite. }
\end{definition}
Let $\mathbb{K}^{-\infty}: Add~Cat \to Sp$ be the functor that associates the non-connective $K$-theory spectrum to an additive category (using the split exact structure). This functor is constructed in \cite{PW85}. See \cite{BFJR04} for a brief review of this functor and its properties. To any such functor one can associate a $G$-homology theory $H^{G}_{*}(-, {\bf{K}}_{\mathcal{A}})$ (see \cite[Section 4 and 7]{DL98}).
\begin{definition}
Let $G$ be a group and let $\mathcal{A}$ be an additive category with right $G$-action. The $Or(G)$-spectrum ${\bf{K}}_{\mathcal{A}}$ is defined by ${\bf{K}}_{\mathcal{A}}(T)=\mathbb{K}^{-\infty}(\mathcal{A}*_{G}T)$.
\end{definition}
\begin{conjecture}\label{kfjciwthcof}(Algebraic K-Theory Farrell-Jones-Conjecture with Coefficients)
Let $G$ be a group and let $\mathcal{F}_{\mathcal{VCYC}}$  be the family of virtually cyclic subgroups of $G$. Let $\mathcal{A}$ be an additive category with a right $G$-action. Then the assembly map $$H^{G}_{*}(E_{\mathcal{F}_{\mathcal{VCYC}}}(G);{\bf{K}}_{\mathcal{A}})\to H^{G}_{*}(pt;{\bf{K}}_{\mathcal{A}})\cong K_{*}(\mathcal{A}*_{G}pt)$$ is an isomorphism. The right hand side of the assembly map $H^{G}_{*}(pt;{\bf{K}}_{\mathcal{A}})$ can be identified with $K_{*}(\mathcal{A}*_{G}pt)$, the $K$-theory of a certain additive category $\mathcal{A}*_{G}pt$.
\end{conjecture}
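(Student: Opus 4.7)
The plan is to attack Conjecture \ref{kfjciwthcof} via the controlled algebra and flow space approach developed by Farrell and Jones and substantially refined by Bartels, L\"uck and Reich. First I would exploit the superior inheritance properties of the coefficient version: unlike the classical Farrell-Jones setup, the conjecture with coefficients in an additive category is designed to be closed under taking subgroups, directed colimits, finite products, and extensions by groups in the class, which reduces the problem to verifying the conjecture for a generating class such as word-hyperbolic groups, CAT(0)-groups, and lattices in virtually connected Lie groups (compare Theorems \ref{nilfjcon} and \ref{worfjcon}). The passage from a subgroup $H \subseteq G$ to $G$ and the passage to overgroups via $\phi^{*}\mathcal{F}_{\mathcal{VCYC}}$ in Definition \ref{equifjcoj} are the two structural properties that the coefficient formulation makes cheap.

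Second, I would translate the assembly map $H^{G}_{*}(E_{\mathcal{F}_{\mathcal{VCYC}}}(G);{\bf{K}}_{\mathcal{A}})\to H^{G}_{*}(\mathrm{pt};{\bf{K}}_{\mathcal{A}})$ into a statement about the non-connective $K$-theory of a controlled additive category $\mathcal{O}^{G}(E_{\mathcal{F}_{\mathcal{VCYC}}}(G)\times[1,\infty);\mathcal{A})$, where control is measured both in the $G$-space direction and along the cone parameter. The conjecture then becomes equivalent to a \emph{squeezing} statement: every $\alpha$-controlled algebraic object over the cone should be equivalent, after a suitable transfer, to an arbitrarily small-controlled object supported near subsets with $\mathcal{F}_{\mathcal{VCYC}}$-isotropy. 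This is the conceptual heart of the strategy, because it converts the $K$-theoretic assembly problem into a purely geometric control problem.

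Third, one has to supply the geometric input, namely a $G$-equivariant flow space $(\mathrm{FS},\Phi_t)$ whose orbit space admits a cover of finite order by open sets with isotropy in $\mathcal{F}_{\mathcal{VCYC}}$ and on which $\Phi_t$ is almost equivariantly contracting on compact sets. For word-hyperbolic $G$ this is modeled on Mineyev's symmetric join construction; for CAT(0)-groups one uses the geodesic flow on the underlying CAT(0)-space, exactly in the spirit of Theorem \ref{sur.map}. Using this flow one builds a transfer functor from $\mathcal{A}*_{G}T$ into the controlled category over $\mathrm{FS}$ such that post-composition with $\Phi_t$ for large $t$ realizes the desired squeezing, and a spectral sequence / Mayer-Vietoris argument assembled from the $\mathcal{F}_{\mathcal{VCYC}}$-stratification of the flow's orbit space then yields bijectivity of the assembly map.

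The main obstacle, and the reason the conjecture remains open in spite of the impressive list of classes for which it is known, is the construction of a flow space of uniformly bounded covering dimension with appropriate contracting properties for groups outside the hyperbolic / CAT(0) / lattice world; for general amenable or solvable groups beyond the virtually nilpotent range no natural negatively-curved-like geometry is available, and the flow-and-squeeze paradigm must be replaced or supplemented. A further delicate point specific to the coefficient formulation is that every construction must be carried out equivariantly with respect to the $G$-action on $\mathcal{A}$, so one is forced to work throughout with the twisted categories $\mathcal{A}*_{G}T$ rather than with modules over a fixed group ring; preserving control, naturality, and the transfer through this generalization is where most of the technical book-keeping will live, and is the step I expect to be the main source of difficulty in carrying the argument out in full generality.
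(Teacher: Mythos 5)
The statement you have addressed is a \emph{conjecture}, and the paper does not prove it: it is an open problem, recorded here as Conjecture \ref{kfjciwthcof} and followed only by partial results --- Theorem \ref{hyfjwc} for hyperbolic groups, Theorems \ref{catfjco} and \ref{kcatfjciwthcof} for CAT(0)-groups, Theorem \ref{vitpolyfj} for virtually poly-$\mathbb{Z}$-groups, Theorem \ref{fjcolat} for cocompact lattices in virtually connected Lie groups. There is therefore no ``paper's own proof'' to compare against, and none of the cited literature proves the conjecture in the stated generality either.

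Your first three paragraphs accurately summarize the Bartels--L\"uck--Reich / Wegner programme: passage to controlled categories over $E_{\mathcal{F}_{\mathcal{VCYC}}}(G)\times[1,\infty)$, reformulation as a squeezing problem, transfer through a flow space with contracting dynamics and a finite-dimensional open $\mathcal{F}_{\mathcal{VCYC}}$-cover. That is indeed the machinery behind every verified case the paper records. But framed as a proof of the conjecture as stated, those paragraphs overreach: the method works precisely to the extent that the geometric input (a suitable flow space with bounded-dimension covers) can be supplied, and that input is not available for general groups. Your final paragraph is the honest and correct assessment, and it in effect concedes that what precedes it is not a proof: the conjecture remains open, the bottleneck is the geometry, and the equivariance forced by the twisted categories $\mathcal{A}*_G T$ is where the extra technical cost concentrates. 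You should repackage the piece not as a proof sketch but as an account of why the conjecture is plausible, of the method that verifies it in the known cases, and of exactly where that method stops.
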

\begin{remark}\rm{
If $\mathcal{A}$ is the category of finitely generated free $R$-modules and is equipped with the trivial $G$-action, then $\pi_n({\bf{K}}_{\mathcal{A}}(G/G))\cong  K_n(RG)$ and the assembly map becomes $$H^{G}_{n}(E_{\mathcal{F}_{\mathcal{VCYC}}}(G),{\bf{K}}_R)\mapsto H^{G}_n(pt, {\bf{K}}_R)\cong K_n(RG).$$ This map can be identified with the one that appears in the original formulation of the Farrell-Jones Conjecture \ref{pesduic}. So the following Theorem \ref{hyfjwc} implies that the $K$-theoretic version of the Farrell-Jones Conjecture is true for hyperbolic groups and any coefficient ring $R$ \cite{BLR08a} :}
\end{remark}
\begin{theorem}\label{hyfjwc}
Let $G$ be a hyperbolic group. Then $G$ satisfies the $K$-theoretic Farrell-Jones Conjecture with coefficients, i.e., if $\mathcal{A}$ is an additive category with a right $G$-action, then for every $n\in \mathbb{Z}$ the assembly map $$H^{G}_{n}(E_{\mathcal{F}_{\mathcal{VCYC}}}(G);{\bf{K}}_{\mathcal{A}})\to H^{G}_{n}(pt;{\bf{K}}_{\mathcal{A}})\cong K_{n}(\mathcal{A}*_{G}pt)$$ is an isomorphism.
\end{theorem}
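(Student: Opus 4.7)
The plan is to reduce the conjecture to a geometric condition about the existence of certain $G$-equivariant covers, and then verify that condition using the dynamics of the $G$-action on the Gromov boundary. The general machinery developed in the Farrell-Jones program (via controlled algebra) shows that to establish the assembly map in Theorem \ref{hyfjwc} is an isomorphism it suffices to produce, for every finite subset $S \subset G$ and every $N \in \mathbb{N}$, an open $G$-invariant cover $\mathcal{U}$ of $G \times X$, where $X$ is a suitable contractible $G$-space (a \emph{flow space}), satisfying: (i) the covering dimension of $\mathcal{U}$ is bounded by a constant depending only on $G$; (ii) every member of $\mathcal{U}$ has isotropy in $\mathcal{F}_{\mathcal{VCYC}}$; and (iii) $\mathcal{U}$ is \emph{$(S,N)$-long} along the flow direction, meaning that for every $(g,x)$ some $U \in \mathcal{U}$ contains the entire flow-translate $\{(g,\phi_t(x)) : |t| \leq N\}$ after applying elements of $S$.

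First I would recall (or black-box) Mineyev's construction of a flow space $\mathbb{F}G$ for a word-hyperbolic group $G$: it is a locally compact, metrizable $G$-space equipped with a $G$-equivariant continuous flow $\phi_t$ whose "orbit-space at infinity" is the double boundary $\partial G \times \partial G \setminus \Delta$, and whose point stabilizers are virtually cyclic. Next I would set up the controlled-algebra translation: following the Bartels--Reich framework for coefficients in an additive category $\mathcal{A}$ with $G$-action, the assembly map is split injective (resp.\ surjective) provided one can build covers of $G \times \mathbb{F}G$ of the above type. This step is axiomatic and does not use hyperbolicity except through the existence of $\mathbb{F}G$.

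The heart of the proof is the construction of the long thin cover of $G \times \mathbb{F}G$. I would proceed in two regimes. Away from infinity, the cocompact part of $\mathbb{F}G$ admits easy equivariant covers by flow-boxes of arbitrarily large length, with finite isotropy. Near infinity, the dynamics on $\partial G$ govern the picture: loxodromic elements of $G$ fix exactly two boundary points, and their centralizers are virtually cyclic subgroups containing a maximal cyclic core. Using the contracting/expanding behavior of the flow near $\partial G \times \partial G \setminus \Delta$, one constructs $G$-equivariant "shadow" neighborhoods of boundary points whose stabilizers are exactly these virtually cyclic subgroups, and whose diameters shrink under the flow; by taking $\phi_t$-translates and unioning over a fundamental domain in $G$, one obtains cover elements that are arbitrarily long in the flow direction while keeping both the dimension and the isotropy class under control.

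The main obstacle is this last step: producing covers that simultaneously satisfy all three conditions is genuinely delicate, because forcing the "long along the flow" property tends to enlarge cover elements and thereby threaten both the finite-dimensionality bound and the virtual cyclicity of stabilizers. One has to balance these using the hyperbolicity of $G$ in a quantitative way --- essentially the exponential divergence/contraction of geodesic rays in $\mathbb{F}G$ --- together with a careful Vietoris-type argument to bound the dimension. Once the cover is in place, combining it with the controlled-algebra vanishing theorem (which holds uniformly in the coefficient category $\mathcal{A}$) yields bijectivity of the assembly map in every degree, proving Theorem \ref{hyfjwc}.
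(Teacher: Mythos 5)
Your sketch correctly captures the strategy of the Bartels--L\"uck--Reich paper that this survey cites as the source of Theorem \ref{hyfjwc} (the reference \cite{BLR08a}): reduce the assembly-map isomorphism to the existence of long, thin, equivariant covers of $G\times X$ with $X$ a Mineyev-type flow space for $G$, use the dynamics of $G$ on $\partial G$ and the exponential contraction/expansion of the flow to build such covers with controlled dimension and virtually cyclic isotropy, and then run the controlled-algebra argument (which is insensitive to the coefficient category $\mathcal{A}$) to conclude bijectivity in all degrees. The paper under review is a survey and records Theorem \ref{hyfjwc} by citation without reproducing a proof, so there is no in-paper argument to compare against; your outline is a faithful high-level account of the original proof and does not take a genuinely different route.
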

\begin{corollary}\label{corfjccoff}
Let $\phi : K\to G$ be a group homomorphism. Let $\mathcal{F}$ be a family of subgroups of $G$. Suppose that for every additive category $\mathcal{A}$ with $G$-action the assembly map $$H^{G}_{n}(E_{\mathcal{F}_{\mathcal{VCYC}}}(G);{\bf{K}}_{\mathcal{A}})\to H^{G}_{n}(pt;{\bf{K}}_{\mathcal{A}})$$
is injective. Then for every additive category $\mathcal{C}$ with $K$-action the assembly map $$H^{G}_{n}(E_{\phi^{*}\mathcal{F}_{\mathcal{VCYC}}}(G);{\bf{K}}_{\mathcal{C}})\to H^{G}_{n}(pt;{\bf{K}}_{\mathcal{C}})$$ is injective. The same statement holds with injectivity replaced by surjectivity in assumption and conclusion.
\end{corollary}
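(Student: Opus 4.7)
The plan is to reduce the $K$-equivariant statement to the given $G$-equivariant hypothesis by inducing the coefficient category along $\phi$. Given the additive category $\mathcal{C}$ with right $K$-action, I construct an induced additive category $\mathcal{A}=\operatorname{ind}_\phi \mathcal{C}$ with right $G$-action, modelled on $\mathcal{C} *_K G$, where $G$ is viewed as a left $K$-set via $\phi$ and the residual right $G$-action on $\operatorname{ind}_\phi\mathcal{C}$ comes from right multiplication on the $G$-coordinate. This is the additive-category analogue of the usual tensor induction for modules.

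The central technical step is to prove the following natural equivalence of additive categories, for every $K$-set $T$:
$$\mathcal{C} *_K T \ \simeq\ \operatorname{ind}_\phi \mathcal{C} *_G (G \times_K T).$$
Applying the non-connective $K$-theory functor $\mathbb{K}^{-\infty}$ yields a weak equivalence of $Or(K)$-spectra identifying $\mathbf{K}_\mathcal{C}$ with the pullback $\mathbf{K}_{\operatorname{ind}_\phi \mathcal{C}}\circ\operatorname{ind}_\phi$ along the functor $\operatorname{ind}_\phi\colon Or(K)\to Or(G)$, $K/H\mapsto G/\phi(H)$. By the Davis--L\"uck machinery of Section~4 of \cite{DL98}, this lifts to an induction isomorphism of equivariant homology theories: for every $K$-CW complex $X$,
$$H^K_n(X;\mathbf{K}_\mathcal{C}) \ \stackrel{\cong}{\longrightarrow}\ H^G_n(G\times_K X;\ \mathbf{K}_{\operatorname{ind}_\phi\mathcal{C}}).$$

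Next I use the observation that $E_{\mathcal{F}}(G)$, regarded as a $K$-CW complex via $\phi$, is itself a model for $E_{\phi^{*}\mathcal{F}}(K)$: for every $H\in\phi^{*}\mathcal{F}$ one has $E_{\mathcal{F}}(G)^H = E_{\mathcal{F}}(G)^{\phi(H)}$, which is contractible because $\phi(H)\in\mathcal{F}$, and all $K$-isotropy groups are of the form $\phi^{-1}(L)$ with $L\in\mathcal{F}$, hence in $\phi^{*}\mathcal{F}$. Substituting $X=E_{\mathcal{F}}(G)$ in the induction isomorphism and using the $G$-equivariant homeomorphism $G\times_K E_{\mathcal{F}}(G)\cong G/\phi(K)\times E_{\mathcal{F}}(G)$ (with diagonal $G$-action), the $K$-assembly map is translated into the $G$-equivariant map
$$H^G_n(G/\phi(K)\times E_{\mathcal{F}}(G);\ \mathbf{K}_{\operatorname{ind}_\phi\mathcal{C}}) \ \longrightarrow\ H^G_n(G/\phi(K);\ \mathbf{K}_{\operatorname{ind}_\phi\mathcal{C}})$$
coming from collapsing the $E_{\mathcal{F}}(G)$ factor.

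The main obstacle is to recognise this translated map as a genuine $G$-assembly map to which the hypothesis applies. The plan here is to show that the functor $Z\mapsto H^G_n(G/\phi(K)\times Z;\ \mathbf{K}_{\operatorname{ind}_\phi\mathcal{C}})$ on $G$-CW complexes is represented by an $Or(G)$-spectrum of the form $\mathbf{K}_{\widetilde{\mathcal{A}}}$ for a suitable additive $G$-category $\widetilde{\mathcal{A}}$, built from $\operatorname{ind}_\phi\mathcal{C}$ by a further "twist by the $G$-set $G/\phi(K)$"; verifying that this construction yields a bona fide additive category with right $G$-action and identifying the spectrum is the technical crux. Once this is done, the hypothesis applied to $\mathcal{A}=\widetilde{\mathcal{A}}$ gives the injectivity (respectively surjectivity) of the translated map, and hence of the original $K$-assembly map. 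The surjectivity version of the corollary is obtained by replacing injective by surjective throughout the same argument.
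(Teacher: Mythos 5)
Your construction of the induced additive $G$-category $\operatorname{ind}_\phi\mathcal{C}=\mathcal{C}*_K G$ (with $G$ viewed as a left $K$-set via $\phi$) is exactly the one in the underlying reference \cite{BR07}, and your observation that $\operatorname{res}_\phi E_{\mathcal{F}}(G)$ is a model for $E_{\phi^{*}\mathcal{F}}(K)$ is also correct (it is Lemma 4.1 there). However, the central equivalence you propose,
$$\mathcal{C}*_K T\;\simeq\;\operatorname{ind}_\phi\mathcal{C}*_G(G\times_K T)\qquad\text{for every $K$-set $T$,}$$
is false unless $\phi$ is surjective, and the derived identification $\mathbf{K}_\mathcal{C}\simeq\mathbf{K}_{\operatorname{ind}_\phi\mathcal{C}}\circ\operatorname{ind}_\phi$ of $Or(K)$-spectra already fails on $K/K$. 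Take $T=pt$: the right-hand side is $\operatorname{ind}_\phi\mathcal{C}*_G(G/\phi(K))$, which by the genuine associativity property $(\mathcal{C}*_K G)*_G S\simeq\mathcal{C}*_K\operatorname{res}_\phi S$ is equivalent to $\mathcal{C}*_K\operatorname{res}_\phi(G/\phi(K))$; when $\phi$ is not surjective the $K$-set $\operatorname{res}_\phi(G/\phi(K))$ has more than one orbit, so its $*_K$-category splits off extra factors beyond $\mathcal{C}*_K pt$. For the simplest counterexample take $K=\{1\}$, $G=\mathbb{Z}/2$: then $\mathcal{C}*_K pt\simeq\mathcal{C}$, while $\operatorname{ind}_\phi\mathcal{C}\simeq\mathcal{C}\times\mathcal{C}$ and $\operatorname{ind}_\phi\mathcal{C}*_G G\simeq\mathcal{C}\times\mathcal{C}$. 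Hence the ``induction isomorphism'' $H^K_n(X;\mathbf{K}_\mathcal{C})\cong H^G_n(G\times_K X;\mathbf{K}_{\operatorname{ind}_\phi\mathcal{C}})$ is false already at $X=pt$, and everything downstream of it (including the final reduction, which you in any case leave as an unverified ``technical crux'') is unsupported.

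The correct version of the key lemma, and the one the proof in \cite{BR07} actually rests on (Proposition 4.2 there), goes in the \emph{restriction} direction: for every $G$-set $S$ there is a natural equivalence
$$\mathcal{C}*_K\operatorname{res}_\phi S\;\simeq\;\mathcal{A}*_G S,\qquad\text{where }\mathcal{A}=\mathcal{C}*_K G.$$
Applying $\mathbb{K}^{-\infty}$ gives $\mathbf{K}_\mathcal{A}(G/L)\simeq\mathbb{K}^{-\infty}\bigl(\mathcal{C}*_K\operatorname{res}_\phi(G/L)\bigr)$ naturally, and hence an isomorphism $H^K_n(\operatorname{res}_\phi Y;\mathbf{K}_\mathcal{C})\cong H^G_n(Y;\mathbf{K}_\mathcal{A})$ for every $G$-CW complex $Y$. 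Evaluating at $Y=E_{\mathcal{F}}(G)$ (using your model observation) and at $Y=pt$ identifies the $K$-assembly map $H^K_n(E_{\phi^*\mathcal{F}}(K);\mathbf{K}_\mathcal{C})\to H^K_n(pt;\mathbf{K}_\mathcal{C})$ \emph{directly} with the $G$-assembly map with coefficients in $\mathcal{A}$, to which the hypothesis applies. The extra $G/\phi(K)$-factors in your translated map and the auxiliary ``twisted'' coefficient category $\widetilde{\mathcal{A}}$ are artifacts of running the comparison via induction rather than restriction, and they disappear once the direction of the lemma is corrected.
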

\begin{remark}\label{fjwithcore2}\rm{
Recall that the fibered version of the Farrell-Jones Conjecture \ref{fibcj} in algebraic $K$-theory for a group $G$ (and a ring $R$) can be formulated as follows: for every group homeomorphism $\phi:K\to G$ the assembly map $$H^{K}_{*}(E_{\phi^{*}\mathcal{F}_{\mathcal{VCYC}}}(K);{\bf{K}}_{R})\to H^{K}_{*}(pt;{\bf{K}}_R)$$ is an isomorphism (see Remark \ref{ficrema}). Therefore by Corollary \ref{corfjccoff} the Farrell-Jones Conjecture \ref{kfjciwthcof} implies the Fibered Farrell-Jones Conjecture \ref{fibcj}.}
\end{remark}
There is a functor $\mathbb{L}^{-\infty}: Add~ Cat~ Inv \to Sp$ that associates the $L$-theory spectrum to an additive category with involution constructed by Ranicki \cite{Ran92}. We consider the $Or(G)$-spectrum ${\bf{L}}_{\mathcal{A}}$ defined by ${\bf{L}}_{\mathcal{A}}(T)=\mathbb{L}^{-\infty}(\mathcal{A}*_{G}T)$. 
\begin{conjecture}\label{lfjciwthcof}(L-Theory Farrell-Jones-Conjecture with Coefficients)
Let $G$ be a group and let $\mathcal{F}_{\mathcal{VCYC}}$  be the family of virtually cyclic subgroups of $G$. Let $\mathcal{A}$ be an additive category with a right $G$-action. Then the assembly map $$H^{G}_{*}(E_{\mathcal{F}_{\mathcal{VCYC}}}(G);{\bf{L}}_{\mathcal{A}})\to H^{G}_{*}(pt;{\bf{L}}_{\mathcal{A}})$$ is an isomorphism.
\end{conjecture}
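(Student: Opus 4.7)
The plan is to adapt the strategy that established Theorem \ref{hyfjwc} (the $K$-theoretic Farrell-Jones Conjecture with coefficients for hyperbolic groups) to the $L$-theoretic setting, and more generally to proceed class by class of groups for which the $K$-theoretic version has been established. First, I would develop the $L$-theoretic analogue of the controlled-algebra framework used for $K$-theory: given an additive category with involution $\mathcal{A}$ carrying a right $G$-action, construct controlled symmetric (or quadratic) chain-complex categories over a suitable $G$-equivariant metric resolution $\mathcal{X}$ of $E_{\mathcal{F}_{\mathcal{VCYC}}}(G)$, and identify their $\mathbb{L}^{-\infty}$-spectra with $H^{G}_{*}(\mathcal{X};{\bf{L}}_{\mathcal{A}})$. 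In this set-up the assembly map appears as a forget-control map, exactly parallel to the $K$-theoretic picture of Bartels-Reich.

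Second, I would reduce the bijectivity of this forget-control map to a geometric estimate on $G$, a form of transfer reducibility: the existence of a $G$-equivariant covering of an auxiliary flow space (built from $\mathcal{X}$) whose open sets have long flow direction but small transverse diameter, together with a chain-level algebraic transfer along the flow. For hyperbolic groups the flow space is constructed from a Gromov-style geodesic flow, as in Theorem \ref{hyfjwc}. The crucial new ingredient for $L$-theory is that this transfer must preserve symmetric Poincaré structures; once this compatibility is installed, the flow-space swindle pushes controlled $L$-cycles into the near-$\mathcal{F}_{\mathcal{VCYC}}$ part, yielding both injectivity and surjectivity.

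Third, the choice of decoration $\langle -\infty\rangle$ is dictated by inheritance. It kills any potential Nil-type contributions and, together with the Transitivity Principle (Theorem \ref{fjtp}), permits the passage between the families $\mathcal{F}_{\mathcal{FIN}}$ and $\mathcal{F}_{\mathcal{VCYC}}$ that is indispensable for $L$-theory (in contrast to the Baum-Connes setting). Once the conjecture is established with this decoration, Rothenberg-type exact sequences (see \cite{KL04}) relate $\mathbb{L}^{-\infty}$ to the classical decorations $\mathbb{L}^{s}$ and $\mathbb{L}^{h}$, provided one has sufficient vanishing information on Whitehead-like groups of the virtually cyclic subgroups, compare Theorem \ref{torsfreefj}.

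The hard part will be implementing the involution-compatible flow-space argument. Whereas $K$-theory only tracks chain complexes up to chain homotopy equivalence, $L$-theory additionally carries a symmetric Poincaré structure, so every controlled operation (cover refinement, chain-level transfer, swindle) must be performed in a way that respects this extra datum; in particular the transfer functor into the category of controlled chain complexes over a virtually cyclic cover has to be refined to a transfer of Poincaré complexes. This compatibility issue is the technical obstacle that has historically made the $L$-theoretic Farrell-Jones Conjecture lag behind its $K$-theoretic counterpart, and any proof of Conjecture \ref{lfjciwthcof} for a new class of groups will have to resolve it.
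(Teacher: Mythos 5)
The statement you are attempting to prove is labelled a \emph{conjecture} in the paper, and it really is an open conjecture: the paper nowhere proves Conjecture~\ref{lfjciwthcof} for arbitrary $G$, and it cannot, since no such proof is known. What the paper does is cite partial results --- Theorem~\ref{catfjco} for groups in the class $\mathcal{B}$ (hyperbolic groups, finite-dimensional CAT(0)-groups, and groups built from these by the listed closure operations), Theorem~\ref{vitpolyfj} for virtually poly-$\mathbb{Z}$ groups, Theorem~\ref{fjcolat} for cocompact lattices in virtually connected Lie groups, and so on. None of these covers, say, $BS(2,3)$, as the paper itself points out. So there is no proof in the paper to compare yours against, and your proposal cannot be assessed as a proof of the stated conjecture; at best it is a description of the proof strategy used in the cited special cases.

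With that caveat, your sketch does accurately reflect the machinery behind the known cases: controlled algebra over an equivariant resolution of $E_{\mathcal{F}_{\mathcal{VCYC}}}(G)$, assembly as a forget-control map, flow spaces and transfer reducibility, and the need for the transfer to respect symmetric/quadratic Poincar\'e structures (this is exactly the extra work in the $L$-theoretic part of \cite{BL12} relative to \cite{BLR08a}). But each of these steps only goes through under a geometric hypothesis on $G$ (hyperbolicity, a CAT(0) action, transfer reducibility), and you never supply such a hypothesis because the conjecture as stated has none. The genuine gap, then, is not a missing lemma inside your argument but the absence of any input that would let the flow-space/transfer mechanism get started for a general discrete group. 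To turn your sketch into a theorem you would have to either (a) restrict $G$ to one of the classes already treated in the literature, in which case you are reproving Theorem~\ref{catfjco} or Theorem~\ref{kcatfjciwthcof}/\ref{hycatgrou} and should say so, or (b) produce a new geometric input valid for all groups, which would be a resolution of the Farrell-Jones Conjecture itself and is far beyond what your outline provides.

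Two smaller points. First, your appeal to the Transitivity Principle (Theorem~\ref{fjtp}) to pass between $\mathcal{F}_{\mathcal{FIN}}$ and $\mathcal{F}_{\mathcal{VCYC}}$ is backwards for $L$-theory: the paper explicitly notes that the relative assembly map $H^{G}_n(\underline{E}G;{\bf{L}}_{R}^{\langle -\infty\rangle})\to H^{G}_n(E_{\mathcal{F}_{\mathcal{VCYC}}}(G);{\bf{L}}_{R}^{\langle -\infty\rangle})$ is in general \emph{not} bijective (by \cite{BL06}), which is precisely why one must use $\mathcal{F}_{\mathcal{VCYC}}$ and cannot reduce to finite subgroups as in the Baum-Connes setting. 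Second, the role of the decoration $\langle -\infty\rangle$ is not merely to ``kill Nil-type contributions''; it is chosen so that the conjecture is decoration-independent after the $K$-theoretic Farrell-Jones Conjecture is imposed (via Rothenberg sequences), and so that the UNil phenomena that obstruct the $L^{h}$ and $L^{s}$ versions over $\mathcal{F}_{\mathcal{FIN}}$ are absorbed into the $\mathcal{F}_{\mathcal{VCYC}}$-homology side.
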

\begin{remark}\rm{
The only property of the functor $\mathbb{K}^{-\infty}$ that was used in the proof of Corollary \ref{corfjccoff} is that it sends equivalences of categories to equivalences of spectra. Because this property holds also for the functor $\mathbb{K}^{-\infty}$ there is also the $L$-theory version of Corollary \ref{corfjccoff}. Therefore there are also $L$-theory versions of Corollary \ref{corfjccoff} and  Remark \ref{fjwithcore2}.}
\end{remark}
The existing proofs for results about the Farrell-Jones Conjecture without coefficients can often be carried over to the context with
coefficients.  The following is a generalization of the main theorem in \cite{BR05}:
\begin{theorem}
Let $G$ be the fundamental group of a closed Riemannian manifold of strictly negative sectional curvature. Then the algebraic $K$-theory Farrell-Jones Conjecture with Coefficients \ref{kfjciwthcof} holds for $G$.
\end{theorem}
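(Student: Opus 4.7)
The plan is to reduce the statement to Theorem \ref{hyfjwc}, which establishes the $K$-theoretic Farrell-Jones Conjecture with Coefficients for all word-hyperbolic groups. The key observation is that the fundamental group $G=\pi_{1}(M)$ of a closed Riemannian manifold $M$ of strictly negative sectional curvature is Gromov-hyperbolic, so Theorem \ref{hyfjwc} applies directly.

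First I would verify the hyperbolicity claim in a standard way. Since $M$ is closed with strictly negative sectional curvature, its sectional curvatures are uniformly bounded above by some $-\kappa<0$. By the Cartan-Hadamard theorem, the universal cover $\widetilde{M}$ is diffeomorphic to $\mathbb{R}^{n}$; the upper curvature bound lifts to $\widetilde{M}$, so $\widetilde{M}$ is a CAT$(-\kappa)$ space. Any CAT$(-\kappa)$ space is $\delta$-hyperbolic in the sense of Gromov for a $\delta>0$ depending only on $\kappa$. Since $G$ acts properly, cocompactly, and by isometries on the proper geodesic metric space $\widetilde{M}$, the \v{S}varc--Milnor lemma gives a quasi-isometry between $G$ (with any word metric) and $\widetilde{M}$. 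Gromov hyperbolicity is a quasi-isometry invariant among geodesic metric spaces, so $G$ itself is word-hyperbolic.

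Having established that $G$ is hyperbolic, I would then apply Theorem \ref{hyfjwc}: for any additive category $\mathcal{A}$ with a right $G$-action and any $n\in\mathbb{Z}$, the assembly map
\[
H^{G}_{n}(E_{\mathcal{F}_{\mathcal{VCYC}}}(G);{\bf{K}}_{\mathcal{A}})\longrightarrow H^{G}_{n}(pt;{\bf{K}}_{\mathcal{A}})\cong K_{n}(\mathcal{A}\ast_{G}pt)
\]
is an isomorphism. This is exactly the statement of Conjecture \ref{kfjciwthcof} for $G$, so the proof is complete.

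The real difficulty of the theorem therefore lies entirely inside Theorem \ref{hyfjwc}, and the above reduction is only cosmetic. A self-contained proof along the lines of the original \cite{BR05} argument, extended from ordinary coefficients $R$ to coefficients in an additive category $\mathcal{A}$ with $G$-action, would instead be organized as follows: use the geodesic flow on the unit tangent bundle of $\widetilde{M}$ to build a $G$-equivariant ``flow space'' $FS$; exploit the uniform contraction of this flow transverse to the flow lines (provided by strictly negative curvature) to produce long, thin $G$-invariant open covers of $G\times FS$ with controlled dimension and covering data by virtually cyclic subgroups; and feed this geometric input into the controlled-algebra / squeezing machinery to show that the assembly map over $E_{\mathcal{F}_{\mathcal{VCYC}}}(G)$ is an equivalence. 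The main obstacle in this direct approach is verifying the squeezing / vanishing step functorially in the coefficient category $\mathcal{A}\ast_{G}T$ rather than for a fixed ring $R$: all of the controlled-algebra estimates, obstruction categories, and continuous-control arguments must be formulated so that they remain natural in $\mathcal{A}$ and compatible with the $G$-twisting. This is precisely the technical step that is upgraded from \cite{BR05} to the coefficient framework of Bartels--Reich, and it is what Theorem \ref{hyfjwc} ultimately absorbs in the hyperbolic generality.
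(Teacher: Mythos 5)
Your first argument is correct, but it takes a different route from what the paper intends. The paper states this result as ``a generalization of the main theorem in [BR05]'' immediately after the sentence ``The existing proofs for results about the Farrell-Jones Conjecture without coefficients can often be carried over to the context with coefficients,'' so the implied proof is precisely your second sketch: take the Bartels--Reich (2005) argument, which uses the geodesic flow on the unit tangent bundle of $\widetilde{M}$ together with controlled-algebra/squeezing techniques to prove the $K$-theoretic assembly map is an equivalence, and redo it functorially in a $G$-twisted additive coefficient category $\mathcal{A}*_{G}T$ rather than a fixed ring $R$. This is the content of the Bartels--Reich coefficients paper (BR07), and historically it \emph{precedes} the hyperbolic-groups theorem (Theorem \ref{hyfjwc}, BLR08a). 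Your primary reduction --- observing that $G=\pi_1(M)$ is word-hyperbolic via Cartan--Hadamard, the CAT$(-\kappa)$ bound, and the \v{S}varc--Milnor lemma, and then invoking Theorem \ref{hyfjwc} --- is perfectly valid within the logical order of the paper (Theorem \ref{hyfjwc} appears earlier), and it is perhaps the cleanest way to read the statement today. But it subsumes the theorem under a strictly stronger, later result rather than reproducing the proof the paper is alluding to. To your credit, you flag this as ``cosmetic'' and then sketch the BR05-style argument correctly, identifying the genuine technical issue: making the long-thin-cover and squeezing steps natural in the additive coefficient category and compatible with the $G$-action. That is exactly the point the paper and BR07 are emphasizing. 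In short: both routes are sound; yours is a top-down reduction, the paper's is a bottom-up extension of BR05, and they reflect the two stages by which this result actually entered the literature.
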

The following is a generalization of the main result from \cite{Bar03}:
\begin{theorem}
Let $G$ be a group of finite asymptotic dimension that admits a finite model for the classifying space $BG$. Let $\mathcal{A}$ be an additive category with right $G$-action. Then the assembly map $$H^{G}_{*}(EG;{\bf{K}}_{\mathcal{A}})\to H^{G}_{*}(pt;{\bf{K}}_{\mathcal{A}})$$ is split injective.
\end{theorem}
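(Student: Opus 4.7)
The plan is to adapt the argument of \cite{Bar03} to the coefficient setting, following the template that has become standard in controlled algebra after the work of Bartels--Reich on additive category coefficients. The entire proof of the uncoefficiented split injectivity theorem is powered by controlled algebra over $EG$; the functor $\mathbb{K}^{-\infty}$ is only used through formal properties (it sends equivalences of additive categories to weak equivalences of spectra, it admits a delooping, and the associated controlled $K$-theory satisfies excision). All of these properties persist when we replace the category of finitely generated free $R$-modules by $\mathcal{A} *_G T$, so we can carry out the same construction in parallel.

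First I would set up controlled algebra with $\mathcal{A}$-coefficients. Given a free $G$-space $X$ with a $G$-invariant metric (one takes $X = EG \times [1,\infty)$, or a Rips-type thickening), form the additive category $\mathcal{C}^{G}(X;\mathcal{A})$ of geometric $\mathcal{A}$-modules, i.e.\ families of objects of $\mathcal{A}$ indexed by $X$, equivariant with respect to the diagonal $G$-action that incorporates the given right $G$-action on $\mathcal{A}$, and locally finite; morphisms are required to have bounded propagation, and one singles out the ideal of morphisms that become trivial near infinity. The key point is that this construction works verbatim with $\mathcal{A}$ in place of $R$-modules because $\mathcal{A} *_G T$ depends functorially on the $G$-set $T$. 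Standard machinery then identifies the $K$-theory of the resulting germ-at-infinity category with $H^{G}_{\ast-1}(EG;\mathbf{K}_{\mathcal{A}})$, while the "bounded over cone" variant carries the target of assembly.

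Second, finiteness of $BG$ gives a compactly supported transfer. Using a finite $G$-CW model for $EG$ and the standard trace/transfer for controlled $K$-theory with $\mathcal{A}$-coefficients, one obtains a map whose composition with assembly is, up to a controlled error, the identity on $H^{G}_{\ast}(EG;\mathbf{K}_{\mathcal{A}})$. Third, and this is the crucial geometric input, finite asymptotic dimension of $G$ implies a vanishing result for the controlled $K$-theory of $EG$ with metric control at infinity. Concretely, one uses the finite asymptotic dimension to construct, for every $\varepsilon > 0$, an equivariant cover of $EG \times [1,\infty)$ by open sets of uniformly bounded multiplicity and shrinking size, and an associated Eilenberg-swindle / Mayer-Vietoris argument kills the controlled $K$-theory class represented by the error term. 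Combining the transfer with the vanishing shows that assembly is split injective.

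The main obstacle will be the bookkeeping needed to twist the geometric module categories by the right $G$-action on $\mathcal{A}$. In the original Bartels argument, one uses the trivial action on $R$-modules, so that $g \in G$ acts on a geometric module $M = (M_x)_{x \in X}$ simply by permuting indices, and bounded propagation is phrased using the identity functor. With a non-trivial $G$-action on $\mathcal{A}$ one must rewrite every diagram so that a morphism supported on the pair $(x,gx)$ lands in $g^{\ast}$ of the target, and verify that composition respects the cocycle $(hg)^{\ast} = g^{\ast} \circ h^{\ast}$; all the controlled categories, the transfer, and the partition-of-unity / nerve-type arguments used for the finite asymptotic dimension step must be checked against this twisted composition law. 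Once these checks are in place, the two inputs --- the transfer from finiteness of $BG$ and the vanishing from finite asymptotic dimension --- combine exactly as in \cite{Bar03} to yield a splitting of the assembly map, proving split injectivity.
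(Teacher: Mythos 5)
Your proposal matches the approach the paper alludes to: the survey does not prove this theorem but states it as a direct generalization of Bartels's main result in \cite{Bar03}, noting that ``existing proofs for results about the Farrell-Jones Conjecture without coefficients can often be carried over to the context with coefficients,'' which is precisely the strategy you outline (re-run the controlled algebra, transfer-from-finite-$BG$, and asymptotic-dimension vanishing argument with $\mathcal{A}*_G T$ in place of free $R$-modules, keeping track of the twisting by the $G$-action). This is also how the result is actually established in Bartels--Reich \cite{BR07}, so your sketch is the right one.
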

\begin{remark}\rm{
\indent
\begin{itemize}
\item[\bf{1.}]A. Bartels and W. L$\ddot{u}$ck \cite{BL12} verified Borel Conjecture \ref{borel} for considerably beyond the world of Riemannian manifolds of non-positive curvature. In particular, they proved the Borel Conjecture for closed aspherical manifolds of dimension $\geq5$, whose fundamental group is hyperbolic in the sense of Gromov \cite{BH99, Gro87} or is non-positively curved in the sense, that it admits a cocompact isometric proper action on a finite dimensional CAT(0)-space.
\item[\bf{2.}] Recall that the $K$-theoretic Farrell-Jones Conjecture (with coefficients in an arbitrary additive category) for hyperbolic groups has been proven by Bartels-L$\ddot{\rm u}$ck-Reich in \cite{BLR08a}(see Theorem \ref{hyfjwc}).  A. Bartels and W. L$\ddot{\rm u}$ck extended Theorem \ref{hyfjwc} to the $L$-theoretic Farrell-Jones Conjecture and (apart from higher $K$-theory) to CAT(0)-groups \cite{BL12}. Here are the results :
\end{itemize}}
\end{remark}
\begin{definition}(The class of groups $\mathcal{B}$). 
Let $\mathcal{B}$ be the smallest class of groups satisfying the following conditions :
\begin{itemize}
\item[(i)] Hyperbolic groups belong to $\mathcal{B}$;
\item[(ii)] If $G$ acts properly cocompactly and isometrically on a finite-dimensional CAT(0)-space, then $G\in \mathcal{B}$;
\item[(iii)] The class $\mathcal{B}$ is closed under taking subgroups;
\item[(iv)] Let $\pi:G \to H$ be a group homomorphism. If $H \in \mathcal{B}$ and $\pi^{-1}(V)\in \mathcal{B}$ for all virtually cyclic subgroups $V$ of $H$, then $G \in \mathcal{B}$;
\item[(v)] $\mathcal{B}$ is closed under finite direct products;
\item[(vi)] $\mathcal{B}$ is closed under finite free products;
\item[(vii)] The class $\mathcal{B}$ is closed under directed colimits.
\end{itemize}
\end{definition}
\begin{remark}\rm{
A group is said to be a CAT(0)-group if it acts geometrically, i.e., properly discontinuously and cocompactly by isometries, on a CAT(0)-space. One should think of a CAT(0)-space as a geodesic metric space in which every geodesic triangle is atleast as thin as its comparison
triangle in Euclidean plane. For basic facts about CAT(0)-spaces and groups a general reference is \cite{BH99}.}
\end{remark}
A. Bartels and W. L$\ddot{u}$ck \cite{BL12} proved the following result :
\begin{theorem}\label{luck}
Let $M$ be a closed aspherical manifold of dimension $\geq5$. If $\pi_1(M)\in \mathcal{B}$, then $M$ is topologically rigid.
\end{theorem}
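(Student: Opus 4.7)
The plan is to reduce topological rigidity to the two Farrell-Jones Conjectures via Theorem \ref{fjimpbor}, and then to prove both Farrell-Jones Conjectures for every group in the class $\mathcal{B}$. More precisely, it suffices to establish that for every $G \in \mathcal{B}$ and every additive category $\mathcal{A}$ with right $G$-action, the $K$-theoretic and $L$-theoretic Farrell-Jones Conjectures with coefficients (Conjectures \ref{kfjciwthcof} and \ref{lfjciwthcof}) hold. Taking $\mathcal{A}$ to be finitely generated free $\mathbb{Z}$-modules with the trivial action specializes the assembly maps to those in Conjectures \ref{fjcok} and \ref{fjcol} for $R=\mathbb{Z}$, and Theorem \ref{fjimpbor} then produces the homeomorphism that topological rigidity requires (dimension $\geq 5$ is part of the hypothesis).

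To show that the class $\mathcal{FJ}$ of groups satisfying both conjectures with coefficients contains $\mathcal{B}$, I would verify that $\mathcal{FJ}$ satisfies the defining closure properties of $\mathcal{B}$. The base cases are hyperbolic groups, for which $K$-theory is Theorem \ref{hyfjwc}, with the $L$-theoretic and CAT(0) analogues provided by the Bartels-Lück extension alluded to in the excerpt (second remark preceding the theorem). For the inheritance properties, the passage to subgroups is the central virtue of the version with coefficients: given $H \leq G$ with $G \in \mathcal{FJ}$, applying Corollary \ref{corfjccoff} to the inclusion $\phi : H \hookrightarrow G$ and the family $\mathcal{F}_{\mathcal{VCYC}}$ yields the conjecture for $H$, since $\phi^{*}\mathcal{F}_{\mathcal{VCYC}} \subseteq \mathcal{F}_{\mathcal{VCYC}}(H)$ and one uses the Transitivity Principle \ref{fjtp} to identify the two families. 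Condition (iv) of $\mathcal{B}$ (extensions meeting a virtually cyclic condition on preimages) is handled by a standard transitivity argument in the equivariant homology theory $H^{?}_{*}(-;{\bf K}_{\mathcal{A}})$, inducing up from $H$ and reducing $G$-cells with virtually cyclic isotropy to the hypothesis on $\pi^{-1}(V)$. Closure under finite direct products, finite free products, and directed colimits is handled by now-standard arguments available in the framework of Davis-Lück \cite{DL98} for assembly maps associated to $\Omega$-spectra over orbit categories, together with the continuity of $K$- and $L$-theory spectra (as seen in Theorems \ref{nilfjcon} and \ref{worfjcon}).

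With $\pi_1(M) \in \mathcal{B} \subseteq \mathcal{FJ}$ established, Theorem \ref{fjimpbor} applies: the $K$-theoretic input kills decoration obstructions via the Rothenberg sequence, so one may work with ${\bf L}\langle 1 \rangle$ versus ${\bf L}$ interchangeably, and the $L$-theoretic input forces the assembly maps $A_n$ and $A_{n+1}$ in the Ranicki surgery sequence (Theorem \ref{ranicki}) to be injective and surjective respectively. Exactness of that sequence then gives $|\mathcal{S}(M)| = 1$, which is exactly topological rigidity for $M$.

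The main obstacle, in my view, is not the reduction to Farrell-Jones (which is essentially Theorem \ref{fjimpbor}) but rather the verification that the $L$-theoretic Farrell-Jones Conjecture with coefficients satisfies the inheritance properties (iv)-(vii) above, in particular closure under the extension condition (iv) for CAT(0)-groups; the $K$-theoretic side enjoys the full strength of \cite{BLR08a}, but extending the geometric flow-space arguments from hyperbolic groups to CAT(0)-groups and from $K$-theory to $L$-theory while retaining coefficient additive categories is where the serious work lies. Once the inheritance machinery is in place, the remainder is a formal assembly of the closure properties matching the definition of $\mathcal{B}$ term by term.
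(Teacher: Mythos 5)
Your overall strategy---reduce topological rigidity to the Farrell--Jones Conjectures and then argue that all groups in $\mathcal{B}$ satisfy them---is the right one and is essentially what the paper does. However, there is a genuine gap in the reduction step you chose.

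You route through Theorem \ref{fjimpbor}, whose hypothesis is the \emph{full} $K$-theoretic Farrell--Jones Conjecture (bijectivity of the assembly map in all degrees). For hyperbolic groups that is available (Theorem \ref{hyfjwc}), but for CAT(0)-groups the Bartels--L\"uck result you are invoking expressly stops short of higher $K$-theory: the remark you cite says they extended Theorem \ref{hyfjwc} ``(apart from higher $K$-theory)'' to CAT(0)-groups, and the corresponding Theorem \ref{catfjco}(i) only gives bijectivity of the $K$-theoretic assembly map in degrees $n \leq 0$ and surjectivity in degree $n=1$. So your base case for the class $\mathcal{FJ}$ you want to exhibit is not covered by the cited machinery. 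Either you would need to invoke Wegner's later result (Theorems \ref{kcatfjciwthcof} and \ref{hycatgrou}), which is a substantially different and later argument extending the $K$-theory to all degrees for CAT(0)-groups, or --- and this is what the paper actually does --- you should observe that the full $K$-theoretic conjecture is an overkill. The paper instead applies Theorem \ref{catlfjc}, which is specifically tailored to the weaker hypothesis (bijective for $m\leq 0$, surjective for $m=1$): the only role of the $K$-theoretic input in killing decorations is to establish $Wh(G)=0$, $\widetilde{K}_0(\mathbb{Z}G)=0$, and $K_i(\mathbb{Z}G)=0$ for $i<0$, which requires nothing above degree $1$. Once you feed Theorem \ref{catfjco} into Theorem \ref{catlfjc} you are done; your plan as stated asks for more than that and would stall at the CAT(0) base case. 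A secondary, smaller point: in the subgroup inheritance you invoke the Transitivity Principle, but for an inclusion $\phi\colon H\hookrightarrow G$ the pullback family $\phi^{*}\mathcal{F}_{\mathcal{VCYC}}(G)$ already equals $\mathcal{F}_{\mathcal{VCYC}}(H)$ on the nose, so no transitivity step is needed there.
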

\begin{remark}\rm{
\indent
\begin{itemize}
\item[\bf{1.}] A. Bartels and W. L$\ddot{\rm u}$ck prove the Farrell-Jones Conjecture about the algebraic $K$- and $L$-theory of group rings which does imply the claim appearing in Theorem \ref{luck} by surgery theory. Theorem \ref{luck} above remains true in dimension four if one additionally assumes that the fundamental group is good in the sense of Freedman \cite{Fre83}.
\item[\bf{2.}] A Coxeter system $(W, S)$ is a group $W$ together with a fundamental set $S$ of generators, see Definition \ref{coxeter}. Associated to the Coxeter system $(W, S)$ is a simplicial complex $K$ with a metric \cite[Chapter 7]{Dav08} and a proper isometric $W$-action. Moussong \cite{Mou87} showed that $K$ is a CAT(0)-space, see also \cite[Theorem 12.3.3]{Dav08}. In particular, if $K$ is finite dimensional and the action is cocompact, then $W$ is finite dimensional CAT(0)-group and belongs to $\mathcal{B}$. This is in particular the case if $S$ is finite. If $S$ is infinite, then any finite subset $S_0\subset S$ generates a Coxeter group $W_0$, see \cite[Theorem 4.1.6]{Dav08}. Then $W_0$ belongs to $\mathcal{B}$ and so does $W$ as it is the colimit of the $W_0$. Therefore Coxeter groups belong to $\mathcal{B}$.
\item[\bf{3.}] Recall that Davis constructed for every $n\geq 4$ closed aspherical manifolds whose universal cover is not homeomorphic to Euclidean space (see Theorem \ref{davis-cocompact}). In particular, these manifolds do not support metrics of non-positive sectional curvature. The fundamental groups of these examples are finite index subgroups of Coxeter groups $\Gamma$. Thus these fundamental groups lie in $\mathcal{B}$ and Theorem \ref{luck} implies that Davis' examples are topological rigid (if the dimension is atleast 5).
\item[\bf{4.}] Davis and Januszkiewicz used Gromov's hyperbolization technique to construct further exotic aspherical manifolds. They showed that for every $n\geq 5$ there are closed aspherical $n$-dimensional manifolds whose universal cover is a CAT(0)-space whose fundamental group at infinity is non-trivial (\cite[Theorem 5b.1]{DJ91}). In particular, these universal covers are not homeomorphic to Euclidean space. Because these examples are in addition non-positively curved polyhedron, their fundamental groups are finite-dimensional CAT(0)-groups and belong to $\mathcal{B}$. There is a variation of this construction that uses the strict hyperbolization of Charney-Davis \cite{CD95} and produces closed aspherical manifolds whose universal cover is not homeomorphic to Euclidean space and whose fundamental group is hyperbolic. All these examples are topologically rigid by Theorem \ref{luck}.
\item[\bf{5.}] Limit groups as they appear for instance in \cite{Sel01} have been in the focus of geometric group theory for the last years. Expositions about limit groups are for instance \cite{CG05} and \cite{Pau04}. Alibegovic-Bestvina have shown that limit groups are CAT(0)-groups \cite{AB06}. A straight forward analysis of their argument shows, that limit groups are finite dimensional CAT(0)-groups and belong therefore to class $\mathcal{B}$.
\item[\bf{6.}] If a locally compact group $L$ acts properly cocompactly and isometrically on a finite dimensional CAT(0)-space, then the same is true for any discrete cocompact subgroup of $L$. Such subgroups therefore belong to $\mathcal{B}$. For example, let $G$ be a reductive algebraic group defined over a global field $\mathbb{K}$ whose $\mathbb{K}$-rank is 0. Let $S$ be a finite set of places of $\mathbb{K}$ that contains the infinite places of $\mathbb{K}$. The group $G_S := \prod_{v\in S}G(\mathbb{K}_v)$ admits an isometric proper cocompact action on a finite dimensional CAT(0)-space, see for example \cite[pp. 40]{Ji07}. Because S-arithmetic subgroups of $G(\mathbb{K})$ can be realized (by the diagonal embedding) as discrete cocompact subgroups of $G_S$ (see for example \cite{Ji07}), these S-arithmetic groups belong to $\mathcal{B}$.
\item[\bf{7.}] Finitely generated virtually abelian groups are finite dimensional CAT(0)-groups and belong to $\mathcal{B}$. A simple induction shows that this implies that all virtually nilpotent groups belong to $\mathcal{B}$, compare the proof of \cite[Lemma 1.13]{BLR08}. All these examples are topologically rigid by Theorem \ref{luck}.
\end{itemize}}
\end{remark}
\begin{theorem}\label{catfjco}
Let $G\in \mathcal{B}$.
\begin{itemize}
\item[(i)] The $K$-theoretic assembly map in Conjecture \ref{kfjciwthcof} is bijective in degree $n\leq 0$ and surjective in degree $n = 1$ for any additive $G$-category $\mathcal{A}$;
\item[(ii)] The $L$-theoretic Farrell-Jones assembly map in Conjecture \ref{lfjciwthcof} with coefficients in any additive $G$-category $\mathcal{A}$ with involution is an isomorphism.
\end{itemize}
\end{theorem}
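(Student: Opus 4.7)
The plan is to prove Theorem \ref{catfjco} by structural induction on the class $\mathcal{B}$. Let $\mathcal{FJ}^{K}_{\leq 1}$ denote the class of groups for which the $K$-theoretic assembly map with coefficients in any additive $G$-category is bijective in degrees $n \leq 0$ and surjective in degree $n=1$, and let $\mathcal{FJ}^{L}$ denote the class for which the $L$-theoretic assembly map with coefficients is an isomorphism in all degrees. I would show that both $\mathcal{FJ}^{K}_{\leq 1}$ and $\mathcal{FJ}^{L}$ satisfy the closure properties (iii)--(vii) of $\mathcal{B}$ and contain the base classes (i)--(ii); the theorem then follows from the minimality of $\mathcal{B}$.

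For the base cases, hyperbolic groups already lie in $\mathcal{FJ}^{K}_{\leq 1}$ (in fact in the stronger class of groups satisfying the full $K$-theoretic conjecture with coefficients) by Theorem \ref{hyfjwc}. For the $L$-theory statement for hyperbolic groups, I would adapt the flow-space framework of Bartels--L\"uck--Reich used to prove Theorem \ref{hyfjwc}: the geodesic flow on a suitable Rips-type complex for $G$ (or the boundary-at-infinity construction) provides a finite-dimensional $G$-space with strong controlled-contractibility properties, from which one derives ``transfer reducibility'' of $G$ over $\mathcal{F}_{\mathcal{VCYC}}$. The same geometric input feeds, essentially verbatim, into the $L$-theoretic controlled algebra machinery, the main extra bookkeeping being the involution and the decoration $\langle -\infty\rangle$ that is needed to make the assembly map split via transfer. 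For CAT(0)-groups (condition (ii)) the analogous input is the CAT(0)-geodesic flow on the CAT(0)-space on which $G$ acts geometrically; one must establish uniform contraction estimates and long/thin covers of the flow space, which is the crucial new geometric step beyond the hyperbolic case.

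For the inheritance part, I would proceed as follows. Closure under subgroups (iii) follows from the restriction principle behind Corollary \ref{corfjccoff}, which is formal once the conjecture is phrased with coefficients in an arbitrary additive category (pulling the $G$-category back along the inclusion $H \hookrightarrow G$). Closure under extensions (iv) uses the Transitivity Principle \ref{fjtp} together with Remark \ref{fjwithcore2}: the hypothesis on preimages of virtually cyclic subgroups in $H$ allows one to replace the family $\mathcal{F}_{\mathcal{VCYC}}$ upstairs by the larger family of preimages, and then the transitivity principle reduces the assembly map for $G$ to those for $H$ and for the preimages. Closure under finite direct products (v) uses the product of $\underline{E}$-spaces as a model for $E_{\mathcal{F}_{\mathcal{VCYC}}}(G_1 \times G_2)$ together with a K\"unneth-type argument applied to equivariant homology with $\mathbf{K}_{\mathcal{A}}$- or $\mathbf{L}_{\mathcal{A}}$-coefficients. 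Closure under free products (vi) is handled by a Bass--Serre / Mayer--Vietoris argument, since $G_1 \ast G_2$ acts on a tree with stabilizers conjugate to $G_1$ or $G_2$. Closure under directed colimits (vii) uses that equivariant homology commutes with filtered colimits (via compact supports) and that both $\mathbf{K}^{-\infty}$ and $\mathbf{L}^{-\infty}$ send colimits of additive categories to colimits of spectra.

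The main obstacle is the CAT(0) base case: establishing the transfer reducibility (or equivalent ``long and thin'' covering) property for the CAT(0) flow space is considerably more delicate than in the hyperbolic setting, because one does not have exponential contraction of the flow but only the much weaker CAT(0)-convexity estimates. This is precisely why, in part (i), only the range $n \leq 1$ is asserted for $K$-theory of CAT(0)-groups: the higher $K$-theoretic controlled-algebra obstructions genuinely require stronger geometric input than what the CAT(0) flow space provides, whereas $L$-theory, via the $4$-periodicity and the symmetric nature of quadratic forms, admits an argument that does close up in all degrees. A secondary obstacle is the careful verification that every inheritance step works uniformly for every additive $G$-category (not merely for $R$-modules with trivial $G$-action); this is what Corollary \ref{corfjccoff} is designed to handle, and I would invoke (the evident $L$-theory analogue of) it at each step.
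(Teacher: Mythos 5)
The paper itself offers no proof of Theorem~\ref{catfjco}: it is a survey, and the result is stated verbatim and attributed to Bartels--L\"uck~\cite{BL12}. So there is no ``paper's own proof'' to compare against; I can only assess your sketch against the argument actually given in~\cite{BL12}.

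Your overall strategy --- establish the two base cases (hyperbolic and CAT(0)-groups) via transfer reducibility of a suitable flow space, and then show that the class of groups satisfying the conjecture with coefficients enjoys closure properties (iii)--(vii), so that minimality of $\mathcal{B}$ finishes the argument --- is indeed the shape of the Bartels--L\"uck proof, and you correctly identify that coefficients in arbitrary additive $G$-categories are exactly what makes the inheritance formal (Corollary~\ref{corfjccoff} and its $L$-analogue). Your identification of the CAT(0) flow space and the ``long thin cover'' estimate as the hard geometric input is also right.

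There are, however, two places where the sketch is not just imprecise but would fail as written. First, your argument for closure under finite direct products is not correct. The identity $\underline{E}G_1\times\underline{E}G_2=\underline{E}(G_1\times G_2)$ holds because a subgroup of $G_1\times G_2$ is finite if and only if both projections are; the analogous statement is false for $\mathcal{F}_{\mathcal{VCYC}}$. The product $E_{\mathcal{F}_{\mathcal{VCYC}}}(G_1)\times E_{\mathcal{F}_{\mathcal{VCYC}}}(G_2)$ is a model for $E_{\mathcal{F}}(G_1\times G_2)$ only for the larger family $\mathcal{F}=\{H_1\times H_2 : H_i\ \text{virtually cyclic}\}$, which properly contains $\mathcal{VCYC}(G_1\times G_2)$ (and is not even equal to the set of subgroups \emph{contained} in some $V_1\times V_2$, since those need not be products). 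There is no K\"unneth shortcut here: the actual argument must first reduce from $\mathcal{ALL}$ to this product family (using the product flow space), then invoke the Transitivity Principle~\ref{fjtp} to pass from the product family to $\mathcal{VCYC}$, which in turn requires verifying the conjecture relative to $\mathcal{VCYC}$ for groups of the form $V_1\times V_2$. That last step is a genuine piece of work (virtually finitely generated abelian groups), not a formal consequence of the two factors lying in $\mathcal{B}$. Second, your explanation for the dichotomy between $n\leq 1$ in $K$-theory and all degrees in $L$-theory --- appealing to $4$-periodicity and ``the symmetric nature of quadratic forms'' --- is not the operative reason. In~\cite{BL12} the notion of transfer reducibility yields a transfer in $L$-theory valid in all degrees and in $K$-theory only through degree~$1$; extending $K$-theory to all degrees needs the strictly stronger property of strong transfer reducibility for CAT(0)-groups, which is exactly what Wegner later supplied (Theorem~\ref{hycatgrou} and Theorem~\ref{kcatfjciwthcof} in this paper). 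Periodicity of $L$ plays no role in that threshold.
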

\begin{remark}\rm{
\indent
\begin{itemize}
\item[\bf{1.}]For virtually abelian groups Quinn \cite{Qui05} proved that the assembly map in Conjecture \ref{kfjciwthcof} is an isomorphism for all $n$ (more precisely in \cite{Qui05} only the untwisted case is considered: $\mathcal{A}$ is the category of finitely generated free $R$-modules for some ring $R$). 
\item[\bf{2.}]The class $\mathcal{B}$ contains in particular directed colimits of hyperbolic groups. The $K$-theory version of the Farrell-Jones Conjecture holds in all degrees for directed colimits of hyperbolic groups \cite[Theorem 0.8 (i)]{BEL08}. Thus, Theorem \ref{catfjco} implies that the Farrell-Jones Conjecture in $K$- and $L$-theory hold for directed colimits of hyperbolic groups. This class of groups contains a number of groups with unusual properties. Counterexamples to the Baum-Connes Conjecture with coefficients are groups with expanders \cite{HLS02}. The only known construction of such groups is as directed colimits of hyperbolic groups (see \cite{AD08}). Thus the Farrell-Jones Conjecture in $K$-and $L$-theory holds for the only at present known counterexamples to the Baum-Connes Conjecture with coefficients. The class of directed colimits of hyperbolic groups contains for instance a torsion-free non-cyclic group all whose proper subgroups are cyclic constructed by Ol'shanskii \cite{Ols79}. Further examples are 
mentioned in \cite[ pp. 5]{OOS07} and \cite[Section 4]{Sap07} These later examples all lie in the class of lacunary groups. Lacunary groups can be characterized as certain colimits of hyperbolic groups.
\item[\bf{3.}] Next we explain the relation between Theorem \ref{catfjco} and Theorem \ref{luck} :
\end{itemize}}
\end{remark}
\begin{theorem}\label{catlfjc}
Let $G$ be a torsion-free group. Suppose that the $K$-theoretic assembly map
$$H^{G}_m(E_{\mathcal{VCYC}}(G),{\bf{K}}_{\mathbb{Z}})\mapsto K_m(\mathbb{Z}G)$$
is an isomorphism for $m\leq 0$ and surjective for $m = 1$ and that the $L$-theoretic assembly map
$$H^{G}_m(E_{\mathcal{VCYC}}(G),{\bf{L}}_{\mathbb{Z}}^{<-\infty>})\mapsto L_{m}^{<-\infty>}(\mathbb{Z}G)$$
is an isomorphism for all $m\in \mathbb{Z}$, where we allow a twisting by any homomorphism $w: G\to \{\pm 1\}$. Then the following holds:
\begin{itemize}
\item[(i)] The assembly map
\begin{equation}\label{fj1}
 H_n(BG,{\bf{L}}_{\mathbb{Z}}^{s})\mapsto L_{n}^{s}(\mathbb{Z}G)
\end{equation}
is an isomorphism for all $n$;
\item[(ii)] The Borel Conjecture \ref{borel} is true in dimension $\geq 5$, i.e., if $M$ and $N$ are closed aspherical manifolds of dimensions $\geq 5$ with $\pi_1(M)\cong \pi_1(N)\cong G$, then $M$ and $N$ are homeomorphic and any homotopy equivalence $M\mapsto N$ is homotopic to a homeomorphism (This is also true in dimension 4 if we assume that $G$ is good in the sense of Freedman (see \cite{Fre82,Fre83});
\item[(iii)] Let $X$ be a finitely dominated Poincar$\grave{\rm e}$ complex of dimension $\geq 6$ with $\pi_1(X)\cong G$. Then $X$ is homotopy equivalent to a compact ANR-homology manifold.
\end{itemize}
\end{theorem}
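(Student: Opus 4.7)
The plan is to deduce part (i) from the $L^{\langle-\infty\rangle}$-hypothesis via two independent reductions, and then feed (i) into Ranicki's algebraic surgery exact sequence (Theorem \ref{ranicki}) to obtain (ii) and (iii), following the template of the sketch of Theorem \ref{fjimpbor}.

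First I would carry out a reduction from the family $\mathcal{F}_{\mathcal{VCYC}}$ to the trivial family using the Transitivity Principle (Theorem \ref{fjtp}). Since $G$ is torsionfree, every nontrivial virtually cyclic subgroup is infinite cyclic. For $C\cong\mathbb{Z}$ the Shaneson splitting gives
\[
L_n^{\langle-\infty\rangle}(\mathbb{Z}[C]) \;\cong\; L_n^{\langle-\infty\rangle}(\mathbb{Z})\oplus L_{n-1}^{\langle-\infty\rangle}(\mathbb{Z}),
\]
which matches $H_n^{C}(EC;\mathbf{L}^{\langle-\infty\rangle}_{\mathbb{Z}})$ computed via the Atiyah-Hirzebruch spectral sequence; hence the $L^{\langle-\infty\rangle}$-assembly is already an isomorphism for every $C\in\mathcal{F}_{\mathcal{VCYC}}$. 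Theorem \ref{fjtp} then identifies the given $\mathcal{F}_{\mathcal{VCYC}}$-assembly map with $H_n(BG;\mathbf{L}^{\langle-\infty\rangle}_{\mathbb{Z}})\to L_n^{\langle-\infty\rangle}(\mathbb{Z}G)$, which is therefore an isomorphism for all $n$. A parallel argument, using $NK_i(\mathbb{Z})=0$ for $i\leq 1$ in place of Shaneson splitting, translates the $K$-theoretic hypothesis into the statement that $H_m(BG;\mathbf{K}_{\mathbb{Z}})\to K_m(\mathbb{Z}G)$ is an isomorphism for $m\leq 0$ and surjective for $m=1$.

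Next I would exploit this reduced $K$-theoretic information to eliminate decorations. The Atiyah-Hirzebruch spectral sequence $E^2_{p,q}=H_p(BG;K_q(\mathbb{Z}))$ converging to $H_{p+q}(BG;\mathbf{K}_{\mathbb{Z}})$ lives in the first quadrant because $\mathbb{Z}$ is regular, and its edge behaviour, exactly as in the proof of Theorem \ref{torsfreefj}, yields $K_{-n}(\mathbb{Z}G)=0$ for $n\geq 1$, $\widetilde{K}_0(\mathbb{Z}G)=0$, and $Wh(G)=0$. The Rothenberg exact sequences comparing the decorations $s$, $h$, $p$, $\langle-\infty\rangle$ have error terms given by the Tate cohomologies $\widehat{H}^{*}(\mathbb{Z}/2;K_i(\mathbb{Z}G))$ for $i=1,0,-1,-2,\ldots$, all of which now vanish. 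This promotes the target identification to an equivalence $\mathbf{L}^{s}_{\mathbb{Z}}\wedge BG_{+}\simeq \mathbf{L}^{\langle-\infty\rangle}_{\mathbb{Z}}\wedge BG_{+}$ compatible with assembly, giving statement (i).

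For (ii), let $M$ be closed aspherical with $n=\dim M\geq 5$ and $\pi_1(M)\cong G$, so that the classifying map $M\to BG$ is a homotopy equivalence. Let $\mathbf{F}$ denote the homotopy fibre of $u:\mathbf{L}\langle 1\rangle\to \mathbf{L}$. Since $\pi_q(\mathbf{F})=0$ for $q\geq 0$, an Atiyah-Hirzebruch argument on the $n$-dimensional $M$ gives $H_k(M;\mathbf{F})=0$ for $k\geq n$, so $H_k(M;\mathbf{L}\langle 1\rangle)\to H_k(M;\mathbf{L})$ is surjective for $k=n+1$ and injective for $k=n$. Combined with the isomorphism from (i), the composite assembly map $A_k:H_k(M;\mathbf{L}\langle 1\rangle)\to L_k(\mathbb{Z}G)$ is surjective for $k=n+1$ and injective for $k=n$. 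Plugging this into Ranicki's surgery exact sequence (Theorem \ref{ranicki}) forces $|\mathcal{S}(M)|=1$, which is the Borel Conjecture \ref{borel} in dimensions $\geq 5$; dimension $4$ is handled by Freedman-Quinn when $G$ is good. Statement (iii) follows by applying the same machinery to Ranicki's total surgery obstruction $s(X)\in\mathcal{S}_n(X)$ associated to a finitely dominated Poincar\'e complex of dimension $\geq 6$: the identical degree count shows $s(X)=0$, and Ranicki's recognition theorem then provides a homotopy equivalence to a compact ANR-homology manifold.

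The main obstacle is the decoration change from $\langle-\infty\rangle$ (in which the hypothesis is stated) to $s$ (in which the geometric applications live). This requires squeezing all three classical $K$-theoretic obstructions, namely $Wh(G)$, $\widetilde{K}_0(\mathbb{Z}G)$, and the lower $K$-groups $K_{-i}(\mathbb{Z}G)$, out of the very limited $K$-theoretic input of the hypothesis, and then upgrading the resulting pointwise Rothenberg identifications to a spectrum-level equivalence compatible with assembly.
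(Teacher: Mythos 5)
Your proposal is correct and follows essentially the same route as the paper's own sketch: reduce from $\mathcal{F}_{\mathcal{VCYC}}$ to the trivial family (possible because $G$ is torsion-free and $\mathbb{Z}$ is regular), deduce the vanishing of $Wh(G)$, $\widetilde{K}_0(\mathbb{Z}G)$, and the negative $K$-groups from the $K$-theoretic hypothesis, use Rothenberg sequences to change decorations from $\langle-\infty\rangle$ to $s$, and then feed the resulting $L^s$-assembly isomorphism into Ranicki's algebraic surgery exact sequence exactly as in the sketch of Theorem \ref{fjimpbor} for (ii), and cite the total surgery obstruction machinery of Ranicki and Bryant--Ferry--Mio--Weinberger for (iii). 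The only difference is one of exposition: the paper's proof compresses steps (a) family change, (b) vanishing of $K$-theoretic obstructions, and (c) decoration change into citations to L\"uck--Reich \cite{LR05} (Propositions 2.2 and 1.5 and Remark 2.5 there), whereas you unpack them explicitly via the Transitivity Principle with the Shaneson/Bass--Heller--Swan splittings for the infinite cyclic group, the Atiyah--Hirzebruch spectral sequence argument of Theorem \ref{torsfreefj}, and the Rothenberg sequences respectively. These are exactly the techniques underlying the cited results, so no genuine divergence of method. (A small cosmetic point: $NK_i(\mathbb{Z})=0$ holds for all $i$, not merely $i\leq 1$, since $\mathbb{Z}$ is regular; and the map $H_k(M;\mathbf{L}\langle 1\rangle)\to H_k(M;\mathbf{L})$ is in fact bijective, not merely surjective, for $k=n+1$, though surjectivity is all you need.)
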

\paragraph{Sketch of the proof:}
Claim (i): Because $G$ is torsion-free and $\mathbb{Z}$ is regular, the above assembly maps are equivalent to the maps
\begin{equation}\label{fj2}
 H_m(BG,{\bf{K}}_{\mathbb{Z}})\mapsto K_m(\mathbb{Z}G)
 \end{equation}
 \begin{equation}\label{fj3}
 H_m(BG,{\bf{L}}_{\mathbb{Z}}^{<-\infty>})\mapsto L_{m}^{<-\infty>}(\mathbb{Z}G)
\end{equation}
compare \cite[pp. 685, Proposition 2.2]{LR05}. Because (\ref{fj2}) is bijective for $m\leq 0$ and surjective for $m = 1$, we have $Wh(G) = 0$, $\widetilde{K}_0(\mathbb{Z}G) = 0$ and $K_i(\mathbb{Z}G) = 0$ for $i < 0$, compare \cite[pp. 653, Conjecture 1.3 and pp. 679, Remark 2.5]{LR05}. This implies that (\ref{fj3})  is equivalent to (\ref{fj1}), compare \cite[pp. 664, Proposition 1.5]{LR05}.\\
Claim (ii) : The Borel Conjecture for a group $G$ is equivalent to the statement that for every closed aspherical manifold $M$ with $G\cong \pi_1(M)$ its topological structure set $\mathcal{S}^{top}(M)$ consists of a single element, namely, the class of $id: M\to M$.
This follows from (i) and the algebraic surgery exact sequence of Ranicki which agrees for an $n$-dimensional manifold for $n\geq 5$ with the Sullivan-Wall geometric exact surgery sequence (see Theorem \ref{ranicki}).\\
Claim (iii) : See \cite[pp. 297, Remark 25.13]{Ran92}, \cite[pp. 439, Main Theorem and Section 8]{BFMW96} and \cite[Theorem A and Theorem B]{BFMW07}.
\begin{remark}\rm{
\indent
\begin{itemize}
\item[\bf{1.}] The assembly maps appearing in the Theorem \ref{catlfjc} above are special cases of the assembly maps in Conjecture \ref{kfjciwthcof} and Conjecture \ref{lfjciwthcof}. In particular, Theorem \ref{luck} follows from Theorem \ref{catfjco} and the above Theorem \ref{catlfjc}.
\item[\bf{2.}] A. Bartels and W. L$\ddot{\rm u}$ck \cite{BL12} gave a number of further important applications of Theorem \ref{catfjco}, which can be summarized as follows: The Novikov Conjecture and the Bass Conjecture hold for all groups $G$ that belong to $\mathcal{B}$. If $G$ is torsion-free and belongs to $\mathcal{B}$, then the Whitehead group $Wh(G)$ of $G$ is trivial, $\widetilde{K}_0(RG)= 0$ if $R$ is a principal ideal domain, and $K_n(RG)= 0$ for $n\leq -1$ if $R$ a regular ring. Furthermore the Kaplansky Conjecture holds for such $G$. These and further applications of the Farrell-Jones Conjecture are discussed in detail in \cite{BLR08} and \cite{LR05}. We remark that Hu \cite{Hu93} proved that if $G$ is the fundamental group of a finite polyhedron with non-positive curvature, then $Wh(G) = 0$, $\widetilde{K}_0(\mathbb{Z}G)= 0$ and $K_n(\mathbb{Z}G)= 0$ for $n\leq -1$.
\end{itemize}}
\end{remark}
C. Wegner proved the $K$-theoretic Farrell-Jones conjecture with (twisted) coefficients for CAT(0)-groups \cite{Weg10}. Here are the results:
\begin{definition}\rm{
A strong homotopy action of a group $G$ on a topological space $X$ is a continuous map $$\Psi :\coprod_{j=0}^{\infty}((G\times [0,1])^j\times G\times X)\to X$$ with the following properties:
\begin{itemize}
\item[(1)] $\Psi(....,g_{l},0,g_{l-1},....)=\Psi(....,g_{l},\Psi(g_{l-1},....))$
\item[(2)] $\Psi(....,g_{l},1,g_{l-1},....)=\Psi(....,g_{l}.g_{l-1},....)$
\item[(3)] $\Psi(e,t_{j},g_{j-1},....)=\Psi(g_{j-1},....)$
\item[(4)] $\Psi(....,t_{l},e,t_{l-1},....)=\Psi(....,t_{l}.t_{l-1},....)$
\item[(5)] $\Psi(......,t_{1},e,x)=\Psi(.....,x)$
\item[(6)] $\Psi(e,x)=x$
\end{itemize}}
\end{definition}
\begin{definition}\rm{
Let $\Psi$ be a strong homotopy $G$-action on a metric space $(X, d_X)$. Let $S\subseteq G$ be a finite symmetric subset which contains the trivial element $e\in G$. Let $k\in \mathbb{N}$ be a natural number.
\begin{itemize}
\item[(1)] For $g\in G$ we define $F_g(\Psi, S, k)\subset map(X,X)$ by $$F_g(\Psi, S, k):=\{ \Psi(g_{k},t_{k},...g_{0},?):X\to X ~~| g_{i}\in S, t_{i}\in [0,1], g_{k}....g_{0}=g\}.$$
\item[(2)]For $(g, x)\in G\times X$ we define $S^1_{\Psi,S,k}(g, x)\subset G\times X$ as the subset consisting of all $(h, y)\in G\times X$ with the following property: There are $a, b\in S$, $f\in F_a(\Psi, S, k)$ and $\widetilde{f}\in F_b(\Psi, S, k)$ such that $f(x) =\widetilde{f}(y)$ and $h= ga^{-1}b$. For $n\in \mathbb{N}^{\geq2}$ we set $$S^{n}_{\Psi,S,k}(g,x):=\{S^1_{\Psi,S,k}(h, y)|(h,y)\in S^{n-1}_{\Psi,S,k}(g,x)\}.$$
\end{itemize}}
\end{definition}
\begin{definition}\rm{
Let $\mathcal{F}$ be a family of subgroups of $G$. The group $G$ is called strongly transfer reducible over $\mathcal{F}$ if there exists a natural number $N\in \mathbb{N}$ with the following property: For every finite symmetric subset $S\subseteq G$ containing the trivial element $e\in G$ and every natural numbers $k$, $n \in \mathbb{N}$ there are
\begin{itemize}
\item[(i)] a compact contractible controlled $N$-dominated metric space $X$,
\item[(ii)] a strong homotopy $G$-action $\Psi$ on $X$ and
\item[(iii)] a cover $\mathcal{U}$ of $G\times X$ by open sets such that
\end{itemize}
\begin{itemize}
\item[(a)] $\mathcal{U}$ is an open $\mathcal{F}$-cover,
\item[(b)] $\dim(\mathcal{U})\leq N$,
\item[(c)] for every $(g, x)\in G\times X$ there exists $U\in \mathcal{U}$ with $S^{n}_{\Psi,S,k}(g,x)\subseteq U$.
\end{itemize}}
\end{definition}
\begin{theorem}\label{kcatfjciwthcof}
Let $G$ be a group which is strongly transfer reducible over a family $\mathcal{F}$ of subgroups of $G$. Let $\mathcal{A}$ be an additive category with a right $G$-action. Then the assembly map $$H^{G}_{m}(E_{\mathcal{F}}(G);{\bf{K}}_{\mathcal{A}})\to H^{G}_{m}(pt;{\bf{K}}_{\mathcal{A}})\cong K_{m}(\mathcal{A}*_{G}pt)$$ is an isomorphism for all $m\in \mathbb{Z}$. 
\end{theorem}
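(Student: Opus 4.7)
}
The plan is to follow the general strategy used by Bartels--L\"uck--Reich for hyperbolic groups (Theorem \ref{hyfjwc}) and by Bartels--L\"uck for CAT(0)--groups (Theorem \ref{catfjco}), but to push it all the way to an isomorphism in every degree. First I would reformulate bijectivity of the assembly map as the vanishing of the $K$-theory of a certain ``obstruction category'' of geometric $\mathcal{A}$-modules controlled over $G$ with bounded control in the $G$-direction and $\mathcal{F}$-equivariant support. By the descent arguments of Bartels--Farrell--Jones--Reich, it suffices to prove a ``Farrell--Jones criterion'': for every finite symmetric $S\subseteq G$ with $e\in S$ and every $n,k\in\mathbb{N}$, one has to produce an open $\mathcal{F}$-cover of $G\times Z$ (for a suitable auxiliary $Z$) of finite dimension $\leq N$ each of whose members is $S^n$-large in an appropriate sense. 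The data in the definition of \emph{strongly transfer reducible} over $\mathcal{F}$ is tailored exactly so that the space $X$, together with the cover $\mathcal{U}$ of $G\times X$ and the strong homotopy action $\Psi$, supplies precisely such a cover after passing to a suitable product coordinate.

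The second step is to manufacture a transfer from the obstruction category over $G$ to the obstruction category over $G\times X$ that uses only the strong homotopy $G$-action $\Psi$ (since no honest action is available). Because $X$ is compact, contractible and controlled $N$-dominated, one obtains a finite chain complex of geometric $\mathcal{A}$-modules whose underlying space is (up to controlled chain homotopy equivalence) $X$ itself, equipped with an action of $G$ that is associative only up to an explicit system of homotopies indexed by the simplices $(G\times[0,1])^j\times G$ appearing in the definition of $\Psi$. Following Wegner's refinement of the transfer construction of Bartels--L\"uck, I would organize these homotopies into a strictly functorial assignment on a homotopy-coherent enlargement of the orbit category, and then pass to total complexes to obtain an honest functor between the controlled categories. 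The $N$-domination of $X$ ensures that the transferred objects have finite geometric dimension, so they land in $K$-theory in every degree (not only in the surjective range up to degree $1$).

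The third step is to check that the composition (transfer) $\circ$ (forget control) is homotopic to multiplication by the Euler characteristic $\chi(X)=1$, which on $K$-theory is the identity, so that the assembly map is a retract of a map that is already known to be an isomorphism by the axiomatic criterion. At this point the $S^n$-largeness property of the cover $\mathcal{U}$ enters: the transferred cycles are controlled over $G\times X$ at a scale comparable to $F_g(\Psi,S,k)$, and condition (c) on $\mathcal{U}$ guarantees that each transferred simplex fits inside a single member of $\mathcal{U}$. This combined with $\dim(\mathcal{U})\leq N$ and the $\mathcal{F}$-equivariance of the cover allows the squeezing/swindle argument of Bartels--L\"uck--Reich to conclude vanishing of the relevant controlled obstructions in every degree.

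The hardest step will be the second one: making the transfer functorial on the nose on the level of controlled chain complexes even though $\Psi$ is only a \emph{strong homotopy} action. In Bartels--L\"uck's CAT(0) argument the analogous construction only produced a weak transfer that was sufficient for surjectivity up to $K_1$, and the technical reason why one could not do better was precisely the lack of strict associativity. The new input of strong transfer reducibility is that one has access to higher coherence data $\Psi(g_k,t_k,\ldots,g_0,x)$ to all orders, and one must check that these coherences assemble into a genuine simplicial (or DG) model of the $G$-action on the chain complex of $X$ that is compatible with the control structure used to define the obstruction category. Once this is accomplished, everything else -- the Euler characteristic computation, the squeezing, and the appeal to the Farrell--Jones criterion -- runs on established machinery.
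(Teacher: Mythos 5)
The paper you are comparing against is a survey: Theorem \ref{kcatfjciwthcof} is stated as a result of Wegner and attributed to \cite{Weg10}, but no proof (not even a sketch) is given in the text. So there is no ``paper's own proof'' to compare line by line; one has to measure your outline against Wegner's published argument.

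Measured that way, your plan is essentially the right one and captures the real innovation correctly: you translate bijectivity of the assembly map into vanishing of the $K$-theory of an obstruction category of controlled $\mathcal{A}$-modules, you build a transfer from control over $G$ to control over $G\times X$ using only the strong homotopy action $\Psi$, and you use the compactness, contractibility, controlled $N$-domination of $X$ and the $S^n$-large, finite-dimensional $\mathcal{F}$-cover $\mathcal{U}$ to run a squeezing argument inside the Bartels--L\"uck--Reich axiomatic framework. You have also put your finger on exactly why ``strong'' homotopy actions are needed: Bartels--L\"uck's transfer for ordinary homotopy actions is not strictly functorial, and this is what limits their $K$-theoretic conclusion to bijectivity in degrees $\le 0$ and surjectivity in degree $1$; Wegner's contribution is precisely to use the higher coherence data in $\Psi$ (organized via a bar-type/simplicial construction) to obtain an honest functor of controlled categories, from which the full range of degrees follows. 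Two points of imprecision worth fixing before this could be turned into a proof. First, your step three garbles the order of composition and the logic: what one actually shows is that a suitable composite $(\text{forget control})\circ(\text{transfer})$ induces multiplication by $\chi(X)=1$ on $K$-theory, and this retract argument is only one half of the story; the actual vanishing of the obstruction category's $K$-theory also needs the contracting-covers/flasque swindle, not just the retract. Second, your step one conflates the descent argument for split injectivity with the stronger ``axiomatic criterion'' for isomorphism; in the Bartels--L\"uck--Reich setup one does not go through a descent spectral sequence, one verifies the hypotheses of a concrete vanishing theorem for a controlled category built from $E_{\mathcal{F}}(G)$. Neither of these affects the overall strategy, which is sound and matches Wegner's.
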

\begin{theorem}\label{hycatgrou}
Hyperbolic groups and CAT(0)-groups are strongly transfer reducible over the family of virtually cyclic subgroups.
\end{theorem}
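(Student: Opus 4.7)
The plan is to adapt the flow-space techniques of Bartels--L\"uck--Reich \cite{BLR08a} and Bartels--L\"uck \cite{BL12} to verify the strengthened reducibility condition needed to apply Wegner's Theorem \ref{kcatfjciwthcof}. I will treat the two cases in parallel. For a CAT(0)-group $G$ acting geometrically on a CAT(0)-space $Y$, I take as compact contractible model $X$ a sufficiently large closed metric ball in $Y$ centered at a basepoint; for a hyperbolic group $G$, I use the analogous ball in Mineyev's flow space (or equivalently a suitable Rips-type thickening). In both cases $X$ is compact, contractible, and controlled $N$-dominated for a universal $N$, with the contracting homotopies supplied by unique/quasi-unique geodesics.

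First I will construct the strong homotopy $G$-action $\Psi$ on $X$. Because $X$ is only approximately $G$-invariant, one cannot use the strict $G$-action on $Y$ directly. Instead, after translation by $g\in G$ I retract back into $X$ along geodesics in $Y$ (respectively along Mineyev-geodesics in the hyperbolic case). The associativity/unit identities (1)--(6) in the definition of a strong homotopy action are then verified by an interpolation argument using uniqueness of geodesics in CAT(0) spaces, and their controlled substitute in the hyperbolic case. This produces $\Psi$ independently of the parameters $S$, $k$, $n$.

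Next, given $S$, $k$, $n$, I will construct the cover $\mathcal{U}$ of $G\times X$. The key device is to push the problem up to the flow space $FS(Y)$ of generalized geodesics in $Y$, on which $\mathbb{R}$ acts by time translation and $G$ acts by isometries. The long-thin boxes along flow orbits, truncated to $G\times X$, provide an open $\mathcal{F}_{\mathcal{VCYC}}$-cover of bounded dimension: the $\mathcal{VCYC}$-isotropy property comes from the fact that long flow boxes are only stabilized by elements translating along the orbit, whose centralizers in $G$ are virtually cyclic. The dimension bound $\dim(\mathcal{U})\le N$ (with $N$ independent of $S,k,n$) is supplied by the main covering theorem for the flow space, proved via polynomial-volume-growth estimates for the $G$-action on $Y$ in the CAT(0) case and via Mineyev's double-difference coordinates in the hyperbolic case.

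The main obstacle, which distinguishes strong transfer reducibility from the ordinary transfer reducibility used in the uncoefficiented Farrell--Jones argument, is the last requirement: each iterated set $S^{n}_{\Psi,S,k}(g,x)$ must fit inside a single $U\in\mathcal{U}$. The point is that each application of the operation $S^{1}_{\Psi,S,k}$ shifts the $G$-coordinate by a word in a bounded alphabet of length at most $2k+2$ drawn from $S$, and moves the $X$-coordinate by a uniformly controlled amount depending on $S$, $k$ and the metric of $Y$. Consequently there is an explicit constant $C=C(S,k,n)$ so that $S^{n}_{\Psi,S,k}(g,x)$ lies in the $C$-neighborhood of $(g,x)$ in $G\times X$. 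Choosing the length (the "amount of flow") of the thin boxes in the flow-space construction to exceed $C$ by a definite margin then forces each $S^{n}_{\Psi,S,k}(g,x)$ to be contained in one box, while leaving the dimension bound $N$ unchanged. Assembling $X$, $\Psi$ and $\mathcal{U}$ produces the data required by the definition of strongly transfer reducible over $\mathcal{F}_{\mathcal{VCYC}}$, completing the proof in both the hyperbolic and CAT(0) cases.
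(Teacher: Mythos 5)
This survey states Theorem~\ref{hycatgrou} without proof; it is quoted from Wegner \cite{Weg10}, so there is no internal argument to check against. Against Wegner's actual argument, your CAT(0) outline is essentially correct: $X$ is a large closed ball $\bar{B}_R(x_0)$ in the CAT(0)-space $Y$, the strong homotopy action $\Psi$ is built by composing translations with the radial retraction of $Y$ onto that ball and interpolating along unique CAT(0)-geodesics, and the $\mathcal{F}_{\mathcal{VCYC}}$-cover of $G\times X$ with bounded dimension comes by pulling back the long-thin flow-space covers of \cite{BL12}.

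The hyperbolic case is where your proposal breaks. You take $X$ to be ``a ball in Mineyev's flow space (or equivalently a suitable Rips-type thickening).'' These are not equivalent, and neither is what is used: for hyperbolic groups the transfer space is the compactified Rips complex $\bar{P}_d(G)=P_d(G)\cup\partial G$ for $d$ sufficiently large, on which $G$ acts by a strict action, so the coherences (1)--(6) for $\Psi$ hold tautologically and no geodesic-retraction machinery is needed. Wegner observes in \cite{Weg10} that the hyperbolic case therefore follows immediately from \cite{BLR08a}. A ball in the flow space is not obviously compact, contractible, or controlled $N$-dominated, and you would have to rebuild that part of the argument from scratch. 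Relatedly, you misidentify what distinguishes strong transfer reducibility from the version in \cite{BL12}: the $S^n_{\Psi,S,k}$-containment you emphasize is part of it, but the substantive new input is the upgrade from a homotopy $S$-action (which supports the Bartels--L\"uck $K$-theoretic transfer only through degree one) to an $A_\infty$-coherent strong homotopy $G$-action. For CAT(0)-groups this upgrade is the technical heart of \cite{Weg10}; for hyperbolic groups it is automatic because the action is strict.
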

Following the proof of \cite[Lemma 2.3]{BL12a} we see that Theorem \ref{kcatfjciwthcof} and Theorem \ref{hycatgrou} imply :
\begin{corollary}
Let $G_1$, $G_2$ be groups which satisfy the $K$-theoretic Farrell-Jones Conjecture \ref{kfjciwthcof}. Then the groups $G_1\times G_2$ and $G_1*G_2$ satisfy the $K$-theoretic Farrell-Jones Conjecture \ref{kfjciwthcof} too.
\end{corollary}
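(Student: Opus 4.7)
The strategy is to combine the Transitivity Principle (Theorem \ref{fjtp}) with the fibered version of the $K$-theoretic Farrell-Jones Conjecture, which is implied by the conjecture with coefficients via Corollary \ref{corfjccoff} and Remark \ref{fjwithcore2}. The key terminal input in both cases is Theorem \ref{kcatfjciwthcof} together with Theorem \ref{hycatgrou}: it supplies the $K$-theoretic Farrell-Jones Conjecture with coefficients for every virtually $\mathbb{Z}^n$ group, since such a group acts geometrically on $\mathbb{R}^n$ and is therefore CAT(0).

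For the product $G = G_1 \times G_2$, I would proceed in two stages. Let $pr_i \colon G \to G_i$ be the projections and set $\mathcal{F}_i := pr_i^{*}\mathcal{F}_{\mathcal{VCYC}}(G_i)$. The fibered $K$-FJC for $G_1$, applied to $pr_1$, shows that the assembly map for $(G, \mathcal{F}_1)$ is an isomorphism for every additive category with right $G$-action. Any $H \in \mathcal{F}_1$ sits inside a product $V \times G_2$ with $V \leq G_1$ virtually cyclic; applying the fibered conjecture for $G_2$ to the projection $V \times G_2 \to G_2$ and iterating the argument, I reduce to subgroups of $V \times V'$ with $V, V'$ virtually cyclic. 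Such a subgroup is virtually $\mathbb{Z}^k$ for $k \leq 2$, so Theorems \ref{hycatgrou} and \ref{kcatfjciwthcof} apply; inheritance to subgroups is automatic because we work with the fibered formulation. Chaining these isomorphism statements through the Transitivity Principle upgrades them to an isomorphism for the $\mathcal{F}_{\mathcal{VCYC}}(G)$-assembly map with coefficients in an arbitrary additive $G$-category, which is the conjecture for $G$.

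For the free product $G = G_1 * G_2$, I would work with the Bass--Serre tree $T$ on which $G$ acts with vertex stabilizers conjugate to $G_1$ or $G_2$ and trivial edge stabilizers. Let $\mathcal{F}$ be the family of subgroups of $G$ that are subconjugate to $G_1$ or $G_2$. Since $T$ is a one-dimensional contractible $G$-CW-complex whose non-trivial isotropy groups are exactly the conjugates of $G_1$ and $G_2$, with fixed point sets of any $H \in \mathcal{F}$ being non-empty contractible subtrees, it is a model for $E_{\mathcal{F}}(G)$. The Mayer--Vietoris sequence in $H^{G}_{\ast}(-\,;{\bf K}_{\mathcal{A}})$ associated to the segment $G\backslash T$, together with the induction structure of the equivariant homology theory, identifies the $\mathcal{F}$-assembly map as an isomorphism, once the fibered $K$-FJC for each vertex group $G_i$ is invoked. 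The Transitivity Principle then promotes this to an isomorphism for the $\mathcal{F}_{\mathcal{VCYC}}(G)$-assembly map, provided each $H \in \mathcal{F}$ satisfies $K$-FJC on its virtually cyclic subgroups; since every such $H$ is subconjugate to $G_1$ or $G_2$, this is covered by the fibered version of the hypothesis on $G_1$ and $G_2$.

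The main technical obstacle is the free-product step: one has to verify carefully that the Bass--Serre decomposition yields the assembly isomorphism for the family $\mathcal{F}$ with coefficients in an arbitrary additive $G$-category, using only the fibered $K$-FJC for the vertex groups and the triviality of the edge stabilizers; this requires a clean interaction between the Mayer--Vietoris spectral sequence for the tree action and the induction structure on ${\bf K}_{\mathcal{A}}$. The product step, by contrast, is essentially bookkeeping once the CAT(0) input for virtually $\mathbb{Z}^2$ groups is in place.
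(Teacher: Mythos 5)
Your direct-product argument is essentially the intended one and is sound: the fibered form of the $K$-theoretic conjecture for $G_1$ and $G_2$ (available by Corollary \ref{corfjccoff} and Remark \ref{fjwithcore2}), chained through the two projections via the Transitivity Principle \ref{fjtp}, reduces the claim for $G_1\times G_2$ to subgroups of $V_1\times V_2$ with each $V_i$ virtually cyclic; such groups are finitely generated virtually abelian, hence CAT(0)-groups, so Theorems \ref{hycatgrou} and \ref{kcatfjciwthcof} apply, and closure under subgroups is built into the fibered formulation.

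The free-product step, however, has a genuine gap rather than a technicality to be verified. Write $G=G_1*G_2$ and let $\mathcal{F}$ be the family of subgroups subconjugate to $G_1$ or $G_2$. The Bass--Serre tree $T$ is indeed a model for $E_{\mathcal{F}}(G)$, but the assertion that the $\mathcal{F}$-assembly map $H^{G}_{*}(E_{\mathcal{F}}(G);{\bf K}_{\mathcal{A}})\to H^{G}_{*}(pt;{\bf K}_{\mathcal{A}})$ is an isomorphism is \emph{false} for a general additive $G$-category $\mathcal{A}$. The Mayer--Vietoris sequence for $G\backslash T$ only computes the source; the target $K_{*}(\mathcal{A}*_{G}pt)$ differs from it by Waldhausen's Nil-groups for the free product, and these are supported exactly on the infinite (virtually) cyclic subgroups generated by hyperbolic tree elements, i.e., the virtually cyclic subgroups that $\mathcal{F}$ omits. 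For the same reason $\mathcal{F}_{\mathcal{VCYC}}(G)\not\subseteq\mathcal{F}$, so the Transitivity Principle cannot carry you from $\mathcal{F}$ to $\mathcal{F}_{\mathcal{VCYC}}$ in the direction you indicate; one must enlarge to $\mathcal{F}'=\mathcal{F}\cup\mathcal{F}_{\mathcal{VCYC}}$, and the relative assembly $H^{G}_{*}(E_{\mathcal{F}}(G))\to H^{G}_{*}(E_{\mathcal{F}'}(G))$ has a non-trivial Nil cokernel that no amount of homological bookkeeping will remove. What is actually needed is a geometric input at the level of the enlarged family $\mathcal{F}'$ --- some form of (strong) transfer reducibility of $G_1*G_2$ over $\mathcal{F}'$ obtained from the tree and its boundary, which is what the cited \cite[Lemma 2.3]{BL12a} supplies --- so that Theorem \ref{kcatfjciwthcof} yields the $\mathcal{F}'$-assembly isomorphism directly; the Transitivity Principle and the fibered hypothesis on $G_1$, $G_2$ then descend it to $\mathcal{F}_{\mathcal{VCYC}}$. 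A Mayer--Vietoris computation alone cannot substitute for that geometric step.
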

\begin{remark}\rm{
\indent
\begin{itemize}
\item[\bf{1.}] Theorem \ref{kcatfjciwthcof} extends the $K$-theoretic result in Theorem \ref{catfjco} for CAT(0)-groups to all dimensions.
\item[\bf{2.}] A. Bartels, F.T. Farrell and W. L$\ddot{\rm u}$ck proved the K-theoretic Farrell-Jones Conjecture (up to dimension one) and the $L$-theoretic Farrell-Jones Conjecture \ref{lfjciwthcof} for cocompact lattices in virtually connected Lie groups [BFL11]:
\end{itemize}}
\end{remark}
\begin{theorem}\label{vitpolyfj}(Virtually poly-$\mathbb{Z}$-groups)
Let $G$ be a virtually poly-$\mathbb{Z}$-group. Then both the $K$-theoretic and the $L$-theoretic Farrell-Jones Conjecture \ref{kfjciwthcof} and Conjecture \ref{lfjciwthcof} hold for $G$.
 \end{theorem}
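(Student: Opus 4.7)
The plan is to proceed by induction on the Hirsch length $h(G)$, systematically using the inheritance properties of the Farrell-Jones Conjecture with coefficients. First I would reduce to the case of torsion-free poly-$\mathbb{Z}$ groups: every virtually poly-$\mathbb{Z}$ group contains a poly-$\mathbb{Z}$ subgroup of finite index, and the class of groups satisfying the conjectures with coefficients is closed under passage to finite-index overgroups via the Transitivity Principle (Theorem~\ref{fjtp}) together with the fibered formulation (Remark~\ref{fjwithcore2}). Any torsion-free $G$ of Hirsch length $n\ge 2$ then fits into a short exact sequence
\[
 1\longrightarrow H\longrightarrow G\longrightarrow \mathbb{Z}\longrightarrow 1
\]
with $H$ torsion-free poly-$\mathbb{Z}$ of Hirsch length $n-1$, so the inductive hypothesis will supply both conjectures with coefficients for $H$ and for every virtually cyclic subgroup of $G$.

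The base case $h(G)\le 1$ covers the trivial group and $\mathbb{Z}$, both of which are finite-dimensional CAT(0)-groups; here the $L$-theoretic conjecture with coefficients is given by Theorem~\ref{catfjco}(ii), while the $K$-theoretic conjecture with coefficients in all degrees follows from Wegner's Theorem~\ref{kcatfjciwthcof} combined with Theorem~\ref{hycatgrou}. For the inductive step, direct closure of the Farrell-Jones Conjecture with coefficients under extensions by $\mathbb{Z}$ is not available, so I would instead aim to verify that $G$ itself is \emph{strongly transfer reducible} over the family $\mathcal{F}_\mathcal{VCYC}$ in Wegner's sense and then invoke Theorem~\ref{kcatfjciwthcof}, with an analogous argument producing Conjecture~\ref{lfjciwthcof}. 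The geometric model for the transfer would be provided by realizing $G$ as a cocompact lattice in a virtually connected solvable Lie group $L$ and working with the contractible homogeneous space $L/K$, where $K\subset L$ is a maximal compact subgroup.

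The hard part will be constructing, for every finite symmetric $S\subset G$ and every $k,n\in\mathbb{N}$, the required open $\mathcal{F}_\mathcal{VCYC}$-cover of $G\times L/K$ of bounded dimension whose members swallow the combinatorial neighborhoods $S^{n}_{\Psi,S,k}(g,x)$. The obstruction here is that $L/K$ is \emph{not} CAT(0) outside the virtually nilpotent case (no left-invariant CAT(0) metric exists on a non-nilpotent simply connected solvable Lie group), so the geodesic-flow covers used for CAT(0)-groups in Theorem~\ref{hycatgrou} are unavailable. To circumvent this I would employ a Farrell-Hsiang style argument exploiting the abundance of finite quotients of the polycyclic group $G$: using Dress induction together with the holonomy representation of $G$ inside the identity component of $L$, one produces, for each pair $(k,n)$, a factorization of the relevant transfers through finite quotients whose virtually cyclic preimages yield the desired cover. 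Executing this combinatorial covering argument uniformly with respect to an arbitrary additive $G$-category $\mathcal{A}$ is the main technical obstacle; once the strong transfer reducibility is established, Theorem~\ref{kcatfjciwthcof} and its $L$-theoretic analogue close the induction and yield both Conjectures~\ref{kfjciwthcof} and~\ref{lfjciwthcof} for $G$.
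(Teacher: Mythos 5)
The paper states this theorem without proof, citing Bartels--Farrell--L$\ddot{\rm u}$ck \cite{BFL11}, so your proposal has to be judged against that paper's argument. You correctly spot several genuine ingredients: reduction to torsion-free poly-$\mathbb{Z}$ groups via inheritance under finite overgroups, realizing $G$ as a cocompact lattice in a virtually connected solvable Lie group $L$, the failure of CAT(0) geometry on $L/K$ once $L$ is not virtually nilpotent, and the need for a Farrell--Hsiang/Dress-induction argument over finite quotients. But there are two real gaps. First, the induction on Hirsch length does not close as set up: applying the inheritance result to the surjection $G\to\mathbb{Z}$ requires the conjectures for the $G$-preimages of \emph{all} virtually cyclic subgroups of $\mathbb{Z}$, and the preimage of $n\mathbb{Z}$ for $n\geq 1$ is a finite-index subgroup of $G$ of the \emph{same} Hirsch length, not Hirsch length $n-1$. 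You partially concede this when you abandon extension inheritance and aim instead at strong transfer reducibility of $G$ over $\mathcal{F}_\mathcal{VCYC}$; but if that succeeded the short exact sequence and the inductive hypothesis would play no role, so the two halves of your plan do not cohere.

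Second, and more fundamentally, you propose to manufacture Wegner's \emph{strong transfer reducibility} out of a Farrell--Hsiang/Dress-induction argument. These are two logically independent sufficient conditions for the Farrell--Jones Conjecture, and your proposal offers no mechanism by which the first could produce the second. In \cite{BFL11} the Farrell--Hsiang criterion --- for every $\epsilon>0$ there is a finite quotient of $G$ such that for every hyperelementary subgroup of that quotient the preimage admits an $\epsilon$-contracting equivariant map from $G$ to a low-dimensional $\mathcal{F}$-simplicial complex --- is a \emph{separate theorem} implying the conjectures with coefficients directly; it does not build the compact controlled $N$-dominated space, strong homotopy $G$-action, and bounded-dimension $\mathcal{F}_\mathcal{VCYC}$-cover of $G\times X$ that strong transfer reducibility demands, precisely because the hyperelementary machinery factors through a varying family of finite groups rather than through a single space carrying one homotopy $G$-action. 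The actual proof in \cite{BFL11} isolates a class of irreducible special affine groups (roughly, $\mathbb{Z}^n$-by-(infinite cyclic or infinite dihedral) groups), shows those are Farrell--Hsiang groups relative to a carefully chosen smaller family, and then combines the Farrell--Hsiang theorem with the CAT(0) results (Theorems \ref{catfjco} and \ref{kcatfjciwthcof}) and the inheritance properties (finite overgroups, extensions, subgroups, colimits) to close a multi-stage induction; the step you flag as the ``main technical obstacle'' is exactly where your version would fail.
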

 \begin{definition}\label{fjcnupto}\rm{
Let $G$ be a group and let $\mathcal{F}$ be the family of subgroups. Then $G$ satisfies the $K$-theoretic Farrell-Jones Conjecture with additive categories as coefficients with respect to $\mathcal{F}$ up to dimension one if for any additive
$G$-category $\mathcal{A}$ the assembly map $$H^{G}_{n}(E_{\mathcal{F}}(G);{\bf{K}}^{<-\infty>}_{\mathcal{A}})\to H^{G}_{n}(pt;{\bf{K}}^{<-\infty>}_{\mathcal{A}})\cong K^{<-\infty>}_{n}(\mathcal{A}*_{G}pt)$$ induced by the projection $E_{\mathcal{F}}(G)\mapsto pt$ is bijective for all $n\leq0$ and surjective for $n=1$.}
 \end{definition}
 \begin{theorem}\label{fjcolat}(Cocompact lattices in virtually connected Lie groups)
Let $G$ be a cocompact lattice in a virtually connected Lie group. Then both the $K$-theoretic Farrell-Jones Conjecture \ref{fjcnupto} with respect to the family $\mathcal{F}_{\mathcal{VCYC}}$ and the $L$-theoretic Farrell-Jones Conjecture \ref{lfjciwthcof} hold for $G$.
\end{theorem}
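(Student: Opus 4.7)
The plan is to combine the CAT(0)-group case, established by Theorem \ref{catfjco} together with Theorem \ref{hycatgrou} and Theorem \ref{kcatfjciwthcof}, with the virtually poly-$\mathbb{Z}$ case supplied by Theorem \ref{vitpolyfj}, using the extension/inheritance properties that the Farrell-Jones assembly map enjoys in the ``coefficients in additive categories'' formulation (Conjectures \ref{kfjciwthcof} and \ref{lfjciwthcof}). Throughout, I would work with these stronger coefficient versions, since they are exactly what the two input theorems provide and since they behave well under passage to subgroups and extensions.

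First I would reduce to the connected case. If $L^0 \leq L$ is the identity component, then $G^0 := G \cap L^0$ has finite index in $G$ and is a cocompact lattice in the connected Lie group $L^0$. Since finite overgroups inherit the Farrell-Jones statements with coefficients, via the transitivity principle (Theorem \ref{fjtp}) applied to the pair $(\mathcal{F}_{\mathcal{VCYC}}(G^0), \mathcal{F}_{\mathcal{VCYC}}(G))$ together with the fact that both conjectures trivially hold for finite groups with coefficients, it suffices to prove the theorem for $G^0$. So I may assume $L$ is connected.

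Second, I would exploit the Levi--Mal'cev decomposition $L = R \cdot S$, with $R$ the radical and $S$ a semisimple Levi subgroup. By a classical theorem of Mostow, $N := G \cap R$ is a cocompact lattice in $R$; being a lattice in a connected solvable Lie group, $N$ is virtually polycyclic and hence virtually poly-$\mathbb{Z}$. The quotient $Q := G/N$ is, up to a finite kernel and a finite-index overgroup, a cocompact lattice in the semisimple Lie group $L/R$, which acts properly and cocompactly by isometries on its Riemannian symmetric space $X$ (after dividing out any compact factors, which only introduces finite normal kernels). Because $X$ is a finite-dimensional complete CAT(0)-space, $Q$ is a CAT(0)-group and therefore lies in the class $\mathcal{B}$. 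For $Q$, Theorem \ref{catfjco}(ii) gives the full $L$-theoretic assembly isomorphism with additive-category coefficients and Theorem \ref{hycatgrou} together with Theorem \ref{kcatfjciwthcof} gives the $K$-theoretic isomorphism in all degrees, so in particular the version of Definition \ref{fjcnupto}. For any virtually cyclic $V \subseteq Q$, the preimage $\pi^{-1}(V)$ is an extension of $V$ by $N$ and is therefore again virtually poly-$\mathbb{Z}$, so Theorem \ref{vitpolyfj} gives both conjectures with coefficients for every such $\pi^{-1}(V)$. Pulling the conjecture for $(Q, \mathcal{F}_{\mathcal{VCYC}}(Q))$ back through $\pi$ yields the conjectures for $G$ relative to the family $\pi^{*}\mathcal{F}_{\mathcal{VCYC}}(Q)$, and the transitivity principle (Theorem \ref{fjtp}) — combined with the fact that every $H\in \pi^{*}\mathcal{F}_{\mathcal{VCYC}}(Q)$ is a subgroup of a virtually poly-$\mathbb{Z}$ group $\pi^{-1}(V)$ and hence satisfies the conjecture for $\mathcal{F}_{\mathcal{VCYC}}(H)$ — upgrades this to the conjecture for $G$ with respect to $\mathcal{F}_{\mathcal{VCYC}}(G)$.

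The main obstacle, I expect, will be making the pullback step work cleanly in the additive-category formulation. One has to identify the assembly map $H^G_{*}(E_{\pi^{*}\mathcal{F}_{\mathcal{VCYC}}(Q)}(G); {\bf{K}}_{\mathcal{A}}) \to H^G_{*}(pt; {\bf{K}}_{\mathcal{A}})$ with the $Q$-assembly map for a suitably twisted coefficient category obtained from $\mathcal{A}$ by induction along $\pi$, and likewise for ${\bf{L}}_{\mathcal{A}}$; this is the technical heart of the argument and is precisely where the ``with coefficients'' formulation is essential, since the classical untwisted Farrell-Jones conjecture does not behave well under extensions. Once this identification is in hand, the remainder is a routine application of Theorem \ref{fjtp} together with the closure of virtually poly-$\mathbb{Z}$ under extensions by virtually cyclic quotients. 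Secondary technicalities — handling compact semisimple factors, the virtually-connected-to-connected reduction, and verifying that the commensurability errors introduced by passing from $Q = G/N$ to a genuine lattice in $L/R$ are absorbed by the finite-group and subgroup inheritance — should be straightforward compared with the coefficient pullback.
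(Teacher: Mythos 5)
The paper records this theorem as a result of Bartels, Farrell and L\"uck (reference [BFL11]) and gives no proof, so there is no in-paper argument to compare against; but your outline does track the known strategy of [BFL11]: exhibit $G$ as an extension $1 \to N \to G \to Q \to 1$ with $N$ virtually poly-$\mathbb{Z}$ (Theorem \ref{vitpolyfj}) and $Q$ a CAT(0) lattice in a semisimple group (Theorems \ref{catfjco}, \ref{hycatgrou}, \ref{kcatfjciwthcof}), then propagate the conjecture up the extension using the additive-category formulation. You are also right that the coefficient version is indispensable for this propagation.

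You have, however, inverted which steps are actually hard. The ``coefficient pullback'' you single out as the technical heart is already supplied by Corollary \ref{corfjccoff}: if $Q$ satisfies the conjecture with coefficients for $\mathcal{F}_{\mathcal{VCYC}}(Q)$ and $\pi\colon G \to Q$ is the projection, then $G$ satisfies it for $\pi^{*}\mathcal{F}_{\mathcal{VCYC}}(Q)$, and Theorem \ref{fjtp} together with Theorem \ref{vitpolyfj} (applied to the subgroups of $\pi^{-1}(V)$) finishes the reduction to $\mathcal{F}_{\mathcal{VCYC}}(G)$. What you dismiss as ``secondary technicalities'' is where [BFL11] do their real work, and your treatment of both is incomplete. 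First, the claim that $N = G \cap R$ is a cocompact lattice in the radical $R$ with $G/N$ a cocompact lattice in the semisimple quotient $L/R$ is not an unconditional consequence of Mostow's theorem; the relevant lattice-intersection theorems of Mostow, Auslander and Wang require hypotheses (roughly that $L/R$ have no compact factors, equivalently that $\Gamma R$ be closed) which can fail, and the correct normal subgroup to intersect with is an amenable enlargement of $R$ whose construction and properties are part of the content of [BFL11]. Second, the reduction from virtually connected $L$ to its identity component cannot be carried out by applying Theorem \ref{fjtp} to ``$(\mathcal{F}_{\mathcal{VCYC}}(G^{0}), \mathcal{F}_{\mathcal{VCYC}}(G))$'' --- those are families of subgroups of different groups --- and, more seriously, the Farrell--Jones Conjecture with coefficients is not known to pass automatically to finite-index overgroups. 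The way [BFL11] handle this is to prove throughout the strictly stronger version of the conjecture involving finite wreath products, which is closed under finite-index overgroups and compatible with the extension step; your proposal omits this device, and without it the very first reduction has a gap.
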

\begin{corollary}\label{corfjcolat}(Fundamental groups of 3-manifolds)
Let $\pi$ be the fundamental group of a 3-manifold (possibly non-compact, possibly non-orientable and possibly with boundary). Then both the $K$-theoretic Farrell-Jones Conjecture \ref{fjcnupto} with respect to the family $\mathcal{F}_{\mathcal{VCYC}}$ and the $L$-theoretic Farrell-Jones Conjecture \ref{lfjciwthcof} hold for $\pi$.
\end{corollary}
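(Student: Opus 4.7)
The plan is to reduce the corollary to the two inputs already recorded, Theorem \ref{vitpolyfj} for virtually poly-$\mathbb{Z}$-groups and Theorem \ref{fjcolat} for cocompact lattices in virtually connected Lie groups, combined with Perelman's proof of Thurston's Geometrization Conjecture and the standard inheritance properties of the Farrell-Jones Conjecture with additive coefficients.

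First I would reduce to the case where $\pi$ is the fundamental group of a compact 3-manifold. An arbitrary 3-manifold is an ascending union of an exhaustion by compact 3-submanifolds, so $\pi$ is a directed colimit of finitely generated groups of the form $\pi_1(N_i)$ for compact $N_i$. The coefficient version of the Farrell-Jones Conjecture is stable under directed colimits in the style of Theorem \ref{worfjcon}(iii), so it is enough to treat the compact case. Then Kneser--Milnor prime decomposition writes $\pi$ as an iterated free product of fundamental groups of prime compact pieces; by the free-product inheritance (realized as the action of $\pi$ on the Bass--Serre tree of the free product, whose edge groups are trivial) it suffices to treat the prime case.

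For a prime compact 3-manifold $N$ I would apply the JSJ decomposition together with Geometrization. This presents $\pi_1(N)$ as the fundamental group of a finite graph of groups with infinite cyclic or $\mathbb{Z}^2$ edge groups and with vertex groups that are fundamental groups of Seifert-fibered or finite-volume hyperbolic pieces; equivalently, $\pi$ acts without inversions on a tree with these vertex stabilizers, so a tree-action inheritance principle reduces the question to the individual vertex groups. Each closed Seifert-fibered piece modeled on $\mathbb{R}^3$, $\mathrm{Nil}$, $\mathrm{Sol}$, $\mathbb{S}^3$, or $\mathbb{S}^2\times\mathbb{R}$ has virtually poly-$\mathbb{Z}$ or finite fundamental group and is covered by Theorem \ref{vitpolyfj}. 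Each Seifert piece modeled on $\mathbb{H}^2\times\mathbb{R}$ or $\widetilde{\mathrm{SL}_2(\mathbb{R})}$, as well as each closed hyperbolic piece, has fundamental group which embeds as a cocompact lattice in a virtually connected Lie group (a central extension of a Fuchsian group, or $\mathrm{PSL}_2(\mathbb{C})$), and is therefore covered by Theorem \ref{fjcolat}.

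The main obstacle is the cusped case: a finite-volume but noncompact hyperbolic 3-manifold has fundamental group that is a non-cocompact lattice in $\mathrm{PSL}_2(\mathbb{C})$, to which Theorem \ref{fjcolat} does not directly apply. The standard remedy is to exploit that each cusp contributes a peripheral $\mathbb{Z}^2$ subgroup (again virtually poly-$\mathbb{Z}$) and that the cusped piece is relatively hyperbolic with respect to its cusp subgroups; one then invokes a relatively hyperbolic version of the Farrell--Jones machinery, or alternatively verifies that such a group admits a proper cocompact isometric action on a finite-dimensional $\mathrm{CAT}(0)$-space (for instance a suitable truncated hyperbolic model with warped ends), in order to place it in a class for which the $K$-theoretic and $L$-theoretic conjectures with coefficients are already known. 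Once this cusped vertex case is absorbed, the tree-of-groups inheritance described above closes the argument. The routine but non-negligible bookkeeping that must also be done is to verify each of the inheritance properties (subgroups, directed colimits, free products, and acyclic actions on trees with edge groups in the allowed class) in the additive-coefficient formulation used in Theorems \ref{vitpolyfj} and \ref{fjcolat}, rather than in the homotopy-$K$-theory or ring-coefficient formulations in which they are recorded in Theorem \ref{worfjcon}.
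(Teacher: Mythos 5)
Your overall reduction scheme is the right one and matches the route taken in the cited reference [BFL11]: pass to a compact core and then to finitely generated groups via directed colimits, use the Kneser--Milnor prime decomposition to reduce to a free product, then apply geometrization plus the JSJ decomposition to reduce to geometric vertex pieces, treating the Euclidean/Nil/Sol/spherical Seifert pieces by Theorem \ref{vitpolyfj} and the closed hyperbolic and $\mathbb{H}^2\times\mathbb{R}$/$\widetilde{\mathrm{SL}_2}$ Seifert pieces as cocompact lattices by Theorem \ref{fjcolat}. You also correctly identify that the inheritance statements (subgroups, free products, directed colimits, and more generally acylindrical tree actions with edge groups in the allowed class) must be verified for the additive-coefficient formulation, not just for the homotopy-$K$-theory or ring-coefficient versions appearing in Theorem \ref{worfjcon}; these closure properties are in fact available in the coefficient setting (cf.\ Corollary \ref{corfjccoff} and the properties defining the class $\mathcal{B}$ in Theorem \ref{luck}), so calling them ``routine but non-negligible bookkeeping'' is fair.

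The one place where your sketch is too optimistic is the resolution of the cusped hyperbolic vertex groups. Your second proposed remedy --- that such a group ``admits a proper cocompact isometric action on a finite-dimensional $\mathrm{CAT}(0)$-space (for instance a suitable truncated hyperbolic model with warped ends)'' --- is not correct as stated: truncating the cusps and attaching flat product ends gives a proper but \emph{not} cocompact action, and whether $\pi_1$ of a cusped finite-volume hyperbolic $3$-manifold is itself a $\mathrm{CAT}(0)$-group is considerably more delicate than you suggest. The established fix is slightly different and uses closure under \emph{subgroups} rather than a direct cocompact $\mathrm{CAT}(0)$-action: truncate the cusps, modify the metric near the flat torus boundary so as to remain non-positively curved with totally geodesic boundary (Schroeder/Leeb; cf.\ Leeb \cite{Lee95}, which is in this paper's bibliography), and double across the boundary to obtain a closed non-positively curved $3$-manifold $D\overline{N}$. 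Then $\pi_1(D\overline{N})$ is a finite-dimensional $\mathrm{CAT}(0)$-group, $\pi_1(N)$ embeds into it, and the class $\mathcal{B}$ (equivalently, the class of groups satisfying the additive-coefficient FJ conjecture) is closed under passing to subgroups. The same Leeb construction also directly handles any prime piece whose JSJ decomposition contains at least one hyperbolic component, reducing the remaining JSJ inheritance issue to the graph-manifold case, which one then treats either by a tree-action inheritance for poly-$\mathbb{Z}$ edge groups or by the virtually poly-$\mathbb{Z}$ and Seifert lattice results directly. So the conclusion you want does hold, but via subgroup-closure and the Leeb doubling trick rather than via a cocompact $\mathrm{CAT}(0)$-action on the cusped group itself.
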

\begin{remark}\rm{
Recall that Theorem \ref{kcatfjciwthcof} extends the $K$-theoretic result in Theorem \ref{catfjco} for CAT(0)-groups to all dimensions. Using Theorem \ref{kcatfjciwthcof} it is possible to drown to dimension one in Theorem \ref{fjcolat} and Corollary \ref{corfjcolat}. So using Theorem \ref{kcatfjciwthcof} we get the full $K$-theoretic Farrell-Jones Conjecture in these cases.}
\end{remark}
\begin{remark}\rm{
Recall that Farrell and Linnell proved in Theorem \ref{fjcrysolv} that if the fibered isomorphism conjecture is true for all nearly crystallographic groups, then it is true for all virtually solvable groups. However, they were not able to verify the fibered isomorphism conjecture for all nearly crystallographic groups. In particular, they pointed out that the fibered isomorphism conjecture has not been verified for the
group $\mathbb{Z}[\frac{1}{2}]\rtimes_{\alpha}\mathbb{Z}$ where $\alpha$ is multiplication by 2. Note this group is isomorphic to the Baumslag-Solitar group $BS(1,2)$. Recall that the Baumslag-Solitar group $BS(m, n)$ is defined by $\langle a,b |ba^mb^{-1}=a^n \rangle$ and all the solvable ones are isomorphic
to $BS(1,d)$. Note that $BS(m,n)\cong BS(n,m)\cong BS(-m,-n)$. Farrell and Xiaolei Wu \cite{FW13} proved the following result :}
\end{remark}
\begin{theorem}
The $K$-theoretic and $L$-theoretic Farrell-Jones Conjecture is true for the solvable Baumslag-Solitar groups with coefficients in an additive category.
\end{theorem}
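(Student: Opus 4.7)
The plan is to realize $BS(1,n)$ as a directed colimit of virtually poly-$\mathbb{Z}$ groups and then combine Theorem \ref{vitpolyfj} with the known closure of the Farrell-Jones Conjecture (with additive category coefficients) under directed colimits. Recall that the solvable Baumslag-Solitar group $BS(1,n)$ has the presentation $\langle a,b \mid bab^{-1}=a^n \rangle$ and is isomorphic to the semidirect product $\mathbb{Z}[\tfrac{1}{n}] \rtimes_{\alpha} \mathbb{Z}$, where $\alpha$ acts on $\mathbb{Z}[\tfrac{1}{n}]$ by multiplication by $n$. The obstruction to a direct application of Theorem \ref{vitpolyfj} is that the normal subgroup $\mathbb{Z}[\tfrac{1}{n}]$ is not finitely generated, so $BS(1,n)$ itself is not virtually poly-$\mathbb{Z}$.

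The first step is to write $\mathbb{Z}[\tfrac{1}{n}] = \bigcup_k \tfrac{1}{n^k}\mathbb{Z}$ as a directed union of infinite cyclic subgroups. Since multiplication by $n$ sends $\tfrac{a}{n^k}$ to $\tfrac{a}{n^{k-1}} \in \tfrac{1}{n^k}\mathbb{Z}$, every subgroup $\tfrac{1}{n^k}\mathbb{Z}$ is $\alpha$-invariant. Hence we obtain a directed system of subgroups $G_k := \tfrac{1}{n^k}\mathbb{Z} \rtimes_{\alpha} \mathbb{Z}$ of $BS(1,n)$ with injective structure maps, and
\[
BS(1,n) \;=\; \operatorname{colim}_{k \to \infty} G_k.
\]
Each $G_k$ is isomorphic, as an abstract group, to $\mathbb{Z} \rtimes_n \mathbb{Z}$, which is a poly-$\mathbb{Z}$ group of Hirsch length two. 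By Theorem \ref{vitpolyfj}, each $G_k$ satisfies both the $K$-theoretic Farrell-Jones Conjecture \ref{kfjciwthcof} and the $L$-theoretic Farrell-Jones Conjecture \ref{lfjciwthcof} with arbitrary additive category coefficients.

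The second step is to transport these isomorphisms through the colimit. For the Farrell-Jones Conjecture with additive category coefficients, there is a standard inheritance result: if $G = \operatorname{colim}_{i \in I} G_i$ is a directed colimit (with not necessarily injective structure maps) and each $G_i$ satisfies the conjecture with additive coefficients, then so does $G$. The underlying reason is that the equivariant homology theories $H^{?}_*(-;{\bf K}_{\mathcal{A}})$ and $H^{?}_*(-;{\bf L}_{\mathcal{A}})$ arising from the functorial spectra of Bartels-Reich are compatible with filtered colimits of groups, and classifying spaces $E_{\mathcal{F}_{\mathcal{VCYC}}}(G)$ are compatible with such colimits at the level of $G$-CW-complexes (one pulls back models $E_{\mathcal{F}_{\mathcal{VCYC}}}(G)$ via $G_i \to G$ and uses that virtually cyclic subgroups of $G_i$ embed into virtually cyclic subgroups of $G$). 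Applying this to our system yields that both assembly maps for $BS(1,n)$ are isomorphisms.

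The main obstacle is the inheritance under colimits in the additive-coefficient setting. While the analogue for the fibered conjecture with ring coefficients is recorded in Theorem \ref{nilfjcon}(iii) and Theorem \ref{worfjcon}(iii), one has to verify that the argument carries over unchanged to additive category coefficients. The key point is that an additive $G$-category $\mathcal{A}$ restricts along the injections $G_k \hookrightarrow BS(1,n)$ to additive $G_k$-categories $\mathcal{A}_k$, and the spectra $\mathbb{K}^{-\infty}(\mathcal{A} \ast_G T)$ and $\mathbb{L}^{-\infty}(\mathcal{A} \ast_G T)$ respect this restriction; granting this, a formal colimit argument on both sides of the assembly map, together with the fact that $E_{\mathcal{F}_{\mathcal{VCYC}}}(G_k)$ can be chosen compatibly to assemble a model for $E_{\mathcal{F}_{\mathcal{VCYC}}}(BS(1,n))$, completes the proof.
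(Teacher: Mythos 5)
The proposal has a fatal group-theoretic flaw. The action $\alpha$ of $\mathbb{Z}$ on $\mathbb{Z}[\tfrac{1}{n}]$ in the semidirect product decomposition $BS(1,n) \cong \mathbb{Z}[\tfrac{1}{n}]\rtimes_\alpha\mathbb{Z}$ is by multiplication by $n$, which is an \emph{automorphism} of $\mathbb{Z}[\tfrac{1}{n}]$ but only an injective, non-surjective endomorphism of the subgroup $\tfrac{1}{n^k}\mathbb{Z}$. Hence the purported group $G_k=\tfrac{1}{n^k}\mathbb{Z}\rtimes_\alpha\mathbb{Z}$ is not a semidirect product at all, and inside $BS(1,n)$ the subgroup generated by $\tfrac{1}{n^k}\mathbb{Z}$ together with the generator $b$ of the $\mathbb{Z}$ factor is already all of $BS(1,n)$: conjugation by $b^{-1}$ sends $x$ to $x/n$, which produces arbitrarily small dyadic-type fractions and hence recovers every element of $\mathbb{Z}[\tfrac{1}{n}]$. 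Relatedly, the abstract group you call ``$\mathbb{Z}\rtimes_n\mathbb{Z}$'' does not exist as a semidirect product for $n>1$; the ascending HNN extension of $\mathbb{Z}$ along multiplication-by-$n$ is precisely $BS(1,n)$ itself, which is not poly-$\mathbb{Z}$ since its derived subgroup $\mathbb{Z}[\tfrac{1}{n}]$ is not finitely generated. So there is no directed system of poly-$\mathbb{Z}$ subgroups exhausting $BS(1,n)$ in the manner you describe, and Theorem~\ref{vitpolyfj} plus colimit-closure cannot be applied along these lines.

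The actual proof of Farrell and Wu is genuinely different: they do not attempt to reduce to virtually poly-$\mathbb{Z}$ groups. Instead, they work with the proper isometric action of $BS(1,n)$ on the product $\mathbb{H}^2\times T$ of the hyperbolic plane with the Bass--Serre tree $T$ of the ascending HNN splitting, build a suitable flow space out of this geometry, and verify the hypotheses of the Bartels--L\"uck axiomatic framework (the transfer-reducibility conditions, as in Theorem~\ref{kcatfjciwthcof} and Theorem~\ref{catfjco}) over the family of virtually cyclic subgroups. This is precisely the technique needed because $BS(1,n)$ for $n\geq 2$ is neither CAT(0) nor a lattice in a virtually connected Lie group, so none of the off-the-shelf theorems (Theorem~\ref{vitpolyfj}, Theorem~\ref{fjcolat}, Theorem~\ref{luck}) apply directly; indeed, the paper explicitly notes that Farrell--Linnell had been unable to treat $\mathbb{Z}[\tfrac{1}{2}]\rtimes_\alpha\mathbb{Z}\cong BS(1,2)$ within the solvable-groups framework of Theorem~\ref{fjcrysolv}.
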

\begin{remark}\rm{
Farrell and Xiaolei Wu \cite{FW13} pointed out that the Farrell-Jones Conjecture has not been verified for all Baumslag-Solitar groups. For example, we do
not know whether the Farrell-Jones conjecture is true for the group $BS(2, 3)$.}
\end{remark}
\begin{final Remarks}\rm{
 Recall we have given in Remark \ref{rem.borel} that the obvious smooth analogue of Borel's Conjecture is false. Namely, Browder had shown in \cite{Bro65} that it is false even in the basic case where $M$ is an $n$-torus. In fact, surgery theory shows that the manifolds $T^n$ and $T^n\#\Sigma^n$ $(n\geq 5)$ are not diffeomorphic when $\Sigma^n$ is an exotic sphere; although they are clearly homeomorphic. This uses three ingredients: Bieberbach's Rigidity Theorem \ref{bieberbachrig}, Farrell and Hsiang topological rigidity for $T^n\times I$ (see Remark \ref{fhrig}), and the (stable) parallelizability of the torus.
But when it is assumed that both $M$ and $N$ in Problem \ref{fund.que} are non-positively curved Riemannian manifolds, then smooth rigidity frequently happens. The most fundamental instance of this is an immediate consequence of Mostow's Rigidity Theorem \ref{mostow}. The problem of changing the smooth structure on closed locally symmetric spaces of noncompact type $M^n$ were considered in \cite{FJ89a, FJ94, AF03, AF04, Oku02} and the problem was solved in many cases by forming the connected sum $M^n\#\Sigma^n$ where $\Sigma^n$ is a homotopy sphere.\\

Using Mostow's Rigidity Theorem \ref{mostow}, Farrell-Jones Topological Rigidity Theorem \ref{gentoprigd} and Kirby and Siebenmann results \cite[Theorem 4.1, pp.25; Theorem 10.1, pp. 194]{KS77} together with the fundamental paper of Kervaire and Milnor \cite{KM63}, the problem of determining when connected sum with a homotopy sphere $\Sigma$ changing the differential structure on a closed locally symmetric space of noncompact type $M^n$ is essentially reduced to a (non-trivial) question about the stable homotopy group of $M^n$. The main result of Okun \cite[Theorem 5.1]{Oku01} gives a finite sheeted cover $\mathcal{N}^n$ of $M^n$ and a nonzero degree tangential map $f : \mathcal{N}^n\to M_u$ where $M_u$ is global dual twin of compact type of $M$. And it is also showed in \cite{Oku02} that $\mathcal{N}^n\times \mathbb{D}^{n+1}$ is diffeomorphic to a codimension 0-submanifold of the interior of $M_u\times \mathbb{D}^{n+1}$. This allows us to look at the above question via \cite[Theorem 3.6]{Oku02} on the specific manifold $M_u$ instead 
of the arbitrary closed locally symmetric space of noncompact type $M^n$. Note that $M_u$ is the  global dual twin of compact type of $M$. Since the dual symmetric spaces of real, complex, quarternionic or Cayley hyperbolic manifolds are the sphere, complex projective space, quaternionic projective space or Cayley projective plane respectively. In view of this, we can look at the problem of detecting exotic structure on sphere, complex projective space, quaternionic projective space or Cayley projective plane instead of the arbitrary closed $\mathbb{K}$-hyperbolic manifold where $\mathbb{K}=\mathbb{R}, \mathbb{C}, \mathbb{H}$ or $\mathbb{O}$ respectively. In the papers \cite{FJ89a, FJ94, AF03, AF04, Oku02}, the authors considered these observations to produce exotic smooth structures on a closed locally symmetric space of noncompact type.\\

As we have observed in Remark \ref{harm.Eells}, every homotopy equivalence $f:M\to N$ where $N$ is a closed non-positively curved Riemannian manifold is homotopic to a unique harmonic map $\phi$. It was conjectured by Lawson and Yau that $f$ is necessarily a diffeomorphism. F.T. Farrell, L.E. Jones and P. Ontaneda have earlier found celebrated counterexamples to this conjecture \cite{FJ89a, Ont94, FJO98}. But the possibility remained that $\phi$ might still be a homeomorphism. If so, it would provide another route to Borel’s Conjecture that homotopically equivalent closed aspherical manifolds are homeomorphic \cite{FJ91}. On the other hand, F.T. Farrell and L.E. Jones showed that the unique harmonic map $\phi:M\to N$ is not a homeomorphism even though $N$ is negatively curved. But in these examples it is unknown if $M$ can also be non-positively curved \cite{FJ96, FJO98a}.\\ \\
For more details about Smooth and PL rigidity Problem \ref{genequestion} for negatively curved manifolds and many interesting open problems along this direction, see survey article \cite{RA13}.}
\end{final Remarks}
\newpage


\begin{thebibliography}{1}
\bibitem [AB06]{AB06}
E. Alibegovi and M. Bestvina, Limit groups are $CAT(0)$, \emph{J. London Math. Soc.,} (2), \textbf{74}(1) (2006), 259-272.
\bibitem [AD08]{AD08}
G. Arzhantseva and T. Delzant, \emph{Examples of random groups,} Preprint, 2008.
\bibitem [AF03]{AF03}
C.S. Aravinda and F.T. Farrell,  Exotic negatively curved structures on Cayley hyperbolic manifolds, \emph{J. Differential Geom.,} \textbf{63} (2003), 41-62.
\bibitem [AF04]{AF04}
C.S. Aravinda and F.T. Farrell, Exotic structures and quaternionic hyperbolic manifolds, \emph{Algebraic groups and arithmetic,} 507-524, Tata Inst. Fund. Res., Mumbai, 2004.
\bibitem [Alb68]{Alb68}
S.I. Al'ber, Spaces of mappings into manifold of negative curvature, \emph{Dokl. Akad. Nauk. SSSR.,} \textbf{178} (1968), 13-16.
\bibitem [AMS97]{AMS97}
H. Abels, G.A. Margulis and G.A. Soifer, Properly discontinuous groups of affine transformations with orthogonal linear part, \emph{C. R. Acad. Sci. Paris Sér. I Math.,} \textbf{324} (1997), no. 3, 253-258.
\bibitem [AM90]{AM90}
S. Akbulut and J.D. McCarthy, \emph{Casson's invariant for oriented homology 3-spheres,} Math. Notes \textbf{36}, Princeton University Press, Princeton, NJ, 1990
\bibitem [Bar03]{Bar03}
A. Bartels, Squeezing and higher algebraic K-theory, \emph{K-Theory.,} \textbf{28} (1) (2003), 19-37.
\bibitem [BEL08]{BEL08}
A. Bartels, S. Echterhoff and W. L$\ddot{\rm u}$ck, Inheritance of isomorphism conjectures under colimits. In Cortinaz, Cuntz, Karoubi, Nest, and Weibel, editors, K-Theory and noncommutative geometry, EMS-Series of Congress Reports, 41-70. \emph{European Mathematical Society}, 2008.
\bibitem [BFJP00]{BFJP00}
E. Berkove, F.T. Farrell, D. Juan-Pineda and K. Pearson, The Farrell-Jones isomorphism conjecture for finite covolume hyperbolic actions and the algebraic K-theory of Bianchi groups, \emph{Trans. Amer. Math. Soc.,} 352 (2000), no. 12, 5689-5702.
\bibitem [BFJR04]{BFJR04}
A. Bartels, F.T. Farrell, L.E. Jones and H. Reich. On the isomorphism conjecture in algebraic K-theory, \emph{Topology.,} \textbf{43}(1) (2004), 157-213.
\bibitem [BFL11]{BFL11}
A. Bartels, F.T. Farrell, W. L$\ddot{\rm u}$ck, The Farrell-Jones Conjecture for cocompact lattices in virtually connected Lie groups, Jan 3, 2011, arXiv:1101.0469
\bibitem [BFMW96]{BFMW96}
J. Bryant, S. Ferry, W. Mio and S. Weinberger, Topology of homology manifolds, \emph{Ann. of Math.,} (2), \textbf{143}(3) (1996), 435-467.
\bibitem [BFMW07]{BFMW07}
J. Bryant, S. Ferry, W. Mio, and S. Weinberger, Desingularizing homology manifolds, \emph{Geom. Topol.,} 11:1289–1314, 2007.
\bibitem [BH99]{BH99}
M.R. Bridson and A. Haefliger, Metric spaces of non-positive curvature, Grundlehren der Mathematischen Wissenschaften [Fundamental
Principles of Mathematical Sciences], vol. 319, \emph{Springer-Verlag, Berlin}, 1999.
\bibitem [BHS64]{BHS64}
H. Bass, A. Heller and R.G. Swan. The Whitehead group of a polynomial extension. \emph{Inst. Hautes Études Sci. Publ. Math.,} no. 22 (1964), 61-79. 
\bibitem [Bie12]{Bie12}
L. Bieberbach, Uber die Bewegungsgruppen der Euklidischen R ̈$\ddot{\rm u}$me II, \emph{Math. Ann.,} \textbf{72} (1912), 400-412.
\bibitem [BL06]{BL06}
A. Bartels and W. L$\ddot{\rm u}$ck, Isomorphism conjecture for homotopy K-theory and groups acting on trees,\emph{ J. Pure Appl. Algebra}, \textbf{205} (2006), no. 3, 660-696.
\bibitem [BL07]{BL07}
A. Bartels and W. L$\ddot{\rm u}$ck, Induction theorems and isomorphism conjectures for $K$-and $L$-theory, \emph{Forum Math.,} \textbf{19} (2007), 379-406.
\bibitem [BLR08]{BLR08}
A. Bartels, W. L$\ddot{\rm u}$ck and H. Reich, On the Farrell-Jones conjecture and its applications, \emph{J. Topol.,} \textbf{1} (2008), no. 1, 57-86.
\bibitem [BLR08a]{BLR08a}
A. Bartels, W. L$\ddot{\rm u}$ck and H. Reich, The K-theoretic Farrell-Jones conjecture for hyperbolic groups, \emph{Invent. Math.,} \textbf{172} (2008), no. 1, 29-70.
\bibitem [BL12]{BL12}
A. Bartels and W. L$\ddot{\rm u}$ck, The Borel conjecture for hyperbolic and $CAT(0)$-groups, \emph{Ann. of Math.,} (2) \textbf{175} (2012), no. 2, 631-689.
\bibitem [BL12a]{BL12a}
A. Bartels and W. L$\ddot{\rm u}$ck, Geodesic flow for $CAT(0)$-groups, \emph{Geom. Topol.,} \textbf{16} (2012), no. 3, 1345-1391. 
\bibitem [Bou68]{Bou68}
N. Bourbaki, \emph{Groupes et Algebres de Lie, Chapters IV-VI}, Hermann, Paris, 1968.
\bibitem [Boy89]{Boy89}
N. Boyom, \emph{The lifting problem for affine structures in nilpotent Lie groups}, \emph{Trans. Amer. Math. Soc.,} 313 (1989), no. 1, 347-379.
\bibitem [Bre67]{Bre67}
E. Bredon, Equivariant cohomology theories, \emph{Bull. Amer. Math. Soc.,} \textbf{73} (1967), 266-268.
\bibitem [Bro60]{Bro60}
M. Brown, A proof of the generalized Schoenflies theorem, \emph{ Bull. Amer. Math. Soc.,} Vol \textbf{66}, no. 2 (1960), 74-76.
\bibitem [Bro65]{Bro65}
 W. Browder, \emph{On the action of $\Theta^{n}\rm{(\partial \pi)}$, Differential and Combinatorial Topology} (A Symposium in Honor of Marston Morse), 23-36, Princeton Univ. Press, Princeton, N.J., (1965).
 \bibitem [Bro65]{Bro65}
  W. Browder, 
 \bibitem [BR05]{BR05}
A. Bartels and H. Reich. On the Farrell-Jones conjecture for higher algebraic K-theory, \emph{J. Amer. Math. Soc.,} 18(3) (2005), 501-545.
\bibitem [BR07]{BR07}
A. Bartels and H. Reich, Coefficients for the Farrell-Jones conjecture, \emph{Adv. Math.,} \textbf{209} (2007), no. 1, 337-362. 
\bibitem [Cap76]{Cap76}
S.E. Cappell, On homotopy invariance of higher signatures, \emph{Invent. Math.,} \textbf{33} (1976), 171-196. 
\bibitem [Cas80]{Cas80}
J. Casson, \emph{Three lectures on new-infinite constructions in 4-dimensional manifolds,} 1980
\bibitem [CD95]{CD95}
R.M. Charney and M.W. Davis, Strict hyperbolization, \emph{Topology} \textbf{34}(2) (1995), 329-350.
\bibitem [CG05]{CG05}
C. Champetier and V. Guirardel, Limit groups as limits of free groups, \emph{Israel J. Math.,} \textbf{146} (2005), 1-75.
\bibitem [Cha74]{Cha74}
T.A. Chapman, Topological invariance of Whitehead torsion, \emph{Amer. J. Math.,} \textbf{96}, 488-497, 1974.
\bibitem [CHK00]{CHK00}
D. Cooper, C. Hodgson and S. Kerchoff, 3-dimensional orbifolds and cone-manifolds, \emph{Math. Soc. Japan Memoirs.,} Vol. \textbf{5}, Tokyo, 2000. 
\bibitem [CR77]{CR77}
P.E. Conner and F. Raymond, Deforming homotopy equivalences to homeomorphisms in aspherical manifolds, \emph{Bull. AMS.,} \textbf{83} (1977), 36-85.
\bibitem [Dav83]{Dav83}
M. Davis, Groups generated by reflections and aspherical manifolds not covered by Euclidean space, \emph{Ann. of Math.,} \textbf{117} (1983), 293-324.
\bibitem [Dav84]{Dav84}
M. Davis, Coxeter groups and aspherical manifolds, 
\emph{Algebraic Topology Aarhus 1982}, Lecture Notes in Mathematics Vol \textbf{1051}, 1984, 197-221.
\bibitem [Dav08]{Dav08}
M.W. Davis. The geometry and topology of Coxeter groups, \emph{London Mathematical Society Monographs Series,} Vol 32, Princeton University Press, Princeton, NJ, 2008.
\bibitem [DFL13]{DFL13}
M.W. Davis, J. Fowler and J.F. Lafont, Aspherical manifolds that cannot be triangulated, 2013, arXiv:1304.3730v2 [math.GT].
\bibitem [DH89]{DH89}
M. Davis and J.C. Hausmann, Aspherical manifolds without smooth or PL structure,  Lect. Notes in Math., Vol.1370, \emph{Springer-Verlag,} New York, 1989, 135-142.
\bibitem [DJW01]{DJW01}
M.W. Davis, T. Januszkiewicz and S. Weinberger, Relative hyperbolizations and aspherical bordisms, an addendum to Hyperbolization of polyhedra, \emph{J. Diff. Geom.,} \textbf{58}, (2001), 535-541.
\bibitem [DJ91]{DJ91}
M. Davis and T. Januszkiewic, Hyperbolization of polyhedra, \emph{J. Differential Geom.,} Vol \textbf{34}, No 2 (1991), 347-388.
\bibitem [DK01]{DK01}
J.F. Davis and P. Kirk, Lecture notes in algebraic topology, Graduate Studies in Mathematics, vol 35, \emph{American Mathematical Society}, Providence, RI, 2001.
\bibitem [DL98]{DL98}
J.F. Davis and W. L$\ddot{\rm u}$ck, Spaces over a category and assembly maps in isomorphism conjectures in $K$- and $L$-Theory, \emph{K-theory}., \textbf{15} (1998), 201-252 
\bibitem [Edw78]{Edw78}
R. Edwards, The topology of manifolds and cell like maps, \emph{Proc. Intemat. Congress Math.,} Helsinki, 1978, 111-128.
\bibitem [ES64]{ES64}
J. Eells and J.H. Sampson, Harmonic mappings of Riemannian manifolds, \emph{Amer. J. Math.,} \textbf{86} (1964), 109-160.
\bibitem [FG83]{FG83}
D. Fried and M. Goldman,  Three-dimensional affine crystallographic groups, \emph{ Adv. in Math.,} \textbf{47} (1983), no. 1, 1-49.
\bibitem [FH73]{FH73}
F.T. Farrell and W.C. Hsiang, Manifolds with $\pi_{1}= G\times_{\alpha} T$, \emph{Amer. J. Math.,} \textbf{95} (1973), 813-848. 
\bibitem [FH78]{FH78}
F.T. Farrell and W.C. Hsiang, The Topological-Euclidean Space Form Problem, \emph{Invent. math.,} \textbf{45} (1978), 181-192.
\bibitem [FH79]{FH79}
F.T. Farrell and W.C. Hsiang, Remarks on Novikov's conjecture and the topological-euclidean space form problem, \emph{Lecture Notes in Mathematics}, 1979, Vol \textbf{763}, 635-642.
\bibitem [FH81]{FH81}
F.T. Farrell and W.C. Hsiang, On Novikov's Conjecture for Non-Positively Curved Manifolds, I, \emph{Ann. of Math.,} \textbf{113} (1981), 199-209.
\bibitem [FH81a]{FH81a}
F.T. Farrell and W.C. Hsiang, The Whitehead group of poly-(finite or cyclic) groups, \emph{J. London Math. Soc.,} \textbf{14} (1981), 308-324.
\bibitem [FH83]{FH83}
F.T. Farrell and W.C. Hsiang, Topological Characterization of Flat and Almost Flat Riemannian Manifolds $M^n (n\neq 3, 4)$, \emph{Amer. J. Math.,} \textbf{105} (1983), 641-672.
\bibitem [FJ86]{FJ86}
F.T. Farrell and L.E. Jones, K-Theory and dynamics I, \emph{Ann. of Math.,} \textbf{2} 124 (1986), no. 3, 531-569. 
\bibitem [FJ88]{FJ88}
F.T. Farrell and L.E. Jones, The surgery L-groups of poly-(finite or cyclic)
groups, \emph{Invent. Math.,} \textbf{91} (1988), no. 3, 559-586.
\bibitem [FJ89]{FJ89}
F.T. Farrell and L.E. Jones, A topological analogue of Mostow’s rigidity theorem, \emph{Journal of the American Mathematical Society}, \textbf{2} (1989), 257-370.
\bibitem [FJ89a]{FJ89a}
F.T. Farrell and L.E. Jones, Negatively curved manifolds with exotic smooth structures,
 \emph{J. Amer. Math. Soc.,} \textbf{2} (1989), 899-908.
\bibitem [FJ91]{FJ91}
F.T. Farrell and L.E. Jones, Rigidity in geometry and topology, Proc. of the ICM, Vol.I, II (Kyoto, 1990), \emph{Math. Soc., Japan, Tokyo,} 1991, 653-663.
\bibitem [FJ93]{FJ93}
F.T. Farrell and L.E. Jones, Topological rigidity for compact nonpositively curved manifolds, \emph{Proc. Sympos. Pure Math.,} \textbf{54} (1993), 229-274.
\bibitem [FJ93a]{FJ93a}
F.T. Farrell and L.E. Jones, Isomorphism conjectures in algebraic K-theory, J. Amer.Math.Soc., \textbf{6}(1993), no. 2, 249-297.
\bibitem [FJ94]{FJ94}
F.T. Farrell and L.E. Jones, Complex hyperbolic manifolds and exotic smooth structures, \emph{Invent. Math.,} \textbf{117} (1994), no. 1, 57-74.
\bibitem [FJ96]{FJ96}
F. T. Farrell and L. E. Jones, Some non-homeomorphic harmonic homotopy equivalences, \emph{Bull. London Math. Soc.,} \textbf{28} (1996), no. 2, 177-182.
\bibitem [FJ98]{FJ98}
F. T. Farrell and L. E. Jones, Rigidity for aspherical manifolds with $\pi_1\subset GL_m(R)$, \emph{ Asian J. Math.,} \textbf{2} (1998), no. 2, 215-262.
\bibitem [FJO98]{FJO98}
F.T. Farrell, L.E. Jones, and P. Ontaneda, Hyperbolic manifolds with negatively curved exotic triangulations in dimensions greater than five, \emph{J. Differential Geom.,} \textbf{48} (1998), no. 2, 319-322.
\bibitem [FJO98a]{FJO98a}
F.T. Farrell, L.E. Jones, and P. Ontaneda, Examples of non-homeomorphic harmonic maps between negatively curved manifolds, \emph{Bull. London Math. Soc.,} \textbf{30} (1998), 295-296.
\bibitem [FL03]{FL03}
F.T. Farrell and P. A. Linnell, $K$-theory of solvable groups, \emph{ Proc. London Math. Soc.,} (3) 87 (2003), no. 2, 309-336. 
\bibitem [FM12]{FM12}
B. Farb and D. Margalit, \emph{A primer on mapping class groups}, Princeton Mathematical Series, 49, Princeton University Press, Princeton, NJ, 2012.
\bibitem [FQ90]{FQ90}
M.H. Freedman and F. Quinn, \emph{Topology of 4-manifolds,} Vol. 39 of Princeton Mathematical Series, Princeton University
Press, Princeton, NJ, 1990.
\bibitem [Fre82]{Fre82}
M.H. Freedman, The topology of four-dimensional manifolds,  \emph{J. Differential Geom.,} Vol \textbf{17}, No 3 (1982), 357-453. 
\bibitem [Fre83]{Fre83}
M.H. Freedman. The disk theorem for four-dimensional manifolds, \emph{in Proceedings of the International Congress of Mathematicians,} Vol. 1, 2 (Warsaw, 1983), pages 647-663, Warsaw, 1984. PWN.
\bibitem [FS08]{FS08}
J. Frazier and A. Sanders. \emph{The Dehn-Nielsen-Baer Theorem}, October 2008.
\bibitem [FW13]{FW13}
F.T. Farrell and Xiaolei Wu, Farrell-Jones Conjecture for the solvable Baumslag-Solitar groups, 2013, Available at http://arxiv.org/abs/1304.4779
\bibitem [Gab94]{Gab94}
D. Gabai, On the Geometric and Topological Rigidity of Hyperbolic 3-Manifolds, \emph{Bull. Amer. Math. Soc.,} \textbf{31} (1994), 228-232. 
\bibitem [Gro87]{Gro87}
M. Gromov,  Hyperbolic groups,  in: Essays in Group Theory, 75-263, \emph{Math. Sci. Res. Inst. Publ. 8. Springer.,} 1987.
\bibitem [GRT03]{GRT03}
D. Gabai, G. Robert Meyerhoff and N. Thurston,  Homotopy hyperbolic 3-manifolds are hyperbolic, \emph{Ann. of Math.,} (2) \textbf{157} (2003), no. 2, 335-431.
\bibitem [GS77]{GS77}
D. Galewski and R. Stern, A universal 5-manifold with respect to simplicial triangulations, \emph{in Geometric Topology (Proc. Georgia Topology
Conf., Athens, Ga., 1977)}, 345-350, Academic Press, New York-London, 1979.
\bibitem [GW71]{GW71}
D. Gromoll and J. Wolf, Some relations between the metric structure and the algebraic structure of the fundamental group in manifolds of nonpositive curvature, \emph{Bull. Amer. Math. Soc.,} \textbf{77} (1971) 545-552.
\bibitem [Har67]{Har67}
Hartman, P,  On homotopic harmonic maps, \emph{Canad. J. Math.,} \textbf{19} (1967), 673-687.
 \bibitem [Hem76]{Hem76}
J. Hempel, \emph{3-Manifolds}, Princeton University Press, Princeton, N. J., 1976. 
\bibitem [Hir58]{Hir58}
F. Hirzebruch, Automorphe Formen und der Satz von Riemann-Roch, 1958 Symposium internacional de topología algebraica, \emph{Universidad Nacional Autónoma de México and UNESCO,} Mexico City,(1958) 129-144.
\bibitem [Hir71]{Hir71}
F. Hirzebruch, The signature theorem: reminiscences and recreation, Prospects in mathematics (Proc. Sympos., Princeton Univ., Princeton, N.J., 1970), \emph{Ann. of Math. Studies,} no. 70, Princeton Univ. Press, Princeton, N.J. (1971) 3-31. 
\bibitem [HLS02]{HLS02}
N. Higson, V. Lafforgue, and G. Skandalis, Counterexamples to the Baum-Connes conjecture, \emph{Geom. Funct. Anal.,} \textbf{12}(2) (2002), 330-354.
\bibitem [HS70]{HS70}
W.C. Hsiang and J. L. Shaneson, \emph{Fake tori, Topology of Manifolds,} Markham, Chicago, 1970, 18-51.
\bibitem [Hu93]{Hu93}
B.Z. Hu. Whitehead groups of finite polyhedra with nonpositive curvature, \emph{J. Differential Geom.,} \textbf{38} (3) (1993), 501-517.
\bibitem [Ji07]{Ji07}
L. Ji, The integral Novikov conjectures for S-arithmetic groups. I., \emph{K-Theory.,} \textbf{38} (1) (2007), 35-47.
\bibitem [Joh71]{Joh71}
F.E.A. Johnson, Manifolds of homotopy type  $K(\pi,1)$. I, \emph{Mathematical Proceedings of the Cambridge Philosophical Society,} Vol \textbf{70}, November 1971, 387-393.
\bibitem [Joh74]{Joh74}
F.E.A. Johnson, Manifolds of homotopy type  $K(\pi,1)$. II, \emph{Mathematical Proceedings of the Cambridge Philosophical Society,} Vol \textbf{75}, March 1974, 165-173.
\bibitem [Kas88]{Kas88}
G.G. Kasparov, Equivariant K-theory and the Novikov conjecture, \emph{Invent. Math., 91}, no. 1, (1988), 147-201.
\bibitem [KL04]{KL04}
M. Kreck and W. L$\ddot{\rm u}$ck, \emph{The Novikov Conjecture}, Oberwolfach-Seminar, January 2004.
\bibitem [KL05]{KL05}
M. Kreck and W. L$\ddot{\rm u}$ck, The Novikov conjecture, \emph{Birkhäuser Verlag,} Basel, 2005. 
\bibitem [KL08]{KL08}
B. Kleiner and J. Lott, Notes on Perelman’s papers, \emph{Geom. Topol,}. \textbf{12} (2008), no. 5, 2587-2855.
\bibitem [KL09]{KL09}
M. Kreck and W. L$\ddot{\rm u}$ck, Topological rigidity for non-aspherical manifolds, \emph{Pure Appl. Math.,} \textbf{5} (2009), no.3, special issue, in honor of Friedrich Hirzebruch, 873-914.
\bibitem [Kne29]{Kne29}
H. Kneser, Geschlossene Flächen in dreidimensionalen Mannigfalligkeiten, \emph{Jahresber. Deutsch. Math.-Verein}, \textbf{38} (1929), 248-260.
\bibitem [KM63]{KM63}
M. Kervaire and J. Milnor, {\it Groups of homotopy spheres: I,} Annals of Math.,
\textbf{77} (1963), 504-537.
 \bibitem [KS77]{KS77}
R.C. Kirby and L.C. Siebenmann, Foundational Essays on Topological Manifolds, Smoothings, and
Triangulations, \emph{Annals of Math. Studies,} Princeton: University Press 1977.
\bibitem [Lee95]{Lee95}
B. Leeb, 3-manifolds with(out) metrics of nonpositive curvature, \emph{Invent. Math.,} \textbf{122} (1995), no.2, 277-289. 
\bibitem [LR75]{LR75}
R. Lee and F. Raymond, Manifolds covered by Euclidean space, \emph{Topology,} 14(1975), 49-57.
\bibitem [LR05]{LR05}
W. L$\ddot{\rm u}$ck and H. Reich, The Baum-Connes and the Farrell-Jones conjectures in K- and L- theory, In Handbook of K-theory. Vol. \textbf{1, 2}, 703-842. {Springer, Berlin,} 2005.
\bibitem [Luc02]{Luc02}
W.L$\ddot{\rm u}$ck, Chern characters for proper equivariant homology theories and applications to K- and L-theory, \emph{J. Reine Angew. Math.,} \textbf{543} (2002), 193-234. 
\bibitem [Luc05]{Luc05}
W. L$\ddot{\rm u}$ck, Survey on classifying spaces for families of subgroups, Infinite groups: geometric, combinatorial and dynamical aspects, 269–322, \emph{Progr. Math.,} 248, Birkhäuser, Basel, 2005. 
\bibitem [Man13]{Man13}
C. Manolescu, Pin(2)-equivariant Seiberg-Witten Floer homology and the Triangulation Conjecture, 2013, arXiv:1303.23554v2.
\bibitem [Mar83]{Mar83}
G.A. Margulis, Free totally discontinuous groups of affine transformations, \emph{Sov. Math. Dokl.,} \textbf{28}, no. 2, 1983.
\bibitem [Mil58]{Mil58}
J. Milnor, \emph{On simply connected 4-manifolds,} In Symposium internacional de topolog´ıa algebraica, Universidad Nacional Aut´onoma de M´exico and UNESCO, Mexico City, 1958, 122-128.
\bibitem [Mil62]{Mil62}
J. Milnor, A unique decomposition theorem for 3-manifolds, \emph{Amer. J. Math.,} \textbf{84} (1962).
\bibitem [Mil77]{Mil77}
J. Milnor, On fundamental groups of complete affinely flat manifolds, \emph{Adv. Math.,} \textbf{25} (1977), 178-187.
\bibitem [Mil03]{Mil03}
J. Milnor, Towards the Poincar$\grave{\rm e}$ Conjecture and the classification of 3-manifolds, \emph{Notices  AMS}, \textbf{50} (2003), 1226-1233. 
\bibitem [Mis74]{Mis74}
A.S. Miscenko, Infinite-dimensional representations of discrete groups and higher signature, \emph{Izv. Akad. Nauk. SSSR Ser. Mat.,} \textbf{38} (1974), 81-106.
\bibitem [Mos67]{Mos67}
G.D. Mostow,  Quasi-conformal mappings in n-space and the rigidity of hyperbolic space forms, \emph{Inst. Hautes Etudes Sci. Publ.,} \textbf{34} (1967), 53-104.
\bibitem [Mos73]{Mos73}
G.D. Mostow,  Strong rigidity of locally symmetric spaces, \emph{Ann. of Math. Studies,} Vol 78, Princeton University press, Princeton, NJ (1973).
\bibitem [Mou87]{Mou87}
G. Moussong, \emph{Hyperbolic Coxeter groups,} Ph.D.thesis, The Ohio State University, Mathematics, 1987.
\bibitem [MT07]{MT07}
J. Morgan and G. Tian, Ricci flow and the Poincar$\grave{\rm e}$ conjecture, \emph{Clay Mathematics Monographs, 3, American Mathematical Society,} Providence, RI; Clay Mathematics Institute, Cambridge,
MA, 2007.
\bibitem [MT08]{MT08}
J. Morgan and G. Tian, \emph{Completion of the Proof of the Geometrization Conjecture,} (2008). Available at the arXiv:0809.4040.
\bibitem [Nov64]{Nov64}
S.P. Novikov, Homotopically equivalent smooth manifolds, \emph{Izv. Akad. Nauk SSSR Ser Mat.,} \textbf{28} (1964), 365-474.
\bibitem [Nov65]{Nov65}
S.P. Novikov, Topological invariance of rational classes of Pontrjagin, \emph{Dokl. Akad. Nauk SSSR.,} 163 (1965), 298-300.
\bibitem [Nov65a]{Nov65a}
S.P. Novikov, Rational Pontrjagin classes. Homeomorphism and homotopy type of closed manifolds.I, \emph{Izv. Akad. Nauk SSSR Ser. Mat.,} \textbf{29} (1965), 1373-1388. 
\bibitem [Nov66]{Nov66}
S.P. Novikov, On manifolds with free abelian fundamental group and their application, \emph{Izv. Akad. Nauk SSSR Ser. Mat.,} \textbf{30} (1966), 207-246.
\bibitem [Nov70]{Nov70}
S.P. Novikov, Algebraic construction and properties of Hermitian analogues of K-theory over rings with involution from the viewpoint of Hamiltonian formalism. Applications to differenrial topology and the theory of characteristic classes II, \emph{Izv. Akad. Nauk. SSSR Ser. Mat.,} \textbf{34} (1970), 479-505. 
\bibitem [Oku01]{Oku01}
 B. Okun, Nonzero degree tangential maps between dual symmetric spaces, \emph{Algebraic and Geometric Topology,} \textbf{1} (2001), 709-718.
\bibitem [Oku02]{Oku02}
 B. Okun, Exotic smooth structures on nonpositively curved symmetric spaces, \emph{Algebraic and Geometric Topoology,} \textbf{2} (2002), 381-389.
\bibitem [Ols79]{Ols79}
A.Y. Ol'shanskii, An infinite simple torsion-free Noetherian group, \emph{Izv. Akad. Nauk SSSR Ser. Mat.,} \textbf{43}(6) (1979), 1328-1393.
\bibitem [Ont94]{Ont94}
P. Ontaneda, Hyperbolic manifolds with negatively curved exotic triangulations in dimension six, \emph{J. Differential Geom.,} \textbf{40} (1994), no. 1, 7-22. 
\bibitem [OOS07]{OOS07}
A.Y. Ol'shankskii, D. Osin, and M. Sapir, \emph{Lacunary hyperbolic groups,} arXiv:math.GR/0701365v1, 2007.
\bibitem [Pau04]{Pau04}
F. Paulin, Sur la théorie élémentaire des groupes libres (d'après Sela). (French) Astérisque no. 294 (2004), ix, 363-402. 
\bibitem [Pra73]{Pra73}
G. Prasad, Strong rigidity of Q-rank 1 lattices, \emph{Inventiones Mathematicae,} \textbf{21} (1973), 255-286.
\bibitem [PW85]{PW85}
E. Pedersen and C. Weibel, A non-connective delooping of algebraic K-theory, \emph{Topology,} Lecture Notes in Math., Vol. \textbf{1126}, Springer-Verlag, Berlin-Heidelberg-New York, (1985), 166-181.
\bibitem [Qui82]{Qui82}
F. Quinn, Ends of maps. II, \emph{Invent. Math.,} \textbf{68} (1982), no. 3, 353-424.
\bibitem [Qui05]{Qui05}
F. Quinn, \emph{Hyperelementary assembly for k-theory of virtually abelain groups}, Preprint, arXiv:math.KT/0509294, 2005
\bibitem [Ran92]{Ran92}
A.A. Ranicki, \emph{Algebraic L-theory and topological manifolds,} volume 102 of Cambridge Tracts in Mathematics. Cambridge University Press, Cambridge, 1992.
\bibitem [RA13]{RA13}
K. Ramesh, Smooth and PL-Rigidity Problems on Locally Symmetric Spaces (Submitted).
\bibitem [Ros94]{Ros94}
J. Rosenberg, Algebraic K-theory and its applications, \emph{Springer-Verlag,} New York, 1994.
\bibitem [Rou14]{Rou14}
S.K. Roushon, The isomorphism conjecture for groups with generalized free product structure \emph{'in'~Handbook of Group Action Vol II,} \textbf{ALM 32} (2014), 77-119. Eds. L. Ji, A. Papadopoulos and S.T. Yau. Higher Education Press and International Press, Beijing-Boston. 
\bibitem [Sap07]{Sap07}
M. Sapir, \emph{Some group theory problems,} arXiv:math.GR/0704.2899v1, 2007.
\bibitem [Sco83]{Sco83}
P. Scott, The geometries of 3-manifolds, \emph{Bull. London Math. Soc.,} \textbf{15} (1983), no.5, 401-487.
\bibitem [Sel01]{Sel01}
Z. Sela, Diophantine geometry over groups. I, \emph{Makanin-Razborov diagrams. Publ. Math. Inst. Hautes ´ Etudes Sci.,} \textbf{93} (2001), 31-105.
\bibitem [Sie72]{Sie72}
L.C. Siebenmann, Approximating cellular maps by homeomorphisms, \emph{Topology,} \textbf{11} (1972), 271-294.
\bibitem [Sta62]{Sta62}
J. Stallings, The piecewise-linear structure of Euclidean space, \emph{Proc. Cambridge Philos. Soc.,} \textbf{58} (1962), 481-488.
\bibitem [Sul71]{Sul71}
D. Sullivan, Geometric periodicity and the invariants of manifolds, \emph{Springer Lecture Notes}, \textbf{197} (1971), 44-75.
 \bibitem [Thu79]{Thu79}
W.P. Thurston, \emph{The geometry and topology of 3-Manifolds,} Geometry center, Minneapolis MN, 1979.
\bibitem [Thu82]{Thu82}
W.P. Thurston, Three dimensional manifolds, Kleinian groups and hyperbolic geometry, \emph{Bull. Amer. Math. Soc.,} \textbf{6} (1982), 357-381.
\bibitem [Thu97]{Thu97}
W.P. Thurston, \emph{Three-dimensional Geometry and Topology}, Vol.1 (Silvio Levy, ed.) Princeton Math.Ser.,vol.35, Princeton Univ.Press, 1997.
\bibitem [Tur88]{Tur88}
V.G. Turaev, Homeomorphisms of geometric three-dimensional manifolds, \emph{(Russian). Mat. Zametki.,} \textbf{43} (1988), no.4, 533-542, 575; translated in Math. Notes 43 (1988), no. 3-4, 307-312.
\bibitem [Wal69]{Wal69}
C.T.C. Wall, On homotopy tori and the annulus theorem, \emph{Bull. London Math. Soc.,} \textbf{1} (1969), 95-97.
\bibitem [Wal71]{Wal71}
C.T.C. Wall, \emph{Surgery on Compact Manifolds,} Academic Press, London, 1971.
\bibitem [Wal76]{Wal76}
F. Waldhausen, Algebraic K-theory of topological spaces. I, Algebraic and geometric topology (Proc. Sympos. Pure Math., Stanford Univ., Stanford,
Calif., 1976), Part 1, 35-60. \emph{Amer. Math. Soc.,} Providence, R.I., 1978.
\bibitem [Weg10]{Weg10}
C. Wegner, The K-theoretic Farrell-Jones conjecture for $CAT(0)$-groups, arXiv:1012.3349v1 [math.GT], 2010.
\bibitem [Yau71]{Yau71}
S.-T. Yau,  On the fundamental group of compact manifolds of non-positive curvature, \emph{Ann. of Math.,} (2) \textbf{93} (1971), 579-585. 

\end{thebibliography}
\end{document}